\def\@settitle{\begin{center}%
  \baselineskip14\p@\relax
    %\bfseries
    \normalfont\LARGE%<- NEW

  \@title
  \end{center}%
}
\newcolumntype{P}[1]{>{\centering\arraybackslash}p{#1}}
\newcolumntype{M}[1]{>{\centering\arraybackslash}m{#1}}
\let\oldmarginpar\marginpar
\renewcommand\marginpar[1]{\-\oldmarginpar[\raggedleft\footnotesize #1]%
	{\raggedright\footnotesize #1}}
\theoremstyle{plain}
\newtheorem{thm}{Theorem}[section]
\newtheorem{lemma}[thm]{Lemma}
\newtheorem*{theorem*}{Theorem}
\newtheorem*{corollary*}{Corollary}
\newtheorem{proposition}[thm]{Proposition}
\newtheorem{cor}[thm]{Corollary}
\theoremstyle{definition}
\newtheorem{definition}[thm]{Definition}
\newtheorem{example}[thm]{Example}
\newtheorem{remark}[thm]{Remark}
\numberwithin{equation}{section}
\newtheorem*{theo}{Main Theorem}
\newenvironment{ftheo}
  {\begin{mdframed}[innertopmargin = 3pt, innerbottommargin=3pt,skipabove=5pt,skipbelow=5pt,linewidth=0.25pt,nobreak=true,align=center]\begin{theo}}
  {\end{theo}\end{mdframed}}
\def\maintheoref{the \hyperref[thm:main_result_equality_DO_NOT_CREF]{Main Theorem}\xspace}
\newcommand{\Z}{\mathbb{Z}}
\newcommand{\R}{\mathbb{R}}
\newcommand{\C}{\mathbb{C}}
\newcommand{\la}{\lambda}
\renewcommand{\d}{\delta}
\newcommand{\e}{\varepsilon}
\newcommand{\ip}{\, \lrcorner \,}
\newcommand{\sse}{\subset}
\newcommand{\lr}{\longrightarrow}
\newcommand{\x}{\times}
\newcommand{\SL}{\operatorname{SL}}
\newcommand{\Br}{\operatorname{Br}}
\newcommand{\sign}{\operatorname{sign}}
\newcommand{\Spec}{\operatorname{Spec}}
\newcounter{daggerfootnote}
\newcommand{\bG}{\mathbb{G}}
\newcommand{\B}{\mathsf{B}}
\def\prop{\lambda}
\newcommand{\n}{\mathfrak{n}}
\renewcommand{\l}{\mathfrak{l}}
\newcommand{\cc}{\mathfrak{c}}
\newcommand{\ww}{\mathfrak{w}}
\newcommand{\Hom}{\mathrm{Hom}}
\newcommand{\id}{\mathrm{id}}
\def \vertbar [#1](#2,#3,#4){
    \draw [#1] (#2,#3) -- (#2,#4);
    \draw [fill=white] (#2,#3) circle [radius=0.1];
    \draw [fill=black] (#2,#4) circle [radius=0.1];
}
\providecommand{\leftsquigarrow}{%
  \mathrel{\mathpalette\reflect@squig\relax}%
}
\newcommand{\reflect@squig}[2]{%
  \reflectbox{$\m@th#1\rightsquigarrow$}%
}
\def\Ddots{\mathinner{\mkern1mu\raise\p@
\vbox{\kern7\p@\hbox{.}}\mkern2mu
\raise4\p@\hbox{.}\mkern2mu\raise7\p@\hbox{.}\mkern1mu}}
\def \horline [#1](#2,#3,#4){
    \draw [#1] (#2,#4) -- (#3,#4);
    \draw [fill=white] (#2,#4) circle [radius=0.1];
    \draw [fill=black] (#3,#4) circle [radius=0.1];
}
\def \crossing (#1,#2)(#3,#4){
\draw (#1,#2) -- (#3,#4);
\draw (#1,#4) -- (#3,#2);
}
\DeclareFontFamily{U}{mathb}{}
\DeclareFontShape{U}{mathb}{m}{n}{
  <-5.5> mathb5
  <5.5-6.5> mathb6
  <6.5-7.5> mathb7
  <7.5-8.5> mathb8
  <8.5-9.5> mathb9
  <9.5-11.5> mathb10
  <11.5-> mathbb12
}{}
\newcommand{\MSB}[1]{\textcolor{red}{[MSB: \footnotesize #1]}}
\newcommand{\PG}[1]{\textcolor{green!70!black}{[PG: \footnotesize #1]}}
\newcommand{\MG}[1]{\textcolor{orange}{[MG: \footnotesize #1]}}
\newcommand{\rw}{\overrightarrow{\ww}}
\newcommand{\lw}{\overleftarrow{\ww}}
\def\dbr{\bbbeta}
\def\negind{J_{-}}
\def\posind{J_{+}}
\def\dstr{\ddot{\mathbf{s}}}
\def\wo{w_\circ}
\def\br{\beta}
\def\Graph{\Gamma}
\def\DW{\ddot\ww}
\def\borel{\mathsf{B}}
\def\G{\mathsf{G}}
\def\U{\mathsf{U}}
\DeclareRobustCommand{\cev}[1]{%
  \mathpalette\do@cev{#1}%
}
\newcommand{\do@cev}[2]{%
  \fix@cev{#1}{+}%
  \reflectbox{$\m@th#1\vec{\reflectbox{$\fix@cev{#1}{-}\m@th#1#2\fix@cev{#1}{+}$}}$}%
  \fix@cev{#1}{-}%
}
\newcommand{\fix@cev}[2]{%
  \ifx#1\displaystyle
    \mkern#23mu
  \else
    \ifx#1\textstyle
      \mkern#23mu
    \else
      \ifx#1\scriptstyle
        \mkern#22mu
      \else
        \mkern#22mu
      \fi
    \fi
  \fi
}
\def\Xbul{X_{\bullet}}
\def\Ybul{Y_{\bullet}}
\def\Fbul{F_{\bullet}}
\def\bmp{\brWe}
\def\brWe{\dbr^{(-|+)}}
\def\Rbr3D{R}
\def\RbrWe{X}
\def\Bz_#1(#2){B_{#1}(#2)}
\def\BZ_#1(#2){B_{#1}(#2)}
\def\BZop{B}
\def\c{c}
\def\cp{d}
\def\z{z}
\def\ip{i'}
\def\zp{z'}
\def\iso{\varphi}
\def\Reg(#1){\operatorname{Reg}(#1)}
\def\bs{\backslash}
\def\Ptop{P^\uparrow}
\def\Pbot{P^\downarrow}
\def\gPathX(#1){Z(#1)}
\def\wPathX(#1){w(#1)}
\def\gPtop_#1{\gPathX(\Ptop_{#1})}
\def\wPtop_#1{\wPathX(\Ptop_{#1})}
\def\gPbot_#1{\gPathX(\Pbot_{#1})}
\def\wPbot_#1{\wPathX(\Pbot_{#1})}
\def\dw{\dot w}
\def\hp{h^+}
\def\H{H}
\def\uc{u^{(\c)}}
\def\jc{j^{(\c)}}
\def\bj{\mathbf{j}}
\def\bi{\mathbf{i}}
\def\bjc{\mathbf{j}^{(\c)}}
\def\ds{\dot s}
\def\ap{v^{(e)}}
\def\bigR{\mathcal{Y}}
\def\bigRo{\mathcal{Y}^{\circ}}
\def\bigRofix{\mathcal{Z}^{\circ}}
\def\bigRfix{\mathcal{Z}}
\def\tV{\tilde{V}}
\def\tT{\tilde{T}}
\newcommand{\smat}[1]{\left[\begin{smallmatrix}
      #1
    \end{smallmatrix}\right]}
\def\<{{\langle}}
\def\>{{\rangle}}
\def\bgamma{\bm{\gamma}}
\def\nudc{\nu^{(\c)}_{\e}}
\def\fdc{f^{(\c)}_{\e}}
\def\curve{\gamma}
\def\fug{f_{\bgamma,\bj}}
\def\fugjp{f_{\bgamma,\bj'}}
\def\fuggp{f_{\bgamma',\bj}}
\def\fugmap{\phi}
\def\chip(#1){\chi^+(#1)}
\def\l{\ell}
\def\charof{\chi}
\def\Dual{Dual\xspace}
\def\dual{dual\xspace}
\def\etop{e^{\top}}
\def\propagR{\vec\rho} 
\def\propag{\cev\rho} %\MSB{This used to be \prop, but that caused weird errors with the proposition environment}
\def\PD{\Gamma}
\def\d{d}
\def\torus{H}
\def\etop{e^{\operatorname{top}}}
\def\doublestring{\dstr} %PG:???
\def\bijX{\phi_X}
\def\bijY{\phi_Y}
\def\nm{{n+m}}
\def\threeD{{\operatorname{D}}}
\def\We{\operatorname{W}}
\def\x3D{x^{\threeD}}
\def\xWe{x^{\We}}
\def\bx3D{{\bm{x}}^{\threeD}}
\def\bxWe{{\bm{x}}^{\We}}
\def\J3D{J^{\threeD}_{\dbr}}
\def\JWe{J^{\We}_{\brWe}}
\def\grid{\Delta}
\def\ord{\operatorname{ord}}
\def\ordxx_#1#2{\ord_{V_{#1}}#2}
\def\gamchp{\gamma^+}
\def\gamchm{\gamma^-}
\def\e{e}
\def\weave{\ww}
\def\lcyc{\nu}
\def\edge{\mathtt{e}} %JS: change this from f due to a notation clash
\def\gamWe{\gamma^{\We}}
\def\vertex{\mathtt{v}}
\def\ne{\mathtt{ne}}
\def\k{{k}}
\def\triangle{\vartriangle} %\JS{to denote the triangle in which the double inductive weaves are drawn}
\def\invc{{\overline{c}}}
\def\inve{{\overline{e}}}
\def\f{f}
\def\invf{\overline{f}}
\def\Om3D{\Omega^\threeD}
\def\OmWe{\Omega^{\We}}
\def\minor{\Delta}
\def\grid{\Delta}
\DeclareMathOperator{\dlog}{dlog}
\newcommand{\Rrel}[1]{\stackrel{#1}{\longrightarrow}}
\newcommand{\Rwrel}[1]{\stackrel{#1}{\Longrightarrow}}
\newcommand{\Lwrel}[1]{\stackrel{#1}{\Longleftarrow}}
\tikzset{tangent/.style={decoration={markings,mark=at position #1 with {
      \coordinate (tangent point-\pgfkeysvalueof{/pgf/decoration/mark info/sequence number}) at (0pt,0pt);
      \coordinate (tangent unit vector-\pgfkeysvalueof{/pgf/decoration/mark info/sequence number}) at (1,0pt);
      \coordinate (tangent orthogonal unit vector-\pgfkeysvalueof{/pgf/decoration/mark info/sequence number}) at (0pt,1);
      }},postaction=decorate},
    use tangent/.style={
        shift=(tangent point-#1),
        x=(tangent unit vector-#1),
        y=(tangent orthogonal unit vector-#1)
    },
    use tangent/.default=1
    }
\DeclareMathOperator{\pre}{pre}
\DeclareMathOperator{\Lec}{Lec}
\DeclareMathOperator{\Ing}{Ing}
\DeclareMathOperator{\Men}{M}
\def\figref#1(#2){Figure~\hyperref[#1]{\ref*{#1}(#2)}}
\begin{document}

	\title{Comparing cluster algebras on braid varieties}
	
	\subjclass[2020]{Primary: 13F60, 14M15. Secondary: 05E99.}

	\author{Roger Casals}
	\address{University of California Davis, Dept. of Mathematics, USA}
	\email{casals@ucdavis.edu}

\author{Pavel Galashin}
	\address{University of California Los Angeles, Dept. of Mathematics, USA}
	\email{galashin@math.ucla.edu}

  \author{Mikhail Gorsky}
\address{Universit\"at Hamburg, Fachbereich Mathematik, Bundesstraße 55, 20146 Hamburg, Germany}
\email{mikhail.gorskii@univie.ac.at}

 \author{Linhui Shen}
\address{Michigan State University, Dept.~of Mathematics, USA}
\email{linhui@math.msu.edu}

\author{Melissa Sherman-Bennett}
\address{University of California Davis, Dept. of Mathematics, USA}
\email{mshermanbennett@ucdavis.edu}

 \author{Jos\'e Simental}
\address{Instituto de Matem\'aticas, Universidad Nacional Aut\'onoma de M\'exico, M\'exico}
\email{simental@im.unam.mx}

\begin{abstract}
Braid varieties parametrize linear configurations of flags with transversality conditions dictated by positive braids. They include and generalize reduced double Bruhat cells, positroid varieties, open Bott-Samelson varieties, and Richardson varieties, among others. Recently, two cluster algebra structures were independently constructed in the coordinate rings of braid varieties: one using weaves and the other using Deodhar geometry. The main result of the article is that these two cluster algebras coincide. More generally, our comparative study matches the different concepts and results from each approach to the other, both on the combinatorial and algebraic geometric aspects.
\end{abstract}

\maketitle
\setcounter{tocdepth}{1}
\vspace{-1.1cm}
\tableofcontents
\vspace{-1.1cm}

%%%%%%%%%%%%%%%%%%%%%%%%%%%%%%%%%%%%%%%%%%%%%%%%%%%%%%%%%%%%%%%%%
%%%%%%%%%%%%%%%%%%%%%%%%%%%%%%%%%%%%%%%%%%%%%%%%%%%%%%%%%%%%%%%%%
%%%%%%%%%%%%%%%%%%%%%%%%%%%%%%%%%%%%%%%%%%%%%%%%%%%%%%%%%%%%%%%%%

\section{Introduction}

The coordinate rings of braid varieties have recently been proven to be cluster algebras. Two independent constructions appeared: using weaves and using Deodhar geometry. The object of this article is to compare these two cluster algebras. The main contribution of the manuscript is that, appropriately understood, these two cluster algebras coincide.

En route, the article provides a detailed comparative study of each of the two constructions, explaining how to develop the necessary concepts and results on each side through the lens of the other. On the geometric side, this includes building explicit isomorphisms between braid varieties and double braid varieties, between weave tori and Deodhar tori, comparing the weave 2-form and the Deodhar 2-form, and the weave and Deodhar cluster variables. On the combinatorial side, this includes matching the combinatorics of double braid words with the combinatorics of double strings and double inductive weaves, and the orders of vanishing of certain chamber minors on Deodhar hypersurfaces with the tropical Lusztig rules on weaves.

%%%%%%%%%%%%%%%%%%%%%%%%%%%%%%%%%%%%%%%%%%%%%%%%%%%%%%%%%%%%%%%%%%
%%%%%%%%%%%%%%%%%%%%%%%%%%%%%%%%%%%%%%%%%%%%%%%%%%%%%%%%%%%%%%%%%%

\subsection{Scientific context} Recently, the two articles \cite{CGGLSS} and \cite{GLSB} have shown that the coordinate rings of braid varieties are cluster algebras for any simple Lie group type $\G$. Braid varieties were introduced in \cite{CGGS}, and note that \cite{GLSB} was preceded by \cite{GLSBS}, which focused on the case $\G=\SL$ and 3D plabic graphs.  In brief, braid varieties are irreducible smooth affine varieties that parametrize linear configurations of flags with transversality conditions dictated by a positive braid. They generalize known classes of varieties appearing in Lie theory and algebraic combinatorics, such as positroid and Richardson varieties, and appear prominently in low-dimensional contact and symplectic topology, cf.~e.g.~\cite{CasalsHonghao,CasalsGao24,casals_cbms}. Succinctly, the two approaches of \cite{CGGLSS} and \cite{GLSB} can be described as follows:

\begin{enumerate}
    \item The main result of \cite{CGGLSS} is the construction of a cluster algebra structure on $\C[X(\beta)]$, where $X(\beta)$ is the braid variety associated to a positive braid word $\beta$. The key technique in \cite{CGGLSS} is weave calculus, as introduced in \cite{CasalsZaslow}, and see also \cite{CGGS}. At core, a weave $\ww:\beta\to\beta'$ is a combinatorial object, given by a diagram in the plane, which encodes a way to relate two braids words $\beta,\beta'$ via braid moves and the moves $\sigma_i^2\to\sigma_i$, where $\sigma_i$ is an Artin generator for the braid group. The cluster algebra structure on $\C[X(\beta)]$ built in \cite{CGGLSS} is such that a weave $\ww:\beta\to\delta(\beta)$ from a braid $\beta$ to its Demazure product $\delta(\beta)$ defines a cluster seed on $\C[X(\beta)]$. Specifically, such a weave defines a weave torus $T_\ww\sse X(\beta)$, a weave quiver $Q_\ww$, and a corresponding set of cluster variables $\bxWe\in \C[X(\beta)]$.\\

\item The main result of \cite{GLSB} is the construction of a cluster algebra structure on $\C[R(\dbr)]$, where $R(\dbr)$ is the double braid variety associated to a double braid word $\dbr$. The key technique in \cite{GLSB} is using a generalization of the Deodhar decomposition of Richardson varieties and certain rational expressions of generalized minors. The cluster algebra structure on $\C[R(\dbr)]$ built in \cite{GLSB} is such that the double braid word $\dbr$ itself defines a cluster seed on $\C[R(\dbr)]$. Specifically, it defines a Deodhar torus $T_\dbr\sse R(\dbr)$, a Deodhar quiver $Q_\dbr$, and a corresponding set of cluster variables $\bx3D\in \C[R(\dbr)]$.
\end{enumerate}

Each technique has its strengths and shortcomings. By developing a detailed comparison between these two approaches, as we do in this manuscript, the advantages of each technique explicitly translate to the other. For instance, the propagation algorithm for weaves, which naturally follows the tropical Lusztig rules, can now be also implemented as a propagation algorithm for the Deodhar cluster variables. We have summarized some of the key ingredients and results of our comparative study in Tables \ref{table:introduction} and \ref{table:summary_intro} below.

%%%%%%%%%%%%%%%%%%%%%%%%%%%%%%%%%%%%%%%%%%%%%%%%%%%%%%%%%%%%%%%%%%
%%%%%%%%%%%%%%%%%%%%%%%%%%%%%%%%%%%%%%%%%%%%%%%%%%%%%%%%%%%%%%%%%%

\subsection{Main result} The combinatorial and algebraic geometric objects appearing in \cite{CGGLSS} and \cite{GLSB}, the two works we are comparing, are qualitatively different. To wit, we summarize the key aspects of each work as follows. In \cite{CGGLSS}, braid varieties $X(\beta)$ are associated to a braid word $\beta$. Given a Demazure weave $\ww:\beta\to\delta(\beta)$, three objects are constructed:
\begin{enumerate}
    \item the weave torus $T_\ww\sse X(\beta)$, via the flag transversality conditions imposed by $\ww$,
    \item the weave cluster variables $\bxWe\in\C[X(\beta)]$, measuring transversality according to $\ww$,
    \item the weave quiver $Q^{\rm W}$, equivalently encoded in the weave 2-form $\OmWe\in\Omega^2X(\beta)$.
\end{enumerate}

The cluster algebra structure on $\C[X(\beta)]$ constructed in \cite{CGGLSS} is such that the triple $(T_\ww,\bxWe,\OmWe)$ associated to a given Demazure weave $\ww$ is a cluster seed for $\C[X(\beta)]$. Non-equivalent Demazure weaves yield different cluster seeds. We refer to this cluster algebra structure on $\C[X(\beta)]$, built in \cite{CGGLSS}, as the {\it weave cluster algebra}. In essence, the weave cluster algebra uses the combinatorics of weaves as a way to govern transversality conditions of tuples of flags, and each cluster seed is itself named in terms of a weave, via the tropical combinatorics of its Lusztig cycles.

In \cite{GLSB} double braid varieties $R(\dbr)$ are associated to double braid words $\dbr$. From a given double braid word $\dbr$, three objects are constructed:
\begin{enumerate}
    \item the Deodhar torus $T_\dbr\sse R(\dbr)$, with cocharacter lattice, via certain chamber minors,
    \item the Deodhar cluster variables $\bx3D\in\C[R(\dbr)]$, via the vanishing of certain characters,
    \item the Deodhar quiver $Q^{\rm D}$, equivalently encoded in the Deodhar 2-form $\Om3D\in\Omega^2R(\dbr)$.
\end{enumerate}

The cluster algebra structure on $\C[R(\dbr)]$ constructed in \cite{GLSB} is such that the triple $(T_\dbr,\bx3D,\Om3D)$ associated to a given double braid word $\dbr$ is a cluster seed for $\C[R(\dbr)]$. We refer to this cluster algebra structure on $\C[R(\dbr)]$, built in \cite{GLSB}, as the {\it Deodhar cluster algebra}. In a nutshell, the Deodhar cluster algebra uses a Deodhar-type decomposition of the double braid variety, whose strata are dictated by the flag relations as imposed by $\dbr$. Each cluster seed is itself named in terms of certain unique characters determined by their vanishing along codimension-1 strata: intuitively, for each codimension-1 stratum, a cluster variable is named as the unique character on the Deodhar torus solely vanishing along that hypersurface.

The core of this article is a detailed comparison of the weave and Deodhar cluster algebras. It occurs in two different layers, the combinatoral comparison and the geometric comparison:
\begin{itemize}
    \item[(i)] Combinatorially, we explain how to transition from a double braid word $\dbr$ to a certain braid word $\beta=\dbr^{(-|+)}$, establish a dictionary between double braid words $\dbr$, double strings $\dstr$ and double inductive weaves $\DW$, and relate the tropical Lusztig propagation rules for weaves to a type of propagation of Deodhar cluster variables in chamber minors.

    \item[(ii)] Geometrically, we establish isomorphisms between double braid varieties and braid varieties, and show that they map weave tori to Deodhar tori, weave cluster variables to Deodhar cluster variables and the weave 2-form to the Deodhar 2-form. See \cref{table:summary_intro}.
\end{itemize}

The qualitatively different nature of these two constructions, one based on the combinatorics of weaves and the other on properties of chamber minors and character lattices, leads to a comparison with substance. For instance, we prove a new formula expressing the Cartan element $\hp_c$ in \cite{GLSB} in terms of the $u$-variables for an edge labeling of a weave. While both $\hp_c$ and the $u$-variables are crucial in defining the respective cluster variables $\bx3D$ and $\bxWe$, such formula is of interest on its own. Another instance, the tropical Lusztig propagation rules, translated through our comparison, lead to a new propagating algorithm to compute the Deodhar cluster variables $\bx3D$, a desired procedure that was missing in \cite{GLSB}.

In summary, the main result of this article is that the weave and Deodhard cluster algebras are isomorphic. There are also new contributions in the techniques and results we develop to establish such isomorphism, which are provided throughout the article, cf.~e.g. \cref{table:summary_intro}. For reference, we state the main contribution, again emphasizing that the ingredients developed for its proof are by themselves new and of interest:

\phantomsection
\begin{ftheo}[\textcolor{blue}{Comparison of weave and Deodhar cluster algebras}]\label{thm:main_result_equality_DO_NOT_CREF}
Let $\dbr$ be a double braid word. Then, there exists an isomorphism of algebraic varieties
$$\iso: R(\dbr) \xrightarrow{\sim} X(\dbr^{(-|+)})$$
such that the following equalities hold:

\begin{enumerate}
    \item Equality of tori: $\iso(T_\dbr)=T_{\DW(\dstr(\dbr))}$,\\

    \item Equality of cluster variables: $\iso^{\ast}\bxWe=\bx3D$,\\

    \item Equality of 2-forms: $\iso^{\ast}\OmWe = \Om3D$.
\end{enumerate}
This implies that the weave cluster algebra structure on $\C[X(\dbr^{(-|+)}]$ is isomorphic to the Deodhar cluster algebra structure on $\C[R(\dbr)]$, via the isomorphism $\iso^{\ast}$.
\end{ftheo}

In the statement above, $\dbr^{(-|+)}$ denotes a specific braid word associated to the given double braid word $\dbr$, and $\DW(\dstr(\dbr))$ denotes the double inductive weave of the double string associated to the double braid word $\dbr$. \cref{table:summary_intro} summarizes where these objects and constructions between them are established in the article.

Beyond \maintheoref above, the manuscript contains a number of additional details and comparisons of independent value. To wit, \cref{ssec:weaves_and_3Dplabicgraphs} provides the combinatorial relation between the 3D plabic graphs from \cite{GLSBS} and weaves for Lie Type $A$, e.g.~$\G=\SL$, and \cref{ssec:compare-cocharacters-type-A} develops the relation between monotone multicurves, which are associated to 3D plabic graphs, and the tropical rules for Lusztig cycle propagation in weaves. In particular, \cref{sssec:propagation_Lusztig_data} contains an interesting connection between the behavior of certain cocharacters under the propagation rules and string operators. Appendices B and C contain two more comparisons: between the double braid moves for double braid words from \cite{GLSB} and the double string moves for double inductive weaves from \cite{CGGLSS}, cf.~\cref{table:move_comparison}, and between constructions of cluster algebra structures in the coordinate rings of open Richardson varieties, cf.~\cref{thm:richardson-compare}.

\begin{table}
\begin{center}
\caption{The key concepts being compared on the construction of cluster algebras from \cite{CGGLSS} and \cite{GLSB}. The results of this article allow us to translate between the corresponding two columns.}\label{table:introduction}
\small
\begin{tabular}{ |P{3.8cm}|P{5.5cm}|P{4.8cm}|  }
	\hline
	\multicolumn{3}{|c|}{ {\bf Schematic comparison in general Lie type $\G$}} \\
	\hline

& In {\bf \cite{CGGLSS}} & In {\bf \cite{GLSB}}\\
\hline

Input data	 & Braid word $\br$ & Double braid word $\dbr$ \\
\hline

Geometric space	 & Braid variety $X(\br)$ & Double braid variety $R(\dbr)$\\
\hline

Braid move &  $X(\br) \cong X(\br')$ if $\br, \br'$ & $R(\dbr)\cong R(\dbr')$ if $\dbr, \dbr'$  \\
	isomorphisms &   related by braid move & related by double braid move \\
\hline

Commutative \textcolor{purple}{{\bf algebra}} & Regular functions $\textcolor{purple}{\C[X(\br)]}$ & Regular functions  $\textcolor{purple}{\C[R(\dbr)]}$\\
\hline

Extra choice of & A Demazure weave $\ww:\br\to\delta(\br)$, & \rule[-.4\baselineskip]{0.25\textwidth}{.5pt}\\
combinatorial data & which determines Lusztig cycles & \\
\hline

Open {\bf \textcolor{teal}{torus charts}} & Many tori $\textcolor{teal}{T_\ww}\sse X(\br)$, & One torus $\textcolor{teal}{T_{\dbr}} \subset R(\dbr)$,
\\
in geometric space& indexed by weave classes $\ww$ & the Deodhar torus\\
\hline

Effect of braid move  & $\textcolor{teal}{T_{\ww'}}\sse X(\br')$ maps to & $\textcolor{teal}{T_{\dbr'}} \subset R(\dbr')$ maps to
\\
isomorphisms on {\bf \textcolor{teal}{tori}} & a weave torus $\textcolor{teal}{T_\ww}\sse X(\br)$& $\textcolor{teal}{T_{\dbr}} \subset R(\dbr)$ or different torus \\
\hline

{\bf \textcolor{blue}{Quiver}} for each & {\bf \textcolor{blue}{Intersection of Lusztig cycles}}, & Coeff.~of {\bf \textcolor{blue}{2-form $\omega_\dbr$ on}} $R(\dbr)$, \\
torus chart & as sum of local contributions & from generalized minors\\
\hline

Cluster variables & Measure flag transversality, & Characters determined by
\\
(in theory) & imposed by each Lusztig cycle $\ww$ &  Deodhar hypersurfaces\\
\hline

{\bf \textcolor{orange}{Cluster variables}} & Regular functions on $T_\ww$ & Irreducible factors
\\
(in practice) & by {\bf \textcolor{orange}{scanning weave $\ww$}} & of {\bf \textcolor{orange}{chamber minors}}\\
\hline
\end{tabular}
\normalsize
\end{center}
\end{table}

\begin{table}[h!]
\begin{center}
\caption{A schematic reference for some of the new key concepts, constructions and results in the comparison. The core results are marked in {\bf bold font}. Objects of a combinatorial or Lie-theoretic nature are highlighted in \textcolor{blue}{blue}, objects of a more algebraic geometric nature are highlighted in \textcolor{purple}{red}.}\label{table:summary_intro}
\begin{tabular}{ |P{3cm}|P{10cm}|  }
	\hline
	\multicolumn{2}{|c|}{ {\bf Some specific ingredients in the comparison}} \\
	\hline

\cref{def:pos-neg-ind} & double braid word \textcolor{blue}{$\dbr$} $\leadsto$ braid word \textcolor{blue}{$\dbr^{(-|+)}$} \\
\hline

\cref{def:braid-variety} & braid word \textcolor{blue}{$\br$} $\leadsto$ braid variety \textcolor{purple}{$X(\br)$}\\
\hline

\cref{def:dbl-braid-variety} & double braid word \textcolor{blue}{$\dbr$} $\leadsto$ double braid variety \textcolor{purple}{$R(\dbr)$} \\
\hline

{\bf\cref{lem:iso-glsbs-cgglss}} & {\bf Isomorphism \textcolor{purple}{$\iso: R(\dbr) \xrightarrow{\sim} X(\dbr^{(-|+)})$}}\\
\hline

\cref{def:dbl-inductive-weave} & double string \textcolor{blue}{$\dstr$} $\leadsto$ double inductive weave \textcolor{blue}{$\DW(\dstr)$}\\
\hline

\cref{def:double_string_from_double_braid_word} & double braid word \textcolor{blue}{$\dbr$} $\leadsto$ double string \textcolor{blue}{$\dstr(\dbr)$}\\
\hline

\cref{sssec:weavetori_doubleinductive} & double inductive weave \textcolor{blue}{$\DW$} $\leadsto$ weave torus \textcolor{purple}{$T_{\DW}\sse X(\beta)$}\\
\hline

\cref{sec:deodhar} & double braid word \textcolor{blue}{$\dbr$} $\leadsto$ Deodhar torus \textcolor{purple}{$T_{\dbr}\sse R(\dbr)$}\\
\hline

{\bf\cref{lem:weave-tori-vs-deodhar-tori}} & {\bf Equality of tori \textcolor{purple}{$\iso(T_\dbr)=T_{\DW}$} for \textcolor{blue}{$\DW:=\DW(\dstr(\dbr))$}}\\
\hline

Sections \ref{sec:weave-cluster-variables1} \& \ref{sec:weave-cluster-variables2} & Weave cluster variables \textcolor{purple}{$\bxWe$}\\
\hline

Sections \ref{ssec:deodhar_variables1} \& \ref{ssec:deodhar_variables2} & Deodhar cluster variables \textcolor{purple}{$\bx3D$}\\
\hline

\cref{lem:hc-in-terms-of-uc} & \textcolor{blue}{$\hp_c$} in terms of \textcolor{blue}{$u$-variables}\\
\hline

{\bf \cref{thm:coincidence-cluster-variables}} & {\bf Equality of cluster variables \textcolor{purple}{$\iso^{\ast}\bxWe=\bx3D$}}\\
\hline

\cref{def:lusztig-data-from-dbl-weave} & Lusztig datum \textcolor{blue}{$[\bjc, \nudc]$} from weaves\\
\hline

\cref{ssec:consequence_equality_variables} & Deodhar cocharacter \textcolor{blue}{$\gamchp_{\dbr, \c, \e}$} and \textcolor{blue}{$[\bjc, \nudc]$} coweight\\
\hline

\cref{ssec:weave-2-form} & Weave 2-form \textcolor{blue}{$\OmWe$}, i.e.~exchange matrix for $T_\ww$\\
\hline

\cref{ssec:Deodhar-2-form} & Deodhar 2-form \textcolor{blue}{$\Om3D$}, i.e.~exchange matrix for $T_\dbr$\\
\hline

{\bf\cref{thm:coincidence-forms}} & {\bf Equality of 2-forms \textcolor{blue}{$\iso^{\ast}\OmWe = \Om3D$}}\\
\hline

\cref{ssec:proof_main_theorem} & Proof of \maintheoref.\\
\hline

\end{tabular}
\end{center}
\end{table}

{\bf Acknowledgements}. We are grateful to the American Institute of Mathematics for funding and hosting the workshop ``Cluster algebras and braid varieties'' (January 2023), where this project started. R.~Casals is supported by the National Science Foundation under grants DMS-2505760 and DMS-1942363 and a Sloan Research Fellowship. P.~Galashin is supported by the Sloan Fellowship and by the National Science Foundation under Grants No.~DMS-1954121 and No.~DMS-2046915.
 M.~Gorsky is supported by the Deutsche Forschungsgemeinschaft SFB 1624 ``Higher structures, moduli spaces and integrability'' (506632645), and the project ``Refined invariants in combinatorics, low-dimensional topology and geometry of moduli spaces (REFINV)'' of the ERC grant No.\ 101001159 under the European Union’s Horizon 2020 research and innovation programme. 
 L. Shen is supported by the National Science Foundation under DMS-2200738.
 M. Sherman-Bennett is supported by the National Science Foundation under DMS-2103282 and DMS-2444020. J. Simental is supported by UNAM's PAPIIT Grant IA102124 and SECIHTI Project CF-2023-G-106.\qed

\section{Comparison of algebraic varieties}\label{sec:braid_varieties}

\noindent This section focuses on comparing the algebraic varieties whose coordinate rings are shown to be cluster algebras in \cite{CGGLSS,GLSBS,GLSB}:
\begin{enumerate}
    \item In \cite{CGGLSS}, one considers braid varieties $X(\beta)$, associated to a braid word $\beta$;
    \item In \cite{GLSBS,GLSB} one considers double braid varieties $R(\dbr)$, associated to a double braid word $\dbr$.
\end{enumerate}
These varieties $X(\beta)$ and $R(\dbr)$ are introduced in \cref{ssec:braid-varieties}, after describing their combinatorial inputs $\beta$ and $\dbr$ in \cref{ssec:braid_words}, and their comparison is explained in \cref{ssec:comparison_varieties}.

%%%%%%%%%%%%%%%%%%%%%%%%%%%%%%%%%%%%%%%%%%%%%%%%%%%%%%%%%%%%%%%
%%%%%%%%%%%%%%%%%%%%%%%%%%%%%%%%%%%%%%%%%%%%%%%%%%%%%%%%%%%%%%%
%%%%%%%%%%%%%%%%%%%%%%%%%%%%%%%%%%%%%%%%%%%%%%%%%%%%%%%%%%%%%%%

\subsection{Braid words and double braid words}\label{ssec:braid_words}

In \cite{CGGLSS}, braid words $\beta$ use the alphabet $I$, whereas \cite{GLSBS,GLSB} employ braid words $\dbr$ in the alphabet $\pm I$. The latter type of words will be referred as double braid words and the former simply as braid words, to mark the difference. The first transitional step in comparing these approaches is the comparison of such braid words. Starting with $\dbr$, as used in \cite{GLSBS,GLSB}, the corresponding braid word $\beta$, as used in \cite{CGGLSS}, is described as follows:

\begin{definition}
    \label{def:pos-neg-ind}
    Let $\dbr= i_1 \cdots i_{n+m}$ be a double braid word in the alphabet $\pm I$ and consider
    $$\negind = \{a_1 < a_2 < \cdots < a_n\} \subseteq [n+m],\quad \posind = \{b_1 < b_2 < \cdots < b_m\} \subseteq [n+m],$$
    the sets of indices of the negative and positive letters of $\dbr$, respectively. By definition, the braid word $\dbr^{(-|+)}$ associated to $\dbr$ is
\begin{equation}\label{eq:def:pos-neg-ind}
      \dbr^{(-|+)}:=(-i_{a_1}^{\ast})\cdots (-i_{a_n}^{\ast})i_{b_m}\cdots i_{b_1}.
\end{equation}
By construction, $\dbr^{(-|+)}$ is a braid word in the alphabet $I$.\hfill$\Box$
\end{definition}

In \cref{def:pos-neg-ind}, we denoted by $-^{\ast}: I \to I$ the map defined by the property that conjugating by $\wo$ sends $s_i$ to $s_{i^*}$, and set $(-i)^{\ast} := -i^{\ast}$. A braid word  $\br = i_1\cdots i_l$ defines a word $s_{i_1} \cdots s_{i_l}$ in the generators of $W$. The \emph{Demazure product} $\delta(\br)$ of $\br$ is the maximal element $w \in W$ with respect to the Bruhat order such that $s_{i_1} \cdots s_{i_l}$ contains a reduced expression of $w$ as a (not necessarily consecutive) subexpression. The Demazure product can be equivalently defined inductively by the following rule:
\begin{equation}\label{def:demazure}
\delta(i):=s_i,\
\delta(\beta \cdot i):=\begin{cases}
\delta(\beta)s_i & \text{if}\  \ell(\delta(\beta)s_i)=\ell(\delta(\beta))+1\\
\delta(\beta) & \text{if}\ \ell(\delta(\beta)s_i)=\ell(\delta(\beta))-1.\\
\end{cases}
\end{equation}
We will also write $s_i \ast s_j$ for the Demazure product $\delta(i \cdot j)$. We define the Demazure product of a double braid word as follows.

\begin{definition}\label{def:si+-}
	For $i \in \pm I$, let 
	\[s_i^+:= \begin{cases}
		s_i & \text{if } i \in I\\
		\id & \text{if } i \in -I
	\end{cases}
	\qquad s_i^-:= \begin{cases}
		\id & \text{if } i \in I\\
		s_{|i|} & \text{if } i \in -I
	\end{cases}.\]
By definition, the Demazure product of a double braid word $\dbr=i_1 \cdots i_l$ is 
	\[\delta(\dbr) :=s_{i_1^{*}}^{-}*\cdots * s_{i_{l}^{*}}^{-}*s_{i_l}^{+}*\cdots * s_{i_1}^{+}
 \]
	where $*$ denotes the usual Demazure product.$\qed$
 \end{definition}

Note that $\delta(\dbr) = \delta(\brWe)$, where the left hand side of the equality is \cref{def:si+-} and the right hand side is the standard Demazure product from \cref{def:demazure}. \cref{fig:Front_Braidwords2} illustrates diagrams for (double) braid words in the case of Type A, i.e.~$\G=\SL$, where the number of strands is the rank of $\G$ plus one. In such braid diagrams, the strands are numbered increasingly starting from 1 at the highest strand, both for the strands at the bottom and at the top blocks. Note that a $\wo$ which is not part of $\dbr, \dbr^{(-|+)}$ is inserted in this diagram. %\MG{Awkward wording; maybe better say in the first sentence of this paragraph that we assume that $\delta(\dbr) = w_0$?} \JS{but this is not what the sentence tries to say, right?} \MG{I guess I am not sure what does it try to say... Either way, we assume later on that $\delta(\dbr) = w_0$, and IIRC this is what we want from $\dbr^{(-|+)}$ for the right picture to give the ``correct'' Legendrian link, as in Figure~\ref{fig:Front_Braidwords4} below.  Is it wrong?} \JS{right, but if I understand correctly the sentence is saying that in the picture we have a half-twist always (so we have the closure of $\br\wo$ as opposed to the closure of $\br$}
The presence of $\wo$ is the reason for the appearance of the involution $*$ in the lower indices $s_{i^*_{a_j}}$, $j\in[1,n]$, and it is needed in order to introduce the different variants of braid varieties.

\begin{center}
	\begin{figure}[h!]
		\centering
		\includegraphics[scale=0.8]{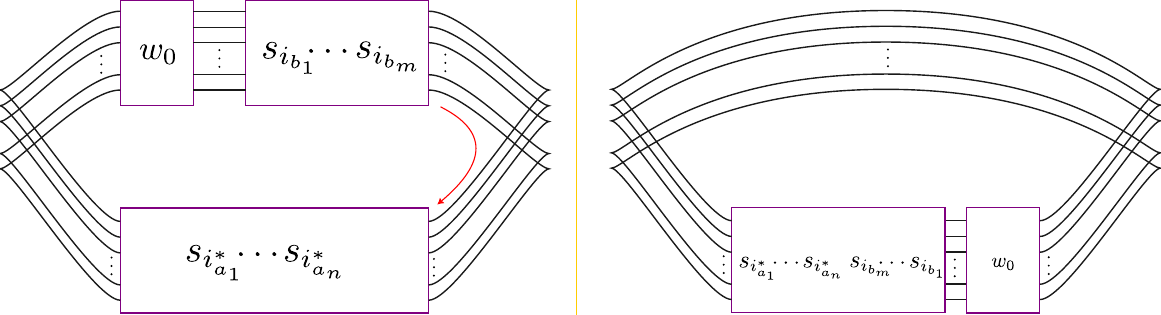}
		\caption{In Type A, $\G=\SL$, the braid words in \cref{def:pos-neg-ind} can be interpreted as geometric braids that close up to Legendrian links, whose fronts are depicted here. (Left) The front braid diagram associated to $\dbr$. (Right) The front associated to $\dbr^{(-|+)}$. The transition from left to right is given by passing, using Reidemeister moves, the two top boxes in the left diagram to the bottom through the {\it right} side, as indicated by the red arrow. This geometric transport explains why the word $s_{i_{b_1}}\ldots s_{i_{b_m}}$ at the top of the left diagram becomes $s_{i_{b_m}}\ldots s_{i_{b_1}}$ at the bottom of the right diagram. Here $w_0$ indicates a braid word for $\wo$, the appearance on the right should be understood as also reflecting (reading right-to-left) the choice of braid word for $\wo$ on the left.%\JS{need to change $w_0$ in the figure to $\wo$.}
        }
		\label{fig:Front_Braidwords2}
	\end{figure}
\end{center}

\begin{example}\label{ex:running}
Let $\G = \SL_3$, so that $I = \{1, 2\}$, and $\dbr = (-2, 1, 2, 1, -1, 1, 2)$. Then $\negind = \{1, 5\}$, $\posind = \{2, 3, 4, 6, 7\}$ and $\dbr^{(-|+)} = (2^{*}, 1^{*}, 2, 1, 1, 2, 1) = (1, 2, 2, 1, 1, 2, 1)$. The associated braid diagram is depicted in \cref{fig:Front_Braidwords4}. This is the running example in this manuscript.\hfill$\Box$
\end{example}

\begin{center}
	\begin{figure}[h!]
		\centering
		\includegraphics[scale=0.8]{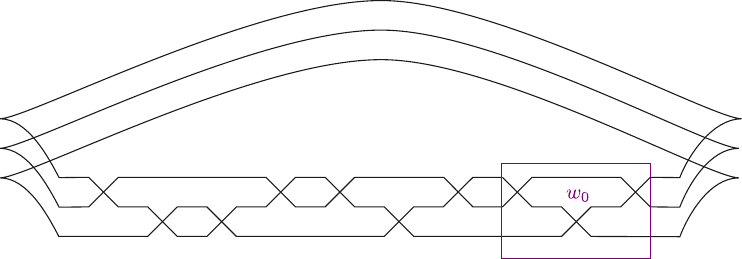}
		\caption{The braid diagram associated to the braid word $\dbr^{(-|+)}$ in \cref{ex:running}. Here crossings are all understood as positive Artin generators, i.e.~ overcrossings. Smoothly, the resulting link is a 3-component link.}
		\label{fig:Front_Braidwords4}
	\end{figure}
\end{center}

%%%%%%%%%%%%%%%%%%%%%%%%%%%%%%%%%%%%%%%%%%%%%%%%%%%%%%%%%%%%%%%
%%%%%%%%%%%%%%%%%%%%%%%%%%%%%%%%%%%%%%%%%%%%%%%%%%%%%%%%%%%%%%%
%%%%%%%%%%%%%%%%%%%%%%%%%%%%%%%%%%%%%%%%%%%%%%%%%%%%%%%%%%%%%%%

\subsection{Braid varieties and double braid varieties}\label{ssec:braid-varieties} Let us introduce braid varieties and double braid varieties in Definitions \ref{def:braid-variety} and \ref{def:dbl-braid-variety} below. The necessary preliminaries on flags, weighted flags, and their relative positions are summarized in Appendix A. The braid varieties studied in \cite{CGGLSS} are defined as follows:

\begin{definition}\label{def:braid-variety}
  Let $\br = i_1\cdots i_l$ be a braid word with the Demazure product $\delta(\br) = \wo \in W$. The \emph{braid variety} $X(\br)$ associated to $\beta$ is
$$X(\beta) := \{(\borel_0, \dots, \borel_{l}) \in (\G/\borel)^{l + 1} \mid \borel_0 = \borel, \borel_{k} \buildrel s_{i_{k+1}} \over \longrightarrow \borel_{k+1}, \borel_{l} = \wo \borel \},$$
That is, $X(\beta)$ parametrizes tuples of flags $\borel_\bullet=(\borel_0, \dots, \borel_l)$ satisfying the relative positions
	\begin{equation}\label{eq:CGGLSS-definition}
			\borel_+ = \borel_0 \Rrel{s_{i_1}}  \borel_1  \Rrel{s_{i_2}}   \cdots  \Rrel{s_{i_l}}  \borel_l = \wo \borel_{+},
	\end{equation}
dictated by $\beta$ and the boundary constraints $\borel_+ = \borel_0$ and $\borel_l = \wo \borel_{+}$.\hfill$\Box$
\end{definition}

\noindent It follows from \cite[Corollary 3.7]{CGGLSS} that $X(\beta)$ is a smooth irreducible affine variety of dimension $\ell(\beta)-\ell(\wo)$. The isomorphism type of $X(\beta)$ as an affine variety only depends on the braid $[\beta]$: it is independent of the particular braid word $\beta$ for $[\beta]$. Precisely, if $\beta$ and $\beta'$ are related by a braid move, there exists an explicit isomorphism, which depends on the braid move, between $X(\beta)$ and $X(\beta')$, cf.~\cite[Section 3]{CGGLSS}. 

\begin{remark}
In \cite{CGGLSS}, braid varieties are defined more generally without the assumption that $\delta(\br) = \wo$. However, by \cite[Lemma 3.4 (1)]{CGGLSS}, we can consider the case $\delta(\br) = \wo$ without loss of generality.\qed 
\end{remark}

%\MG{NB: Maybe cite \cite[Lemma 3.4.(1)]{CGGLSS} and/or Ceballos-Labb\'e-Stump to say that this generality in fact covers braid varieties for all positive braid (words), up to iso.}

\noindent There is an equivalent description of $X(\beta)$ in terms of weighted flags, cf.~\cite[Lemma 3.13]{CGGLSS}. This description is employed in Section \ref{sec:cluster_var} and also brings $X(\beta)$ closer to double braid varieties, which are defined in terms of weighted flags. Specifically, consider the algebraic variety $X_{\U}(\beta)$ defined by
\begin{equation}\label{eq:CGGLSS_weighted_dfn2}
X_{\U}(\beta):=\{(\U_0, \dots, \U_{l}) \in (\G/\U)^{l + 1} \mid \U_0 = \U_+, \U_0 \Rrel{s_{i_1}} \U_1  \Rrel{s_{i_2}}\cdots\Rrel{s_{i_l}}  \U_l,\pi(\U_l)=\wo B_+\},
\end{equation}
where $\U \Rrel{w} \U'$ denotes strong relative position $w$ between weighted flags, cf.~\cref{sec:appendix}. Then, the braid variety $X(\beta)$ is isomorphic to $X_{\U}(\beta)$:
\begin{equation}\label{eq:CGGLSS_weighted_dfn}
X(\beta)\cong X_{\U}(\beta).
\end{equation}
Thus, under this isomorphism, the braid variety $X(\beta)$ also parametrizes tuples of \emph{weighted} flags $\U_\bullet = (\U_0, \U_1, \dots, \U_l)$ satisfying:
	\begin{equation}\label{eq:CGGLSS_weighted_dfn2}
					\U_0 \Rrel{s_{i_1}} \U_1  \Rrel{s_{i_2}}   \cdots  \Rrel{s_{i_l}}  \U_l
	\end{equation}
and the boundary conditions $\U_0=\U_+$ and $\pi(\U_l)=\wo B_+$. By \cite[Lemma 3.13]{CGGLSS}, an explicit isomorphism witnessing \cref{eq:CGGLSS_weighted_dfn} is given by projecting each such tuple $\U_\bullet$ of weighted flags to the tuple $\borel_\bullet$ of underlying flags in $\G/\borel$. % \MSB{I'm slightly confused. \eqref{eq:CGGLSS_weighted_dfn} doesn't involve a quotient by $\U_+$. But here it says to project $\U_+$-orbits.}
 In short, \cref{eq:CGGLSS_weighted_dfn} states that we can lift the first flag of a point in $X(\beta)$ to be the weighted flag $\U_+$ and that this lift propagates to the right in a unique manner, reading $\beta$ left-to-right. That is, once $\borel_+$ is lifted to $\U_+$ we can continue lifting each flag $\borel_i$ to a weighted flag $\U_i$ while preserving the relative positions.%\footnote{Relative positions only depend on the underlying flags $\borel_\i$ in $\G/\borel$, not on the specific lift to $\G/\U$.}
\color{black}

The double braid variety studied in \cite{GLSBS,GLSB} is defined as follows.
%%%%%%%%%%%%%%%%%%%%%%%%%%%%%%%%%%%%%%%%%%%%%%%%%%%%%%%%%%%%%%%
%%%%%%%%%%%%%%%%%%%%%%%%%%%%%%%%%%%%%%%%%%%%%%%%%%%%%%%%%%%%%%%
%%%%%%%%%%%%%%%%%%%%%%%%%%%%%%%%%%%%%%%%%%%%%%%%%%%%%%%%%%%%%%%

\begin{definition}\label{def:dbl-braid-variety}
	Let $\dbr = i_1\cdots i_l$ be a double braid word with Demazure product $\delta(\dbr) = \wo \in W$. The \emph{double braid variety} $R(\dbr)$ associated to $\dbr$ is the variety of pairs of weighted flags $\{((X_{0},Y_{0}),\ldots,(X_{l}, Y_{l}))\}\sse (\G/\U\times \G/\U)^l$, satisfying the following relative position conditions
	\begin{equation}\label{eq:GLSBS-definition}
		\begin{tikzcd}
			X_0& \arrow[l,"{s_{i_1}^+}"'] X_1& \arrow[l,"{s_{i_2}^+}"'] \cdots& \arrow[l,"{s_{i_l}^+}"'] 
			X_l \\
			Y_0 \arrow[r,"{s_{i_1^\ast}^-}"'] \arrow[u, Rightarrow, "\wo"']& Y_1 \arrow[r,"{s_{i_2^\ast}^-}"'] & \cdots \arrow[r,"{s_{i_l^\ast}^-}"'] & Y_l. \ar[equal]{u}
		\end{tikzcd}
	\end{equation}
	modulo the simultaneous action of $\G$ on the left.\hfill$\Box$
\end{definition}

It follows from \cite[Section 4]{GLSB} that double braid moves applied to $\dbr$ preserve the isomorphism type of the double braid variety $R(\dbr)$, see loc.~cit.~for the definition of these double braid moves, see also \cref{rmk:compatibility-iso-moves}. That is, if $\dbr$ and $\dbr'$ are related by a double braid move, then there exists an explicit isomorphism, depending on the braid move, between $R(\dbr)$ and $R(\dbr')$. In particular, there is an isomorphism $R(\dbr) \cong R(\dbr')$ if $\dbr$ and $\dbr'$ are related by commuting a letter in $I$ with a letter in $-I$. Therefore, moving all the letters in $-I$ to the left and all the letters in $I$ to the right, we have that every double braid variety $R(\dbr)$ is naturally isomorphic to a double braid variety of the form $R(\dbr')$ where $\dbr' = (-i_1)\cdots (-i_m)j_1\cdots j_n$, with $l = m+n$ and $i_1, \dots, i_m, j_1, \dots, j_n \in I$. See \cref{fig:Front_Braidwords2} for a depiction of this.

\begin{remark}
The Deodhar torus $T_{\dbr} \subseteq R(\dbr)$ is an open torus chart that has a central role in construction of the cluster structures in \cite{GLSBS,GLSB}. It is defined in \cite[Definition 2.5]{GLSB}, cf.~also \cref{sec:deodhar} below. Such torus $T_{\dbr}$ depends crucially on the double braid word $\dbr$ in the following sense: if $\dbr$ and $\dbr'$ are related by a double braid move and $\psi:R(\dbr)\lr R(\dbr')$ is the isomorphism associated to such double braid move, then $\psi(T_{\dbr})$ is typically not the same as $T_{\dbr'}$.\hfill$\Box$
\end{remark}

Note that the symbols $s^\pm_i$ in \cref{def:si+-} are allowed to be the identity $\id \in W(\G)$ in some cases. More precisely, in \eqref{eq:GLSBS-definition} we have $Y_{j-1} = Y_j$, resp.~$X_{j-1} = X_j$, if $i_j \in I$, resp.~$i_j \in -I$. In contrast, in \cite{CGGLSS} no consecutive flags in a point of $X(\beta)$ are allowed to be equal. This is a technical difference between \cref{def:braid-variety} and \cref{def:dbl-braid-variety}.

\subsection{An isomorphism between $X(\beta)$ and $R(\dbr)$}\label{ssec:comparison_varieties} By \cref{def:pos-neg-ind}, each double braid word $\dbr$ defines a braid word $\dbr^{(-|+)}$. In this subsection, we show that the varieties $R(\dbr)$ and $X(\dbr^{(-|+)})$ are isomorphic. Intuitively, this isomorphism is obtained by reading \cref{eq:GLSBS-definition} counter-clockwise and starting at $Y_0$, while ignoring any arrows that are the identity. Specifically, the isomorphism of varieties is given as follows:

%The varieties $X(\beta)$ and $R(\dbr)$ of Definitions \ref{def:braid-variety} and \ref{def:dbl-braid-variety} are isomorphic, as affine varieties, if $\beta=\dbr^{(-|+)}$. Such $\dbr^{(-|+)}$ was introduced in \cref{def:pos-neg-ind}, along with the sets $J_\pm$: $\dbr^{(-|+)}$ is essentially obtained by reading 

\begin{proposition}\label{lem:iso-glsbs-cgglss}
Let $\dbr = i_1\cdots i_{n+m}$ be a double braid word with $J_{-} = \{a_1 < \cdots < a_n\}$ and $J_{+} = \{b_1 < \cdots < b_m\}$. Given a $\G$-orbit $\G .(\Xbul', \Ybul') \in R(\dbr)$, let $(\Xbul, \Ybul)$ be the unique representative such that $Y_0 = \U_{+}$ and $\pi(X_0) = \wo\borel_{+}$. Consider the map
\begin{equation}\label{eq:iso-glsbs-cgglss}
\iso: R(\dbr) \lr X(\dbr^{(-|+)}),\quad \G.(\Xbul', \Ybul') \stackrel{\iso}{\longmapsto} (\borel_0, \dots, \borel_{n+m}) \in X(\dbr^{(-|+)}),
\end{equation}
where the flags $\borel_i$, $i\in[0,n+m]$ are defined by
\[
\begin{array}{l} \borel_0 = \pi(Y_0) = \borel_{+}, \borel_1 = \pi(Y_{a_1}), \dots, \borel_{n} = \pi(Y_{a_n}), \\ \borel_{n+1} = \pi(X_{b_m-1}), \borel_{n+2} = \pi(X_{b_{m-1}-1}), \dots, \borel_{n+m} = \pi(X_0) = \wo \borel_+.
\end{array}\]
Then the map $\iso:R(\dbr) \lr X(\dbr^{(-|+)})$ is an isomorphism of algebraic varieties.
\end{proposition}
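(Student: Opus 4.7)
My plan is to exhibit $\iso$ as a bijective morphism of algebraic varieties by constructing an explicit algebraic inverse. Two rigidity facts about weighted flags will drive the argument: (a) fixing $Y_0 = \U_+$ determines the representative of a $\G$-orbit in $\G/\U$ uniquely up to the stabilizer $\U$, and (b) the weighted-flag reformulation of braid varieties in \cref{eq:CGGLSS_weighted_dfn} propagates any single weighted lift in a chain like \cref{eq:CGGLSS_weighted_dfn2} uniquely to the rest of the chain.

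First, I would show that the unique representative exists. Starting from any $(\Xbul', \Ybul')$, fact (a) produces $g \in \G$ with $g \cdot Y_0' = \U_+$, unique up to left multiplication by $\U$. Because $Y_0$ and $X_0$ are in strong relative position $\wo$, the flag $\pi(g\cdot X_0')$ lies in the open Bruhat cell $\U \cdot \wo\borel_+$, on which $\U$ acts freely and transitively; a unique $u \in \U$ then normalizes $\pi(ug\cdot X_0') = \wo\borel_+$. Next, I would verify that the image lies in $X(\dbr^{(-|+)})$. The key observation is that in \cref{eq:GLSBS-definition}, one has $Y_{k-1} = Y_k$ whenever $i_k \in I$ and $X_{k-1} = X_k$ whenever $i_k \in -I$. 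Hence the $Y$-chain is constant between consecutive negative positions, jumping by the simple reflection $s_{i_{a_j}^{\ast}}$ exactly at $a_j$, while the $X$-chain is constant between consecutive positive positions, jumping by $s_{i_{b_r}}$ exactly at $b_r$. The right-hand equality $Y_l = X_l$ glues the two chains at $\borel_n = \pi(Y_{n+m}) = \pi(X_{b_m})$, and reading off the resulting sequence of nontrivial relative positions along $\borel_0, \borel_1, \ldots, \borel_{n+m}$ reproduces exactly the braid word $\dbr^{(-|+)}$ of \cref{eq:def:pos-neg-ind}. The boundary conditions $\borel_0 = \borel_+$ and $\borel_{n+m} = \wo\borel_+$ are built into the normalization of the representative.

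For the inverse, given $\borel_\bullet \in X(\dbr^{(-|+)})$ I would first lift $\borel_\bullet$ via \cref{eq:CGGLSS_weighted_dfn} to a unique chain of weighted flags $\U_0 = \U_+, \U_1, \ldots, \U_{n+m}$ satisfying the relations of \cref{eq:CGGLSS_weighted_dfn2}, and then define $Y_k := \U_{|\negind \cap [1,k]|}$ and $X_k := \U_{n + |\posind \cap [k+1, n+m]|}$. These chains are piecewise constant with the prescribed jumps, glue at $Y_l = X_l = \U_n$, and project onto $\borel_\bullet$. Both $\iso$ and this candidate inverse are polynomial in the matrix entries of representatives, so once set-theoretic inverseness is established the isomorphism of varieties follows formally.

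The main obstacle will be verifying that the $\wo$-double-arrow in \cref{eq:GLSBS-definition}, which expresses a \emph{strong} relative position between the weighted flags $X_0$ and $Y_0$, is correctly reconstituted from the purely flag-level datum $\borel_{n+m} = \wo\borel_+$ of the target variety. Concretely, I would need to show that given $Y_0 = \U_+$, the weighted flag $X_0$ obtained by running the $X$-chain backwards from $\U_n$ lies in strong relative position $\wo$ with $\U_+$. This reduces to a computation inside the open Bruhat cell and is essentially dual to the rigidity used to fix the unique representative in step one; I expect it to follow from explicit $\U$-normalization, but it is the one point where conflating weighted with unweighted flags could derail the argument.
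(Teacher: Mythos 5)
Your proposal follows essentially the same route as the paper: pass to the weighted-flag model $X_{\U}(\dbr^{(-|+)})$ of \cref{eq:CGGLSS_weighted_dfn}, drop the identity arrows when reading \cref{eq:GLSBS-definition} counterclockwise from $Y_0$, and construct the inverse by reinserting repeated weighted flags at every position where $s_i^{\pm} = \id$ before taking the $\G$-orbit. The one thing to correct is that your final paragraph misreads the double arrow $Y_0 \Rwrel{\wo} X_0$ in \cref{eq:GLSBS-definition} as a \emph{strong} relative position between weighted flags; it is in fact \emph{weak} relative position, which by definition (see \cref{ssec:flags_and_weighted_flags}) is equivalent to the purely flag-level condition $\pi(Y_0) \Rrel{\wo} \pi(X_0)$, so the compatibility you flag as the "main obstacle" holds automatically and requires no further argument.
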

\begin{proof}
Consider the map 
\[\iso': R(\dbr) \to X_{\U}(\dbr^{(-|+)}) \qquad \iso: \G.(\Xbul', \Ybul') \mapsto (Y_0,Y_{a_1},\ldots,Y_{a_n},X_{b_m-1},X_{b_{m-1}-1},\ldots,X_0),\]
where $(\Xbul, \Ybul)$ is the unique representative of $\G.(\Xbul', \Ybul')$ with $Y_0=\U_+$ and $\pi(X_0)= \wo \B_+$, as in the statement.
The morphism $\iso$ is $\iso'$ composed with the projection $\pi:\G/\U\lr\G/\borel$ applied to each flag. Since \cite[Lemma 3.13]{CGGLSS} implies that such projection $\pi$ induces an isomorphism between $X_{\U}(\beta)$ and $X(\beta)$, as in \cref{eq:CGGLSS_weighted_dfn} above, it suffices to show that the morphism $\iso'$ is an isomorphism.

Note that $\iso'$ is well-defined, as the existence of the special representative $(\Xbul, \Ybul)$ of $\G.(\Xbul', \Ybul')$ follows from the condition that $Y_0' \Rwrel{\wo} X_0'$, and its uniqueness follows from the fact that $\G$ acts freely on such pairs $(Y_0', X_0')$. Once this representative is fixed, well-definedness of $\iso'$ follows from the definition of $\dbr^{(-|+)}$. Indeed, the
linear sequence of relative positions in the definition of $X_{\U}(\dbr^{(-|+)})$ coincides with the sequence of relative positions in \cref{eq:GLSBS-definition} describing $R(\dbr)$, where we read counterclockwise starting at $Y_0$ and ignore instances of $s^\pm_i=\mbox{id}$. The inverse map $(\iso')^{-1}$ is constructed by repeating weighted flags for every instance of $s^\pm_i=\mbox{id}$ and then taking the $\G$-orbit: it follows that $\iso'$ is an isomorphism.
\end{proof}

% $\iso'$ has codomain $X_{\U}(\beta)$. For the braid word $\dbr^{(-|+)}$, the linear sequence of relative positions in the definition of $X_{\U}(\beta)$ coincides with the sequence of relative positions in \cref{eq:GLSBS-definition} describing $R(\dbr)$, where we read counterclockwise starting at $Y_0$ and ignore instances of $s^\pm_i=\mbox{id}$. Indeed, this is by the construction of $\dbr^{(-|+)}$, whose index sets $J_\pm$ automatically discarded any instances of $s^\pm_i=\mbox{id}$.

% The only difference at this stage is that \cref{eq:GLSBS-definition} is quotiented by the left $\G$-action to obtain $R(\dbr)$, while \cref{eq:CGGLSS_weighted_dfn} imposes $\U_0=\U_+$. Since $\iso$ is defined by choosing the unique representative $(\Xbul, \Ybul)$ of each $\G$-orbit with the additional property $\U_0=\U_+$, the regular map that assigns to $\G \cdot (\Xbul,\Ybul)\in R(\dbr)$ the associated tuple of weighted flags
% $$(Y_0,Y_{a_1},\ldots,Y_{a_n};X_{b_m-1},X_{b_{m-1}-1},\ldots,X_0)$$ is an isomorphism between $R(\dbr)$ and the right hand side of \cref{eq:CGGLSS_weighted_dfn}. Since $\pi$ gives an isomorphism from its codomain $X(\beta)$, and $\iso$ is the composition with such projection $\pi$, $\iso$ must itself be an isomorphism.

The isomorphism $\iso$ in \cref{lem:iso-glsbs-cgglss} is essentially given by reflecting \eqref{eq:GLSBS-definition} across the $x=y$ line and reading (the projections of) the flags clockwise starting at $Y_0=U_+$, contracting all arrows labeled with the identity in \eqref{eq:GLSBS-definition}.
%{\RC Talk about this with Melissa}\MSB{I still don't know if this is worth including. The point of this is that we can refer back to these diagrams in \cref{fig:ind-weave-triangle-B-labels}} 

\begin{example}\label{ex:running-iso}
    Consider the double braid word $\dbr = (-2, 1, 2, 1, -1, 1, 2)$ as in Example \ref{ex:running}. A point $(X_\bullet, Y_\bullet)\in R(\dbr)$, or more precisely the particular representative of a $G$-orbit specified in \cref{lem:iso-glsbs-cgglss}, is described by the following diagram:
    \begin{equation*}
		\begin{tikzcd}
			X_0& \arrow[l,"{\id}"'] X_1& \arrow[l,"{s_1}"']  X_2& \arrow[l,"{s_2}"']  X_3& \arrow[l,"{s_1}"']  X_4& \arrow[l,"{\id}"']  X_5& \arrow[l,"{s_1}"']  X_6& \arrow[l,"{s_2}"']   
			X_7 \\
			U_+=Y_0 \arrow[u, Rightarrow, "\wo"'] \arrow[r,"{s_{2^*}=s_1}"']& Y_1 \arrow[r,"{\id}"'] & Y_2 \arrow[r,"{\id}"'] & Y_3 \arrow[r,"{\id}"'] &Y_4 \arrow[r,"{s_{1^*}=s_2}"']& Y_5 \arrow[r,"{\id}"'] & Y_6 \arrow[r,"{\id}"']  &
			Y_7  \ar[equal]{u}
		\end{tikzcd}
	\end{equation*}
    Reflecting this diagram across the line $y=x$ gives 
\begin{equation*}
       \adjustbox{scale=0.8}{ \begin{tikzcd}[column sep=2ex,row sep=1.5ex]
			 &&&&&&& X_7 =  Y_7 \arrow[rd,"{s_2}"']&&&&&&& \\
          &&&&&&Y_6\arrow[ru,"{\id}"'] && X_6\arrow[rd,"{s_1}"'] &&&&&& \\
          &&&&&Y_5\arrow[ru,"{\id}"'] &&&& X_5\arrow[rd,"{\id}"'] &&&&& \\
          &&&&Y_4\arrow[ru,"{s_2}"'] &&&&& &X_4\arrow[rd,"{s_1}"'] &&&& \\
          &&&Y_3\arrow[ru,"{\id}"'] &&&&&&&& X_3\arrow[rd,"{s_2}"'] &&& \\
          &&Y_2\arrow[ru,"{\id}"'] &&&&&&&&&& X_2\arrow[rd,"{s_1}"'] && \\
          &Y_1\arrow[ru,"{\id}"'] &&&&&&&&&& &&X_1\arrow[rd,"{\id}"'] & \\
          U_+=Y_0\arrow[ru,"{s_1}"'] \arrow[rrrrrrrrrrrrrr, Rightarrow, "w_0"']&&&&&&&&&&&& &&X_0,
                   \end{tikzcd}}
\end{equation*}
then projecting each flag to $G/B$ gives

\begin{equation} \label{fig:triangle_with_flags}
\adjustbox{scale=0.8}{\begin{tikzcd}[column sep=2ex,row sep=1.5ex]
			 &&&&&&& \borel_2 \arrow[rd,"{s_2}"']&&&&&&& \\
          &&&&&&\borel_2\arrow[ru,"{\id}"'] && \borel_3\arrow[rd,"{s_1}"'] &&&&&& \\
          &&&&&\borel_2\arrow[ru,"{\id}"'] &&&& \borel_4\arrow[rd,"{\id}"'] &&&&& \\
          &&&&\borel_1\arrow[ru,"{s_2}"'] &&&&&&\borel_4\arrow[rd,"{s_1}"'] &&&& \\
          &&&\borel_1\arrow[ru,"{\id}"'] &&&&&&&& \borel_5\arrow[rd,"{s_2}"'] &&& \\
          &&\borel_1\arrow[ru,"{\id}"'] &&&&&&&&&& \borel_6\arrow[rd,"{s_1}"'] && \\
          &\borel_1\arrow[ru,"{\id}"'] &&&&&&&&&&&& \borel_7\arrow[rd,"{\id}"'] & \\
          \borel_+=\borel_0\arrow[ru,"{s_1}"'] \arrow[rrrrrrrrrrrrrr, Rightarrow, "w_0"']&&&&&&&&&&&&&& \borel_7=\wo\borel_+,
                   \end{tikzcd}}
\end{equation}
and finally contracting each arrow labeled by the identity gives
\begin{equation*}
        \begin{tikzcd}[column sep=2ex,row sep=1.5ex]
			 &&&&&&& \borel_2 \arrow[rd,"{s_2}"']&&&&&&& \\
          &&&&&& && \borel_3\arrow[rrdd,"{s_1}"'] &&&&&& \\
          &&&&& &&&&  &&&&& \\
          &&&&\borel_1\arrow[rrruuu,"{s_2}"'] &&&&&&\borel_4\arrow[rd,"{s_1}"'] &&&& \\
          &&&&&&&&&&& \borel_5\arrow[rd,"{s_2}"'] &&& \\
          && &&&&&&&&&& \borel_6\arrow[rrdd,"{s_1}"'] && \\
          & &&&&&&&&&&&& & \\
          \borel_+=\borel_0\arrow[rrrruuuu,"{s_1}"'] \arrow[rrrrrrrrrrrrrr, Rightarrow, "w_0"']&&&&&&&&&&&&&& \borel_7=\wo \borel_+.
                   \end{tikzcd}
\end{equation*}

The image of $(X_\bullet, Y_\bullet)$ under $\varphi$ is the point
\[\borel_+=\borel_0 \Rrel{s_1} \borel_1 \Rrel{s_2} \borel_2 \Rrel{s_2}\borel_3 \Rrel{s_1} \borel_4 \Rrel{s_1} \borel_4 \Rrel{s_2} \borel_6 \Rrel{s_1} \borel_7=\wo\borel\]
obtained by reading the diagram above clockwise from $\borel_+$. In symbols,   
    \begin{align*}
    \borel_0 = & \pi(Y_0) = \borel_{+}; & 
    \borel_1 = & \pi(Y_1) = \pi(Y_2) = \pi(Y_3) = \pi(Y_4);\\
    \borel_2 = & \pi(Y_5) = \pi(Y_6) = \pi(Y_7) = \pi(X_7);&
    \borel_3 = & \pi(X_6); \\
    \borel_4 = & \pi(X_5) = \pi(X_4); &
    \borel_5 = & \pi(X_3); \\
    \borel_6 = & \pi(X_2) ;& 
    \borel_7 = & \pi(X_1) = \pi(X_0)= \wo \borel_+.
    \end{align*}
    This illustrates the map $\iso$ for the running example.\qed
\end{example}

\begin{remark}\label{rmk:compatibility-iso-moves} The isomorphism $\iso$ is compatible with certain types of double braid moves:
\begin{enumerate}
\item[(i)] First, assume that $\dbr = \dbr_1 ij \dbr_2$ and $\dbr' = \dbr_1 ji \dbr_2$ where $i$ and $j$ have different signs. This is move (B1) in \cite[Section 4]{GLSB}. Then there is a natural isomorphism $\phi_{\dbr, \dbr'}: R(\dbr) \to R(\dbr')$. If $i$ is the $k$-th letter of $\dbr$, this isomorphism is given by $(\Xbul, \Ybul) \mapsto (X'_{\bullet}, Y'_{\bullet})$ where
\[
(X'_{p}, Y'_{p}) = \begin{cases} (X_p, Y_p) & \text{if} \; p \neq k, k+1 \\ (X_{k+1}, Y_{k+1}) & \text{if} \; p = k \\
(X_k, Y_k) & \text{if} \; p = k+1.\end{cases}
\]
Note that $\brWe = (\dbr')^{(-|+)}$. Then the following diagram commutes
\begin{equation}\label{eq:compatibility-iso-moves}
\begin{tikzcd}
    R(\dbr) \arrow[rr, "\phi_{\dbr, \dbr'}"] \arrow[dr, "\iso_{\dbr}"] && R(\dbr') \arrow[dl, "\iso_{\dbr'}"]\\
     &X(\brWe).& 
\end{tikzcd}
\end{equation}

\item[(ii)] Assume that $\dbr = \dbr_1i$ and $\dbr' = \dbr_1(-i)^{\ast}$.  This is move (B4) in \cite[Section 4]{GLSB}. Then there is a natural isomorphism $\phi_{\dbr, \dbr'}: R(\dbr) \to R(\dbr')$, described in \cite[Section 4.6]{GLSB}. As in (i) above, we have that $\brWe = (\dbr')^{(-|+)}$ and a diagram similar to \eqref{eq:compatibility-iso-moves} commutes.\qed
\end{enumerate}
\end{remark}

%%%%%%%%%%%%%%%%%%%%%%%%%%%%%%%%%%%%%%%%%%%%%%%%%%%%%%%%%%%%%%%%%%
%%%%%%%%%%%%%%%%%%%%%%%%%%%%%%%%%%%%%%%%%%%%%%%%%%%%%%%%%%%%%%%%%%
%%%%%%%%%%%%%%%%%%%%%%%%%%%%%%%%%%%%%%%%%%%%%%%%%%%%%%%%%%%%%%%%%%

\subsection{The commutative algebras}\label{ssec:coordinate_algebras} A central goal in both \cite{CGGLSS} and \cite{GLSBS,GLSB} is the construction of cluster algebras. Specifically, the two commutative algebras that are shown to admit a cluster structure are the rings of regular functions $\Z[X(\br)]$ and $\Z[R(\dbr)]$.

By construction, $X(\beta)$ is an affine variety over $\Z$ and the weave methods are defined and applied over $\Z$. % \JS{Probably stupid question: why is this true? One can interpret $\Z[X(\br)]$ using coordinates as being $\Z[z_1, \dots, z_r]/$relations, since the equation $\borel_l = \wo\borel_{+}$ only involves integer coefficients. But wouldn't we need a statement along the lines of 'every cluster variable is actually a polynomial in $z$'s with integer coefficients?' Why is this true?} \MG{@Jos\'e: Not sure this is what Roger meant or you asked about. But: a posteriori we know that both varieties are isomorphic to spectra of locally acyclic cluster algebras $\mathcal{A}$. The latter can be of course defined over $\Z$, and one moreover has an isomorphism $\mathcal{A}_\Z \otimes_{\Z} \C \cong \mathcal{A}_{\C}$, see \cite[Lemma 4.7]{BMRS}. So the formulas for variables should, I suppose, be the same over $\Z$ and over $\C$ (or over an arbitrary field). Does this answer your question or you meant something else?} \JS{I mean the following. One can define two integral forms of $\C[X(\br)]$. One of these is defined without cluster theory, it's just the $\Z$-subalgebra generated by the coordinates $z_1, \dots, z_r$, let's call this $\mathcal{A}'$. The other uses cluster theory and it is the $\Z$-subalgebra generated by all cluster variables, let's call it $\mathcal{A}$. These algebras satisfy that $\mathcal{A} \otimes_\Z \C = \mathcal{A}' \otimes_\Z \C = \C[X(\br)]$. Roger's sentence seems to say that $\mathcal{A} = \mathcal{A}'$. Why is this the case? We talked to Melissa and maybe one can show that all cluster variables are polynomials in $z$'s with integer coefficients -- it is enough to prove it in the Bott-Samelson case, and maybe we can use induction.}.
Such integrality in the case of $R(\dbr)$ can also be deduced from \cite{GLSBS,GLSB}, even if less explicitly stated there. For simplicity, we focus on the base change to the ground field $k=\C$ and work with the commutative algebras $\C[X(\br)]$ and $\C[R(\dbr)]$.

%{\RC I do not know where this is particularly used, especially $(ii)$.}\\
%\JS{At some point there was a long discussion on the nature of these coordinate algebras that led to this section. In particular, $\C[X(\br)]$ is naturally a \emph{quotient} of a polynomial algebra; while $\C[R(\dbr)]$ is naturally a \emph{subalgebra} of (a localization of) a polynomial algebra. From my point of view, I always found it a bit strange that in \cite{CGGLSS}, when verifying the exchange relations, we never actually had to use the relations defining $\C[X(\br)]$. The facts that (i) the cluster structures are the same and (ii) there are no relations in $\C[R(\dbr)]$ explains this. Note that this is not automatic, for example, in \cite{CasalsWeng22} (the first arXiv version), in the Section of $m(7_2)$, at some point you have to justify using Gr\"obner bases that a fraction gives a well-defined global function. So we can say that the main theorem explains this phenomenon in \cite{CGGLSS} (this used to be in this section, but it makes more sense to state it after the main result)}

{\it $(i)$ Braid varieties}. In the case of $X(\br)$, \cref{def:braid-variety} gives a presentation of equations and inequalities cutting out $X(\br)$ as a quasi-affine scheme in the projective variety $(\G/\borel)^{l + 1}$. In fact, \cite[Proposition 3.6]{CGGLSS} implies that the space of sequences of flags $\borel_\bullet = (\borel_0, \dots, \borel_{l})$ satisfying the relative position conditions $\borel_+ = \borel_0 \Rrel{s_{i_1}}  \borel_1  \Rrel{s_{i_2}}   \cdots  \Rrel{s_{i_l}}  \borel_l$ is isomorphic to the affine space $\C^l$, and thus the braid variety $X(\beta)$ can be described as the closed subvariety of $\C^l$ cut out by the condition $\borel_l = \wo\borel_{+}$. With this approach, the $\C$-algebra $\C[X(\br)]$ can be presented rather explicitly as a quotient of $\C[z_1,\ldots,z_l]$ by using braid matrices, as explained in \cite[Corollary 3.7]{CGGLSS} and as we now recall.

Fix a pinning $(\H,\borel_+,\borel_-,x_i,y_i;i\in I)$ of $\G$, cf. \cref{ssec:pinnings}. For $i\in I$ and $z \in \C$ we set
\begin{equation*}%
 \BZ_i(z):=\phi_i \begin{pmatrix}
		z & -1\\
		1 & 0
	\end{pmatrix}=x_i(z) \ds_i.
\end{equation*}

Then \cite[Corollary 3.7]{CGGLSS} implies that a set of equations for $X(\beta)$ in affine space is:
\begin{equation}\label{eq:braid_var_eq}
X(\beta) \cong \{(z_1, \dots, z_l) \in \C^{l} \mid \wo B_{i_{1}}(z_1)\cdots B_{i_{l}}(z_l) \in \borel\},
\end{equation}
where $\wo$ is understood as a lift of the permutation $\wo$.\\

\begin{example}\label{ex:example_zcoordinates}
Consider the braid word $\beta=1221121$, as in \cref{ex:running}, and the following elements $B_1(z),B_2(z)$ of $\G=\SL_3$:
$$B_1(z):=\begin{pmatrix}
z & -1 & 0\\
1 & 0 & 0\\
0 & 0 & 1\\
\end{pmatrix},\quad
B_2(z):=\begin{pmatrix}
1 & 0 & 0\\
0 & z & -1\\
0 & 1 & 0\\
\end{pmatrix}.$$
Denote $B_{\textcolor{blue}{\beta}}(z_1,\ldots,z_7):=B_{\textcolor{blue}{1}}(z_1)B_{\textcolor{blue}{2}}(z_2)B_{\textcolor{blue}{2}}(z_3)B_{\textcolor{blue}{1}}(z_4)B_{\textcolor{blue}{1}}(z_5)B_{\textcolor{blue}{2}}(z_6)B_{\textcolor{blue}{1}}(z_7)$, where we have highlighted in \textcolor{blue}{blue} the dependence on the braid word $\beta$. These are the braid matrices introduced in \cite[Section 3.4]{CGGLSS} to coordinatize $X(\beta)$. Then $X(\beta)$ is isomorphic to the closed subvariety of $\C^7=\Spec\C[z_1,\ldots,z_7]$ cut out %\MG{no hyphen? two words?} 
by the condition
\begin{equation}\label{eq:example_zcoordinates}
\wo B_\beta(z_1,\ldots,z_7)\in \borel_+,
\end{equation}
where $\wo$ is understood as a permutation matrix lifting $\wo\in S_n$. The three diagonal conditions in \cref{eq:example_zcoordinates} are non-equalities, cutting out an open subset, whereas the vanishing of the entries $(2,1),(3,1)$ and $(3,2)$ of $\wo B_\beta(z_1,\ldots,z_7)$ impose three independent conditions. Altogether, these lead to a dimension count of $\dim_\C X(\beta)=7-3=4$, which indeed coincides with $\ell(\beta)-\ell(\delta(\beta))=7-3=4$.\hfill$\Box$
\end{example}

{\it $(ii)$ Double braid varieties}.  In the case of $R(\dbr)$, the quotient by the $\G$-action requires a word. Consider the variety $\bigRo(\dbr)$ given by the subset of points in $(\G/\U_+)^{[0,l]} \times (\G/\U_+)^{[0,l]}$ satisfying \cref{eq:GLSBS-definition}, so that $R(\dbr)$ is the quotient of $\bigRo(\dbr)$ by the diagonal action of $\G$. Since this $\G$-action is free and $\G$ is reductive, Geometric Invariant Theory, cf.~e.g. \cite{GIT}, implies that
\begin{equation}\label{eq:double_braid_Ginvariant_description}
\C[R(\dbr)] \cong \Gamma(\bigRo(\dbr), \mathcal{O}_{\bigRo(\dbr)})^{\G},
\end{equation}

i.e.~the ring of regular functions on $R(\dbr)$ is isomorphic to the ring of $\G$-invariant global sections of the structure sheaf of $\bigRo(\dbr)$. For an alternative description, let $\bigRofix(\dbr) \subseteq \bigRo(\dbr)$ be the closed subvariety defined by the condition $Y_0 = \U_{+}$.

Then the group $\U_{+}$, which is the stabilizer of $Y_0 = \U_{+}$, acts freely on $\bigRofix(\dbr)$ and there is a natural bijection between the $\G$-orbits in $\bigRo(\dbr)$ and the $\U_{+}$-orbits in $\bigRofix(\dbr)$, so that $R(\dbr)\cong \bigRo(\dbr)/\G \cong \bigRofix(\dbr)/\U_{+}$. It follows that the pull-back $i^*: \Gamma(\bigRo(\dbr), \mathcal{O}_{\bigRo(\dbr)}) \to \C[\bigRofix(\dbr)]$, induced by the inclusion $i:\bigRofix(\dbr)\to\bigRo(\dbr)$,  descends to an isomorphism $[i^*]:\Gamma(\bigRo(\dbr), \mathcal{O}_{\bigRo(\dbr)})^{\G} \xrightarrow{\cong} \C[\bigRofix(\dbr)]^{\U_{+}}$. Therefore, the commutative algebra $\C[R(\dbr)]$ is isomorphic to
\begin{equation}\label{eq:coord_ring_doublebraidvariety}
 \C[R(\dbr)] \cong \C[\bigRofix(\dbr)]^{\U_{+}}.
\end{equation}
%Finally, let $\mathcal{Z}(\dbr)$ be the subset of points in $(\G/\U_+)^{[0,l]} \times (\G/\U_+)^{[0,l]}$ where $Y_0=\U_+$ and \cref{eq:GLSBS-definition} holds with the constraint $Y_0 \Rwrel{\wo} X_0$ removed. One can show that $\mathcal{Z}(\dbr) \cong \C^l$.
%We note that $\bigRofix(\dbr)$ is an open subset of $\mathcal{Z}(\dbr)$, so \cref{eq:coord_ring_doublebraidvariety} also shows that $\C[R(\dbr)]$ is isomorphic to the $\U_+$-invariant functions of a localization of a polynomial ring.

\subsection{Parametrizations}\label{sec:parametrization}
Let us provide explicit coordinates on the varieties $R(\dbr)$ and $X(\brWe)$, as well as on the auxiliary variety $\bigRofix(\dbr)$ appearing in \cref{ssec:coordinate_algebras}. %\MSB{Seems like we actually only give a parametrization of $\bigRofix(\dbr)$ and $\bigRfix(\dbr)$ in this section?} \JS{Fixed, thanks!}

\subsubsection{Parametrizing $R(\dbr)$}\label{ssec:parameters-double-braid-variety} Let $\dbr = i_1\cdots i_{n+m}$ be a double braid word. We parameterize $\Rbr3D(\dbr)$ as follows. Using the $\G$-action, we assume that $Y_0= \U_+$, i.e. we work on the space $\bigRofix(\br)$ defined in \cref{ssec:coordinate_algebras} above. We set $g'_0:=\id\in \G$ and define a sequence of elements $g'_1,\dots,g'_\nm\in \G$ and the associated weighted flags $Y_1=g_1'\U_+,\dots,Y_\nm=g_\nm'\U_+$ inductively by 
\begin{equation*}%\label{eq:*}
  g'_{\c} := 
  \begin{cases}
    g'_{\c-1}, &\text{if $i_\c\in I$,}\\
    g'_{\c-1} \Bz_{|i_\c|^\ast}(\z_\c), &\text{if $i_\c\in -I$,}
  \end{cases}
  \quad\text{for $\c=1,2,\dots,\nm$.}
\end{equation*}
Note that the weighted flags $Y_0, \dots, Y_{n+m}$ uniquely determine the scalars $z_c, c \in J_{-}$ and vice versa. Similarly, we define a sequence of elements $g_{m+n}, \dots, g_0 \in \G$ and the associated weighted flags $X_{m+n} = g_{n+m}\U_{+}, \dots, X_0 = g_0\U_+$ by
\[
g_{m+n} = g'_{m+n} \qquad \text{and} \qquad g_{\c-1} = \begin{cases} g_\c B_{i_\c}(z_\c), & \text{if} \; i_\c \in I, \\ g_\c, & \text{if} \; i_\c \in -I. \end{cases} 
\]
The weighted flags $X_{m+n}, \dots, X_0$ uniquely determine the scalars $z_\c$, $\c \in J_{+}$, and vice versa. Thus, if we let $\bigRfix(\dbr)$ be the space defined as $\bigRofix(\dbr)$ but without the condition that $Y_0 \Rwrel{\wo} X_0$, we obtain that $\bigRfix(\dbr)$ is an affine variety with coordinate algebra
\begin{equation}\label{eq:polynomial-algebra}
\C[\bigRfix(\dbr)] \cong \C[z_1, \dots, z_{n+m}],
\end{equation}
that is, $z_1, \dots, z_{n+m}$ give coordinates on the space $\bigRfix(\dbr)$. The condition $Y_0 \Rwrel{\wo} X_0$ gives an open condition on $\bigRfix(\dbr)$, so we obtain that
\[
\C[\bigRofix(\dbr)] \; \text{is a localization of} \; \C[\bigRfix(\dbr)] \cong \C[z_1, \dots, z_{n+m}]. 
\]
Finally, $\C[R(\dbr)] \cong \C[\bigRofix(\dbr)]^{\U_{+}}$, so $\C[R(\dbr)]$ is a subalgebra of a localization of $\C[z_1, \dots, z_{n+m}]$. 

    \subsubsection{Parametrizing $X(\brWe)$} A parametrization of $X(\brWe)$ has been obtained in \cref{eq:braid_var_eq}, but we recall it for the sake of comparing to the parametrization of $R(\dbr)$ obtained in \cref{ssec:parameters-double-braid-variety}. Let us write $\brWe = j_1\cdots j_{m+n}$. We define elements $f_0 = \id \in \G$, $f_1, \dots, f_{n+m}$ and the associated flags $\borel_0 = \borel$, $\borel_1 = f_1\borel, \dots, \borel_{n+m} = f_{n+m}\borel$ as follows:
    \[
        f_{\c} = f_{\c-1}B_{j_\c}(z'_c). 
    \]
    The variety of collections of flags satisfying the relative position conditions
    \[
    \borel \Rrel{s_{j_1}} \borel_1 \Rrel{s_{j_2}} \cdots \Rrel{s_{j_{n+m}}} \borel_{n+m}
    \]
    is then isomorphic to $\C[z'_1, \dots, z'_{n+m}]$, and the condition $\borel_{n+m} = \wo\borel$ defining $X(\brWe)$ is a closed condition. Thus, we obtain a natural surjection
    \[
    \C[z'_1, \dots, z'_{n+m}] \twoheadrightarrow \C[X(\brWe)].
    \]

\subsubsection{Parametrizing the isomorphism $\iso: \Rbr3D(\dbr) \lr \RbrWe(\brWe)$} By \cref{lem:iso-glsbs-cgglss}, we have an isomorphism $\iso: \Rbr3D(\dbr) \xrightarrow{\cong} \RbrWe(\bmp)$. We now present the induced isomorphism $\iso^{\ast}: \C[X(\brWe)] \lr \C[R(\dbr)]$ in terms of the coordinates obtained above. 

Recall that we set $J_{-} = \{a_1 < \cdots < a_n\}$ for the set of indices $j$ such that $i_j \in -I$, and similarly $J_{+} = \{b_1 < \cdots < b_m\}$. We define a bijection $\phi: [\nm] \lr [\nm]$ as follows:
\[
\phi(d) = a_d\quad\mbox{for}\quad d\in[1,n], \qquad \phi(n+d') = b_{m+d'-1}\quad\mbox{for}\quad d'\in[1,m].
\]

We can use the bijection $\phi$ to present the isomorphism $\iso^{\ast}: \C[(X(\brWe)] \lr \C[R(\dbr)]$ using the coordinates $\zp_1, \dots, \zp_\nm$ and $z_1, \dots, z_\nm$, as follows:

\begin{proposition}\label{prop:iso-polynomial-algebras}
Consider the isomorphism
$$\iso^{\ast}: \C[\zp_1, \dots, \zp_\nm] \lr \C[z_1, \dots, z_\nm],\quad \iso^{\ast}(\zp_\c) = z_{\phi^{-1}(c)}.$$
Then, $\iso^{\ast}$ induces an homonymous isomorphism $\iso^{\ast}: \C[X(\brWe)] \lr \C[R(\dbr)]$ that is dual to the isomorphism $\iso: R(\dbr) \lr X(\brWe)$. In precise terms, the following diagram commutes:
\begin{center}
\begin{tikzcd}
    \C[z'_1, \dots, z'_\nm] \arrow[r, "\iso^{\ast}"] \arrow[d, twoheadrightarrow] & \C[z_1, \dots, z_\nm] \arrow[r, hook]&  \C[\bigRofix(\dbr)] \\
    \C[X(\brWe)] \arrow[r, "\iso^{\ast}"] & \C[R(\dbr)] \arrow[r, "\cong"] & \C[\bigRofix(\dbr)]^{\U_{+}} \arrow[u, hook].
\end{tikzcd}
\end{center}
\qed
\end{proposition}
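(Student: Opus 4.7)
The plan is to exhibit $\iso^{\ast}$ explicitly by tracing the $\brWe$-parametrization of $X(\brWe)$ and the double-braid parametrization of $R(\dbr)$ through the geometric isomorphism $\iso$ of \cref{lem:iso-glsbs-cgglss}. Since $\iso$ is already an isomorphism of algebraic varieties, its pullback $\iso^{\ast}: \C[X(\brWe)] \to \C[R(\dbr)]$ is automatically an algebra isomorphism; the content of the proposition is the matching of the polynomial generators together with commutativity of the displayed diagram.

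The key computation is as follows. Fix a point of $\bigRofix(\dbr)$ parametrized by $(z_1, \ldots, z_{n+m})$ as in \cref{ssec:parameters-double-braid-variety}, so that for each $d \in [1,n]$ the weighted flag $Y_{a_d}$ equals $B_{|i_{a_1}|^{\ast}}(z_{a_1}) \cdots B_{|i_{a_d}|^{\ast}}(z_{a_d}) \U_+$, and for each $d' \in [1,m]$ the weighted flag $X_{b_{m+1-d'}-1}$ is obtained from the bridge $g_{n+m} = g'_{n+m}$ by successively applying $g_{\c-1} = g_\c B_{i_\c}(z_\c)$ for $\c \in J_+$. Projecting these weighted flags to $\G/\borel$ produces the tuple $\iso(X_\bullet, Y_\bullet) = (\borel_0, \ldots, \borel_{n+m})$ of \cref{lem:iso-glsbs-cgglss}, which is to be compared with the $\brWe$-parametrization $\borel_\c = B_{j_1}(z'_1) \cdots B_{j_\c}(z'_\c) \borel_+$. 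Using the identities $j_d = -i_{a_d}^{\ast} = |i_{a_d}|^{\ast}$ for $d \in [1,n]$ and $j_{n+d'} = i_{b_{m+1-d'}}$ for $d' \in [1,m]$, the two products of $B_{j_\bullet}(\cdot)$ factors line up term by term, and injectivity of the Bott--Samelson parametrization forces the matching $\iso^{\ast}(z'_\c) = z_{\phi^{-1}(\c)}$.

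Commutativity of the displayed diagram then follows by functoriality of pullback. The equality of the top and bottom rows on generators, together with surjectivity of the left vertical map $\C[z'_1, \ldots, z'_{n+m}] \twoheadrightarrow \C[X(\brWe)]$, shows that the top $\iso^{\ast}$ descends to the bottom $\iso^{\ast}$, and the right vertical inclusion $\C[\bigRofix(\dbr)]^{\U_+} \hookrightarrow \C[\bigRofix(\dbr)]$ is compatible with the identification $\C[R(\dbr)] \xrightarrow{\cong} \C[\bigRofix(\dbr)]^{\U_+}$ from \cref{eq:coord_ring_doublebraidvariety}. Injectivity of the inclusion $\C[z_1, \ldots, z_{n+m}] \hookrightarrow \C[\bigRofix(\dbr)]$ ensures there is no ambiguity in reading off $\iso^{\ast}$ from the coordinate-level computation.

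The main technical subtlety will be the bookkeeping of the index reversal between the two parametrizations: the $\brWe$-side is built left-to-right via $f_\c = f_{\c-1} B_{j_\c}(z'_\c)$, whereas the $X$-flags on the $R(\dbr)$ side are built right-to-left via $g_{\c-1} = g_\c B_{i_\c}(z_\c)$ starting from the bridge $g_{n+m} = g'_{n+m}$. Concatenating the $Y$- and $X$-parametrizations and disentangling which $z'_\c$ pulls back to which $z_{\phi^{-1}(\c)}$ is where all the care is required; once this matching is established, the rest of the proposition is a purely formal consequence.
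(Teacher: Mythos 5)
Your proposal is correct and takes essentially the same approach as the paper: the paper's own proof is a one-line note that the proposition ``follows directly from the construction of the isomorphism $\iso$,'' accompanied by the key observation that $\iso$ carries $(\Xbul,\Ybul)$ to the tuple $F_{c'} = B_{i'_1}(z'_1)\cdots B_{i'_{c'}}(z'_{c'})\U_+$, which is precisely the Bott--Samelson factorization matching you perform explicitly. You flesh out the same trace-through-the-parametrizations argument with more detail, correctly identifying the index-reversal bookkeeping between the left-to-right build on the $X(\brWe)$ side and the right-to-left build on the $X_\bullet$ side as the only nontrivial step.
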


\cref{prop:iso-polynomial-algebras} follows directly from the construction of the isomorphism $\iso$. Note that a tuple $(\Xbul,\Ybul)\in R(\dbr)$ satisfying~\eqref{eq:GLSBS-definition} is mapped under $\iso$ to the tuple of weighted flags $\Fbul\in X(\brWe)$ given by
\begin{equation}\label{eq:parametrized-flags-CGGLSS}
  F_\cp = \BZ_{\ip_1}(\zp_1) \BZ_{\ip_2}(\zp_2)\cdots \BZ_{\ip_\cp}(\zp_\cp) U_+\quad \mbox{ for each }\cp\in[0,\nm].
\end{equation}

% Here $\BZ_0(\zp) := \id$, and $\{\zp_\cp\mid \cp\in[\nm], \ip_\cp\neq 0\} = \{z_1,z_2,\dots,z_\nm\}$. 

% We let $\bijm: [m]\to[\nm]$ be the map sending $\c\in[m]$ to the unique index $\cp\in[\nm]$ such that $\zp_{\cp} = \z_\c$.

\begin{example}\label{ex:running_variables}
We illustrate the isomorphism in \cref{prop:iso-polynomial-algebras} in our running example, continuing \cref{ex:running-iso}. 
% The correspondence~\eqref{eq:XY_to_F} is given by \\
% $  \left(\begin{smallmatrix}
% Y_0 & Y_1 & Y_2 & Y_3 & Y_4 & Y_5 & Y_6 & Y_7 & X_7 & X_6 & X_5 & X_4 & X_3 & X_2 & X_1 & X_0 \\
% F_0 & F_1 & F_1 & F_1 & F_1 & F_2 & F_2 & F_2 & F_2 & F_3 & F_4 & F_{4}& F_{5}& F_{6}& F_{7}& F_{7}
% \end{smallmatrix}\right).$
% % $  \left(\begin{smallmatrix}
% % Y_0 & Y_1 & Y_2 & Y_3 & Y_4 & Y_5 & Y_6 & Y_7 & X_7 & X_6 & X_5 & X_4 & X_3 & X_2 & X_1 & X_0 \\
% % F_0 & F_1 & F_2 & F_3 & F_4 & F_5 & F_6 & F_7 & F_7 & F_8 & F_9 & F_{10}& F_{11}& F_{12}& F_{13}& F_{14}
% % \end{smallmatrix}\right).$
% % \begin{equation*}%\label{eq:*}
% %   \left(\begin{smallmatrix}
% % Y_0 & Y_1 & Y_2 & Y_3 & Y_4 & Y_5 & Y_6 & Y_7 & X_7 & X_6 & X_5 & X_4 & X_3 & X_2 & X_1 & X_0 \\
% % F_0 & F_1 & F_2 & F_3 & F_4 & F_5 & F_6 & F_7 & F_7 & F_8 & F_9 & F_{10}& F_{11}& F_{12}& F_{13}& F_{14}
% % \end{smallmatrix}\right).
% % \end{equation*}
 We have
\begin{align*}
% Y_0 = F_0 &= U_+;\\
% Y_1= Y_2 = Y_3 = Y_4 = F_1=F_2=F_3=F_4 &= \BZ_1(\z_1) U_+; \\
% Y_5 = Y_6 = Y_7 = X_7 = F_5 = F_6 = F_7 &= \BZ_1(\z_1) \BZ_2(\z_5) U_+;\\
% X_6 = F_8 &= \BZ_1(\z_1) \BZ_2(\z_5) \BZ_2(\z_7) U_+;\\ %7,6,4,3,2
% X_5 = X_4 = F_9 = F_{10} &= \BZ_1(\z_1) \BZ_2(\z_5) \BZ_2(\z_7) \BZ_1(\z_6) U_+;\\
% X_3 = F_{11} &= \BZ_1(\z_1) \BZ_2(\z_5) \BZ_2(\z_7) \BZ_1(\z_6) \BZ_1(\z_4) U_+;\\
% X_2 = F_{12} &= \BZ_1(\z_1) \BZ_2(\z_5) \BZ_2(\z_7) \BZ_1(\z_6) \BZ_1(\z_4) \BZ_2(\z_3) U_+;\\
% X_1 = X_0 = F_{13} = F_{14} &= \BZ_1(\z_1) \BZ_2(\z_5) \BZ_2(\z_7) \BZ_1(\z_6) \BZ_1(\z_4) \BZ_2(\z_3) \BZ_1(\z_2) U_+.
% Y_0 &= U_+;\\
% Y_1= Y_2 = Y_3 = Y_4 &= \BZ_1(\z_1) U_+; \\
% Y_5 = Y_6 = Y_7 = X_7 &= \BZ_1(\z_1) \BZ_2(\z_5) U_+;\\
% X_6 &= \BZ_1(\z_1) \BZ_2(\z_5) \BZ_2(\z_7) U_+;\\ %7,6,4,3,2
% X_5 = X_4 &= \BZ_1(\z_1) \BZ_2(\z_5) \BZ_2(\z_7) \BZ_1(\z_6) U_+;\\
% X_3 &= \BZ_1(\z_1) \BZ_2(\z_5) \BZ_2(\z_7) \BZ_1(\z_6) \BZ_1(\z_4) U_+;\\
% X_2 &= \BZ_1(\z_1) \BZ_2(\z_5) \BZ_2(\z_7) \BZ_1(\z_6) \BZ_1(\z_4) \BZ_2(\z_3) U_+;\\
% X_1 = X_0 &= \BZ_1(\z_1) \BZ_2(\z_5) \BZ_2(\z_7) \BZ_1(\z_6) \BZ_1(\z_4) \BZ_2(\z_3) \BZ_1(\z_2) U_+.
Y_0 = F_0 &= \U_+;\\
Y_1= Y_2 = Y_3 = Y_4 = F_1 &= \BZ_1(\z_1) \U_+; \\
Y_5 = Y_6 = Y_7 = X_7 = F_2 &= \BZ_1(\z_1) \BZ_2(\z_5) \U_+;\\
X_6 = F_3 &= \BZ_1(\z_1) \BZ_2(\z_5) \BZ_2(\z_7) \U_+;\\ %7,6,4,3,2
X_5 = X_4 = F_4 &= \BZ_1(\z_1) \BZ_2(\z_5) \BZ_2(\z_7) \BZ_1(\z_6) \U_+;\\
X_3 = F_5 &= \BZ_1(\z_1) \BZ_2(\z_5) \BZ_2(\z_7) \BZ_1(\z_6) \BZ_1(\z_4) \U_+;\\
X_2 = F_6 &= \BZ_1(\z_1) \BZ_2(\z_5) \BZ_2(\z_7) \BZ_1(\z_6) \BZ_1(\z_4) \BZ_2(\z_3) \U_+;\\
X_1 = X_0 = F_7 &= \BZ_1(\z_1) \BZ_2(\z_5) \BZ_2(\z_7) \BZ_1(\z_6) \BZ_1(\z_4) \BZ_2(\z_3) \BZ_1(\z_2) \U_+.
\end{align*}
Thus, the isomorphism from \cref{prop:iso-polynomial-algebras} is given by $\iso^{\ast}(\zp_1) = z_1, \iso^{\ast}(\zp_2) = z_5$, $\iso^{\ast}(\zp_3) = z_7$, $\iso^{\ast}(\zp_4) = z_6$, $\iso^{\ast}(\zp_5) = z_4$, $\iso^{\ast}(\zp_6) = z_3$ and $\iso^{\ast}(\zp_7) = z_2$.\qed
\end{example}

%\begin{remark}\label{rmk:localization}
%Note that if we consider the space of sequences of framed flags as in \eqref{eq:CGGLSS_weighted_dfn} but without the condition that $U_{+} \Rwrel{\wo} F_{l}$, we obtain an affine space with coordinates $z_1, \dots, z_l$ as above. The condition that $U_{+} \Rwrel{\wo} F_l$ is then a principal open condition, given by the non-vanishing of generalized minors. Thus, $\C[\bigRofix(\dbr)]$ is a localization of the polynomial algebra $\C[z_1, \dots, z_l]$. 
%\end{remark}

%{\RC ADD PARAMETRIZATION SECTION HERE}

\def\nm{{n+m}}

%%%%%%%%%%%%%%%%%%%%%%%%%%%%%%%%%%%%%%%%%%%%%%%%%%%%%%%%%%%%%%%%%%
%%%%%%%%%%%%%%%%%%%%%%%%%%%%%%%%%%%%%%%%%%%%%%%%%%%%%%%%%%%%%%%%%%
%%%%%%%%%%%%%%%%%%%%%%%%%%%%%%%%%%%%%%%%%%%%%%%%%%%%%%%%%%%%%%%%%%
\section{Comparison of tori}\label{sec:weaves-tori}

\cref{sec:braid_varieties} introduced $X(\beta)$ and $R(\dbr)$, from \cite{CGGLSS} and \cite{GLSBS,GLSB} respectively, and \cref{lem:iso-glsbs-cgglss} established the isomorphism $ \iso: R(\dbr) \xrightarrow{\sim} X(\br)$ when $\beta=\dbr^{(-|+)}$, cf.~\cref{eq:iso-glsbs-cgglss}. The goal of the present section is to compare the open torus charts in $X(\beta)$ and $R(\dbr)$, respectively described in \cite{CGGLSS} and \cite{GLSBS,GLSB}, that are part of the cluster seeds of the respective cluster structures on $\C[X(\beta)]$ and $\C[R(\dbr)]$.

There is an interesting conceptual difference in the way such cluster tori in $X(\beta)$ and $R(\dbr)$ are built. It is helpful to understand this crucial point when implementing the comparison:

\begin{enumerate}
    \item In \cite{CGGLSS}, given the braid word $\beta$, one chooses an additional piece of combinatorial data to name a cluster torus: a Demazure weave $\ww:\beta\to\delta(\br)$, cf.~\cite[Section 2]{CasalsZaslow} and \cite[Section 4]{CGGLSS}. There are many such Demazure weaves for $\beta$, even up to weave equivalence. Once a specific Demazure weave $\ww:\beta\to\delta(\br)$ is chosen, it names an open torus chart $T_\ww\sse X(\beta)$. Non-equivalent weaves $\ww,\ww'$ give different tori $T_\ww,T_{\ww'}\sse X(\beta)$ in the {\it same} braid variety $X(\beta)$.\\

    \item In \cite{GLSBS,GLSB}, given the double braid word $\dbr$ there is {\it no} additional choice of combinatorial data to name a torus. Instead, a unique torus $T_\dbr\sse R(\dbr)$ is named from the double braid word $\dbr$: such torus is referred to as the Deodhar torus, cf.~\cite[Section 7.1]{GLSBS} and \cite[Section 2.3]{GLSB}. It is possible to describe other tori in $R(\dbr)$ by changing the double braid word from $\dbr$ to $\dbr'$, via a double braid move, and using the induced isomorphism $m:R(\dbr)\to R(\dbr')$ to pull
    %-back 
    $T_{\dbr'}\sse R(\dbr')$ back to $m^*(T_{\dbr'})\sse R(\dbr)$. In many cases, $T_\dbr,m^*(T_{\dbr'})\sse R(\dbr)$ are two different tori in the {\it same} double braid variety $R(\dbr)$.
\end{enumerate}

The main contribution of this section, summarized in \cref{lem:weave-tori-vs-deodhar-tori}, is the description of a Demazure weave $\DW(\dbr):\dbr^{(-|+)}\to \wo$ such that the image $\iso(T_\dbr)$ of the Deodhar torus $T_{\dbr} \sse R(\dbr)$ under the isomorphism $\iso$ coincides with the torus $T_{\DW(\dbr)} \sse X(\dbr^{(-|+)})$.

%%%%%%%%%%%%%%%%%%%%%%%%%%%%%%%%%%%%%%%%%%%%%%%%%%%%%%%%%%%%%%%%%%
%%%%%%%%%%%%%%%%%%%%%%%%%%%%%%%%%%%%%%%%%%%%%%%%%%%%%%%%%%%%%%%%%%
%%%%%%%%%%%%%%%%%%%%%%%%%%%%%%%%%%%%%%%%%%%%%%%%%%%%%%%%%%%%%%%%%%
\subsection{Demazure weaves} Weaves were introduced in \cite{CasalsZaslow} and further studied in \cite[Section 4]{CGGS} and \cite[Section 4]{CGGLSS}. They are the combinatorial object that drives the construction of cluster structures on braid varieties in \cite{CGGLSS}. Indeed, given a Demazure weave $\ww:\beta\to\delta(\beta)$, one names the cluster torus $T_\ww\sse X(\beta)$, the quiver $Q_\ww$ and the cluster variables $\xWe=\{A_i(\ww)\}$ all in terms of the combinatorics of $\ww$. We now briefly summarize their definition and then discuss double inductive weaves, leading to the construction of the weave $\DW(\dbr)$, whose associated torus matches the corresponding Deodhar torus.\\

\begin{definition}[Graph of braid words]\label{def:graph_braid_words}
The directed graph $\Graph=\Graph(\G)$ is the directed graph whose vertex set consists of all the braid words in the positive Artin generators of the braid group $\Br(\G)$, words being of arbitrary finite length. The set of arrows of $\Gamma$ is described as follows, where $\beta,\beta'$ denote two vertices of $\Gamma$:
\begin{itemize}
\item[(a)] There is an arrow $\beta\to\beta'$ if
$$\beta = \dots \underbrace{(iji \dots)}_{m_{ij} \; \text{letters}}\dots \, \to \beta' = \dots\underbrace{(jij\dots)}_{m_{ij} \; \text{letters}} \dots,$$
where $i,j$ are adjacent vertices of the Dynkin diagram of $\G$.\\
\item[(b)] There is an arrow $\beta\to\beta'$ if
$$\beta = \dots ik \dots \, \to \beta' = \dots ki \dots,$$
where $i,k$ are not adjacent in the Dynkin diagram of $\G$.\\
%\JS{I guess technically this also falls on (a), but I'd prefer to keep them separated}
\item[(c)] There is an arrow $\beta\to\beta'$ if
$$\beta = \dots ii \dots \, \to \beta' = \dots i \dots,$$
where $i$ is an arbitrary vertex of the Dynkin diagram of $\G$.\hfill$\Box$
\end{itemize}
\end{definition}

In \cref{def:graph_braid_words} the notation $\beta=\dots(\upsilon)\dots$ denotes that $\beta$ is a braid word which contains $\upsilon$ as a (consecutive) braid subword. Note that in cases (a),(b) of \cref{def:graph_braid_words}, $\Graph$ has arrows $\beta \to \beta'$ and $\beta' \to \beta$, while in (c), $\Graph$ has an arrow only in one direction $\beta \to \beta'$.

\begin{definition}[Demazure weave]\label{def:weave}
Let $\beta \in \Graph$ be a braid word. By definition, a \emph{Demazure weave} $\ww:\beta\to\delta(\beta)$ is a directed path on the graph $\Graph$ from $\beta$ to a reduced word for its Demazure product $\delta(\beta)$.\hfill$\Box$
\end{definition}

The Demazure product gives a bijection between the connected components of $\Graph$ and the elements of the Weyl group $W(\G)$. As explained earlier, we focus on the case $\delta(\beta)=\wo$ in this article, so we can consider vertices $\beta$ in the connected component $\Graph_{\circ}\sse\Graph$ consisting of elements with Demazure product $\wo$, and thus weaves $\ww:\beta\to\wo$.

Demazure weaves can be drawn diagrammatically in the plane, as follows. First, the braid word $\beta \in I^l$ is drawn as a sequence of $l$ vertical strings colored by the elements of $I$: these are drawn so that they spell $\beta$ left-to-right. Second, each arrow in the path is represented by the following % are of the types (a), (b) and (c) above, they are correspondingly drawn by 
intertwinings of the vertical strings, going from top to bottom:

\begin{center}
\begin{tikzcd}
\node at (-0.5, 3) {\text{(a)}};
\node at (-0.8, 2.5) {m_{ij} = 3};
\draw[dashed] (0,0) -- (3,0) -- (3,3) -- (0,3)-- cycle;
\draw[red] (0.5, 3) to[out=270, in=135] (1.5, 1.5);
\node at (0.3, 2.5) {{i}};
\draw[blue] (1.5, 3) to (1.5, 1.5);
\node at (1.3, 2.5) {{j}};
\draw[red] (2.5, 3) to [out=270, in=45] (1.5, 1.5);
\node at (2.3, 2.5) {{i}};
\draw[blue] (1.5, 1.5) to [out=225, in=90] (0.5, 0);
\draw[red] (1.5, 1.5) to (1.5, 0);
\draw[blue] (1.5, 1.5) to[out=315, in=90] (2.5, 0);

\node at (4.5, 3) {\text{(a)}};
\node at (4.2, 2.5) {m_{ij} = 4};
\draw[dashed] (5,3)--(8,3)--(8,0)--(5,0)--cycle;
\draw[red] (5.3, 3) to[out=270, in=90] (7.7, 0);
\draw[blue] (6.1, 3) to[out=270, in=90] (6.9, 0);
\draw[red] (6.9, 3) to[out=270, in=90] (6.1, 0);
\draw[blue] (7.7, 3) to[out=270, in=90] (5.3, 0);

\node at (9.5, 3) {\text{(a)}};
\node at (9.2, 2.5) {m_{ij} = 6};
\draw[dashed] (10,3)--(13,3)--(13,0)--(10,0)--cycle;
\draw[red] (10.25, 3) to[out=270,in=90] (12.75, 0);
\draw[blue] (10.75, 3) to[out=270,in=90] (12.25,0);
\draw[red] (11.25, 3) to[out=270,in=90] (11.75, 0);
\draw[blue] (11.75, 3) to[out=270,in=90] (11.25, 0);
\draw[red] (12.25, 3) to[out=270,in=90] (10.75, 0);
\draw[blue] (12.75, 3) to[out=270,in=90] (10.25, 0);

\node at (-0.5, -1) {\text{(b)}};
\draw[dashed] (0,-1)--(3,-1)--(3,-4)--(0,-4)--cycle;
\draw[red] (0.5, -1) to[out=270, in=90] (2.5, -4);
\node at (0.3, -1.5) {{i}};
\draw[teal] (2.5, -1) to[out=270, in=90] (0.5, -4);
\node at (2.2, -1.5) {{k}};

\node at (9.5, -1) {\text{(c)}};
\draw[dashed] (10,-1)--(13,-1)--(13,-4)--(10,-4)--cycle;
\draw[red] (10.5, -1) to[out=270, in=135] (11.5,-2.5);
\node at (10.3, -1.5) {{i}};
\draw[red] (12.5, -1) to[out=270, in=45] (11.5, -2.5);
\node at (12.3, -1.5) {{i}};
\draw[red] (11.5, -2.5) to (11.5, -4);
\end{tikzcd}
\end{center}

Each of these five pictures describes a move $\beta\to\beta'$. The top horizontal slice in any of these five pictures above corresponds to the braid word $\beta$, which is spelled left-to-right by the vertical strings above, and the bottom horizontal slice corresponds to the braid word $\beta'$, which is spelled left-to-right by the vertical strings below. From this diagrammatic viewpoint, a Demazure weave $\ww:\beta\to\delta(\beta)$ can be also described as a planar graph whose edges are colored by the elements of $I$, and with vertices of different types: $(2m_{ij})$-valent vertices as in (a), tetravalent vertices as in (b), trivalent vertices as in (c), and univalent vertices that are located at the top and bottom of the weave, spelling $\beta$ at the top and a reduced word for $\delta(\beta)$ at the bottom.\\

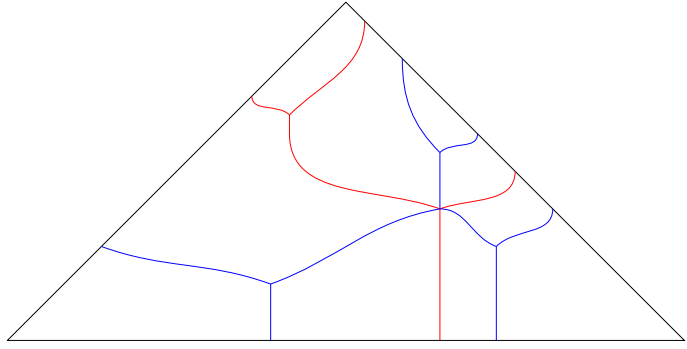
\begin{figure}[h!]
    \centering
    \adjustbox{scale=0.5}{
   \begin{tikzcd}
\draw (-1,-1) to (17,-1) to (8,8) to (-1,-1);

\draw[color=red] (8.5, 7.5) to[out=270, in=45] (6.5, 5);
\draw[color=blue] (9.5, 6.5) to[out=270,in=135] (10.5, 4);
\draw[color=red] (5.5,5.5) to[out=270,in=135] (6.5, 5) to (6.5, 4.5) to [out=270,in=160] (10.5, 2.5);
\draw[color=blue] (11.5, 4.5) to[out=270,in=45] (10.5, 4) to (10.5, 2.5);
\draw[color=red] (12.5, 3.5) to[out=270, in=20] (10.5, 2.5);

\draw[color=red] (10.5, 2.5) to (10.5,-1);

\draw[color=blue] (10.5, 2.5) to[out=0,in=160] (12, 1.5) to (12, -1);

\draw[color=blue] (13.5, 2.5) to[out=270, in=45] (12, 1.5);

\draw[color=blue] (1.5, 1.5) to[out=340, in=160] (6, 0.5);

\draw[color=blue] (10.5, 2.5) to[out=190,in=20] (6, 0.5) to (6,-1);
\end{tikzcd}
}
    \caption{A Demazure weave $\ww:1221121\to121$ drawn in an upward pointing triangle $\triangle$. In this figure, blue is color 1 and red is color 2. This weave $\ww$ is the double inductive weave associated to the double string $(2R, 1R, 1^{*}L, 1R, 2R, 1R, 2^{*}L)$ and double braid word $(-2,1,2,1,-1,1,2)$ from Example \ref{ex:from-double-braid-to-double-string-running-example}. To recover the double string from the weave, scan top to bottom and record $iR$ (or $i^*L$) when you see a weave line of color $i$ on the right (or left) side of $\triangle$. To recover the double braid word, scan bottom to top and record $i$ (or $-i^*$) when you see a weave line of color $i$ on the right (or left) side of $\triangle$.}
    \label{fig:ind-weave-triangle}
\end{figure}

From now onward, a weave $\ww$ will refer to such a planar edge-colored graph. An edge of this colored planar graph is said to be an \emph{edge} or \emph{weave line}, and \emph{top edge} (resp.~\emph{bottom edge}) is edge adjacent to a univalent vertex on the top (resp.~ bottom) of $\weave$. We denote by $E(\weave)$ the set of edges of a weave $\weave$. We often draw $\weave$ inside an upward pointing triangle $\triangle$, as in \cref{fig:ind-weave-triangle}, with one side parallel to the $x$-axis. The \emph{bottom boundary} of this triangle is the horizontal base side, and the \emph{top boundary} is the union of the two other sides, adjacent to the topmost vertex of the triangle. We place the univalent vertices spelling $\beta$ on the top boundary of $\triangle$ and those spelling $\delta(\beta)$ on the bottom boundary. Figure \ref{fig:ind-weave-triangle} depicts a Demazure weave $\ww:\beta\to\wo$ for the braid word $\beta = 1221121$, where the word for its Demazure product is $\delta(\beta)=\wo=121$.

\begin{remark}[Weave equivalences] \label{rmk:equivalence_weave}There are certain local moves between weaves which are said to be equivalences. Though we do not give the precise definition here, we refer to \cite[Section 4.1]{CasalsZaslow}, \cite[Section 4.2]{CGGS} and \cite[Section 4.2 and Section 6.3]{CGGLSS} for the equivalence moves and necessary definitions. The discussions in this article related to weaves are all well-defined for an equivalence class of weaves, and not just a weave itself.\hfill$\Box$
\end{remark}

%%%%%%%%%%%%%%%%%%%%%%%%%%%%%%%%%%%%%%%%%%%%%%%%%%%%%%%%%%%%%%%%%%
%%%%%%%%%%%%%%%%%%%%%%%%%%%%%%%%%%%%%%%%%%%%%%%%%%%%%%%%%%%%%%%%%%
%%%%%%%%%%%%%%%%%%%%%%%%%%%%%%%%%%%%%%%%%%%%%%%%%%%%%%%%%%%%%%%%%%
\subsection{Weave tori}\label{ssec:weave_tori} We now explain how each Demazure weave $\ww:\beta\to\delta(\beta)$ specifies an open algebraic torus $T_\ww\sse X(\beta)$. We will use the following concept.

\begin{definition}\label{def:weave-flag-labeling} Let $\beta$ be a braid word and $\ww:\beta\to\wo$ a Demazure weave, drawn in a triangle $\ww\sse\triangle$. A \emph{flag labeling} of $\ww$ is an assignment of a flag $\borel_C$ to each connected component $C\sse\triangle \setminus \weave$ of the complement of $\ww$ such that:
\begin{enumerate}
\item If $C$ and $D$ are separated by an edge of color $i$, then $\borel_{C} \Rrel{s_i} \borel_{D}$.
\item $\borel_{C_{+}} = \borel_{+}$ and $\borel_{C_{-}} = \wo \borel_{+}$,
\end{enumerate}
where $C_{+}$ (resp.~$C_{-}$) is the only connected component of $\triangle \setminus \weave$ containing the left (resp.~right) vertex of the bottom boundary of the triangle $\triangle$.\hfill$\Box$
\end{definition}

By construction, given a flag labeling of $\weave$, the tuple consisting of flags at the top regions (those that intersect the top boundary of $\triangle$) determines a point of the braid variety $X(\beta)$. This tuple of flags is said to be the \emph{top tuple} of the flag labeling. By \cite[Lemma 3.1]{CGGLSS}, the top tuple of a flag labeling uniquely determines all other flags in that flag labeling. That is, giving the top tuple of a flag labeling is the same as specifying a point in $X(\beta)$, and such flags in the top tuple uniquely propagate down to a flag labeling for the weave $\ww:\beta\to\delta(\beta)$. Given a weave $\ww:\beta\to\delta(\beta)$, not all points of $X(\beta)$ are the top tuple of a flag labeling for $\ww$. In fact, such locus of points in $X(\beta)$ is the required weave torus:

\begin{definition}\label{lem:weave-torus}
Let $\weave:\beta\to\delta(\beta)$ be a Demazure weave. By definition, the \emph{weave torus} $T_\ww\sse X(\beta)$ associated to $\ww$ is
$$T_\ww:=\{(\borel_0, \dots, \borel_l) \in X(\beta)\mbox{ s.t.~$(\borel_0, \dots, \borel_l)$ is the top tuple of a flag labeling of }\weave\},$$
i.e.~the locus of points in $X(\beta)$ that appear as top tuples for some flag labeling of $\ww$.\hfill$\Box$
\end{definition}

It is proven in \cite[Lemma 4.1]{CGGLSS} that $T_\ww$ is indeed isomorphic to an algebraic torus and it is open in $X(\beta)$, thus the nomenclature in \cref{lem:weave-torus}. As an instance of \cref{rmk:equivalence_weave}, the results of \cite[Sections 5\&6]{CGGS} imply that the weave tori $T_{\weave},T_{\weave'}\sse X(\beta)$ associated to two equivalent weaves $\weave, \weave':\beta\to\delta(\beta)$ must coincide $T_{\weave}=T_{\weave'}$ as subvarieties of $X(\beta)$.

\begin{example}\label{ex:running_example_weave_torus}
Consider the braid word $\beta=1221121$, as in \cref{ex:running}, and the Demazure weave $\ww:\beta\to\wo$ depicted in \cref{fig:ind-weave-triangle}. By using the $z$-coordinates in \cref{ex:example_zcoordinates}, we expressed $X(\beta)\sse\C^7$ as a closed subvariety of $\C^7$. In these coordinates, the open torus $T_\ww\sse X(\beta)$ corresponding to $\ww$ is
$$T_\ww=\{(z_1,\ldots,z_7)\in X(\beta): z_3\neq0,z_5\neq0,z_3z_5z_7-z_3z_6-1\neq0,z_3z_6-z_5z_4+1\neq0\}.$$
The general method to obtain such equations is explained in \cite[Section 5]{CGGLSS}. For instance, given the top tuple $(B_0,\ldots,B_7)\in X(\beta)$ of any flag labeling of $\ww$, which is equivalent to giving $(z_1,\ldots,z_7)\in T_\ww \sse X(\beta)$, %\MG{or rather in $T_\ww \sse X(\beta)$?}, 
the weave $\ww$ is such that any flag labeling must satisfy:
\begin{enumerate}
    \item $B_1\stackrel{s_2}{\to}B_3$, which translates to the condition $z_3\neq0$,
    \item $B_3\stackrel{s_1}{\to}B_5$, which translates to the condition $z_5\neq0$.
\end{enumerate}
There are two more transversality conditions, expressed by $z_3z_5z_7-z_3z_6-1\neq0$ and $z_3z_6-z_5z_4+1\neq0$, which translate to $B_0$ and $B_7$ being $s_1$-transverse to $F_1$ and $F_2$ respectively, where $F_1,F_2\in \G/\borel$ are certain flags obtained from the top tuple $(B_0,\ldots,B_7)$. Specifically, $F_1,F_2$ are the two flags assigned to the two connected components of $\triangle \setminus \weave$ which are not adjacent to the top boundary of $\triangle$, cf.~\cref{fig:ind-weave-triangle-B-labels} below. \hfill$\Box$
\end{example}

%%%%%%%%%%%%%%%%%%%%%%%%%%%%%%%%%%%%%%%%%%%%%%%%%%%%%%%%%%%%%%%%%%
%%%%%%%%%%%%%%%%%%%%%%%%%%%%%%%%%%%%%%%%%%%%%%%%%%%%%%%%%%%%%%%%%%
%%%%%%%%%%%%%%%%%%%%%%%%%%%%%%%%%%%%%%%%%%%%%%%%%%%%%%%%%%%%%%%%%%
\subsection{Double inductive weaves}\label{ssec:double_inductive_weaves} Given a double braid word $\dbr$, %the input data in \cite{GLSBS,GLSB}, 
the Demazure weave $\ww:\brWe\lr\wo$ whose weave torus $T_\ww$ coincides with $\iso(T_\dbr\sse) R(\dbr)$ is a rather particular type of Demazure weave, called a double inductive weave. These were introduced in \cite[Section 6.4]{CGGLSS}. We recall double inductive weaves here and establish \cref{lem:description-double-inductive-weave}, which is the key result to compare the Deodhar torus $T_\dbr$ in $R(\dbr)$ to a weave torus in $X(\beta)$.

%%%%%%%%%%%%%%%%%%%%%%%%%%%%%%%%%%%%%%%%%%%%%%%%%%%%%%%%%%%%%%%%%%
%%%%%%%%%%%%%%%%%%%%%%%%%%%%%%%%%%%%%%%%%%%%%%%%%%%%%%%%%%%%%%%%%%
\subsubsection{Double strings} Double inductive weaves $\ww:\br\lr\wo$ are a type of Demazure weaves determined by the following piece of combinatorial data, cf.~\cite[Section 6.4]{CGGLSS}.

\begin{definition}[Double strings]\label{def:dbl-string-and-braid-word-seq}
A \emph{double string} $\dstr=(i_1 A_1, \dots, i_l A_{l})$ is a tuple of entries of the form $iA$, where $i \in I$ and $A \in \{L,R\}$. A double string $\dstr$ determines a sequence of braid words $\br^{\dstr}_0, \dots, \br^{\dstr}_l$ by the rule
	\[\br^{\dstr}_0= e  \quad \text{and} \quad  \br^{\dstr}_{\k}= \begin{cases}
		\br^{\dstr}_{\k-1} i_\k & \text{if } A_\k=R\\
		i_k \br^{\dstr}_{\k-1} & \text{if } A_\k=L.
	\end{cases}\]
The notation $L,R$ in $\dstr$ stands for Left and Right. By definition, $\dstr$ is said to be a double string for $\br^{\dstr}_l$. To ease notation, the Demazure product $\delta(\br_k^{\dstr})$ is denoted by $w_k^{\dstr}$.\hfill$\Box$
\end{definition}

%%%%%%%%%%%%%%%%%%%%%%%%%%%%%%%%%%%%%%%%%%%%%%%%%%%%%%%%%%%%%%%%%%
%%%%%%%%%%%%%%%%%%%%%%%%%%%%%%%%%%%%%%%%%%%%%%%%%%%%%%%%%%%%%%%%%%
\subsubsection{Double inductive weaves}\label{sssec:double_inductive_weaves} We draw double inductive weaves $\ww$ in a triangle $\triangle$ embedded in the $(x,y)$-plane, as in \cref{fig:ind-weave-triangle}. By definition, a point $(x,y)$ is said to have \emph{depth} $d:=-y$ and the \emph{slice} of a weave $\ww\sse\triangle$ at depth $d$ refers to the list of weave lines encountered at depth $d$, read left-to-right. For reference, the top vertex of the triangle is always drawn at depth $-\varepsilon$ and the (horizontal) base of the triangle is at depth $d=l + \varepsilon$, $l$ being the length of $\beta$ and $\varepsilon\in\R_+$ small enough.

\begin{definition}[Double inductive weaves]\label{def:dbl-inductive-weave}
	Let $\dstr = (i_1 A_1, \dots, i_l A_{l})$ be a double string for a braid word $\br$. By definition, the double inductive weave $\DW$ associated to $\dstr$ as follows. First, each entry $i_\k A_\k$ of $\dstr$ corresponds to a weave line of color $i_\k$ which starts at depth $d=\k-0.4$ on the $A_\k$ side of the triangle and goes down. Second, $\DW$ is then constructed scanning top-to-bottom, according to the following:
 	\begin{enumerate}
		\item The slice at $d=\k$ gives a reduced expression for the Demazure product $w_\k^{\dstr}$.\\
        
		\item If $w_{\k-1}^{\dstr} \neq w_{\k}^{\dstr}$, then there are no vertices between the slices at $d=\k-1$ and $d=\k$ and the weave lines continue vertically through this strip.\\
        
		\item If $w_{\k-1}^{\dstr}= w_{\k}^{\dstr}$, then there are only vertices of degree larger than 3 between the slices at $d=\k-1$ and $d=\k-0.5$. In this case, the slice at $d=\k-0.5$ must be a reduced expression for $w_{\k-1}^{\dstr}$ which ends in $i_{\k}$ if $A_{\k}=R$, and which begins in $i_{\k}$ if $A_{\k}=L$. Between the slices $d=\k-0.4$ and $d=\k$, there is exactly one vertex $\vertex$, which is trivalent and involves the new weave line colored $i_\k$ that starts at $d=\k-0.4$. 
 	\end{enumerate}

Finally, for $\k\in[1,l]$, we define $\DW_\k$ to be the truncation of the weave $\DW$ up to depth $\k$, i.e.~$\DW_\k$ consists of the strips from depth $d=0-\varepsilon$ to depth $\k$.\hfill$\Box$
\end{definition}

Note that $\DW_{l} = \DW$ and that $\DW_\k$ is a double inductive weave for the braid word $\br^{\dstr}_{\k}$ according to \cite[Section 6.4]{CGGLSS}. As introduced in \cref{def:dbl-inductive-weave}, there is an ambiguity in the definition of $\DW$, as different choices of vertices between $d=k-1$ and $d=k-0.5$ result in technically different weaves. They are nevertheless equivalent, cf.~\cref{rmk:equivalence_weave}, and we denote all such equivalent weaves by $\DW$. See \cref{fig:ind-weave-triangle-B-labels} for an instance of a double inductive weave as in \cref{def:dbl-inductive-weave}, with the relevant slices depicted as dashed horizontal lines: it is the double inductive weave associated to the double string $\dstr=(2R, 1R, 2L, 1R, 2R, 1R, 1L)$.

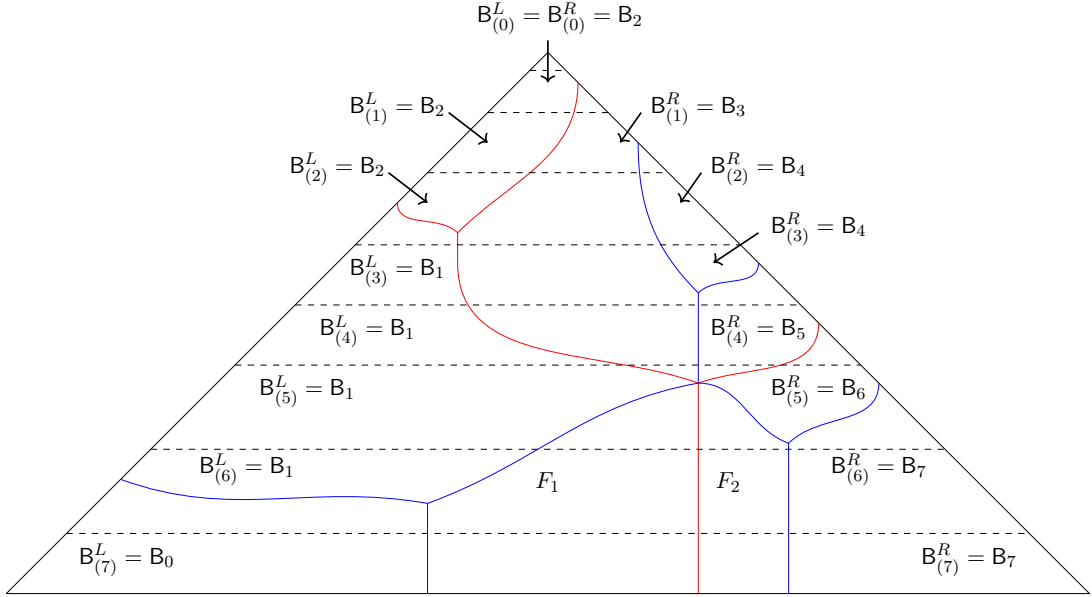
\begin{figure}[h!]
    \centering
    \adjustbox{scale=0.8}{
\begin{tikzcd}
\draw (-1,-1) to (17,-1) to (8,8) to (-1,-1);

\draw[color=red] (8.5, 7.5) to[out=270, in=45] (6.5, 5);
\draw[color=blue] (9.5, 6.5) to[out=270,in=135] (10.5, 4);
\draw[color=red] (5.5,5.5) to[out=270,in=135] (6.5, 5) to (6.5, 4.5) to [out=270,in=160] (10.5, 2.5);
\draw[color=blue] (11.5, 4.5) to[out=270,in=45] (10.5, 4) to (10.5, 2.5);
\draw[color=red] (12.5, 3.5) to[out=270, in=20] (10.5, 2.5);

\draw[color=red] (10.5, 2.5) to (10.5,-1);

\draw[color=blue] (10.5, 2.5) to[out=0,in=160] (12, 1.5) to (12, -1);

\draw[color=blue] (13.5, 2.5) to[out=270, in=45] (12, 1.5);

\draw[color=blue] (0.9, 0.9) to[out=340, in=170] (6, 0.5);

\draw[color=blue] (10.5, 2.5) to[out=190,in=20] (6, 0.5) to (6,-1);

\draw[dashed] (0,0) to (16,0);

\draw[dashed] (1.4,1.4) to (14.6,1.4);

\draw[dashed] (2.8, 2.8) to (13.2, 2.8);

\draw[dashed] (3.8, 3.8) to (12.2, 3.8);

\draw[dashed] (4.8, 4.8) to (11.2, 4.8);

\draw[dashed] (6, 6) to (10, 6);

\draw[dashed] (7,7) to (9,7);

\draw[dashed] (7.7, 7.7) to (8.3, 7.7);

\node at (8, 0.75) {F_1};
\node at (11, 0.75) {F_2};

\node at (1, -0.5) {\mathsf{B}^L_{(7)} = \mathsf{B}_0};

\node at (15, -0.5){\mathsf{B}^R_{(7)} = \mathsf{B}_7};

\node at (3, 1) {\mathsf{B}^L_{(6)} = \mathsf{B}_1};
\node at (13.5, 1) {\mathsf{B}^R_{(6)} = \mathsf{B}_7};

\node at (4, 2.3) {\mathsf{B}^L_{(5)} = \mathsf{B}_1};
\node at (12.5, 2.3) {\mathsf{B}^R_{(5)} = \mathsf{B}_6};

\node at (5, 3.3) {\mathsf{B}^L_{(4)} = \mathsf{B}_1};
\node at (11.5, 3.3) {\mathsf{B}^R_{(4)} = \mathsf{B}_5};

\node at (5.5, 4.3) {\mathsf{B}^L_{(3)} = \mathsf{B}_1};
\node at (12.5, 5) {\mathsf{B}^R_{(3)} = \mathsf{B}_4};
\draw[thick, ->] (11.5, 5) to (10.75, 4.5);

\node at (4.5, 6)  {\mathsf{B}^L_{(2)} = \mathsf{B}_2};
\draw[thick, ->] (5.35, 6) to (6, 5.5);

\node at (11.5, 6)  {\mathsf{B}^R_{(2)} = \mathsf{B}_4};
\draw[thick, ->] (10.55, 6) to (10.2, 5.5);

\node at (5.5, 7)  {\mathsf{B}^L_{(1)} = \mathsf{B}_2};
\draw[thick, ->] (6.35, 7) to (7, 6.5);

\node at (10.5, 7)  {\mathsf{B}^R_{(1)} = \mathsf{B}_3};
\draw[thick, ->] (9.55, 7) to (9.2, 6.5);

\node at (8.2, 8.5) {\mathsf{B}^L_{(0)} = \mathsf{B}^R_{(0)} = \mathsf{B}_2};
\draw[thick, ->] (8, 8.2) to (8, 7.5);

\end{tikzcd} 
    
    }
    \caption{The weave $\DW$ for the double string $(2R, 1R, 1^{*}L, 1R, 2R, 1R, 2^{*}L)$ and double braid word $(-2,1,2,1, -1,1,2)$, together with the labeling of the regions of $\triangle \setminus \DW$ by the flags $\borel_i$. The flags $\borel_{(\c)}, \borel'_{(\c)}$ label the regions neighboring the left and right sides, respectively, of the triangle $\triangle$. Compare with the diagram \eqref{fig:triangle_with_flags} in \cref{ex:running-iso}.
    }
    \label{fig:ind-weave-triangle-B-labels}
\end{figure}

%%%%%%%%%%%%%%%%%%%%%%%%%%%%%%%%%%%%%%%%%%%%%%%%%%%%%%%%%%%%%%%%%%
%%%%%%%%%%%%%%%%%%%%%%%%%%%%%%%%%%%%%%%%%%%%%%%%%%%%%%%%%%%%%%%%%%
\subsubsection{Weave tori for double inductive weaves}\label{sssec:weavetori_doubleinductive} Let us provide another description of the weave torus $T_{\DW} \sse X(\br^{\dstr}_l)$ associated to a double inductive weave $\DW\sse\triangle$ for a double string $\dstr$ of length $l$.

For each  $\c\in[0,l]$, let $L_{\c}$ (resp.~$R_{\c}$) be the leftmost (resp.~rightmost) connected component of $\triangle \setminus \DW$ intersected by a horizontal line of depth $\d = \c$. Note that $L_0 = R_0$ is the unique component touching the apex of the triangle, $L_{l}$ contains the bottom left vertex and $R_{l}$ contains the bottom right vertex. In addition, for each $\c\in[0,l - 1]$, $L_{\c} = L_{\c +1}$ (resp. $R_{\c} = R_{\c+1}$) if and only if in the double string $\dstr$ we have $A_{\c+1} = R$ (resp. $A_{\c+1} = L$). 

For each point $\borel_{\bullet}=(\borel_0, \borel_1, \dots, \borel_{l}) \in X(\br^{\dstr}_l)$, place the flags in the top regions of $\triangle \setminus \DW$, starting with $\borel_0$ in the region $L_{l}$ and moving parallel to the side edges of the triangle. These uniquely determine a flag labeling, cf.~\cref{ssec:weave_tori}. For each $\k\in[0,l]$, let $\borel_{(\k)}^L$, resp.~$\borel_{(\k)}^R$, be the flag labeling the region $L_\k$, resp.~$R_\k$. See \cref{fig:ind-weave-triangle-B-labels} for an example with all such labels.

Points $\borel_{\bullet}\in X(\br^{\dstr}_l)$ that belong to the weave torus $T_{\DW} \sse X(\br^{\dstr}_l)$ are then characterized by the following property:

\begin{lemma}\label{lem:description-double-inductive-weave}
Let $\dstr$ a double string and $\DW$ the corresponding double inductive weave. 
Then 
\begin{center}
$\borel_{\bullet} \in T_{\DW}\Longleftrightarrow$ $\borel_{(\k)}^L \Rrel{w_{\k}^{\dstr}} \borel^R_{(\k)}$, $\forall k\in[0,l]$.
\end{center}

\end{lemma}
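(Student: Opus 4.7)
The plan is to establish the two implications separately. The forward direction amounts to reading off slices of $\DW$, while the backward direction proceeds by induction on $l$, where the inductive construction of $\DW$ from \cref{def:dbl-inductive-weave} makes the argument local between consecutive depths.

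For the forward direction, suppose $\borel_{\bullet}\in T_{\DW}$ and fix a flag labeling of $\DW$ with top tuple $\borel_{\bullet}$. For any $k\in[0,l]$, by \cref{def:dbl-inductive-weave}(1) the horizontal slice at depth $d=k$ traverses a sequence of weave lines whose colors $(j_1,\dots,j_r)$ form a reduced expression for $w_k^{\dstr}$. The regions crossed, read left-to-right, are $L_k=C_0,C_1,\dots,C_r=R_k$, and consecutive regions $C_{p-1},C_p$ are separated by an edge of color $j_p$. The flag labeling axiom (\cref{def:weave-flag-labeling}) then gives the chain $\borel_{(k)}^L=\borel_{C_0}\Rrel{s_{j_1}}\borel_{C_1}\Rrel{s_{j_2}}\cdots\Rrel{s_{j_r}}\borel_{C_r}=\borel_{(k)}^R$, and reducedness of $(j_1,\dots,j_r)$ yields $\borel_{(k)}^L\Rrel{w_k^{\dstr}}\borel_{(k)}^R$.

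For the backward direction, I would induct on $l$. The base case $l=0$ is immediate: $\DW$ is a single apex region, $w_0^{\dstr}=e$, and the condition $\borel_{(0)}^L=\borel_{(0)}^R$ is automatic. For the inductive step, write $\dstr=(\dstr',i_lA_l)$ of length $l$, so that $\DW_{l-1}$ is the truncation at depth $l-1$. Applying the inductive hypothesis to the image of $\borel_{\bullet}$ in $X(\br^{\dstr}_{l-1})$ (obtained by forgetting the new top-tuple flag), the depth-$k$ conditions for $k\le l-1$ already produce a valid flag labeling of $\DW_{l-1}$. By \cref{def:dbl-inductive-weave}, the passage from $\DW_{l-1}$ to $\DW$ is one of two local pictures: \textbf{Case A}, where $\ell(w_l^{\dstr})=\ell(w_{l-1}^{\dstr})+1$ and no vertex is added, or \textbf{Case B}, where $w_l^{\dstr}=w_{l-1}^{\dstr}$ and a single trivalent vertex of color $i_l$ is added. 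In Case A the extension of the flag labeling to $\DW$ needs no new constraint: the new boundary region carries the new top-tuple flag and the $s_{i_l}$-relation across the freshly added weave line is guaranteed by $\borel_{\bullet}\in X(\br^{\dstr}_l)$.

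Case B is the main obstacle. Here I must verify that the three regions around the trivalent vertex carry pairwise $s_{i_l}$-distinct flags, which is the only condition required beyond the labeling of $\DW_{l-1}$. Assuming $A_l=R$ for concreteness, the depth-$(l-1)$ and depth-$l$ hypotheses combine into the chain
\[
\borel_{(l)}^L=\borel_{(l-1)}^L\Rrel{w_{l-1}^{\dstr}}\borel_{(l-1)}^R\Rrel{s_{i_l}}\borel_{(l)}^R,
\]
whose composed relative position is bounded above in Bruhat order by the Demazure product $w_{l-1}^{\dstr}*s_{i_l}=w_{l-1}^{\dstr}$. The hypothesis $\borel_{(l)}^L\Rrel{w_l^{\dstr}}\borel_{(l)}^R$ with $w_l^{\dstr}=w_{l-1}^{\dstr}$ asserts that this upper bound is attained. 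I would then identify the three regions of $\triangle\setminus\DW$ around the trivalent vertex with three explicit flags determined by the above chain and invoke the standard dictionary between Bruhat-order non-decrease and generic relative position on the intermediate $\bP^1$-fiber of $P_{i_l}/\B$ to conclude that these three flags are pairwise $s_{i_l}$-transverse. This last step is where I expect to lean on the local analysis of double inductive weaves in \cite[Section~6.4]{CGGLSS}, which carries out essentially this computation at each newly-added trivalent vertex.
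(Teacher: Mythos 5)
Your proof is correct and follows essentially the same strategy as the paper's: the forward direction reads off the slice at depth $k$, and the backward direction inducts on depth, with the crux being that the depth-$l$ relative-position hypothesis forces the three flags around the newly added trivalent vertex to be pairwise $s_{i_l}$-transverse. Where you defer to ``the standard dictionary'' and a reference to \cite[Section 6.4]{CGGLSS}, the paper instead makes the contradiction explicit: it extends the labeling through the $\geq 4$-valent strip, observes via \cref{lem:properties-rel-pos}(4) that failure can only occur when the new far-side flag coincides with the flag across the trivalent vertex's southern edge, and notes that such a coincidence would shorten the $\borel^L_{(l)}$-to-$\borel^R_{(l)}$ relative position to $s_{i_l} w_l^{\dstr} < w_l^{\dstr}$, contradicting the hypothesis.
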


\begin{proof}
$(\Rightarrow)$ Let us show that $(\borel_0, \dots, \borel_{l}) \in T_{\DW}$ implies $\borel_{(\k)}^L \Rrel{w_{\k}^{\dstr}} \borel_{(\k)}^R$ for all $k$. Indeed, fix a flag labeling of $\DW$ with top regions labeled by $(\borel_0, \dots, \borel_{l})$ and restrict this labeling to $\DW_k$. The left region of $\DW_\k$ is labeled by $\borel_{(\k)}^L$ and the right region is labeled by $\borel^R_{(\k)}$. Since the bottom edges of $\DW_\k$ spell a reduced word for $\delta(\br_\k^{\dstr})= w_\k^{\dstr}$, which is a permutation, by \cref{lem:properties-rel-pos} (2) we must have $\borel_{(\k)}^L \Rrel{w_{\k}^{\dstr}} \borel^R_{(\k)}$.\\

$(\Leftarrow)$ Supppose that $\borel_{(\k)}^L \Rrel{w_{\k}^{\dstr}} \borel^R_{(\k)}$ for all $k\in[0,l]$. We show inductively that the sequence $(\borel^L_{(\k)}, \dots, \borel^R_{(\k)})$ appear as the top tuple of a flag labeling for the partial double inductive weave $\DW_{\k}$ obtained after $\k$ steps, relaxing the condition on the leftmost and rightmost flags if necessary. Once this is proven, this implies that $(\borel_0, \dots, \borel_{l}) \in T_{\DW}$, as required. The base case is $k = 1$, and since the weave $\DW_{1}$ is simply a line, the statement is readily true. For the inductive step, suppose that $(\borel^L_{(\k)}, \dots, \borel^R_{(\k)})$ is the top tuple of a flag labeling $\mathcal{L}$ of $\DW_{k}$. For concreteness, assume that $A_{k+1} = L$: the proof in the case $A_{k+1} = R$ is similar. Then $\br^{\dstr}_{k+1} = i_{a_{k}-1}\br^{\dstr}_{k}$, and we need to show that
$$(\borel^L_{(k+1)}, \borel^L_{(k)}, \dots, \borel^R_{(k)} = \borel^R_{(k+1)})$$
is the top tuple of a flag labeling of $\DW_{k+1}$. By construction, $\DW_{k+1}$ is obtained from $\DW_{k}$ by adding first a string on the left and then, possibly, a strip of vertices with degree at least 4, followed by a strip with a trivalent vertex. We label $\DW_{k+1}$ by extending $\mathcal{L}$ to a flag labeling of the strip of degree $\ge 4$ vertices (if it exists) and then placing $\borel^L_{(k+1)}$ in the far left region. By \cref{lem:properties-rel-pos}(4), this fails to be a valid flag labeling of $\DW_{k+1}$ exactly when $\DW_{k+1}$ and $\DW_k$ differ by a trivalent vertex (equivalently, when
%i.e.
~$w_{k+1}^{\dstr}= w_{k}^{\dstr}$) and $\borel^L_{(k+1)}$ is equal to the flag $\borel$ to the right of the trivalent vertex. In that case the distance between $\borel^L_{(k+1)}=\borel$ and $\borel^R_{(k+1)} = \borel^R_{(k)}$ is $s_{i_{a_k}}w_{k+1}^{\dstr}$, which is strictly less than $w_{k+1}^{\dstr}$: this is a contradiction and thus the resulting flag labeling must have been valid.
\end{proof}

\begin{example} 
Note that the weave $\ww$ from \cref{ex:running_example_weave_torus} is double inductive. Let us verify that the description of the torus $T_{\ww}$ given by \cref{lem:description-double-inductive-weave} coincides with the one obtained in \cref{ex:running_example_weave_torus}. Let us recall that in \cref{ex:running_example_weave_torus} we obtained that $T_{\ww}$ is given by the conditions:
\begin{equation}\label{eq:conditions-running-example-weave-torus}
\borel_1 \Rrel{s_2} \borel_3, \qquad \borel_3 \Rrel{s_1} \borel_5, \qquad \borel_0 \Rrel{s_1} F_1, \qquad F_2 \Rrel{s_1} \borel_7,
\end{equation}
where $F_1$ and $F_2$ are flags obtained from the top tuple $(\borel_0, \dots, \borel_7)$, see \cref{fig:ind-weave-triangle-B-labels}. Reading this figure top-to-bottom, we also obtain that the description of $T_{\ww}$ in \cref{lem:description-double-inductive-weave} is:
\begin{equation}\label{eq:conditions-running-example-from-lemma}
\borel_2 \Rrel{s_2} \borel_3, \; \borel_2 \Rrel{s_2s_1} \borel_4, \; \borel_1 \Rrel{s_2s_1}\borel_4, \; \borel_1\Rrel{s_2s_1}\borel_5, \; \borel_1 \Rrel{\wo} \borel_6, \; \borel_1 \Rrel{\wo}\borel_7, \; \borel_0 \Rrel{\wo} \borel_7. 
\end{equation}

By the definition of the braid variety $X(\br)$ and \cref{lem:properties-rel-pos} (2), the first two conditions in \eqref{eq:conditions-running-example-from-lemma} are valid for any element of $X(\br)$, so they can be omitted. By \cref{lem:properties-rel-pos}(2), we have that $\borel_1 \Rrel{s_2s_1}\borel_4$ is equivalent to $\borel_1 \neq \borel_3$, and by \cref{lem:properties-rel-pos}(4) this is in turn equivalent to $\borel_1 \Rrel{s_2}\borel_3$, i.e. the conditions $\borel_1 \Rrel{s_2} \borel_3$ and $\borel_1 \Rrel{s_2s_1}\borel_4$ are equivalent. Similarly, assuming $\borel_1 \Rrel{s_2}\borel_3$, the condition $\borel_3 \Rrel{s_1}\borel_5$ is equivalent to $\borel_1 \Rrel{s_2s_1}\borel_5$.

By \cref{lem:properties-rel-pos}(2) and the definition of $X(\br)$, the condition $\borel_1 \Rrel{s_2s_1}\borel_5$ implies $\borel_1 \Rrel{\wo} \borel_6$, so the latter can be omitted from \eqref{eq:conditions-running-example-from-lemma}. Similarly to the previous paragraph, assuming $\borel_1 \Rrel{\wo} \borel_6$, the condition $\borel_0 \Rrel{s_1} F_1$ is equivalent to $\borel_1 \Rrel{\wo} \borel_7$, and assuming the latter we have that $F_2 \Rrel{s_1}\borel_7$ is equivalent to $\borel_0 \Rrel{\wo} \borel_7$. So \eqref{eq:conditions-running-example-weave-torus} and \eqref{eq:conditions-running-example-from-lemma} indeed define the same subvariety of $X(\br)$.\hfill$\Box$
\end{example}

%%%%%%%%%%%%%%%%%%%%%%%%%%%%%%%%%%%%%%%%%%%%%%%%%%%%%%%%%%%%%%%%%%
%%%%%%%%%%%%%%%%%%%%%%%%%%%%%%%%%%%%%%%%%%%%%%%%%%%%%%%%%%%%%%%%%%
%%%%%%%%%%%%%%%%%%%%%%%%%%%%%%%%%%%%%%%%%%%%%%%%%%%%%%%%%%%%%%%%%%
\subsection{Deodhar tori}\label{sec:deodhar}
Let $\dbr$ be a double braid word, \cite[Definition 7.1]{GLSBS} and \cite[Definition 2.5]{GLSB} introduce an open torus $T_{\dbr} \subseteq R(\dbr)$, referred to as the \emph{Deodhar torus}. It depends on the following data that can be readily read from the double braid word $\dbr$. 

\begin{definition}\label{def:w-and-u}
    Let $\dbr= i_1\cdots i_{n+m}$ be a double braid word. Define a sequence of elements $w_{m+n}, \dots, w_{0} \in W$ by 
    \begin{equation}\label{eq:inductively-get-w}
w_{m+n} = \id \; \text{and} \; w_{c-1} = s_{i_c^{\ast}}^{-} * w_c * s_{i_c}^{+}.
\end{equation}
where $*$ denotes the Demazure product. 
\end{definition}

Note that $w_0 = \delta(\dbr)$. Since we are assuming that $\delta(\dbr) = \wo$, the following definition makes sense.

\begin{definition}[Deodhar torus]\label{def:deodhar-torus} %\MSB{$w_c$ should be defined somewhere before this}
Let $\dbr$ be a double braid word. By definition, the Deodhar torus $T_{\dbr} \subset R(\dbr)$ is the subvariety of tuples $(X_\bullet, Y_{\bullet})$ of pairs of flags, modulo the diagonal action of $\G$, satisfying the transversality conditions
	\begin{equation}\label{eq:deodhar-torus}
		\begin{tikzcd}
			X_0& \arrow[l,"{s_{i_1}^+}"'] X_1& \arrow[l,"{s_{i_2}^+}"'] X_2 &\arrow[l,"{s_{i_3}^+}"'] \cdots& \arrow[l,"{s_{i_{m+n-1}}^+}"'] 
			X_{m+n-1} & \arrow[l,"{s_{i_{m+n}}^+}"'] 
			X_{m+n} \\
			Y_0 \arrow[r,"{s_{i_1^\ast}^-}"'] \arrow[u, Rightarrow, "\wo"']& Y_1 \arrow[r,"{s_{i_2^\ast}^-}"'] \arrow[u, Rightarrow, "w_1"']&Y_2 \arrow[r,"{s_{i_3^\ast}^-}"'] \arrow[u, Rightarrow, "w_2"'] & \cdots \arrow[r,"{s_{i_{m+n-1}^\ast}^-}"'] & Y_{m+n-1} \arrow[r,"{s_{i_{m+n}^\ast}^-}"'] \arrow[u, Rightarrow, "w_{m+n-1}"'] & Y_{m+n} \ar[equal]{u}
		\end{tikzcd}
	\end{equation}
	\hfill$\Box$
\end{definition}
It is proven in \cite[Corollary 2.8]{GLSB} that $T_{\dbr}$ is an algebraic torus of dimension $m+n-\ell(\wo)$ and an open subset of $R(\dbr)$. Intuitively speaking, the Deodhar torus $T_{\dbr}$ is given by the condition that for every index $c\in[0,m+n-1]$, the weighted flags $Y_c$ and $X_c$ are as far away from each other as possible, starting with $Y_{m+n} = X_{m+n}$.

%%%%%%%%%%%%%%%%%%%%%%%%%%%%%%%%%%%%%%%%%%%%%%%%%%%%%%%%%%%%%%%%%%
%%%%%%%%%%%%%%%%%%%%%%%%%%%%%%%%%%%%%%%%%%%%%%%%%%%%%%%%%%%%%%%%%%
%%%%%%%%%%%%%%%%%%%%%%%%%%%%%%%%%%%%%%%%%%%%%%%%%%%%%%%%%%%%%%%%%%
\subsection{The Deodhar torus as a weave torus} This section shows that the isomorphism $\iso:R(\dbr)\lr X(\dbr^{(-|+)})$ from \cref{lem:iso-glsbs-cgglss} maps the Deodhar torus $T_{\dbr} \subseteq R(\dbr)$ isomorphically onto a torus $T_{\DW}$ defined by a specific double inductive weave ${\DW}:\brWe\to\wo$. We start by describing this weave.

%%%%%%%%%%%%%%%%%%%%%%%%%%%%%%%%%%%%%%%%%%%%%%%%%%%%%%%%%%%%%%%%%%
%%%%%%%%%%%%%%%%%%%%%%%%%%%%%%%%%%%%%%%%%%%%%%%%%%%%%%%%%%%%%%%%%%
\subsubsection{Double strings from double braid words} The weave $\DW$ such that $\iso(T_\dbr)=T_{\DW}$ is associated to a double string $\dstr$. The relation between double strings $\dstr$, as introduced in \cref{def:dbl-string-and-braid-word-seq}, and double braid words $\dbr$ is as follows: given a double braid word $\dbr$, we can associate a double string $\dstr(\dbr)$ by reading $\dbr$ right-to-left and replacing each positive letter $i$ with $iR$ and each negative letter $j$ with $|j|^* L$. More formally:

\begin{definition}
\label{def:double_string_from_double_braid_word}
Let $\dbr = i_1\cdots i_{n+m}$ be a double braid word, with negative and positive index sets $\negind, \posind$, and for each $c \in [0, m+n]$ consider $\invc:=m+n-c$. By definition, the double string $\dstr(\dbr) = (j_1A_1, \dots, j_{n+m}A_{n+m})$ associated to $\dbr$ is given by
\begin{equation}\label{eq:deodhar-double-inductive}
j_{c+1}A_{c+1} := \begin{cases} i_{\invc}R & \text{if} \; \invc \in \posind, \\
|i_{\invc}|^{\ast}L & \text{if} \; \invc \in \negind.\end{cases}
\end{equation}
where $c\in[0,n+m-1]$.\hfill$\Box$
\end{definition}

By \cref{def:pos-neg-ind}, the braid word associated to $\dbr$ is $\dbr^{(-|+)}$: the double string $\dstr(\dbr)$ in \cref{def:double_string_from_double_braid_word} is indeed a double string for $\dbr^{(-|+)}$.

\begin{comment}
\begin{remark}\label{rem:inverting-indices-notation}
    For $\dbr$ a double braid word with $l = n+m$ letters, comparing various constructions for (double) braid varieties
    %R(\dbr)$ and $X(\brWe)$
    will often require inverting the indices in $[0, m+n]$. We will always use the notation $\invc:= n+m -c$.
\end{remark}
\end{comment}

\begin{example}\label{ex:from-double-braid-to-double-string-running-example}
Consider the double braid word $\dbr = (-2,1,2,1,-1,1,2)$ as in \cref{ex:running}. The associated double string via \cref{def:double_string_from_double_braid_word} is $\dstr(\dbr) = (2R, 1R, 1^{*}L, 1R, 2R, 1R, 2^{*}L)$, which is indeed a double string for the braid word $\dbr^{(-|+)} = 2^{\ast}1^{\ast}21121=1221121$.\hfill$\Box$
\end{example}

The association of a double string $\dstr(\dbr)$ to each double braid word $\dbr$ in \cref{def:double_string_from_double_braid_word} yields a bijective correspondence:

\begin{lemma} 
\label{lem:double_words_bij_double_strings}
Let $\beta$ be a positive braid word, $\mathbb{W}(\beta)$ the set of double braid words $\dbr$ such that $\dbr^{(-|+)} = \beta$, and $\mathbb{S}(\beta)$ the set of double strings for $\beta$. Then
$$\mathbb{W}(\beta)\lr\mathbb{S}(\beta),\quad \dbr \mapsto \dstr(\dbr)$$
is a bijection.
\end{lemma}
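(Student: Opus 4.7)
The plan is to construct the inverse map explicitly and to verify that it is well-defined on $\mathbb{S}(\beta)$. The key observation is that the assignment $\dbr \mapsto \dstr(\dbr)$ in \cref{def:double_string_from_double_braid_word} is, before imposing any constraint on the underlying braid words, a bijection between length-$l$ double braid words in $\pm I$ and length-$l$ double strings in $I$: reading $\dbr$ right-to-left, the prescription ``positive letter $i$ $\mapsto$ $iR$, negative letter $-j$ $\mapsto$ $j^\ast L$'' is visibly invertible letterwise. Concretely, the inverse sends $\dstr=(j_1 A_1, \dots, j_l A_l)$ to the double braid word $\dbr=i_1\cdots i_l$ with
\[
  i_{l-k+1} = \begin{cases} j_k & \text{if } A_k = R, \\ -j_k^\ast & \text{if } A_k = L, \end{cases}\qquad k \in [1,l].
\]
That this assignment and the one in \cref{def:double_string_from_double_braid_word} are mutually inverse on the level of unrestricted words is immediate from the definitions.

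The substantive content of the lemma is therefore the identity
\[
  \dbr^{(-|+)} \;=\; \beta^{\dstr(\dbr)}_l,
\]
which would show that the above letterwise bijection restricts to a bijection between the fibers $\mathbb{W}(\beta)$ and $\mathbb{S}(\beta)$. I would prove this identity by a direct left-to-right scan of $\dstr(\dbr)$, matching it step by step against the definition of $\dbr^{(-|+)}$ in \cref{def:pos-neg-ind}. At step $c+1$ of building $\beta^{\dstr(\dbr)}_\bullet$, the letter $i_{\bar c}$ of $\dbr$ (with $\bar c = n+m-c$) is appended to the right of the current word if $i_{\bar c}\in I$, and $|i_{\bar c}|^\ast$ is prepended on the left if $i_{\bar c}\in -I$. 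Running this scan to completion, the positive letters of $\dbr$, encountered right-to-left as $i_{b_m},\ldots,i_{b_1}$, end up forming the right block $i_{b_m}\cdots i_{b_1}$, while the negative letters, encountered as $i_{a_n},\ldots,i_{a_1}$ and prepended one at a time, produce the left block $|i_{a_1}|^\ast \cdots |i_{a_n}|^\ast$. Since $(-i_{a_j})^\ast = |i_{a_j}|^\ast$ by the definitions of $-^\ast$ and of the negative letters, this is exactly $\dbr^{(-|+)}$.

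The cleanest way to organize this verification is by induction on the length $l$ of $\dbr$: the base case $l=0$ is trivial, and the inductive step compares $\dstr(\dbr\,i)$ with $\dstr(\dbr)$, observing that appending a letter on the right of $\dbr$ corresponds to \emph{prepending} a new entry on the left of $\dstr$ (because the correspondence reverses index order). One then checks, by inspection of the two cases $i\in I$ and $i\in -I$, that this prepending carries $\beta^{\dstr(\dbr)}_{l}$ to $(\dbr\,i)^{(-|+)}$ in accordance with \eqref{eq:def:pos-neg-ind}. No real obstacle is expected: the only subtlety is being careful with the index-reversal convention $\bar c = n+m-c$ and with the identity $(-i)^\ast = |i|^\ast$ when $i\in -I$, both of which are built into the definitions. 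Once the identity $\dbr^{(-|+)} = \beta^{\dstr(\dbr)}_l$ is in hand, the bijection claim of the lemma follows at once from the letterwise bijection described above.
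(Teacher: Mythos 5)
Your proposal is correct and takes essentially the same route as the paper: you exhibit the same letterwise inverse map (your formula $i_{l-k+1}$ with $k=l-c+1$ is a re-indexing of the paper's $i_c := j_{\invc+1}$ or $-j_{\invc+1}^\ast$), and you verify the compatibility of fibers. The paper's proof is much terser — it simply writes down the inverse formula and asserts that $(\dbr(\dstr))^{(-|+)}=\beta$ — whereas you supply the scan/induction argument that actually checks the identity $\dbr^{(-|+)}=\beta^{\dstr(\dbr)}_l$; that extra detail is sound and fills in exactly the bookkeeping the paper leaves to the reader.
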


\begin{proof}
For a double string $\dstr=(j_1 A_1, \dots, j_l A_{l})$, consider the double braid word $\dbr(\dstr) = i_1\cdots i_{l}$ given by 
\begin{equation}
i_{c} := \begin{cases} j_{\invc+1} & \text{if} \; A_{\invc+1} = R, \\
-j_{\invc+1}^{\ast} & \text{if} \; A_{\invc+1} = L,\end{cases}
\end{equation}
for $c \in [1, l]$. If $\dstr$ is a double string for $\beta$ then $(\dbr(\dstr))^{(-|+)} = \beta$ and the assignment $\dstr \mapsto \dbr(\dstr)$ defines the inverse $\mathbb{S}(\beta)\lr\mathbb{W}(\beta)$ of the map $\dbr \mapsto \dstr(\dbr)$.
\end{proof}

Recall that each double braid word $\dbr$ defines a sequence of elements $w_{m+n}, \dots, w_0 \in W$, cf. \cref{def:w-and-u}. Likewise, a double string $\dstr$ of length $m+n$ defines a sequence of elements $w_0^{\dstr}, \dots, w_{m+n}^{\dstr}$ as in \cref{def:dbl-string-and-braid-word-seq}.  The next lemma shows that the bijection in \cref{lem:double_words_bij_double_strings} is compatible with these sequences. 

\begin{lemma}\label{lem:br-w-and-weave-w-coincide}
Let $\dbr=i_1\cdots i_{m+n}$ be a double braid word and $\dstr=\dstr(\dbr)$ its associated double string. Then for any $c \in [0, n+m]$, 
\[w_c = w_{\invc}^{\dstr},\] 
where $\invc = m+n-c$. 
\end{lemma}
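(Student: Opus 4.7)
The statement is indexed by the parameter $\invc = m+n-c$, which runs from $0$ to $m+n$ as $c$ decreases from $m+n$ to $0$, so the natural approach is a downward induction on $c$ (equivalently, induction on $\invc$). The base case $c = m+n$ is immediate: by \cref{def:w-and-u} we have $w_{m+n} = \id$, and on the double-string side $w_0^{\dstr} = \delta(\beta_0^{\dstr}) = \delta(e) = \id$. For the inductive step, I would assume $w_c = w_{\invc}^{\dstr}$ and prove $w_{c-1} = w_{\invc+1}^{\dstr}$, where the right-hand index reflects the fact that decreasing $c$ by $1$ corresponds to increasing $\invc$ by $1$.

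The argument then splits into two cases according to the sign of $i_c$. Reading off \cref{def:double_string_from_double_braid_word} with $c' := \invc$, the $(\invc+1)$-st entry of $\dstr(\dbr)$ is $i_c R$ when $i_c \in I$ and $|i_c|^{\ast} L$ when $i_c \in -I$. In the first case, $\beta_{\invc+1}^{\dstr} = \beta_{\invc}^{\dstr}\cdot i_c$, so the inductive rule~\eqref{def:demazure} for the Demazure product gives $w_{\invc+1}^{\dstr} = w_{\invc}^{\dstr}\ast s_{i_c}$. On the other side, \cref{def:si+-} yields $s_{i_c^{\ast}}^{-} = \id$ and $s_{i_c}^{+} = s_{i_c}$, so the recursion in \cref{def:w-and-u} gives $w_{c-1} = w_c \ast s_{i_c}$; the two expressions agree by the inductive hypothesis. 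In the second case $\beta_{\invc+1}^{\dstr} = |i_c|^{\ast}\cdot \beta_{\invc}^{\dstr}$, i.e.\ the new letter is \emph{prepended}. Using the standard fact that $\delta$ is a homomorphism with respect to concatenation and the Demazure product $\ast$ (a direct induction from~\eqref{def:demazure}), one gets $w_{\invc+1}^{\dstr} = s_{|i_c|^{\ast}} \ast w_{\invc}^{\dstr}$. On the other side, since $i_c \in -I$ one has $s_{i_c^{\ast}}^{-} = s_{|i_c^{\ast}|} = s_{|i_c|^{\ast}}$ and $s_{i_c}^{+} = \id$, hence $w_{c-1} = s_{|i_c|^{\ast}}\ast w_c$. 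The inductive hypothesis closes the case.

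The only genuinely delicate point is bookkeeping: one must verify that the ``$L$'' entries in $\dstr(\dbr)$ (coming from negative letters of $\dbr$) prepend on the opposite end, so that the left-appearance of $s_{i_c^{\ast}}^{-}$ in~\eqref{eq:inductively-get-w} matches the left-prepending in the definition of $\beta_{\invc+1}^{\dstr}$, and correspondingly that the ``$R$'' entries (from positive letters) right-append and pair with the right-appearance of $s_{i_c}^{+}$. The involution identity $|i_c^{\ast}| = |i_c|^{\ast}$ that makes the negative case work is forced by the convention $(-i)^{\ast} = -i^{\ast}$ stated just after \cref{def:pos-neg-ind}. Other than this indexing/sign check and the auxiliary multiplicativity of $\delta$ with respect to $\ast$, no non-trivial input is needed; in particular, no information about the variety $R(\dbr)$, the isomorphism $\iso$, or weave tori is used in this lemma.
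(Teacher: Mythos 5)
Your proof is correct and follows essentially the same induction as the paper (the paper phrases it as an upward induction on the double-string index $c$, which is the same re-indexing of your downward induction on the double-braid index). The one small place where you are more careful than the paper's write-up is in the $-I$/prepending case, where you explicitly invoke the multiplicativity $\delta(\alpha\beta)=\delta(\alpha)\ast\delta(\beta)$ to reduce the left-append to the recursion~\eqref{def:demazure}; the paper glosses over this by saying "using the inductive definition," but your observation is the honest justification.
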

\begin{proof}
We verify that $w_{\invc}=w_{c}^{\dstr}$ for $c=0, \dots, m+n$, in increasing order. By \cref{eq:inductively-get-w} and \cref{def:dbl-string-and-braid-word-seq}, $w_{m+n} = \id= w_0^{\dstr}$. Assume now that $w_{\invc}= w^{\dstr}_{c}$. By definition, $w_{\overline{c+1}} =w_{\invc -1}=s_{i_{\invc}^{\ast}}^{-} * w_{\invc} * s_{i_{\invc}}^{+}$. To determine $w^{\dstr}_{c+1}$, note that by \eqref{eq:deodhar-double-inductive}, 
\[\beta^{\dstr}_{c+1} = \begin{cases}
    \beta^{\dstr}_{c}i_{\invc} & \text{ if } \invc \in J_+\\
    |i_{\invc}|^* \beta^{\dstr}_{c} & \text{ if } \invc \in J_-.\\
\end{cases}\]
Using the inductive definition of Demazure products, we have that $w^{\dstr}_{c+1}= s_{i_{\invc}^{\ast}}^{-} * w_{\invc} * s_{i_{\invc}}^{+}$ as desired.
\end{proof}

\color{black}

%%%%%%%%%%%%%%%%%%%%%%%%%%%%%%%%%%%%%%%%%%%%%%%%%%%%%%%%%%%%%%%%%%
%%%%%%%%%%%%%%%%%%%%%%%%%%%%%%%%%%%%%%%%%%%%%%%%%%%%%%%%%%%%%%%%%%
\subsubsection{Tori comparison} Given a double braid word $\dbr$, denote by $\DW(\dbr)$ the double inductive weave associated to the double string $\dstr(\dbr)$ in \cref{def:double_string_from_double_braid_word}. %and let $T_{\DW(\dbr)} \subseteq X(\brWe)$ be the corresponding weave torus. Consider the isomorphism
%$$\iso:R(\dbr)\lr X(\brWe)$$
%from \cref{lem:iso-glsbs-cgglss}. Then the Deodhar torus $T_{\dbr}\sse R(\dbr)$ and the weave torus $T_{\DW(\dbr)}\sse X(\brWe)$ coincide under $\iso$:

\begin{proposition}\label{lem:weave-tori-vs-deodhar-tori}
Let $\dbr$ be a double braid word and $\iso:R(\dbr)\lr X(\brWe)$ the isomorphism from \cref{eq:iso-glsbs-cgglss}. Then
$$\iso(T_{\dbr})=T_{\DW(\dbr)}.$$
\end{proposition}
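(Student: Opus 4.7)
The plan is to match the two tori as subsets of the flag incarnation of $X(\brWe)$ by showing they are cut out by the same transversality conditions. On one side, \cref{lem:description-double-inductive-weave} characterizes $T_{\DW(\dbr)}$ as the locus of tuples $\borel_{\bullet} = (\borel_0, \dots, \borel_{m+n})$ satisfying $\borel^L_{(k)} \Rrel{w_k^{\dstr}} \borel^R_{(k)}$ for all $k \in [0, m+n]$, where $\borel^L_{(k)}$ and $\borel^R_{(k)}$ label the leftmost and rightmost regions of $\triangle \setminus \DW(\dbr)$ at depth $\d = k$. On the other side, via \cref{lem:iso-glsbs-cgglss}, $\iso(T_{\dbr})$ consists of those $\borel_\bullet$ coming from $(X_\bullet, Y_\bullet) \in R(\dbr)$ satisfying $\pi(Y_c) \Rrel{w_c} \pi(X_c)$ for every $c \in [0, m+n]$, which is the flag-level restatement of the Deodhar transversality condition in \cref{def:deodhar-torus}.

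The main step is a dictionary identifying the weave regions with the flags $Y_c, X_c$. Writing $\invc = m+n - k$, I would prove by induction on $k$ that under $\iso$ the left region $L_k$ is labeled by $\pi(Y_{\invc})$ and the right region $R_k$ by $\pi(X_{\invc})$. The base case $k = 0$ uses that $Y_{m+n} = X_{m+n}$ in \cref{def:dbl-braid-variety}, together with the fact that the apex region of $\DW(\dbr)$ carries a single flag which, under the identification in \cref{lem:iso-glsbs-cgglss}, coincides with $\pi(X_{m+n}) = \pi(Y_{m+n})$. For the inductive step, recall from \cref{def:double_string_from_double_braid_word} that the $(k+1)$-th entry of $\dstr(\dbr)$ is $i_{\invc} R$ if $\invc \in \posind$ and $|i_{\invc}|^{\ast} L$ if $\invc \in \negind$. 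In the former case, the new weave line emanates from the right boundary, so $L_{k+1} = L_k$ and $R_{k+1}$ is separated from $R_k$ by a weave line of color $i_{\invc}$; this matches exactly the situation in \cref{eq:GLSBS-definition} where $Y_{\invc - 1} = Y_{\invc}$ and $X_{\invc - 1} \Rrel{s_{i_{\invc}}} X_{\invc}$. The latter case is symmetric. Combining the inductive identification with \cref{lem:br-w-and-weave-w-coincide}, which equates $w_k^{\dstr}$ with $w_{\invc}$, translates the condition $\borel^L_{(k)} \Rrel{w_k^{\dstr}} \borel^R_{(k)}$ into $\pi(Y_{\invc}) \Rrel{w_{\invc}} \pi(X_{\invc})$.

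The two characterizations therefore agree at the level of $\G/\borel$, and since the strong relative position $Y_c \Rwrel{w_c} X_c$ between weighted flags in \cref{def:deodhar-torus} is equivalent to the flag-level condition $\pi(Y_c) \Rrel{w_c} \pi(X_c)$ once the weightings propagate uniquely from $Y_0 = \U_+$ as in \cref{ssec:parameters-double-braid-variety}, we obtain $\iso(T_{\dbr}) = T_{\DW(\dbr)}$. The main obstacle I anticipate is the bookkeeping in the inductive identification of regions with flags, especially around the depths where $w_{k-1}^{\dstr} = w_k^{\dstr}$ and \cref{def:dbl-inductive-weave} inserts a trivalent vertex (possibly preceded by a strip of higher-valent vertices); one needs to verify that such a trivalent vertex corresponds exactly to an arrow labelled by $\id$ in the relevant row of \cref{eq:GLSBS-definition}, and that the strip of higher-valent vertices is consistent with the Demazure product equality coming from \cref{lem:br-w-and-weave-w-coincide}.
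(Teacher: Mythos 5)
Your proposal is correct and follows essentially the same route as the paper's proof: both establish by induction the dictionary identifying $\borel^L_{(k)}$ with $\pi(Y_{\overline{k}})$ and $\borel^R_{(k)}$ with $\pi(X_{\overline{k}})$, and then invoke \cref{lem:description-double-inductive-weave} together with \cref{lem:br-w-and-weave-w-coincide}; the only cosmetic difference is that you induct downward from the apex ($k=0$) whereas the paper inducts upward from the bottom ($c=0$, where the base case is immediate from the definition of $\iso$ and the placement rule). One small correction: the condition $Y_c \Rwrel{w_c} X_c$ in \cref{def:deodhar-torus} is \emph{weak}, not strong, relative position, which is by definition equivalent to $\pi(Y_c) \Rrel{w_c} \pi(X_c)$ with no caveat about weightings, and your anticipated obstacle involving trivalent vertices is a non-issue here since the region--flag dictionary concerns only the top boundary while all the trivalent-vertex bookkeeping is already handled inside the proof of \cref{lem:description-double-inductive-weave}.
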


\begin{proof} Consider a point $(X_\bullet, Y_\bullet) \in T_{\dbr}$ and, using the $\G$-action, we assume that $Y_0 = \U_{+}$ and $\pi(X_0) = \wo \borel_{+}$. Let $\borel_{\bullet} \in X(\dbr^{(-|+)})$ denote $\iso(X_\bullet, Y_\bullet)$. Use the notation of Lemma \ref{lem:description-double-inductive-weave} to define $\B_{(\c)}^L, \B_{(\c)}^R$ for $\c \in [0, n+m]$. We claim that for every $\c \in [0,n+m]$ we have the flag equalities
\[
\pi(Y_\c) = \borel_{(\invc)}^L, \qquad \pi(X_\c) = \borel_{(\invc)}^R,
\]
where we again use the notation $\invc:=m+n-c$. If the claim is established, then the characterization of $T_{\DW(\dbr)}$ in Lemma \ref{lem:description-double-inductive-weave} together with \cref{lem:br-w-and-weave-w-coincide} imply the required result. We prove the claim by induction on $\c$. The base case is $\c = 0$: by definition $\pi(Y_0) = \borel_0 = \borel_{(m+n)}^L$ and $\pi(X_0) = \borel_{m+n} = \borel^R_{(m+n)}$, thus the claim holds. For the inductive step, let us suppose the claim is true for $\c$, and proceed to prove it for $\c + 1$, as follows.

Let us focus on the case $i_{\c + 1} \in I$, as the case $i_{\c + 1} \in -I$ is similar. If $i_{\c + 1} \in I$, then $Y_\c = Y_{\c + 1}$ and in the double string $\dstr$ we have $A_{\invc} = R$. This implies that $L_{\invc} = L_{\invc - 1}$ and therefore $\pi(Y_{\c + 1}) = \pi(Y_{\c}) = \borel_{(\invc)}^L = \borel_{(\invc - 1)}^L$. This shows the first flag equality. For the second one, note that $X_\c \neq X_{\c +1}$: it then follows from \cref{eq:iso-glsbs-cgglss}, and the fact that $R_{\invc} \neq R_{\invc-1}$, that $\pi(X_{\c + 1}) = \borel^R_{(\c - 1)}$. This concludes the claim.
\end{proof}

\begin{remark}\label{rem:iso-on-triangle-weave}
The proof of \cref{lem:weave-tori-vs-deodhar-tori} gives an alternative description of the isomorphism $\iso$, where points in the braid variety $X(\brWe)$ are understood as top tuples, and points in $T_{\DW(\dbr)}$ as top tuples for flag labelings of $\DW$. Indeed, let $(X_\bullet, Y_\bullet)$ be the unique representative of a point in $R(\dbr)$ such that $Y_0=U_+$ and $\pi(X_0)=\wo \borel_+$, and label region $L_{\c}$ with $\pi(Y_{\invc})$ and region $R_{\c}$ with $\pi(X_{\invc})$. That is, set $\borel^L_{(\c)}:= \pi(Y_{\invc})$ and $\borel_{(\c)}^R:= \pi(X_{\invc})$. To obtain the corresponding element $\iso((X_\bullet, Y_\bullet))=\borel_{\bullet} \in X(\brWe)$ read the labels of the regions, from leftmost to rightmost along the two non-horizontal sides of $\triangle$, and that yields the required tuple of flags $\borel_{\bullet}=(\borel_0, \dots, \borel_{m+n})$.\hfill$\Box$
\end{remark}

In \cite[Section 4.3]{CGGLSS} right and left inductive weaves were considered, as special cases of double inductive weaves. % The following consequence of \cref{lem:weave-tori-vs-deodhar-tori} is not used explicitly in this article \MG{NB: might be used in the quiver comparison section, double check when that's finished}, but we record it for reference, as it compares such right and left inductive weaves in terms of Deodhar tori. 
Specifically, if $\br = i_1 \dots i_r$ is a braid word, then its \emph{right} inductive weave is the double inductive weave corresponding to the double string $(i_1R, i_2R, \dots, i_rR)$, while its \emph{left} inductive weave is the double inductive weave corresponding to the double string $(i_rL, i_{r-1}L, \dots, i_1L)$, cf.~\cite[Section 4.3]{CGGLSS} for details. The corresponding weave tori are referred to as the right and left inductive tori. \cref{lem:weave-tori-vs-deodhar-tori} implies the following fact:

\begin{cor}\label{cor:inductive-weaves} Let $\dbr$ be a double braid word and $\iso:R(\dbr)\lr X(\brWe)$ the isomorphism from \cref{eq:iso-glsbs-cgglss}. The following holds:
\begin{enumerate}
\item Suppose $\dbr$ is a double braid word using only letters from $I$, $\dbr = j_1\cdots j_m$. Then,
$$\iso(T_{\dbr})\sse X(j_m\dots j_1)$$
is the right inductive torus of $\beta=j_m\dots j_1$.\\

\item Suppose $\dbr$ is a double braid word using only letters from $-I$, $\dbr = (-i_1)\cdots(-i_n)$. Then,
$$\iso(T_{\dbr})\sse X(i_1^{\ast}\cdots i_n^{\ast})$$
 is the left inductive torus of $\beta=i_1^{\ast}\cdots i_n^{\ast}$.\qed
\end{enumerate}
\end{cor}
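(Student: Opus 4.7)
The plan is to reduce the statement to \cref{lem:weave-tori-vs-deodhar-tori} together with an explicit unpacking of \cref{def:double_string_from_double_braid_word} in the two edge cases. By \cref{lem:weave-tori-vs-deodhar-tori}, $\iso(T_{\dbr}) = T_{\DW(\dbr)}$, where $\DW(\dbr)$ is the double inductive weave built from the double string $\dstr(\dbr)$. It therefore suffices to verify that, in each of the two scenarios, $\dstr(\dbr)$ agrees with the specific double string that produces the right (resp.\ left) inductive weave of $\brWe$, as recalled in the paragraph preceding the corollary.

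For part (1), if $\dbr = j_1 \cdots j_m$ with all $j_k \in I$, then $\negind = \es$ and $\posind = [1,m]$, so by \cref{def:pos-neg-ind} we have $\brWe = j_m \cdots j_1$. Applying \cref{def:double_string_from_double_braid_word} with $\invc = m - c$, every index $\invc \in \posind$ contributes an entry $i_{\invc} R$, and reading right-to-left yields
\[
\dstr(\dbr) = (j_m R,\, j_{m-1} R,\, \ldots,\, j_1 R).
\]
By definition, this is precisely the double string that specifies the right inductive weave of $\brWe = j_m \cdots j_1$, so $T_{\DW(\dbr)}$ is its right inductive torus, establishing (1).

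For part (2), if $\dbr = (-i_1) \cdots (-i_n)$ with all letters in $-I$, then $\negind = [1,n]$ and $\posind = \es$, so by \cref{def:pos-neg-ind} we have $\brWe = i_1^{\ast} \cdots i_n^{\ast}$. Now every $\invc \in \negind$ contributes the entry $|i_{\invc}|^{\ast} L = i_{\invc}^{\ast} L$, and reading right-to-left (with $\invc = n - c$) gives
\[
\dstr(\dbr) = (i_n^{\ast} L,\, i_{n-1}^{\ast} L,\, \ldots,\, i_1^{\ast} L).
\]
Setting $k_j := i_j^{\ast}$ so that $\brWe = k_1 \cdots k_n$, this is the double string $(k_n L, k_{n-1} L, \ldots, k_1 L)$, which by definition specifies the left inductive weave of $\brWe$. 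Hence $T_{\DW(\dbr)}$ is the left inductive torus of $\brWe$, establishing (2).

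The whole argument is a direct bookkeeping exercise, so there is no genuine obstacle; the only subtlety to double-check is the combination of the right-to-left reading in \cref{def:double_string_from_double_braid_word} with the definitions of left/right inductive weaves recalled from \cite[Section 4.3]{CGGLSS} (one reads the braid right-to-left in the right inductive case and left-to-right in the left inductive case, which is exactly compensated by the fact that $\brWe$ itself reverses the positive letters of $\dbr$ while keeping the negative ones in order).
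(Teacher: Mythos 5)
Your proof is correct and follows the same route the paper takes: the corollary is presented as a direct consequence of \cref{lem:weave-tori-vs-deodhar-tori}, and the paper omits the bookkeeping that you spell out (computing $\dstr(\dbr)$ in the two edge cases and matching it against the defining double strings of the right and left inductive weaves). Your unpacking of \cref{def:double_string_from_double_braid_word}, including the sign/star gymnastics $-((-i)^\ast) = i^\ast$ and $|-i|^\ast = i^\ast$ in part (2), is accurate.
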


\begin{table}[h!]
\begin{center}
\caption{A schematic reference for the concepts introduced in Sections \ref{sec:braid_varieties} and \ref{sec:weaves-tori}. Objects of a \textcolor{blue}{combinatorial nature are highlighted in blue}, whereas objects of a more \textcolor{purple}{algebraic geometric nature are highlighted in red}.}\label{table:summary_section4}
\begin{tabular}{ |P{3cm}|P{10cm}|  }
	\hline
	\multicolumn{2}{|c|}{ {\bf Summary of constructions and results thus far}} \\
	\hline

\cref{def:pos-neg-ind} & double braid word \textcolor{blue}{$\dbr$} $\leadsto$ braid word \textcolor{blue}{$\dbr^{(-|+)}$} \\
\hline

\cref{def:braid-variety} & braid word \textcolor{blue}{$\br$} $\leadsto$ braid variety \textcolor{purple}{$X(\br)$}\\
\hline

\cref{def:dbl-braid-variety} & double braid word \textcolor{blue}{$\dbr$} $\leadsto$ double braid variety \textcolor{purple}{$R(\dbr)$} \\
\hline

\cref{lem:iso-glsbs-cgglss} & isomorphism \textcolor{purple}{$\iso: R(\dbr) \xrightarrow{\sim} X(\dbr^{(-|+)})$}\\
\hline

\cref{def:dbl-inductive-weave} & double string \textcolor{blue}{$\dstr$} $\leadsto$ double inductive weave \textcolor{blue}{$\DW(\dstr)$}\\
\hline

\cref{def:double_string_from_double_braid_word} & double braid word \textcolor{blue}{$\dbr$} $\leadsto$ double string \textcolor{blue}{$\dstr(\dbr)$}\\
\hline

\cref{sssec:weavetori_doubleinductive} & double inductive weave \textcolor{blue}{$\DW$} $\leadsto$ weave torus \textcolor{purple}{$T_{\DW}\sse X(\beta)$}\\
\hline

\cref{sec:deodhar} & double braid word \textcolor{blue}{$\dbr$} $\leadsto$ Deodhar torus \textcolor{purple}{$T_{\dbr}\sse R(\dbr)$}\\
\hline

\cref{lem:weave-tori-vs-deodhar-tori} & tori equality \textcolor{purple}{$\iso(T_\dbr)=T_{\DW}$} for \textcolor{blue}{$\DW:=\DW(\dstr(\dbr))$}\\
\hline
\end{tabular}
\end{center}
\end{table}

%%%%%%%%%%%%%%%%%%%%%%%%%%%%%%%%%%%%%%%%%%%%%%%%%%%%%%%%%%%%%%%%%%
%%%%%%%%%%%%%%%%%%%%%%%%%%%%%%%%%%%%%%%%%%%%%%%%%%%%%%%%%%%%%%%%%%
\subsection{A table summary of objects at this stage} \cref{table:summary_section4} summarizes the main ingredients, constructions and results thus far, similar to \cref{table:summary_intro}. The construction and comparison of cluster variables will use these ingredients, so we give a summarizing account at this point to aid the reader in the next sections.

%%%%%%%%%%%%%%%%%%%%%%%%%%%%%%%%%%%%%%%%%%%%%%%%%%%%%%%%%%%%%%%%%%
%%%%%%%%%%%%%%%%%%%%%%%%%%%%%%%%%%%%%%%%%%%%%%%%%%%%%%%%%%%%%%%%%%
%%%%%%%%%%%%%%%%%%%%%%%%%%%%%%%%%%%%%%%%%%%%%%%%%%%%%%%%%%%%%%%%%%
\section{Comparison of cluster variables}\label{sec:cluster_var} The goal of this section is to show that the cluster variables in \cite{CGGLSS} and \cite{GLSBS,GLSB} coincide. Specifically, \cref{lem:weave-tori-vs-deodhar-tori} implies that the isomorphism
$$\iso:R(\dbr)\lr X(\brWe)$$
from \cref{eq:iso-glsbs-cgglss} satisfies $\iso(T_{\dbr})=T_{\DW(\dbr)}$, where $T_\dbr$ is the Deodhar torus of the double braid word $\dbr$ and $T_{\DW(\dbr)}$ is the weave torus for the double inductive weave $\DW$ associated to the double string in \cref{def:double_string_from_double_braid_word}, cf.~\cref{table:summary_section4}. For the cluster variables in such tori:

\begin{enumerate}
    \item In \cite[Sect.~5.2\&6]{CGGLSS}, cluster variables $\{\xWe_{\e}\}$ for the weave torus $T_{\DW(\dbr)}\sse X(\brWe)$ are constructed such that $\xWe_{\e}\in \C[X(\brWe)]$. These cluster variables are indexed by the trivalent vertices of the weave $\DW(\dbr)$, which are in bijection with certain letters of $\brWe$. \\
    
    \item In \cite[Section 2.4]{GLSBS} and \cite[Section 2.7]{GLSB}, cluster variables $\{\x3D_{\inve}\}$ for the Deodhar torus $T_\dbr\sse R(\dbr)$ are constructed such that $\x3D_{\inve}\in\C[R(\dbr)]$. These cluster variables are indexed by Deodhar hypersurfaces, which are in bijection with certain letters of $\dbr$.
\end{enumerate}

Both tori $T_{\dbr}$ and $T_{\DW(\dbr)}$ are given by the non-vanishing of $\{\xWe_{\e}\}$ and $\{\x3D_{\inve}\}$, respectively, i.e.~there are isomorphisms
\begin{equation}\label{eq:tori_nonvanishing_variables}
T_{\dbr}\cong\Spec(\C[(\x3D_{\inve})^{\pm1}]),\quad T_{\DW(\dbr)}\cong\Spec(\C[(\xWe_{\e})^{\pm1}])
\end{equation}
where the indices $\e,\inve$ are understood to range through all cluster variables in the corresponding sets. \cref{eq:tori_nonvanishing_variables} and \cref{lem:weave-tori-vs-deodhar-tori} imply that the pullback isomorphism
$$\iso^*:\C[X(\brWe)]\lr\C[R(\dbr)]$$
must be such that the pullback $\iso^*(\xWe_{\e})$ of a cluster variable $\xWe_{\e}$ is a Laurent monomial on the cluster variables $\{\x3D_{\inve}\}$. The goal of this section is to show that the Laurent monomial $\iso^*(\xWe_{\e})$ consists solely of a cluster variable $\x3D_{\inve}$. That is, for all indices $e$, the goal is to show that there exists a unique $\inve$ such that
\begin{equation}\label{eq:equality_cluster_variables}
\iso^*(\xWe_{\e})=\x3D_{\inve}.
\end{equation}
    and, moreover, the assignment $e \mapsto \inve$ gives a bijection betweeen the corresponding indexing sets. In short, \cref{eq:equality_cluster_variables} states that the cluster variables of \cite{CGGLSS} and \cite{GLSBS,GLSB} coincide under the isomorphism $\iso^*$. This is achieved in two steps:
\begin{enumerate}
    \item First, we show that $\iso^*(\xWe_{\e})$ is proportional to $\x3D_{\inve}$, i.e.~we prove the equality in \cref{eq:equality_cluster_variables} up to non-zero constant $\la_e\in\C^\times$.
    \item Second, we show that the constant of proportionality $\la_e$ is exactly $1$, i.e.~we establish the equality in \cref{eq:equality_cluster_variables}.
\end{enumerate}

Conceptually, the first step requires only part of the technical results of \cite{CGGLSS} and \cite{GLSBS,GLSB}, whereas the second step is proven using a significant amount of the technical details from both these approaches. In what follows, Subsections \ref{sec:weave-cluster-variables1} and \ref{ssec:deodhar_variables1} start discussing $\{\xWe_{\e}\}$ and $\{\x3D_{\inve}\}$, respectively, and \cref{subsec:init_cl_var_comp} achieves Step (1), in \cref{lem:coincidence-cluster-variables-constants}. Subsections \ref{sec:weave-cluster-variables2}, \ref{ssec:deodhar_variables2} and \ref{ssec:comparing-cocharacters} further establish some needed results on $\{\xWe_{\e}\}$ and $\{\x3D_{\inve}\}$, and \cref{ssec:equality_variables} establishes Step (2), in \cref{thm:coincidence-cluster-variables}.

%%%%%%%%%%%%%%%%%%%%%%%%%%%%%%%%%%%%%%%%%%%%%%%%%%%%%%%%%%%%%%%%%%
%%%%%%%%%%%%%%%%%%%%%%%%%%%%%%%%%%%%%%%%%%%%%%%%%%%%%%%%%%%%%%%%%%
%%%%%%%%%%%%%%%%%%%%%%%%%%%%%%%%%%%%%%%%%%%%%%%%%%%%%%%%%%%%%%%%%%

\subsection{The cluster variables $\{\xWe_{\e}\}$}\label{sec:weave-cluster-variables1} In full generality, a description of the cluster variables $\{\xWe_{\e}\}$ for an arbitrary Demazure weave is provided in \cite[Section 5.2]{CGGLSS}. Since the comparison to $\{\x3D_{\inve}\}$ only requires the use of the double inductive weave $\DW$, we now provide a more tailored description of $\{\xWe_{\e}\}$, cf.~\cite[Section 5.3]{CGGLSS}.

The combinatorial input data is a double string $\dstr$ for the braid word $\beta$. We maintain the notation that $\beta=\brWe$ has length $n+m$ and $\DW$ is the double inductive weave associated to $\dstr$ via \cref{def:dbl-inductive-weave}. By \cref{def:dbl-string-and-braid-word-seq}, $\dstr$ yields a sequence of braid words $\br_{\c}^{\dstr}$ whose Demazure products are denoted by $w_\c^{\dstr} = \delta(\br_\c^{\dstr})$. Diagrammatically, the cluster variables $\{\xWe_{\e}\}$ are indexed by trivalent vertices of the weave $\DW$. This can be expressed as follows:

\begin{definition}\label{def:solid-crossings-vs-vertex-crossings}
An index $\e \in \{1, \dots, m+n\}$ is said to be \emph{vertex crossing} of $\dstr$ if $w_{\e}^{\dstr} = w_{\e+1}^{\dstr}$. Equivalently, if there is a (necessarily unique) trivalent vertex between depths $\e$ and $\e+1$ on the weave $\DW$. The set of such vertex crossings associated to $\dstr$ is denoted by $$J^{\text{We}}_{\dstr} \subseteq \{1, \dots, m+n\}.$$
The trivalent vertex corresponding to $e$ is denoted $\vertex_\e$ and its cluster variable $\xWe_\e$. \hfill$\Box$
\end{definition}

The cluster variables $\{\xWe_{\e}\}$, indexed by $J^{\text{We}}_{\dstr}$, can be computed by using an edge labeling of the weave $\DW$, decorating weave lines with elements of $\G$. This decoration uses the data of the chosen pinning for $\G$, specifically the coroot $\chi_i$ associated to $i\in I$ and the variant $\BZ_i(z)=x_i(z) \ds_i$ of the exponential Chevalley generator $x_i(z)$, $z\in\C$. In precise terms:

\begin{definition}\label{def:edge-labeling}
Let $\DW$ be a double inductive weave. By definition, the \emph{edge labeling} of $\DW$ is the labeling of the edges of $\DW$ by elements of the form
$$g_\edge = B_{i}(f_\edge)\chi_i(u_\edge) \in \G,$$where $i$ is the color of the edge $\edge$ and $f_\edge, u_\edge \in \C(X(\brWe)$ are rational functions on $X(\brWe)$ obtained as follows:
\begin{itemize}
\item[(i)] The $j$th top edge $\edge_j$ of $\DW$ is labeled by
$$g_{\edge_j}:=\BZ_{i'_j}(z'_j)\chi_{i'_j}(1),\mbox{ so that }f_{\edge_j}=z'_j\mbox{ and } u_{\edge_j}=1,$$
where we have written $\dbr^{(-|+)} = i'_1\cdots i'_{n+m}$ and the functions $z_j' \in \C[X(\brWe)]$ are as defined in \cref{sec:parametrization}. In particular, $u_\edge=1$ for all top edges $\edge$ of $\DW$.\\

\item[(ii)] The functions $f_\edge,u_\edge$ for an arbitrary weave edge $\edge$ of $\DW$ are obtained by propagating down the functions $f_\edge,u_\edge$ at the top edges by using the rules from \cite[Definition 5.8]{CGGLSS}.\hfill$\Box$
\end{itemize}
\end{definition}

At this stage, \cref{def:edge-labeling} admittedly has little content, as it refers to rules from \cite{CGGLSS} which we have not specified.\footnote{\cite{CGGLSS} has raked weaves: we do not need such refinement for the comparison, so we omit this data.} The rules are such that the edge labeling of $\DW$ is unique. For now, it suffices to know that an edge labeling satisfies the following:

\begin{lemma}\label{lem:element-labeling}
Let $\DW$ be a double inductive weave and $\mathcal{L}$ a flag labeling. Then, $\mathcal{L}$ lifts to a unique labeling $\tilde{\mathcal{L}}$ of $\DW$ by weighted flags so that:
\begin{enumerate}
\item The top labels of $\tilde{\mathcal{L}}$ are the weighted flags $F_i$ given by Equation \eqref{eq:parametrized-flags-CGGLSS}.
\item If two regions $F, F'$ of $\triangle\setminus \DW$ are separated by a weave $i$-edge $\edge$ and $F$ is to the left of $\edge$, then $F' = g_\edge F$.% where $g_\edge = B_i(f_\edge)\chi_i(u_\edge)$.
\end{enumerate}
\end{lemma}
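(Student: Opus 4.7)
The plan is to establish existence and uniqueness simultaneously by induction on the depth $d$, sweeping the weave $\DW$ from top to bottom. For the starting data, recall from Appendix A (and equation~\eqref{eq:CGGLSS_weighted_dfn2}) that two flags $\B, \B'$ with $\B \Rrel{s_i} \B'$ admit a unique lift to weighted flags $\U, \U'$ with $\U \Rrel{s_i} \U'$ once $\U$ is fixed, and that this lift is witnessed by multiplication by an element of the form $B_i(f)\chi_i(u) \in \G$. This is exactly the shape of the edge labels $g_\edge$ in \cref{def:edge-labeling}. So the problem reduces to showing that propagating these weighted flag lifts across the regions of $\triangle \setminus \DW$ according to rule (2) is globally consistent and agrees at the top boundary with the parametrization~\eqref{eq:parametrized-flags-CGGLSS}.

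For the base case, the unique region adjacent to the apex of $\triangle$ must be labeled by $F_0 = \U_+$ to be compatible with $\B_{(0)}^L = \B_{(0)}^R = \B_0 = \B_+$; along the top boundary, requirement (1) forces the labels to be precisely $F_1, F_2, \dots, F_{n+m}$ as given by~\eqref{eq:parametrized-flags-CGGLSS}. By construction these top labels satisfy the relative position conditions dictated by $\brWe$, and the top edge labels $g_{\edge_j} = B_{i'_j}(z'_j)\chi_{i'_j}(1)$ precisely encode the passage from $F_{j-1}$ to $F_j$, matching condition (2) along the top. For the inductive step, suppose $\tilde{\mathcal{L}}$ has been defined consistently for all regions lying above depth $d$. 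By the structure of $\DW$, the strip between depth $d$ and $d+1$ contains either a vertical extension (with no vertex), a trivalent vertex, or a multivalent vertex; in each case, rule (2) determines the weighted flag of each adjacent region from the one to its left, yielding a unique extension of $\tilde{\mathcal{L}}$.

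The main obstacle, and the crux of the argument, is the verification that this extension is single-valued at each internal vertex of $\DW$: going around a vertex using different sequences of edges must produce the same weighted flag. At a tetravalent vertex with commuting colors $i, k$ (move (b)), this is the identity in $\G$ expressing that $B_i(\cdot)\chi_i(\cdot)$ and $B_k(\cdot)\chi_k(\cdot)$ commute up to parameter changes, which is already built into the propagation rules of \cite[Definition 5.8]{CGGLSS}. At a $2m_{ij}$-valent vertex (move (a)), consistency is the braid relation between the corresponding colored $B$-$\chi$ products in $\G$. The genuinely non-trivial case is a trivalent vertex (move (c)), where two weave $i$-edges merge into one: here consistency amounts to an identity of the form $B_i(f_1)\chi_i(u_1) B_i(f_2)\chi_i(u_2) = B_i(f_3)\chi_i(u_3)$ modulo the $U_-$-factor absorbed by the weighted flag on the right side of the outgoing edge. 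The rules of \cite[Definition 5.8]{CGGLSS} are precisely constructed so that the parameters $(f_3, u_3)$ are the unique values making this equality hold on the nose, which also serves as the definition of the cluster variable $\xWe_\e$ attached to this vertex. Uniqueness of the whole lift then follows because each step of the propagation is uniquely determined, while existence follows from these pointwise consistency checks combined with the acyclic top-to-bottom sweep of the double inductive weave.
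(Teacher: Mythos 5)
Your overall strategy — a top-to-bottom sweep through the double inductive weave, fixing the top labels via~\eqref{eq:parametrized-flags-CGGLSS}, propagating uniquely across each edge, and checking single-valuedness at each vertex — is the right one, and it essentially spells out the argument behind \cite[Lemma 5.9]{CGGLSS}, which is all the paper itself offers as a proof (a bare citation). Your more detailed version is therefore a useful fill-in of that black box.

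However, there is a genuine sign-of-the-quotient error in the trivalent-vertex step, which is precisely the one place where a wrong answer would break the argument. You assert that $B_i(f_1)\chi_i(u_1)\, B_i(f_2)\chi_i(u_2) = B_i(f_3)\chi_i(u_3)$ \emph{modulo a $\U_-$-factor} absorbed by the weighted flag on the right of the outgoing edge. Weighted flags are cosets in $\G/\U_+$ (Appendix~A), so a right multiplication on a representative is absorbed if and only if the factor lies in $\U_+$, not $\U_-$. A direct computation in $\SL_2$ through the pinning $\phi_i$ confirms this: with $u_3 = f_2 u_1 u_2$ and $f_3 = f_1 - 1/(f_2 u_1^2)$, one finds
\[
B_i(f_1)\chi_i(u_1)\, B_i(f_2)\chi_i(u_2) = B_i(f_3)\chi_i(u_3)\cdot x_i\!\left(-\tfrac{1}{f_2 u_2^2}\right),
\]
and $x_i(\cdot)\in\U_+$. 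A residual factor in $\U_-$ would \emph{not} be absorbed by a $\G/\U_+$ coset, so as written your consistency check does not close; it does once $\U_-$ is replaced by $\U_+$. With that correction, and with the tacit (and appropriate) appeal to \cite[Definition 5.8]{CGGLSS} for the explicit form of the propagation rules at each vertex type, your proof goes through and agrees with the paper's.
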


\cref{lem:element-labeling} is a consequence of \cite[Lemma 5.9]{CGGLSS}. Let us denote the set of vertex crossings $J^{\text{W}}_{\dstr(\dbr)}$ by $\JWe$. The cluster variables $\{\xWe_{\e}\}$ are then defined as follows:

\begin{definition}[Weave cluster variables]\label{prop:dbl-weave-cluster-var}
Let $\DW$ be a double inductive weave, $\e \in \JWe$ the index of a trivalent vertex $\vertex_\e$ of $\DW$, and $\edge$ be the southern weave edge of $\vertex_\e$. By definition, the cluster variable $\xWe_\e$ associated to $\e$ is
$$\xWe_\e := u_\edge,$$
where $u_\edge$ is specified by the edge labeling of $\DW$.\hfill$\Box$
\end{definition}

It is implied by \cite[Theorem 5.19.(3)]{CGGLSS} that the rational function $u_\edge \in \C(X(\beta))$ is a regular function on $X(\beta)$, i.e.~we actually have $\xWe_\e\in\C[X(\beta)]$, where $\beta=\brWe$. The set of cluster variables $\{\xWe_\e\}$ in \cref{prop:dbl-weave-cluster-var} is the set of cluster variables for the weave torus $T_{\DW}\sse X(\brWe)$.

The following lemma is used in the proof of \cref{lem:coincidence-cluster-variables-constants}, in the upcoming comparison of cluster variables:

\begin{lemma}\label{lem:weave-var-zero-locus}
Let $\DW$ be a double inductive weave and $\c \in \JWe$.
The locus $$\{\xWe_\e \neq 0 \mid \e \in \JWe, \e < \c\} \cap \{\xWe_\c = 0\}\sse X(\brWe)$$ coincides with the set of points $\borel_{\bullet} \in X(\brWe)$ such that

\begin{equation}\label{eq:rel_position_condition_lemma}
\begin{cases}
\borel_{(\e)} \Rrel{w_{\e}^{\dstr}} \borel'_{(\e)} & \text{ if } \; \e < \c,\\
\borel_{(\c)} \Rrel{w} \borel'_{(\c)} & \text{ for some } \; w < w_{\c}^{\dstr}.
\end{cases}
\end{equation}

\end{lemma}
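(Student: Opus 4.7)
The plan is to bootstrap the characterization of the weave torus $T_{\DW}$ given in \cref{lem:description-double-inductive-weave} to the setting where the weave cluster variables are only required to be non-zero up to depth $\c-1$. First, I apply \cref{lem:description-double-inductive-weave} to the truncated double inductive weave $\DW_{\c-1}$, which corresponds to the prefix double string $(i_1A_1,\dots,i_{\c-1}A_{\c-1})$ and whose set of vertex crossings is exactly $\{\e\in\JWe : \e<\c\}$. Combined with the identification of $T_{\DW_{\c-1}}$ with the common non-vanishing locus of its cluster variables, as in \eqref{eq:tori_nonvanishing_variables}, this identifies the locus $\{\xWe_\e\ne 0 : \e\in\JWe,\ \e<\c\}\subseteq X(\brWe)$ with the set of $\borel_\bullet$ satisfying $\borel_{(\e)}\Rrel{w_\e^{\dstr}}\borel'_{(\e)}$ for all $\e<\c$. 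By \cref{lem:element-labeling}, on this locus the edge labeling of $\DW$ extends uniquely down to the three edges incident to the trivalent vertex $\vertex_\c$.

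The second step is a local analysis at $\vertex_\c$. Since $\c\in\JWe$ forces $w_\c^{\dstr}=w_{\c-1}^{\dstr}$, both weave edges entering $\vertex_\c$ from above carry the same color $i := i_\c$. Let $\borel$, $\borel_1$, $\borel_2$ denote the flags labeling the three regions incident to $\vertex_\c$ from above, read left to right; then $\borel\Rrel{s_i}\borel_1$ and $\borel_1\Rrel{s_i}\borel_2$ by \cref{def:weave-flag-labeling}. The southern edge $\edge$ of $\vertex_\c$ separates the two regions directly below the vertex, which are labeled by $\borel$ and $\borel_2$, so \cref{lem:element-labeling} gives $\borel_2 = g_\edge\cdot\borel$ with $g_\edge = \BZ_i(f_\edge)\chi_i(u_\edge)$. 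A direct computation using the propagation rules in \cite[Definition 5.8]{CGGLSS} at the trivalent vertex then shows that $u_\edge = 0$ if and only if $\borel = \borel_2$, equivalently, the relative position between $\borel$ and $\borel_2$ drops from $s_i$ to the identity.

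The third step is to propagate this local collapse outwards to the boundary flags $\borel_{(\c)}$ and $\borel'_{(\c)}$. Between depths $\c-0.5$ and $\c$ the only vertex of $\DW$ is $\vertex_\c$, and the remaining weave edges at depth $\c$ do not affect the relative position between the leftmost and rightmost flags at that slice, since they carry transversality data already fixed by the conditions $\borel_{(\e)}\Rrel{w_\e^{\dstr}}\borel'_{(\e)}$ for $\e<\c$ imposed in the first step. An inductive argument using \cref{lem:properties-rel-pos} then translates the local condition $\borel=\borel_2$ into the global statement $\borel_{(\c)}\Rrel{w}\borel'_{(\c)}$ for some $w<w_\c^{\dstr}$, and conversely, the failure $\borel\ne\borel_2$ gives $w=w_\c^{\dstr}$. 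Combining the three steps yields the claimed equivalence. The main obstacle is the local computation at $\vertex_\c$ in the second step: verifying $u_\edge = 0 \iff \borel=\borel_2$ requires carefully unpacking the propagation rules from \cite[Definition 5.8]{CGGLSS}, which are invoked in \cref{def:edge-labeling} but not reproduced in the present excerpt, and confirming that the factor at the southern edge indeed has the form $\BZ_i(f_\edge)\chi_i(u_\edge)$ with $u_\edge$ equal to the cluster variable $\xWe_\c$.
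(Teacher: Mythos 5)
Your overall strategy — characterize the locus $\{\xWe_\e\neq 0 : \e<\c\}$ by relative‑position conditions, then do a local analysis at $\vertex_\c$ — is close in spirit to the paper's argument, but two of your steps don't go through as written.

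First, a concrete off-by-one: the vertex crossings of the prefix double string $(i_1A_1,\dots,i_{\c-1}A_{\c-1})$ are $\{\e\in\JWe:\e\le\c-2\}$, not $\{\e\in\JWe:\e<\c\}$, because $\e\in\JWe$ requires $w_\e^\dstr=w_{\e+1}^\dstr$ and so the last trivalent vertex of $\DW_{\c-1}$ lies between depths $\c-2$ and $\c-1$. The correct truncation is $\DW_\c$.

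More seriously, Step 1 does not follow from applying \cref{lem:description-double-inductive-weave} to a truncated weave. The torus $T_{\DW_\c}$ is a subset of $X(\br_\c^{\dstr})$, which is a \emph{different} braid variety than $X(\brWe)$: $\br_\c^{\dstr}$ is a shorter word, the boundary conditions $\borel_0=\borel_+$, $\borel_\ell=\wo\borel_+$ are imposed differently, and there is no projection $X(\brWe)\to X(\br_\c^{\dstr})$ along which you could pull back the characterization of $T_{\DW_\c}$. In particular, for $\borel_\bullet\in X(\brWe)$ the boundary flags $\borel^L_{(\c)}$, $\borel^R_{(\c)}$ at depth $\c$ are typically not $\borel_+$ and $\wo\borel_+$, so the "restriction'' of the flag labeling does not land in $X(\br_\c^{\dstr})$. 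To make the equality $\{\xWe_\e\neq0:\e<\c\}=\{\borel_{(\e)}\Rrel{w_\e^{\dstr}}\borel'_{(\e)}:\e<\c\}$ precise as a statement about loci in $X(\brWe)$, you actually need the same inductive, vertex-by-vertex argument you sketch in Steps 2 and 3 — applied at every $\vertex_\e$ with $\e<\c$, not just at $\vertex_\c$. As written, Step 1 quietly presupposes the conclusion of Steps 2--3 run over all earlier trivalent vertices, so the decomposition into three independent steps is circular. The paper avoids this by invoking \cite[Equation (25)]{CGGLSS}, namely $\xWe_\e = f_{\e_\ne}\cdot m$ with $m$ a Laurent monomial in $\{\xWe_{\e'}:\e'<\e\}$, which packages both the local computation at $\vertex_\e$ and the inductive (``earlier cluster variables are units'') structure into one identity; the proof then reads off both directions of \eqref{eq:rel_position_condition_lemma} directly from it and from $B_i(z)B_i(z')\in\borel_+\iff z'=0$.

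Your Step 2 is essentially correct and is the right local computation: unwinding \cite[Definition 5.8]{CGGLSS} at a trivalent vertex gives $u_\edge = u_{\mathrm{ne}}\,u_{\mathrm{nw}}\,f_{\mathrm{nw}}$ (up to convention), and one then concludes that $\xWe_\c=0$ iff the relevant $f$-variable vanishes iff the flags across the southern edge collapse, \emph{provided} one also records that the $u$-variables of the two northern edges are already products of $\xWe_{\e'}$, $\e'<\c$, by \cref{thm:u-as-cluster-var-product}, and hence nonzero on your locus. Two points of care: the argument must run through the $f$-variable rather than through $g_\edge$ itself, because $g_\edge = \BZ_i(f_\edge)\chi_i(u_\edge)$ is not defined when $u_\edge=0$; and carrying out this computation in full is exactly re-deriving the cited Equation (25), which is what the paper's proof cites rather than re-proves. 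So either you should cite Equation (25) and proceed as in the paper, or restructure the argument as a single induction on $\e\le\c$ in which the local identity at $\vertex_\e$ is established alongside the relative-position condition at depth $\e$, rather than presenting Step 1 as a standalone application of \cref{lem:description-double-inductive-weave}.
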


\begin{proof}
At core, this is implied by Equation (25) in the proof of \cite[Theorem 5.12]{CGGLSS}, as follows. Let $\e \in \JWe$ and let $\e_\ne$ be the north-east edge incident to the trivalent vertex $\vertex_\e$. By this Equation (25), there exists a Laurent monomial $m$ in the cluster variables $\xWe_{\e'}$, with $\e' < \e$, such that we have the equality
\begin{equation}\label{eqn:cluster-variable-ne-edge}
\xWe_\e = f_{\e_\ne}\cdot m.
\end{equation}
    The proof of the lemma is deduced from \cref{eqn:cluster-variable-ne-edge} as follows. Consider a point $\borel_{\bullet} \in X(\brWe)$ with $\xWe_{\e}(\borel_{\bullet}) \neq 0$ for $\e < \c$ and $\xWe_{\c} = 0$: we want to deduce the conditions in \cref{eq:rel_position_condition_lemma}. First we need to show that $\borel_{(e)} \Rrel{w_\e^{\dstr}} \borel'_{(e)}$ for $e < c$. Note that \cref{eqn:cluster-variable-ne-edge} implies that $f_{\e_\ne}(\borel_{\bullet}) \neq 0$ for $\e < \c$, and that $B_i(z)B_i(z') \in \borel_+$ if and only if $z' = 0$. This implies that, labeling the top regions of $\triangle \setminus \DW$ with the flags $\borel_{\bullet}$ and propagating them through the other regions, the flags separated by the southern edge of the trivalent vertex $\vertex_{\e}$ will be distinct. The first required relative position condition in \cref{eq:rel_position_condition_lemma} then follows from \cref{lem:properties-rel-pos}(2). Let us now verify the second condition of \cref{eq:rel_position_condition_lemma}. Since $\xWe_{\c} = 0$ and $m$ is a monomial in cluster variables $\xWe_{\e}$ for $\e < \c$, \cref{eqn:cluster-variable-ne-edge} implies that $f_{\c_\ne} = 0$. This implies that the flags separated by the southern edge of $\vertex_{\c}$ are the same, and the distance between $\borel_{(\c)}$ and $\borel'_{(\c)}$ is strictly less than $w^{\dstr}_{\c}$, as required. This shows that the locus
$$\{\xWe_\e \neq 0 \mid \e \in \JWe, \e < \c\} \cap \{\xWe_\c = 0\}$$
is contained in the locus of points satisfying \cref{eq:rel_position_condition_lemma}. The reverse inclusion follows similarly.
\end{proof}

\begin{remark}
\cref{prop:dbl-weave-cluster-var} and \cref{lem:weave-var-zero-locus} are valid for double inductive weaves but, as stated, do not hold for a general weave. For a description of cluster variables in a general weave see \cite[Section 5.2]{CGGLSS}.\hfill$\Box$
\end{remark} 

%%%%%%%%%%%%%%%%%%%%%%%%%%%%%%%%%%%%%%%%%%%%%%%%%%%%%%%%%%%%%%%%%%
%%%%%%%%%%%%%%%%%%%%%%%%%%%%%%%%%%%%%%%%%%%%%%%%%%%%%%%%%%%%%%%%%%
%%%%%%%%%%%%%%%%%%%%%%%%%%%%%%%%%%%%%%%%%%%%%%%%%%%%%%%%%%%%%%%%%%

\subsection{The cluster variables $\{\x3D_{\inve}\}$}\label{ssec:deodhar_variables1} Let $\dbr$ be a double braid word and $T_\dbr\sse R(\dbr)$ its Deodhar torus. The cluster variables $\{\x3D_\e\}$ are defined in terms of certain hypersurfaces in $R(\dbr)$, which are part of the complement $R(\dbr)\setminus T_\dbr$ of the Deodhar torus. For the indexing set, we need to introduce solid and hollow crossings, cf.~\cite[Section 2.3]{GLSB}. Recall the elements $w_0, \dots, w_{\nm}$ from \cref{{def:w-and-u}}.

\begin{definition}[Solid and hollow crossings]\label{def:almost-positive-sub}
Let $\dbr$ be a double braid word. An index $\e \in \{1, \dots, m+n\}$ is said to be
\begin{itemize}
    \item[(i)] a \emph{solid crossing} if $w_{\e-1}=w_\e$, i.e.~if the Demazure product stays the same through $e$.

    \item[(ii)] a \emph{hollow crossing} if $w_{\e-1}>w_\e$.
\end{itemize}
The set of solid crossings of $\dbr$ is denoted by $\J3D$.\hfill$\Box$

The cluster variables $\{\x3D_\e\}$ are indexed by $\J3D$, i.e.~there is a cluster variable per solid crossing of $\dbr$. Now, given a solid crossing $\e$ we define a sequence of elements $\ap_{m+n}, \dots \ap_{0}$ recursively by $\ap_{m+n}:= \id$ and
\begin{equation}
\ap_{\c-1}:=
\begin{cases}
    s_{i_{\c}^{\ast}}^{-} * \ap_\c * s_{i_{\c}}^{+} & \text{if } \c \neq \e,\\
    s_{i_{\c}^{\ast}}^{-}  \ap_\c  s_{i_{\c}}^{+} & \text{if } \c = \e.
\end{cases}
\end{equation}
\end{definition}

In words, the sequence of permutations $\ap_{m+n}, \dots ,\ap_{\e}$ agrees with the sequence of Demazure products $w_{m+n}, \dots ,w_{\e}$ and then at index $\e-1$ one makes a ``mistake" and takes the usual product (instead of the Demazure product), so that $\ap_{\e-1}< \ap_\e$. Then one proceeds by taking Demazure products to compute $\ap_{\e-2}, \dots, \ap_{0}$. This sequence of permutations names a sequence of relative positions that defines the following hypersurfaces of $R(\dbr)$:

\begin{definition}[Mutable Deodhar hypersurfaces]\label{def:mutable-Deodhar-hypersurface}
Let $\dbr$ be a double braid word and $\e$ be a solid crossing such that $\ap_0 = \wo$. By definition, the \emph{mutable Deodhar hypersurface} $V_\e \subseteq R(\dbr)$ is the Zariski closure of the locus
\[\{(\Xbul, \Ybul) \in R(\dbr) :Y_\c \Rwrel{\ap_\c} X_\c \quad \text{for all }c\in[0,m+n] \}\sse R(\dbr).\]
\hfill$\Box$
\end{definition}

The mutable cluster variables $\{\x3D_\e\}$ on $T_{\dbr}$ will be characters of $T_{\dbr}$ which vanish on exactly one mutable Deodhar hypersurface, and extend themselves to regular functions on $R(\dbr)$. In order to determine these characters exactly, rather than just up to units of $\C[R(\dbr)]$, we need to consider additional hypersurfaces, the frozen Deodhar hypersurfaces, which sit inside of a space different than $R(\dbr)$, as follows:

\begin{enumerate}
    \item Let $\bigRo(\dbr)\sse(G/U_+)^{[0,m+n]} \times (G/U_+)^{[0,m+n]}$ be the subset of points satisfying the relative position conditions from \cref{eq:GLSBS-definition}, so that the quotient of $\bigRo(\dbr)$ by the diagonal (free) $\G$-action is the double braid variety $R(\dbr)$. This is the variety discussed in \cref{ssec:coordinate_algebras}.(ii).\\

    \item Let $\tT_{\dbr} \subset \bigRo(\dbr)$ be the subset satisfying the relative position conditions from \cref{eq:deodhar-torus}, which is the preimage of $T_{\dbr}$ under the $\G$-quotient map $\bigRo(\dbr) \to R(\dbr)$.\\

    \item Let $\bigR(\dbr)$ denote the partial compactification of $\bigRo(\dbr)$ obtained by dropping the condition $X_0 \Lwrel{\wo} Y_0$.
\end{enumerate}

Note that $\bigRo(\dbr)$ has already been considered in \cref{ssec:coordinate_algebras}. The spaces $\bigRo(\dbr)$ and $\bigR(\dbr)$ suffice to name all necessary Deodhar hypersurfaces:

\begin{definition}[Deodhar hypersurfaces]\label{def:all-Deodhar-hypersurfaces}
Let $\dbr$ be a double braid word. For a solid index $e \in \J3D$, the \emph{Deodhar hypersurface} $\tV_{\e} \subset \bigR(\dbr)$ is the Zariski closure of the locus
    \[\{(\Xbul, \Ybul) \in \bigR(\dbr) :Y_\c \Rwrel{\ap_\c} X_\c \quad \text{for all }c\in[0,m+n] \}.\]
    The index $\e$ is said to be \emph{mutable} if $\ap_0 = \wo$, or equivalently if $\tV_{\e} \subset \bigRo(\dbr)$. The index $\e$ is said to be \emph{frozen} if $\ap_0 \neq \wo$, or equivalently if $\tV_{\e} \cap \bigRo(\dbr)= \emptyset$.\hfill$\Box$
\end{definition}

If $\e$ is mutable then the $\G$-action on $\tV_\e$ is free and $\tV_e/\G = V_e$ is the mutable Deodhar hypersurface defined in \cref{def:mutable-Deodhar-hypersurface}. By \cite[Proposition 2.19]{GLSB}, the Deodhar hypersurfaces are irreducible, codimension 1, and their union is $\bigR(\dbr) \setminus \tT_{\dbr}$.
%, where $\tT_{\dbr} \subseteq \bigRo(\dbr) \subseteq \bigR(\dbr)$ is the preimage of the Deodhar torus $T_{\dbr} \subseteq R(\dbr) \cong \bigRo(\dbr)/\G$ under the quotient map.

\begin{remark}\label{rmk:where_functions_live}
By \cref{eq:double_braid_Ginvariant_description}, we can identify $\C[R(\dbr)] \cong \Gamma(\bigRo(\dbr), \mathcal{O}_{\bigRo(\dbr)})^{\G}$, i.e.~any regular function in $\C[R(\dbr)]$ defines a $\G$-invariant regular function on $\bigRo(\dbr)$, which in turn defines a $\G$-invariant rational function on $\bigR(\dbr)$.\hfill$\Box$
\end{remark}

In \cite[Section 2.5]{GLSB}, the authors construct a collection of functions $\minor_c \in \C[R(\dbr)]$ for $c \in [m+n]$, referred to as the \emph{chamber minors}, cf.~\cref{def:grid-minors} below, and show in \cite[Prop.~2.12.(2)]{GLSB} that the collection $\{\minor_e\}_{e \in \J3D}$ of chamber minors indexed by the solid crossings provides an isomorphism
\begin{equation}\label{eq:chamber_minor_trivialize_tori}
\{\minor_e\}_{e \in \J3D}:T_{\dbr} \xrightarrow{\sim} (\C^{\times})^{|\J3D|}.
\end{equation}
That is, the restrictions of the chamber minors indexed by $\J3D$ can be used as coordinates on the Deodhar torus $T_\dbr$. Via the isomorphism (\ref{eq:chamber_minor_trivialize_tori}), the character lattice $\Hom(T_{\dbr}, \C^{\times})$ of the Deodhar torus $T_{\dbr}$ has a basis given by the restriction of $\{\minor_e\}_{e\in\J3D}$. See \cref{ssec:deodhar_variables2} for more on chamber minors. By \cref{rmk:where_functions_live}, a character $\chi\in\Hom(T_\dbr,\C^\times)$ can be interpreted as a ($\G$-invariant) rational function on $\bigR(\dbr)$, and the isomorphism (\ref{eq:chamber_minor_trivialize_tori}) provides a specific identification for this. By definition, a character $\chi\in\Hom(T_\dbr,\C^\times)$ is said to vanish along a hypersurface in $\bigR(\dbr)$ if the corresponding Laurent monomial in the solid chamber minors $\{\minor_e\}_{e \in \J3D}$, seen as a rational function on $\bigR(\dbr)$, vanishes along the hypersurface. Such vanishing is crucial in the definition of the cluster variables in \cite{GLSB}, as it allows to name unique characters based on their vanishing along the Deodhar hypersurfaces from \cref{def:all-Deodhar-hypersurfaces}:

\begin{proposition}\label{prop:GLSB-cluster-var-def}
Let $\dbr$ be a double braid word and $\e \in \J3D$ the index of a solid crossing. Then there exists a unique character
$$\x3D_\e\in \Hom(T_{\dbr},\C^\times)$$
vanishing to order 1 on the Deodhar hypersurface $\tV_\e$ and not vanishing on $\tV_\c$ if $\c\neq \e$.%\MSB{This defines the cluster variables only up to a constant... To fix the constant, need to specify that they're related by upper-triangular change-of-basis to chamber minors}
\end{proposition}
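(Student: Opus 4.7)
The plan is to identify the character lattice $M := \Hom(T_\dbr, \C^\times)$ via chamber minors and extract $\x3D_\e$ from a vanishing-order homomorphism on Deodhar hypersurfaces. By \eqref{eq:chamber_minor_trivialize_tori}, the chamber minors $\{\minor_e\}_{e \in \J3D}$ are characters of $T_\dbr$ forming a $\Z$-basis of $M$, so $M$ is free abelian of rank $|\J3D|$. Since each $\minor_e$ is regular on $\bigR(\dbr)$, every character $\chi \in M$ corresponds to a Laurent monomial in the $\minor_e$'s, hence to a well-defined rational function on $\bigR(\dbr)$. Using that each $\tV_\c \subset \bigR(\dbr)$ is an irreducible Weil divisor (\cref{def:all-Deodhar-hypersurfaces} and \cite[Prop.~2.19]{GLSB}), one gets a well-defined vanishing homomorphism
\[ \nu : M \lr \Z^{\J3D}, \qquad \chi \longmapsto \bigl(\mathrm{ord}_{\tV_\c}(\chi)\bigr)_{\c \in \J3D}. \]
The proposition is equivalent to showing that $\nu$ is an isomorphism of lattices, after which $\x3D_\e := \nu^{-1}(e_\e)$ is the preimage of the $\e$-th standard basis vector.

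For injectivity of $\nu$: if $\nu(\chi) = 0$, then $\chi$ is regular on $\bigR(\dbr)$ with no zeros along any $\tV_\c$; combined with the nonvanishing on $\tT_\dbr$ inherent to a character and the decomposition $\bigR(\dbr) \setminus \tT_\dbr = \bigcup_\c \tV_\c$ from \cite[Prop.~2.19]{GLSB}, $\chi$ is a nowhere-vanishing regular function on $\bigR(\dbr)$. Its restriction to $\tT_\dbr$ is then both a character of a torus and a unit in the Laurent polynomial ring, forcing $\chi$ to be trivial. Surjectivity then reduces to showing that the square integer matrix $A := (\mathrm{ord}_{\tV_\c}(\minor_e))_{\c,e \in \J3D}$ has determinant $\pm 1$, since $\nu$ is an injective homomorphism between free $\Z$-modules of the same rank $|\J3D|$.

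The main obstacle is the unimodularity of $A$, which I would establish by showing that, after ordering $\J3D$ by its natural linear order as a subset of $[m+n]$, the matrix $A$ is triangular with $1$'s on the diagonal. The diagonal entry $\mathrm{ord}_{\tV_e}(\minor_e) = 1$ should reflect the fact that $\tV_e$ is by design the unique hypersurface on which the transversality condition encoded by $\minor_e$ first fails, and the failure comes from replacing one Demazure product by an ordinary product (a drop by one simple reflection), giving a simple zero. The off-diagonal triangularity would follow because $\minor_e$, as a chamber minor, depends only on the letters of $\dbr$ up to index $e$, whereas a degeneracy $\tV_\c$ for $\c > e$ is imposed at step $\c > e$; this isolates the vanishing pattern so that entries above the diagonal vanish. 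Establishing the compatibility between the chamber-minor description of $\minor_e$ (detailed in \cref{ssec:deodhar_variables2}) and the inductive construction of $\ap_\bullet$ defining $\tV_\c$ is the technical heart of the argument; once done, $|\det A| = 1$ and the existence and uniqueness of $\x3D_\e$ follow.
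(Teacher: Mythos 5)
The paper does not prove this proposition; it cites it directly as \cite[Proposition--Definition 1.3]{GLSB} (see the sentence immediately after the statement), so there is no in-paper proof to compare against. Your proposed strategy is, in outline, the one used in \cite{GLSB}: use the chamber minors $\{\minor_e\}_{e\in\J3D}$ as a basis of $\Hom(T_\dbr,\C^\times)$, define the vanishing-order matrix along the Deodhar hypersurfaces, and show it is unitriangular. That is the right approach, and your reduction of the statement to unimodularity of the matrix $A$ is correct.

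However, there are two genuine gaps. First, the injectivity argument as written is vacuous at the key step: you say ``Its restriction to $\tT_\dbr$ is then both a character of a torus and a unit in the Laurent polynomial ring, forcing $\chi$ to be trivial,'' but every character of a torus is a unit in the Laurent ring, so this imposes no constraint whatsoever. What you actually need is that the unit group of $\C[\bigR(\dbr)]$ consists only of scalars (which does hold: $\bigR(\dbr)$ is a principal $\G$-bundle over the affine space $\bigRfix(\dbr)\cong\mathbb{A}^{m+n}$ and $\G$ is semisimple), and that a rational function with vanishing order $0$ along every $\tV_\c$ extends to a regular unit on $\bigR(\dbr)$, which additionally uses normality of $\bigR(\dbr)$ and the fact from \cite[Prop.~2.19]{GLSB} that $\bigR(\dbr)\setminus\tT_\dbr$ is exactly the union of the $\tV_\c$. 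None of this is said. (In any case, this separate injectivity argument is superfluous once unitriangularity is established.)

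Second, and more seriously, the unitriangularity of $A$ is precisely the content of the proposition, and your proposal does not prove it; you explicitly flag it as ``the technical heart,'' and the heuristic you give contains an error: the chamber minor $\minor_e$, being a generalized minor of $Z_{e-1}=Y_{e-1}^{-1}X_{e-1}$, depends on the \emph{suffix} $i_e,\dots,i_{n+m}$ of $\dbr$ (equivalently on $z_e,\dots,z_{n+m}$), not ``on the letters of $\dbr$ up to index $e$'' as you write. The correct mechanism is: on $\tV_\c$ the relative positions $Y_j\Rwrel{\ap_j}X_j$ for $j\geq\c$ agree with those defining $\tT_\dbr$, so for $e>\c$ the minor $\minor_e$ (a function of $Z_{e-1}$ with $e-1\geq\c$) is generically nonvanishing on $\tV_\c$, giving $\mathrm{ord}_{\tV_\c}(\minor_e)=0$; the diagonal entry $\mathrm{ord}_{\tV_e}(\minor_e)=1$ then requires an argument that the order-one drop $w_{e-1}\to s_{i_e^\ast}^-w_{e-1}s_{i_e}^+$ produces a simple zero of the relevant generalized minor. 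Without this, the determinant computation and hence existence and uniqueness of $\x3D_\e$ are not established. Your conclusion ``entries above the diagonal vanish'' happens to be correct, but the reasoning offered for it is backwards.

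In summary: correct architecture, but the load-bearing step (the unitriangularity, i.e.~essentially \cite[Prop.~2.20]{GLSB}) is asserted rather than proved, and the auxiliary injectivity argument is both redundant and, as phrased, non-reasoning.
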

The collection of characters $\{\x3D_\e\}_{\e \in \J3D}$ from \cref{prop:GLSB-cluster-var-def} form the cluster of the seed with cluster torus $T_{\dbr}\sse R(\dbr)$ constructed in \cite{GLSB}. For reference, \cref{prop:GLSB-cluster-var-def} is \cite[Proposition-Definition 1.3]{GLSB}.

%%%%%%%%%%%%%%%%%%%%%%%%%%%%%%%%%%%%%%%%%%%%%%%%%%%%%%%%%%%%%%%%%%
%%%%%%%%%%%%%%%%%%%%%%%%%%%%%%%%%%%%%%%%%%%%%%%%%%%%%%%%%%%%%%%%%%
%%%%%%%%%%%%%%%%%%%%%%%%%%%%%%%%%%%%%%%%%%%%%%%%%%%%%%%%%%%%%%%%%%
\subsection{The cluster variables $\xWe_{\e}$ and $\x3D_{\inve}$ are proportional}\label{subsec:init_cl_var_comp} The goal of this subsection is to establish that the cluster variables from \cite{CGGLSS} and \cite{GLSB,GLSBS} are proportional. Specifically, that $\iso^*(\xWe_{\e})$ is equal to a non-zero scalar multiple of $\x3D_{\inve}$, so that \cref{eq:equality_cluster_variables} holds up to constants.

%%%%%%%%%%%%%%%%%%%%%%%%%%%%%%%%%%%%%%%%%%%%%%%%%%%%%%%%%%%%%%%%%%
%%%%%%%%%%%%%%%%%%%%%%%%%%%%%%%%%%%%%%%%%%%%%%%%%%%%%%%%%%%%%%%%%%
\subsubsection{Bijection between indexing sets} The cluster variables $\xWe_{\e}$ from \cite{CGGLSS} are indexed by the vertex crossings in $\JWe$, cf.~\cref{def:solid-crossings-vs-vertex-crossings}. The cluster variables $\x3D_{\inve}$ from \cite{GLSB,GLSBS} are indexed by solid crossings $\J3D$, cf.~\cref{prop:GLSB-cluster-var-def}. \cref{lem:br-w-and-weave-w-coincide} readily implies following bijection between $\J3D$ and $\JWe$:

\begin{lemma}\label{lem:correspondence-solid-crossings-trivalent-vertices}
Let $\dbr$ be a double braid word with $n+m$ letters and $\inve:=m+n-e$. Then
$$inv:\J3D\lr\JWe,\quad e \longmapsto inv(e):=\inve,$$
is a bijection of sets. 
\end{lemma}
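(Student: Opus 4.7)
The plan is to reduce the claim directly to \cref{lem:br-w-and-weave-w-coincide}, which already provides a dictionary between the two sequences of Weyl-group elements governing the two notions of crossing, namely $w_c = w_{\invc}^{\dstr}$ for all $c \in [0, m+n]$. Since the map $e \mapsto \inve = m+n-e$ is an involution on $\Z$, it is automatically a bijection, so the only content of the statement is that it restricts to a bijection between the prescribed indexing subsets. I would therefore proceed by unpacking the two definitions: by \cref{def:almost-positive-sub}, $e \in \J3D$ iff $w_{e-1} = w_e$; by \cref{def:solid-crossings-vs-vertex-crossings}, $\inve \in \JWe$ iff $w_{\inve}^{\dstr} = w_{\inve+1}^{\dstr}$.

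Substituting the identity $w_c = w_{\invc}^{\dstr}$ from \cref{lem:br-w-and-weave-w-coincide} on both sides of the defining relation for $\J3D$ gives $w_{e-1} = w_{m+n-(e-1)}^{\dstr} = w_{\inve+1}^{\dstr}$ and $w_e = w_{\inve}^{\dstr}$. Hence the equality $w_{e-1} = w_e$ holds if and only if $w_{\inve+1}^{\dstr} = w_{\inve}^{\dstr}$, which is exactly the condition for $\inve$ to be a vertex crossing. This establishes that $e \mapsto \inve$ sends $\J3D$ bijectively onto $\JWe$, completing the argument.

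I do not anticipate any real obstacle: once \cref{lem:br-w-and-weave-w-coincide} is in hand, the proof is a one-line translation, and the only thing that deserves care is the bookkeeping of the index shift $c \leftrightarrow \invc$ and the off-by-one between the equalities $w_{e-1} = w_e$ and $w_{\inve}^{\dstr} = w_{\inve+1}^{\dstr}$ defining solid and vertex crossings respectively.
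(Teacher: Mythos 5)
Your argument is correct and is exactly the expansion of what the paper calls "readily implied" by \cref{lem:br-w-and-weave-w-coincide} (the paper states \cref{lem:correspondence-solid-crossings-trivalent-vertices} without a written-out proof, treating it as immediate). Your substitution of $w_c = w_{\invc}^{\dstr}$ at the two relevant indices $c = e-1$ and $c = e$, together with the observation that $\overline{e-1} = \inve + 1$, is precisely the bookkeeping the paper leaves to the reader.
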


\begin{example}
\label{ex:running_sequences_of_elements_of_W}
Consider $\dbr = (-2, 1, 2, 1, -1, 1, 2)$ as in \cref{ex:running}, so that $n + m = 7$. From the viewpoint of \cite{GLSBS,GLSB}, the sequence of Demazure products reads $w_7 = e, w_6 = s_2$, $w_5 = s_2s_1 = w_4 = w_3$, $w_2 = s_2s_1s_2 = w_1 = w_0$ and the (indices for the) solid crossings are $\J3D = \{5, 4, 2, 1\}$. From the perspective of \cite{CGGLSS}, the associated double string is $\dstr(\dbr) = (2R, 1R, 1^{*}L, 1R, 2R, 1R, 2^{*}L)$ and thus we have $w_0^{\dstr} = e$, $w_1^{\dstr} = s_2$, $w_2^{\dstr} = s_2s_1 = w_3^{\dstr} = w_4^{\dstr}$, $w_5^{\dstr} = s_2s_1s_2 = w_6^{\dstr} = w_7^{\dstr}$. In particular, the indices for the trivalent vertices of the weave $\DW$ associated to $\dstr(\dbr)$ are given by $\JWe = \{2, 3, 5, 6\} = \{\overline{5}, \overline{4}, \overline{2}, \overline{1}\}$, as stated in \cref{lem:correspondence-solid-crossings-trivalent-vertices}.\hfill$\Box$
\end{example}

%%%%%%%%%%%%%%%%%%%%%%%%%%%%%%%%%%%%%%%%%%%%%%%%%%%%%%%%%%%%%%%%%%
%%%%%%%%%%%%%%%%%%%%%%%%%%%%%%%%%%%%%%%%%%%%%%%%%%%%%%%%%%%%%%%%%%
\subsubsection{Proportionality between cluster variables} Let us use the following notation:

\begin{definition}
    Let $\dbr$ be a double braid word and $\DW(\dbr)$ its double inductive weave.
    \begin{enumerate}
        \item $\bxWe := \{\xWe_\c\}$, $\c \in \JWe$, denotes the set of  cluster variables in $\C[X(\brWe)]$ for the weave torus $T_{\DW(\dbr)}$, as constructed in \cite{CGGLSS}.\\
        \item $\bx3D := \{\x3D_\e\}$, $\e \in \J3D$, denotes the set of cluster variables in $\C[R(\dbr)]$ for the Deodhar torus $T_\dbr$, as constructed in \cite{GLSBS,GLSB}.\hfill$\Box$
    \end{enumerate}
\end{definition}

To establish proportionality, we use that the variables $\bxWe$ and $\bx3D$ can be expressed as polynomials in the $z$-variables introduced in \cref{ssec:coordinate_algebras}.(i) and \cref{sec:parametrization}, for $X(\brWe)$ and $R(\dbr)$ respectively. This property is in fact inductive, in that the polynomial expressions do not change as we add crossings in certain directions:

\begin{lemma}\label{lem:polynomial-expressions-cluster-variables}
Let $\dbr$ be a double braid word.
\begin{enumerate}
%    \item For every $\e \in \J3D$ such that $\x3D_\e$ is mutable, the cluster variable $\x3D_\e$ vanishes up to order $1$ on the Deodhar hypersurface $V_\e$ (cf. \cref{def:mutable-Deodhar-hypersurface}), and it is the only cluster variable in $\bx3D$ to do so.

    \item For every $\e \in \J3D$, there exists a polynomial expression
    $$\x3D_\e = \x3D_\e(z_1, \dots, z_{m+n})$$
    which is invariant under extending the double braid word $\dbr$ on the left.\\
    
    \item For every $\c \in \JWe$, there exists a polynomial expression
    $$\xWe_\c = \xWe_\c(z_1, \dots, z_{m+n})$$
    which is invariant under extending the double string $\dstr$ on the right (and thus extending the braid word $\brWe$ on either side). 
\end{enumerate}
\end{lemma}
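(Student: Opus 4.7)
The plan is to prove the two parts in parallel, extracting both polynomiality and invariance from the local/inductive nature of each construction (top-down for weaves, right-to-left for Deodhar).

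For part (2), I would argue inductively over the depth of the weave. By \cref{def:edge-labeling}(i), the top edges of $\DW$ carry labels $f_{\edge_j} = z'_j$ and $u_{\edge_j} = 1$, both manifestly polynomial in the $z'$-coordinates from \cref{sec:parametrization}. The propagation rules of \cite[Definition 5.8]{CGGLSS}, applied at each vertex (trivalent, tetravalent, or higher-valent) as one descends through the weave, express the $f$'s and $u$'s of outgoing edges as polynomials in the $f$'s and $u$'s of incoming edges, without introducing denominators. Consequently $u_\edge$ at every edge is a polynomial in $z'_1,\dots,z'_{m+n}$, and in particular so is $\xWe_\c = u_\edge$ at the southern edge of each trivalent vertex $\vertex_\c$ by \cref{prop:dbl-weave-cluster-var}. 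Invariance under right extension of $\dstr$ is immediate: by \cref{def:dbl-inductive-weave}, extending $\dstr$ with one more entry $i_{l+1}A_{l+1}$ only adds a new weave line starting at depth $l+1-0.4$, strictly below the existing structure and hence strictly below the depth of $\vertex_\c$ for any $\c \in \JWe$ of the original $\dstr$. Since propagation is strictly downward, the label $u_\edge$ at the southern edge of $\vertex_\c$ depends only on the portion of the weave above depth $\c$, which is unaffected.

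For part (1), the strategy is to use the explicit presentation of $\x3D_\e$ through the chamber minors $\minor_c$ of \cite{GLSB}. By \eqref{eq:chamber_minor_trivialize_tori} the solid chamber minors trivialize $T_\dbr$, so $\x3D_\e$ is an explicit Laurent monomial in $\{\minor_c\}_{c\in \J3D}$; each $\minor_c$ is a generalized minor of a product of braid matrices $B_{i_1}(z_1)\cdots B_{i_{n+m}}(z_{n+m})$ and therefore a polynomial in $z_1,\dots,z_{n+m}$ by the parametrization of \cref{ssec:parameters-double-braid-variety}. Combining with the fact that $\x3D_\e \in \C[R(\dbr)]$ is regular everywhere on $R(\dbr)$, its pullback to $\bigRfix(\dbr) \cong \Spec\C[z_1,\dots,z_{n+m}]$ cannot have denominators and is therefore a polynomial. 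For invariance under left extension, note that the recursion \eqref{eq:inductively-get-w} defining $w_{n+m},\dots,w_0$ is built from right to left, so prepending a letter $i_0$ to $\dbr$ leaves all the $w_c$ for the old indices unchanged (modulo reindexing) and likewise leaves $\J3D$ and the auxiliary sequence $\ap_{n+m},\dots,\ap_\e$ intact. The chamber minors $\minor_c$ for indices $c$ relevant to $\x3D_\e$ involve only the braid matrices at positions $\geq \e$, and these positions, together with their $z$-variables, are untouched by the extension.

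The main obstacle is part (1), and in particular the verification that the Laurent monomial defining $\x3D_\e$ involves only chamber minors $\minor_c$ with $c \geq \e$, so that the polynomial expression is genuinely independent of the newly prepended $z$-variables. This requires reading the precise formulas of \cite[Section~2.5]{GLSB} (notably for the characters $\gamchp_{\dbr,\c,\e}$ that will reappear in \cref{ssec:consequence_equality_variables}) and confirming that the cocharacter defining $\x3D_\e$ is supported on chamber indices $\geq \e$. Part (2), by contrast, is comparatively direct, relying only on the top-down locality of the weave edge-labeling and on the absence of denominators in the propagation rules at each vertex.
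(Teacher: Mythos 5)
Your overall strategy is the same as the paper's (express $\x3D_\e$ in chamber minors supported on a suffix; use downward locality of the edge labeling for $\xWe_\c$), but there are several concrete gaps and one outright false claim.

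For Part (2), the claim that the propagation rules of \cite[Definition 5.8]{CGGLSS} produce the outgoing $f$'s and $u$'s ``without introducing denominators'' is wrong: the $f$-variables are genuinely rational, with divisions appearing at trivalent and hexavalent vertices. What is true is that the $u_\edge$ at the southern edge of a trivalent vertex is regular, but that is precisely the content of \cite[Theorem 5.19.(3)]{CGGLSS}, which is a nontrivial theorem and not a consequence of inspecting each local rule. Your invariance argument for Part (2) (extension only adds structure at strictly greater depth, so it cannot affect $u_\edge$ on edges above) is correct and is exactly what the paper uses, but the polynomiality half of the statement needs to be cited, not derived from a false local claim.

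For Part (1), you write that ``each $\minor_c$ is a generalized minor of a product of braid matrices $B_{i_1}(z_1)\cdots B_{i_{n+m}}(z_{n+m})$'', but this is the wrong product: $\minor_{c+1}$ is a generalized minor of $Z_c := Y_c^{-1}X_c$, which after gauge-fixing by $g_{m+n}^{-1}$ depends only on $z_{c+1},\dots,z_{m+n}$, not on all $z$'s. That dependence-on-a-suffix statement is the entire mechanism for invariance under prepending, so the misidentification undercuts the argument. Second, your deduction that $\x3D_\e$ is a polynomial because it is ``regular everywhere on $R(\dbr)$'' is insufficient: $\C[R(\dbr)]$ sits inside a \emph{localization} of $\C[z_1,\dots,z_{m+n}]$, so regularity on $R(\dbr)$ only gives a Laurent-type expression. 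The paper uses \cite[Corollary 2.24]{GLSB}, which promotes $\x3D_\e$ to a regular function on the partial compactification $\bigR(\dbr)$, to conclude polynomiality. Third, you correctly flag the main missing ingredient --- that the Laurent monomial expressing $\x3D_\e$ involves only $\minor_c$ with $c\ge\e$ --- but you do not close it; this is exactly what the unitriangularity of \cite[Proposition 2.20]{GLSB} combined with \cite[Lemma 2.25]{GLSB} provides, and without it neither polynomiality in the right variables nor invariance under left extension follows.
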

\begin{proof}
%The statement (1) follows by definition, see \cref{prop:GLSB-cluster-var-def}. 

For Part (1), \cite[Proposition 2.12]{GLSB} implies that every character of $T_{\dbr}$ may be expressed as a Laurent monomial in the chamber minors $\{\Delta_c\}_{c \in \J3D}$. Each chamber minor $\Delta_{\c+1}$ is a certain $(\U_+ \times \U_+)$-invariant generalized minor of $Z_\c := Y_\c^{-1} X_\c \in \U_+\backslash \G / \U_+$. Assuming $(X_\bullet, Y_\bullet) \in R(\dbr)$ is parametrized as in \cref{sec:parametrization}, $Z_\c$ depends only on the parameters $z_{\c+1}, z_{\c+2}, \dots, z_{m+n}$. Indeed, this follows by using the $\G$-action to multiply every representative flag by $g_{m+n}^{-1}$ on the left, does not affect $Z_\c$. The generalized minors of $Z_{\c}$, and in particular the chamber minor $\Delta_{\c+1}$, are polynomials in $z_{\c+1}, \dots, z_{m+n}$ and do not depend on the letters $i_1, \dots, i_{\c}$ of $\dbr$.

Now, the cluster variable $\x3D_{\e}$ is a character of $T_{\dbr}$. By the upper-triangularity property in \cite[Proposition 2.20]{GLSB}, $\x3D_\e$ can be written as a Laurent monomial in the chamber minors $\{\Delta_{\c}\}_{\c \in \J3D \cap [\e, n+m]}$. In particular, the exponents of each chamber minor are given by the entries in row $\e$ of an upper unitriangular matrix. %$M_{\dbr}^{-1}$.
By \cite[Lemma 2.25]{GLSB}, the entries in row $\e$ do not depend on the letters $i_1, \dots, i_{\e}$ of $\dbr$ and thus the expression for $\x3D_\e$ in terms of chamber minors only depends on the suffix $i_{\e+1} \dots i_{n+m}$ of $\dbr$. The expression for these chamber minors in terms of $z_{\e}, \dots, z_{m+n}$ in turn only depends on the suffix $i_{\e+1} \cdots i_{n+m}$. This gives an a priori rational expression for $\x3D_\e$ in terms of $z_{\e}, \dots, z_{m+n}$ which only depends on the suffix $i_{\e+1} \cdots i_{n+m}$. By \cite[Corollary 2.24]{GLSB}, $\x3D_\e$ lifts to a regular function on $\bigR(\dbr)$ and thus this expression for $\x3D_\e$ must be a polynomial in the $z$-variables, see \cref{eq:polynomial-algebra}.%{\RC Clarify why.} \JS{Please check}

Part (2) follows directly from the construction of cluster variables $\xWe_\c$, cf.~\cite[Section 7.2]{CGGLSS}, since extending the double string does not affect the top part of the double inductive weave $\DW$. 
\end{proof}

Let us establish the proportionality of the cluster variables $\x3D_{\e}$ and $\xWe_{\inve}$:

\begin{proposition}[Proportionality of cluster variables]
\label{lem:coincidence-cluster-variables-constants}
Let $\dbr$ be a double braid word and $\e \in \J3D$ the index of a solid crossing. Then $\exists\prop_\e \in \C^{\times}$ such that
\begin{equation}\label{eq:proportionality}
\x3D_{\e} = \prop_\e\cdot\xWe_{\inve}. 
\end{equation}
\end{proposition}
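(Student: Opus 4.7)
The plan is to show that $\iso^*(\xWe_{\inve})$ and $\x3D_e$, viewed as regular functions on $R(\dbr)$, have the same irreducible zero divisor and both restrict to nonvanishing functions on the Deodhar torus $T_\dbr$. Combined with the polynomial presentations of both cluster variables from \cref{lem:polynomial-expressions-cluster-variables} and the identification of coordinates from \cref{prop:iso-polynomial-algebras}, this will force the two to differ only by a nonzero scalar $\prop_e\in\C^{\times}$.

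For the divisor identification, I would apply \cref{lem:weave-var-zero-locus} to the double inductive weave $\DW(\dbr)$: the open locus in $X(\brWe)$ on which $\xWe_{\inve}=0$ while $\xWe_{c}\neq 0$ for every vertex crossing $c<\inve$ consists exactly of flag tuples with $\borel^L_{(\inve)}\Rrel{w}\borel^R_{(\inve)}$ for some $w<w^{\dstr}_{\inve}$ and all other relative positions at their generic values $w^{\dstr}_{c}$. Using the explicit description of $\iso$ from \cref{rem:iso-on-triangle-weave}, namely $\pi(Y_c)=\borel^L_{(\invc)}$ and $\pi(X_c)=\borel^R_{(\invc)}$, together with the matching of Demazure products $w_c=w^{\dstr}_{\invc}$ from \cref{lem:br-w-and-weave-w-coincide}, this locus pulls back under $\iso^{-1}$ onto the open stratum of the mutable Deodhar hypersurface $V_e\subset R(\dbr)$ from \cref{def:mutable-Deodhar-hypersurface}. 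Hence $V_e$ is an irreducible component of the zero divisor of $\iso^*(\xWe_{\inve})$, and by \cref{prop:GLSB-cluster-var-def} the same $V_e$ is also the sole irreducible component of the zero divisor of $\x3D_e$ inside $R(\dbr)$; the analogous statement with $\tV_e\subset\bigR(\dbr)$ handles the case of a frozen index $e$.

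To rule out additional components and conclude proportionality, both $\iso^*(\xWe_{\inve})$ and $\x3D_e$ are nonvanishing on $T_\dbr=\iso^{-1}(T_{\DW(\dbr)})$ by \cref{lem:weave-tori-vs-deodhar-tori}, so the ratio $\mu:=\iso^*(\xWe_{\inve})/\x3D_e$ defines a character of $T_\dbr$; expanded in the basis $\{\minor_c\}_{c\in\J3D}$ of the character lattice of $T_\dbr$ guaranteed by \cite[Proposition 2.12]{GLSB}, it is a Laurent monomial. I would then proceed by downward induction on $e$, equivalently upward induction on $\inve$: the inductive proportionalities $\x3D_{c}=\prop_c\xWe_{\invc}$ for $c>e$, substituted into \cref{lem:weave-var-zero-locus} and combined with the upper-triangular expansion of cluster variables in chamber minors from \cite[Proposition 2.20]{GLSB}, suffice to force $\mu$ to have trivial vanishing order on every mutable Deodhar hypersurface. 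The polynomial presentation of both sides in $\C[z_1,\dots,z_{n+m}]$, together with uniqueness up to scalar of an irreducible factor vanishing on $V_e$ inside this polynomial ring, then yields $\mu\in\C^{\times}$. The main subtlety is the inductive bookkeeping, since the weave order on $\JWe$ and the Deodhar order on $\J3D$ are reversed under $e\mapsto\inve$; the invariance of the polynomial expressions under extending $\dbr$ on the left, from \cref{lem:polynomial-expressions-cluster-variables}(1), is the key mechanism enabling each solid crossing to be treated independently.
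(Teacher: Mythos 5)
Your geometric instinct — pin down the zero divisors, restrict to the Deodhar torus, and leverage the polynomial presentations — is the right one, and the way you bring \cref{lem:weave-var-zero-locus}, \cref{rem:iso-on-triangle-weave}, \cref{lem:br-w-and-weave-w-coincide}, and \cref{prop:GLSB-cluster-var-def} together is exactly the argument the paper uses for the index-matching part (its Step 5). However the proposal has gaps in precisely the places the paper spends its effort. First, \cref{lem:weave-var-zero-locus} gives a locus on which $\xWe_{\inve}$ vanishes and $\xWe_{e'}\neq 0$ only for $e'<\inve$; nothing is said about the depths above $\inve$, so the pullback only \emph{contains} the open Deodhar stratum of $V_e$ — it is not equal to it. Consequently you have ``the zero locus of $\iso^*(\xWe_{\inve})$ contains $\overline{V_e}$,'' which is still far from ``$\overline{V_e}$ is its sole irreducible component,'' and that stronger statement is what the later ``uniqueness up to scalar'' step requires. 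To upgrade it you need irreducibility, and here is the subtlety your proposal glosses over: GLS-irreducibility of a mutable cluster variable lives in $\C[R(\dbr)]$, which is the ring of $\U_+$-invariants of a \emph{localization} of $\C[z_1,\dots,z_{n+m}]$ at the frozen variables. Irreducibility does not descend from a localization to the polynomial ring, so ``irreducible in $\C[R(\dbr)]$'' does not give you ``irreducible in $\C[z_1,\dots,z_{n+m}]$'' for free.

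Second, showing that the ratio $\mu$ is a unit on $T_\dbr$ with trivial vanishing order along every \emph{mutable} Deodhar hypersurface does not force $\mu$ to be a constant: by the GLS structure theorem for units of $\C[R(\dbr)]$, $\mu$ could still be a nontrivial Laurent monomial in \emph{frozen} cluster variables, whose hypersurfaces $\tV_c$ sit in $\bigR(\dbr)\setminus\bigRo(\dbr)$ and are invisible from inside $R(\dbr)$. This is precisely the reason the paper's Steps 4 and 6 extend $\dbr$ on the left by a reduced word for $\wo$: after extension every frozen becomes mutable, so GLS irreducibility and the factoriality of $\C[R(\widetilde\dbr)]$ (cf.~\cite[Lemma~5.29]{CGGLSS}) apply to \emph{all} the variables at once, and the invariance of the polynomial expressions (\cref{lem:polynomial-expressions-cluster-variables}) transports the resulting proportionality back to $\dbr$. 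You cite \cref{lem:polynomial-expressions-cluster-variables} only as ``enabling each solid crossing to be treated independently,'' but its real role is to legitimize this extension trick; without it your induction cannot dispose of the frozen monomial ambiguity. Likewise the frozen case of $\xWe_{\inve}$ itself (Step 6 of the paper) is not addressed. In short, your divisor-matching paragraph is sound as a containment statement and matches the paper's Step 5, but the proof of proportionality needs the cluster-algebraic inputs (units $=$ frozen monomials, irreducibility of mutables, factoriality after extension) that you replace with an incomplete induction and an unsupported irreducibility-in-the-polynomial-ring claim.
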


\begin{proof}
We use the isomorphism $\iso: R(\dbr) \xrightarrow{\sim} X(\brWe)$ from \cref{eq:iso-glsbs-cgglss} to identify the two algebraic varieties $R(\dbr)$ and $X(\brWe)$, and denote by $\iso^{\ast}: \C[X(\brWe)] \to \C[R(\dbr)]$ the corresponding pullback isomorphism between their rings of functions. To ease notation, we denote the pullbacks $\iso^{\ast}(\xWe_\c)$ of the weave cluster variables to $R(\dbr)$ by $\xWe_\c$. For clarity, we separate the proof in the following steps:\\

{\it Step 1.} By construction there is an inclusion $\C[R(\dbr)] \subseteq \C[\bigRofix(\dbr)]$, and the latter algebra $\C[\bigRofix(\dbr)]$ is isomorphic to a localization of the polynomial algebra $\C[z_1, \dots, z_\nm]$. By \cite[Theorem 1.3]{GLS}, the units in $\C[R(\dbr)]$ are precisely the monomials in frozen variables of $\{\x3D_c\}$. Therefore, we can directly work with the inclusion
\begin{equation}\label{eq:inside-polynomial-algebra}
\C[R(\dbr)]\subseteq \C[z_1, \dots, z_{m+n}][(\x3D_c)^{-1}\mid \x3D_c \; \text{is frozen}].
\end{equation}

{\it Step 2.} Both $\bx3D$ and $\bxWe$ are cluster variables for a seed in a cluster structure on the double braid variety $R(\dbr)$. By \cite[Definition 2.23]{GLSB} and \cite[Theorem 5.12]{CGGLSS}, together with Lemma \ref{lem:weave-tori-vs-deodhar-tori}, the associated cluster tori determined by these two clusters (in a priori different cluster structures) are both the Deodhar torus $T_{\dbr}$. Since cluster variables form a free generating set of units in the algebra $\C[T_{\dbr}]$, for every $\c \in \JWe$ there exists a Laurent monomial $m_{\c}(\bx3D)$ in the $\bx3D$-cluster variables such that
\begin{equation}\label{eq:weave-cluster-as-monomial}
\xWe_{\c} = m_{\c}(\bx3D).
\end{equation}
By Step 1, this equality can be understood as holding in the polynomial algebra
$$\C[z_1, \dots, z_{m+n}][(\x3D_{\c})^{-1} \mid \x3D_{\c} \; \text{is frozen}].$$

{\it Step 3.} Let $\c \in \JWe$ be such that $\xWe_\c$ is a mutable variable. By \cite[Theorem 3.1]{GLS}, the cluster variable $\xWe_\c$ is irreducible in $\C[R(\dbr)]$. Thus, in this case the monomial $m_\c$ must be of the form
\begin{equation}\label{eq:weave-cluster-as-monomial-mutable}
\xWe_\c = \x3D_{\phi(\c)}m'_\c(\bx3D),
\end{equation}
where $m'_\c(\bx3D)$ is a monomial involving only frozen variables in $\bx3D$ and $\phi$ is a bijection of the indexing sets. Note that if $\c \in \JWe$ is such that $\xWe_\c$ is a frozen variable, then the Laurent monomial $m_\c(\bx3D)$ must be a monomial in the frozen variables by \cite[Theorem 2.2]{GLS}.

{\it Step 4.}  We now argue that $m'_\c = \prop_\c \in \C^{\times}$ for $m'_c$ as in \cref{eq:weave-cluster-as-monomial-mutable}. For this, we use \cref{lem:polynomial-expressions-cluster-variables} as follows. Assume there is a frozen variable appearing in $m'_\c$. Clear denominators in \eqref{eq:weave-cluster-as-monomial-mutable} so that we have an equality
\begin{equation}\label{eq:denominators-cleared}
m_{c,1}(\bx3D)\xWe_{c} = m_{c,2}(\bx3D)\x3D_{\phi(\c)}
\end{equation}
that is now valid in the polynomial algebra $\C[z_1, \dots, z_{m+n}]$. Extend the double braid word on the left so that all frozen variables appearing in \eqref{eq:denominators-cleared} become mutable; this can be done, for example, by appending a reduced expression for $\wo$ on the left of $\dbr$. By \cref{lem:polynomial-expressions-cluster-variables}, the expression \eqref{eq:denominators-cleared} is still valid for the longer double braid word $\widetilde{\dbr}$. The factoriality of $\C[R(\widetilde{\dbr})]$, cf.~\cite[Lemma 5.29]{CGGLSS}, and the irreducibility of cluster variables imply that $m_{c,1}(\bx3D)$ is proportional to $m_{c,2}(\bx3D)$, and so are $\xWe_{\c}$ and $\x3D_{\phi(\c)}$.\footnote{Note that the cluster variable $\xWe_{\c}$ cannot be a factor of $m_{c,2}(\x3D)$ since $\xWe_{\c}$ is not frozen in $\C[R(\dbr)]$. Similarly, $\x3D_{\varphi(\c)}$ cannot be a factor of $m_{c,1}(\bx3D)$.}  Thus, $\xWe_\c = \prop_\c\x3D_{\phi(\c)}$ for some constant $\prop_\c \in \C^{\times}$. 

{\it Step 5.} Let us now verify that $\phi(\c) = \invc$, i.e.~that the indexing bijection $\phi$ is indeed given by the inversion $c\mapsto\invc$. Consider the locus $\{\xWe_{\e} \neq 0 | \e < \c\}\cap\{\xWe_{\c} = 0\}$. By \cref{lem:weave-var-zero-locus}, this locus coincides with the set of elements $(\Xbul, \Ybul) \in R(\dbr)$ satisfying 
\[
Y_{\inve} \Rwrel{w_{\inve}} X_{\inve} \; \text{for} \; \e < \c \; \text{but} \; Y_{\invc} \Rwrel{w} X_{\invc} \; \text{with} \; w < w_{\invc}.  
\]
We note that the only possibility for $w$ above is $s_{i_{\invc}^*}^-w_{\invc}s_{i_{\invc}}^+$, by the properties of relative position. It thus follows that the locus $\{\xWe_{\e} \neq 0 | \e < \c\}\cap\{\xWe_{\c} = 0\}$ contains the locally closed subset from \cref{def:mutable-Deodhar-hypersurface}, whose closure is the Deodhar hypersurface $V_{\invc}$, and thus the vanishing locus of $\xWe_\c$ contains the Deodhar  hypersurface $V_{\invc}$. By \eqref{eq:weave-cluster-as-monomial-mutable}, the vanishing locus of $\xWe_\c$ is exactly the same as the vanishing locus of $\x3D_{\phi(c)}$. By \cref{prop:GLSB-cluster-var-def}, of all the cluster variables in $\bx3D$ only $\x3D_{\invc}$ vanishes on $V_{\invc}$. Therefore $\phi(\c) = \invc$, as required. 

{\it Step 6.} We have shown that $\xWe_{\c} = \prop_\c\cdot \x3D_{\invc}$ with $\prop_\c\in\C^\times$ for $\c \in \JWe$ such that $\xWe_{\c}$ is mutable. If $\xWe_{\c}$ is frozen, extend the double braid word (and thus the double inductive weave $\DW$) until $\xWe_{\c}$ becomes mutable, and apply the arguments above. This finishes the proof. 
\end{proof}

%%%%%%%%%%%%%%%%%%%%%%%%%%%%%%%%%%%%%%%%%%%%%%%%%%%%%%%%%%%%%%%%%%
%%%%%%%%%%%%%%%%%%%%%%%%%%%%%%%%%%%%%%%%%%%%%%%%%%%%%%%%%%%%%%%%%%
%%%%%%%%%%%%%%%%%%%%%%%%%%%%%%%%%%%%%%%%%%%%%%%%%%%%%%%%%%%%%%%%%%
\subsection{Towards equality of cluster variables}
The current goal is to show that $\prop_\e = 1$ for every $\e \in \JWe$, where $\prop_\e\in\C^\times$ are the constants of proportionality in \cref{lem:coincidence-cluster-variables-constants}. That is, we want to show that the Deodhar and weave cluster variables $\bx3D$ and $\bxWe$ coincide identically, not just up to constants. For this, we need to further study the combinatorics of weave and Deodhar cluster variables and their relation. 

For the remainder of this section, we let $\dbr=(i_1,i_2,\dots,i_\nm) = i_1i_2\cdots i_\nm \in (\pm I)^\nm$ be a double braid word,
$J_- = \{a_1<a_2<\cdots<a_n\}\quad\mbox{and}\quad J_+ = \{b_1<b_2<\cdots<b_m\}$
be the subsets of $[\nm]$ recording positions of negative and positive letters in $\dbr$, and
$$\brWe = \ip_1\ip_2\cdots \ip_\nm\in I^\nm$$
be the (ordinary) braid word corresponding to $\dbr$, cf.~ \cref{lem:iso-glsbs-cgglss}. Let
$$\doublestring(\dbr) = (j_1A_1,j_2A_2,\dots, j_\nm A_\nm)$$ be the associated double string and let $\DW(\dbr)$ be the double inductive weave, cf.~\cref{sssec:double_inductive_weaves}.

%%%%%%%%%%%%%%%%%%%%%%%%%%%%%%%%%%%%%%%%%%%%%%%%%%%%%%%%%%%%%%%%%%
%%%%%%%%%%%%%%%%%%%%%%%%%%%%%%%%%%%%%%%%%%%%%%%%%%%%%%%%%%%%%%%%%%
%%%%%%%%%%%%%%%%%%%%%%%%%%%%%%%%%%%%%%%%%%%%%%%%%%%%%%%%%%%%%%%%%%
\subsection{Combinatorics of weave cluster variables}\label{sec:weave-cluster-variables2} 
From \cref{def:edge-labeling} we have the edge labeling of the double inductive weave $\DW$ and the rational functions $u_\edge$ involved in the edge label $g_\edge$. By \cref{prop:dbl-weave-cluster-var}, for the southern edge $\edge$ of a trivalent vertex, the function $u_\edge$ is equal to $\xWe_\c$, for a certain index $c$. By \cite[Theorem 5.12 (i)]{CGGLSS}, for all edges $\edge$, $u_\edge$ is a product of cluster variables. In this section, we review the combinatorics of this product formula for $u_\edge$ and how the cluster variables spread along the weave edges top-to-bottom. This scanning procedure is essential in our comparison of the cluster variables $\xWe$ to the cluster variables $\x3D$.

Recall that if $\weave$ is a weave, then $E(\weave)$ denotes the set of edges of the graph $\weave$. The following notion is a key aspect of \cite{CGGLSS}:

\begin{definition}[Lusztig cycles]\label{def:lusztig-cycle}
Let $\weave$ be a weave. A function $C: E(\weave) \to \Z_{\geq0}$ is said to be \emph{Lusztig cycle} if it satisfies the \emph{tropical Lusztig rules}. Specifically, at $6$, $4$ and $3$-valent vertices the tropical Lusztig rules are given by the following local models: %\MSB{Need to add the rules for 8- and 12-valent vertices, unless we are folding. For 12-valent, probably it's the tropicalization of Theorem 3.1 (c) in \url{https://pages.uoregon.edu/arkadiy/ansgen1.pdf}} \JS{Yes, the rules for 8- and 12-valent vertices are the tropicalization of Thm. 3.1 (b) and (c), respectively, of the Berenstein-Zelevinsky paper.}
	
\begin{equation}\label{fig:tropical-lusztig-rules}
\begin{tikzcd}
\draw[dashed] (0,0) -- (3,0) -- (3,3) -- (0,3)-- cycle;
\draw[red] (0.5, 3) to[out=270, in=135] (1.5, 1.5);
\node at (0.3, 2.5) {a_1};
\draw[blue] (1.6, 3) to (1.5, 1.5);
\node at (1.3, 2.5) {a_2};
\draw[red] (2.5, 3) to [out=270, in=45] (1.5, 1.5);
\node at (2.7, 2.5) {a_3};
\draw[blue] (1.5, 1.5) to [out=225, in=90] (0.5, 0);
\node at (0.3, 0.5) {b_1};
\draw[red] (1.5, 1.5) to (1.5, 0);
\node at (1.3, 0.5) {b_2};
\draw[blue] (1.5, 1.5) to[out=315, in=90] (2.5, 0);
\node at (2.7, 0.5) {b_3};

\draw[dashed] (5,3)--(8,3)--(8,0)--(5,0)--cycle;
\draw[red] (5.5, 3) to[out=270, in=90] (7.5, 0);
\node at (5.3, 2.5) {{a_1}};
\draw[teal] (7.5, 3) to[out=270, in=90] (5.5, 0);
\node at (7.7, 2.5) {{a_2}};
\node at (5.3, 0.5) {a_2};
\node at (7.7, 0.5) {a_1};

\draw[dashed] (10,3)--(13,3)--(13,0)--(10,0)--cycle;
\draw[red] (10.5, 3) to[out=270, in=135] (11.5, 1.5);
\node at (10.3, 2.5) {{a_1}};
\draw[red] (12.5, 3) to[out=270, in=45] (11.5, 1.5);
\node at (12.7, 2.5) {{a_2}};
\draw[red] (11.5, 1.5) to (11.5, 0);
\node at (11.2, 0.5) {\min(a_1, a_2)};
\end{tikzcd}
\end{equation}

where the weights $a_i$ and $b_j$ in the 6-valent case satisfy the equalities:
\begin{enumerate}

\item $b_1 = a_2 + a_3 - \min(a_1, a_3)$.
\item $b_2 = \min(a_1, a_3)$.
\item $b_3 = a_2 + a_1 - \min(a_1, a_3)$. 
\end{enumerate}
\end{definition}

More generally, at $8$- and $12$-valent vertices the rules are obtained by tropicalizing the formulas in \cite[Theorem 3.1]{BZ-positivity}, see e.g. \cite[(36), (37)]{CGGLSS} for the tropical Lusztig rules at an $8$-valent vertex.\hfill$\Box$

\begin{comment}
I chopped the following lemma because it did not seem to be used:

\begin{lemma}[{\cite[Lemma 4.10]{CGGLSS}}] \label{lem:lusztig-cycle-top-determines-bottom} %\MSB{Not sure if this is still needed}
Let $\br$ be a braid word, let $u$ be a reduced word for $\delta(\br)$, and let $\weave: \br \to u$ be a weave. Given the values of a Lusztig cycle $C: E(\weave) \to \Z_{\geq0}$ on the top edges of $\weave$, the values of $C$ on the bottom edges do not depend on $\weave$.
\end{lemma}
\end{comment}

Let us fix a double inductive weave $\DW$ for $\brWe$. For each cluster variable $\xWe_\c$, we now introduce a function $\lcyc_\c: E(\DW) \to \Z_{\geq0}$ which tracks the exponent of $\xWe_\c$ in $u_\edge$. Recall from \cref{def:solid-crossings-vs-vertex-crossings} that if $\c \in \JWe$, the unique trivalent vertex between depths $\c$ and $\c+1$ 
is denoted $\vertex_\c$. The function $\lcyc_\c$ is defined as follows:

\begin{definition}[Vertex cycles]\label{def:cycles-for-trivalents}
Let $\DW$ be a double inductive weave for $\brWe$, $\c \in \JWe$ and $\edge$ the southern edge of the vertex $\vertex_\c$ in $\DW$. By definition, the \emph{vertex cycle} $\lcyc_\c$ is the function $\lcyc_\c:E(\DW) \to \Z_{\geq0}$ uniquely characterized by
\begin{enumerate}
    \item $\lcyc_\c(\edge)=1$,
    \item $\lcyc_\c$ vanishes on all other edges which intersect the region above depth $\c+1$,
    \item $\lcyc_\c$ propagates down from $\vertex_\c$ by the tropical Lusztig rules (cf. Definition \ref{def:lusztig-cycle}) for edges that lie entirely below depth $\c+1$.\hfill$\Box$
\end{enumerate}

\end{definition}

The following result asserts that the collection of vertex cycles, as introduced in \cref{def:cycles-for-trivalents}, exactly record the factorization of $u_\edge$ into cluster variables. Note that this fact is true for general weaves, though we state it here only for double inductive weaves:

\begin{thm}[{\cite[Theorem 5.12 (i)]{CGGLSS}}]\label{thm:u-as-cluster-var-product}
    Let $\DW$ be a double inductive weave for $\brWe$. In the edge labeling of \cref{def:edge-labeling}, for any $\edge \in E(\DW)$, we have
    \[u_\edge = \prod_{\c \in \JWe} (\xWe_{\c})^{\lcyc_{\c}(\edge)}.\]
\end{thm}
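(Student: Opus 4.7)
The plan is to prove the formula by induction on depth, showing that the identity $u_\edge = \prod_{\c \in \JWe} (\xWe_\c)^{\lcyc_\c(\edge)}$ propagates from the top of $\DW$ downward, in a way that the multiplicative propagation rules for $u_\edge$ exactly tropicalize to the additive propagation rules for $\lcyc_\c$. Throughout, I would scan $\DW$ from top to bottom depth by depth, and verify at each horizontal strip that if the identity holds for every edge crossing the upper boundary of the strip, then it continues to hold for every edge crossing the lower boundary.

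\textbf{Base case.} For every top edge $\edge$ of $\DW$, \cref{def:edge-labeling}(i) gives $u_\edge = 1$. On the other hand, every trivalent vertex $\vertex_\c$ lies at depth $\ges \c$, so by \cref{def:cycles-for-trivalents}(2) the vertex cycle $\lcyc_\c$ vanishes on every top edge. Thus the right-hand side is the empty product $1$, and the identity holds trivially at depth $0$. In a strip of $\DW$ containing no vertex, by \cref{def:edge-labeling} the edge labels (hence the $u_\edge$) are unchanged as edges pass vertically through; similarly $\lcyc_\c$ does not change along such edges, so the identity propagates across such strips.

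\textbf{Inductive step at vertices.} At each vertex type, I would match the multiplicative transformation rule for the $u$-labels, as dictated by the propagation rules of \cite[Def.~5.8]{CGGLSS} referenced in \cref{def:edge-labeling}, against the tropical Lusztig rules of \cref{def:lusztig-cycle} applied to each $\lcyc_\c$. Concretely:
\begin{itemize}
\item At a \emph{tetravalent} (commutation) vertex, the $u$-values are simply swapped, and so are the vertex-cycle values; the identity is preserved edgewise.
\item At a \emph{trivalent} vertex $\vertex_\c$ with southern edge $\edge$: by \cref{prop:dbl-weave-cluster-var}, $u_\edge = \xWe_\c$. By inductive hypothesis both incoming edges satisfy the factorization formula, and all vertex cycles $\lcyc_{\c'}$ with $\c' \neq \c$ pass through the trivalent vertex according to the min-rule in \cref{def:lusztig-cycle} — one must verify that this min-rule is precisely the tropicalization of the propagation rule for $u$ through a trivalent vertex. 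The new cluster variable $\xWe_\c$ appears with exponent $\lcyc_\c(\edge) = 1$, matching the initial condition of \cref{def:cycles-for-trivalents}(1).
\item At a \emph{$6$-valent} (and more generally $2m_{ij}$-valent) vertex, the $u$-values transform according to the multiplicative braid-move formulas of Berenstein–Zelevinsky \cite[Thm.~3.1]{BZ-positivity}. These are subtraction-free Laurent monomials in the incoming $u$s, and their tropicalizations are exactly the formulas in \cref{def:lusztig-cycle}(1)--(3) and their higher analogues. Hence applying $\log_{\mathrm{trop}}$ to the multiplicative identity on each side translates termwise into the additive propagation rule for $\lcyc_\c$.
\end{itemize}
In each case, the key point is that the propagation rule for $u_\edge$ is a subtraction-free rational (in fact Laurent monomial) map in the incoming $u$-variables, so exponents of any fixed factor $\xWe_\c$ propagate independently by the tropicalization of that same rule.

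\textbf{Main obstacle.} The only nontrivial check is the matching at the higher-valent vertices: one needs the multiplicative propagation formulas in \cite[Def.~5.8]{CGGLSS}, which are built from the Lusztig parametrization transition maps of \cite{BZ-positivity}, to tropicalize \emph{precisely} to the tropical Lusztig rules used to define the vertex cycles $\lcyc_\c$. This is the content of the Berenstein–Zelevinsky positivity theorem combined with the specific conventions chosen for $g_\edge = B_i(f_\edge)\chi_i(u_\edge)$; one has to track the factor $\chi_i(u_\edge)$ through each local move and verify the tropicalization produces the exponents $b_1,b_2,b_3$ displayed in \cref{fig:tropical-lusztig-rules} (and similarly in the $8$- and $12$-valent cases). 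Once this local verification is carried out at each vertex type, the induction closes and the formula $u_\edge = \prod_\c (\xWe_\c)^{\lcyc_\c(\edge)}$ holds on every edge of $\DW$.
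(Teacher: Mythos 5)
This theorem is cited from \cite{CGGLSS} (Theorem 5.12(i)) rather than proved in this paper, so there is no in-text proof for direct comparison; nonetheless, your proposed argument contains a genuine gap that would need to be filled before it could serve as a proof.

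Your inductive skeleton (scan depth by depth, verify the factorization persists across each strip, classify by vertex valence) is the right shape. The problem is in the treatment of the $6$- and higher-valent vertices, and it is an \emph{internal inconsistency} in your argument. You assert that the $u$-labels propagate by a ``subtraction-free rational (in fact Laurent monomial) map in the incoming $u$-variables,'' so that exponents of any fixed $\xWe_\c$ propagate linearly, and simultaneously that the tropicalization of this propagation is the tropical Lusztig rule of \cref{def:lusztig-cycle}. These two claims cannot both hold: the tropicalization of a Laurent monomial map $\bm{u}\mapsto\prod u_i^{n_i}$ is a \emph{linear} map with no $\min$, whereas the tropical Lusztig rule at a $6$-valent vertex reads $b_2=\min(a_1,a_3)$, $b_1=a_2+a_3-\min(a_1,a_3)$, $b_3=a_1+a_2-\min(a_1,a_3)$, which is genuinely piecewise-linear. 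Moreover, if the $u$-values really transformed by the Berenstein--Zelevinsky transition maps of \cite[Thm.~3.1]{BZ-positivity} (as you cite), those maps are subtraction-free rational but emphatically \emph{not} monomial (e.g.~$b_2=a_1+a_3$ in type $A_2$), so the inductive hypothesis that $u_\edge$ is a Laurent monomial in the $\xWe_\c$ would not automatically be preserved: a sum of two distinct monomials is not a monomial.

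So one of two things must be the case, and your argument addresses neither. Either (a) the $u$-propagation in \cite[Def.~5.8]{CGGLSS} is a different, genuinely monomial rule on the Cartan parts (with the BZ transitions instead governing the $f$-labels), in which case the matching of its linear tropicalization with the $\min$-plus tropical Lusztig rule requires a separate structural argument about Lusztig cycles in these weaves (e.g.~that the $\min$ in \cref{def:lusztig-cycle} is always degenerately achieved, say because one argument vanishes); or (b) the $u$-propagation really is the BZ transition, in which case your induction fails to show that $u_\edge$ stays monomial, and one would need a supplementary argument for why the sums collapse. Your ``main obstacle'' paragraph gestures at the need to verify the tropicalization, but the missing idea is precisely the mechanism that makes monomiality persist across the addition in the braid-move formulas; without pinning down which of (a) or (b) holds in \cite{CGGLSS} and supplying that mechanism, the inductive step is not closed.
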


%%%%%%%%%%%%%%%%%%%%%%%%%%%%%%%%%%%%%%%%%%%%%%%%%%%%%%%%%%%%%%%%%%
%%%%%%%%%%%%%%%%%%%%%%%%%%%%%%%%%%%%%%%%%%%%%%%%%%%%%%%%%%%%%%%%%%
%%%%%%%%%%%%%%%%%%%%%%%%%%%%%%%%%%%%%%%%%%%%%%%%%%%%%%%%%%%%%%%%%%
\subsection{Deodhar cluster variables and grid minors}\label{ssec:deodhar_variables2} In this section we discuss the torus element $\hp_\c$ of \cite{GLSB}, and further comment on the chamber minors, which briefly appeared in \cref{ssec:deodhar_variables1} when defining the Deodhar cluster variables $\bx3D$. Each element $\hp_\c$ belongs to the Cartan torus, and the chamber minors are certain characters on the torus that can be evaluated on $\hp_\c$. Since the chamber minors are Laurent monomials in the cluster variables $\bx3D = (\x3D_\e)_{\e\in\J3D}$, 
$\hp_\c$ can be understood as a certain cocharacter evaluated on $\bx3D$. Intuitively, in analogy with weaves, the Deodhar cluster variables $\bx3D$ spread through the chamber minors analogous to the way the weave cluster variables $\bxWe$ spread through the weave; this analogy is made more precise in \cref{ssec:comparing-cocharacters}, see also \cref{ssec:compare-cocharacters-type-A}. The chamber minors and $\hp_c$ are rigorously introduced as follows.\\

Fix a point $(X_{\bullet}, Y_\bullet) \in R(\dbr)$ in the double braid variety. For $\c \in [0, n+m]$, we define the coset $Z_\c:=Y_\c^{-1}X_\c\in U_+\bs \G/U_+$ and, to ease notation, we use $Z_\c$ to also denote a representative of such double coset in $G$. Following~\cite[Equation~(2.6)]{GLSB}, for $(X_{\bullet}, Y_\bullet) \in T_\dbr$, we define $\hp_\c\in\H$ to be the unique torus element such that 
\begin{equation}\label{eq:hp_c_dfn}
  Z_\c\in U_+ \dw_\c \hp_\c U_+.
\end{equation}

We now define certain functions of $\hp_\c$, which extend to regular functions on $R(\dbr)$, in \cref{def:grid-minors}. Set $u_c:= \wo w_c$, where $w_\c$ is as in \cref{def:w-and-u}, use $\omega_i$ to denote the fundamental weights. If $w  = s_{i_1}\cdots s_{i_{\ell}}\in W$, we define the group element
\[
\overline{\overline{w}}:=\dot{s}_{i_1}^{-1} \cdots \dot{s}_{i_\ell}^{-1}\in \G.
\]
%\TODO{Define earlier: $\omega_i$ fundamental weight and $\overline{\overline{w}}=\dot{s}_{i_1}^{-1} \cdots \dot{s}_{i_\ell}^{-1}$}

\begin{definition}[Grid and chamber minors]\label{def:grid-minors}
    Let $(X_{\bullet}, Y_\bullet) \in T_\dbr$, $\c \in [0, m+n]$, and $i \in I$. By definition, the \emph{grid minors} of the point $(X_{\bullet}, Y_\bullet)$ are
    \[\grid_{\c, i}:= \omega_i(\hp_c) \quad \text{and}\quad \grid_{\c, -i}:=\omega_i(\overline{\overline{u_c}} \hp_\c \overline{\overline{u_c}}^{-1}).\]
    For $\c \in [m+n]$, the \emph{chamber minor} is defined to be the grid minor $\grid_\c:= \grid_{c-1, i_c}$, and it is said to be \emph{solid} whenever $c$ is solid.\hfill$\Box$
\end{definition}
The grid minors in \cref{def:grid-minors} are defined on the Deodhar torus $T_\dbr\sse R(\dbr)$ and \cite[Lemma 2.17]{GLSB} implies that they extend to regular functions on $R(\dbr)$ and $\G$-invariant regular functions on $\bigR(\dbr)$. In particular, they can be written as polynomials in the parameters $z_i$ from \cref{sec:parametrization}.\footnote{The regularity of the grid minors follows from the fact that they are  \emph{generalized minors} of $\G$ evaluated at the element $Z_c$, cf. \cite[Section 2.5]{GLSB}.}

The important features of the grid and chamber minors, which follow from \cite[Prop. 2.12 \& Cor. 2.22]{GLSB}, can be summarized as follows:

\begin{lemma}[\cite{GLSB}]\label{lem:impt-facts-minors} The following holds:
\begin{enumerate}
    \item The grid minors are characters on the Deodhar torus $T_{\dbr}$.

    \item The collection $\{\grid_\c\}_{\c \in \J3D}$ of solid chamber minors is a basis of characters of $T_{\dbr}$.

    \item  The collection of cluster variables $\bx3D$ is related to the chamber minors $\{\grid_\c\}_{\c \in \J3D}$ by an upper unitriangular transformation.
\end{enumerate}
\end{lemma}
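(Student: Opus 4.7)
The plan is to prove each of the three items separately, reducing everything to the explicit parametrization of the Deodhar torus from Section~\ref{sec:parametrization} and to the uniqueness property of Proposition~\ref{prop:GLSB-cluster-var-def}.

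For (1), I would compute $\hp_\c$ explicitly as a regular function on $T_\dbr$ by descending induction on $\c$. The base case $\hp_{\nm}=\id$ is immediate from $Z_{\nm}=\id$. For the inductive step, one uses $Z_{\c-1}=B_{i_\c}(z_\c)Z_\c$ when $i_\c\in I$ and $Z_{\c-1}=Z_\c B_{|i_\c|^\ast}(z_\c)^{-1}$ when $i_\c\in -I$, together with the factorization $B_j(z)=x_j(z)\ds_j$. Moving $\ds_j$ past $\U$ and across the Weyl factor $\dw_\c$ using $\ds_j h \ds_j^{-1}=s_j(h)$ for $h\in\H$, one rewrites \eqref{eq:hp_c_dfn} for $Z_{\c-1}$ and extracts a formula expressing $\hp_{\c-1}$ as the product of a conjugate of $\hp_\c$ by a lift of $s_{i_\c}$ with a factor $\chi_{i_\c}(\lambda_\c)$, where $\lambda_\c$ is a Laurent monomial in $z_\c$. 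Applying a fundamental weight $\omega_i$ to this expression shows that the grid minors are Laurent monomials in the coordinates of $T_\dbr$, hence characters.

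For (2), a dimension count first gives $|\J3D|=m+n-\ell(\wo)=\dim T_\dbr$, since the sequence $w_{\nm},\ldots,w_0$ strictly increases in length exactly $\ell(\wo)$ times along $\dbr$ (as $\delta(\dbr)=\wo$), the remaining $m+n-\ell(\wo)$ crossings being solid. To exhibit linear independence of $\{\grid_\c\}_{\c\in\J3D}$, I would use the explicit formula from~(1) to build an upper triangular change of coordinates between $\{\grid_\c\}_{\c\in\J3D}$ and the restricted coordinate functions $z_\c|_{T_\dbr}$ with $\c\in\J3D$. Triangularity reflects that $\grid_\c=\grid_{\c-1,i_\c}$ only depends on $z_\c,z_{\c+1},\ldots,z_\nm$; at a solid crossing $\c$, the inductive step of~(1) produces a nontrivial factor $\chi_{i_\c}(z_\c^{\pm1})$ whose image under $\omega_{i_\c}$ is a nonzero monomial in $z_\c$, ensuring nonzero diagonal entries.

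For (3), the plan is to analyze the vanishing pattern of each chamber minor $\grid_\c$ on the Deodhar hypersurfaces $\{\tV_{\c'}\}_{\c'\in\J3D}$. I would show that $\grid_\c$ vanishes to order one on $\tV_\c$ and does not vanish on $\tV_{\c'}$ for $\c'>\c$, by tracking how the Bruhat decomposition \eqref{eq:hp_c_dfn} of $Z_{\c-1}$ degenerates near each hypersurface: along $\tV_{\c'}$ with $\c'>\c$ the relaxed transversality condition only constrains the data at indices $\geq\c'$, so $\hp_{\c-1}$ remains generic and $\grid_\c$ does not vanish; along $\tV_\c$ itself, the relaxation at index $\c$ forces the Weyl factor in the decomposition of $Z_{\c-1}$ to shrink in length, which via the inductive formula of~(1) produces a first-order zero of $\omega_{i_\c}(\hp_{\c-1})$. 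Combined with the uniqueness characterization of $\x3D_\e$ in Proposition~\ref{prop:GLSB-cluster-var-def}, this yields the desired upper unitriangular transition with respect to the natural linear order on $\J3D$. The main obstacle is this last step: quantifying the precise orders of vanishing of $\grid_\c$ on every $\tV_{\c'}$ requires a careful analysis of the Deodhar stratification in terms of generalized minors, which is the technical core of the argument.
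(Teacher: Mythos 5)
The paper does not actually prove this lemma: it is attributed directly to~\cite{GLSB} (the sentence preceding it cites Proposition~2.12 and Corollary~2.22 of that paper), so you are supplying a from-scratch argument for something the paper deliberately outsources. Your sketch of~(1) and~(2) is in the right spirit, modulo a side-of-multiplication slip: from the parametrization in Section~\ref{sec:parametrization} one has $Z_{\c-1} = Z_\c B_{i_\c}(z_\c)$ when $i_\c\in I$ and $Z_{\c-1} = B_{|i_\c|^\ast}(z_\c)\,Z_\c$ when $i_\c\in -I$, i.e.\ the opposite order from what you wrote; the descending induction works after that correction.

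For~(3) there is a genuine directional error, and it is not cosmetic: your proposed vanishing pattern contradicts the unitriangularity you set out to prove. You claim $\grid_\c$ vanishes to order one on $\tV_\c$ and does \emph{not} vanish on $\tV_{\c'}$ for $\c'>\c$. If that were true, the matrix expressing each $\grid_\c$ as a Laurent monomial in $\{\x3D_{\c'}\}$ would have nonzero entries only for $\c'\le\c$, i.e.\ it would be \emph{lower} unitriangular in the natural order on $\J3D$, the opposite of the statement. The correct pattern — and the one the paper itself invokes in the proof of Lemma~\ref{lem:polynomial-expressions-cluster-variables}, quoting \cite[Proposition~2.20]{GLSB} — is that $\x3D_\e$ is a Laurent monomial in $\{\grid_\c\}_{\c\ge\e}$; equivalently, $\grid_\c$ vanishes to order one on $\tV_\c$, does \emph{not} vanish on $\tV_{\c'}$ for $\c'<\c$, and may well vanish on $\tV_{\c'}$ for $\c'>\c$. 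Your heuristic also points the wrong way: $\grid_\c$ depends only on $z_\c,\dots,z_{m+n}$ while $\x3D_{\c'}$ depends only on $z_{\c'},\dots,z_{m+n}$; for $\c'>\c$ the function $\x3D_{\c'}$ is built from a \emph{subset} of the variables $\grid_\c$ sees, making divisibility entirely plausible, whereas for $\c'<\c$ divisibility is excluded precisely because $\grid_\c$ cannot acquire dependence on $z_{\c'}$. Finally, you candidly concede that you have not pinned down the precise orders of vanishing; that computation is the real technical content of \cite[Propositions~2.12 and~2.20]{GLSB}, and until it is carried out part~(3) is not established.
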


\noindent The most important fact is \cref{lem:impt-facts-minors}.(3), which allows for the explicit computation of the cluster variables $\bx3D$ via Laurent monomials on the chamber minors. \cref{lem:impt-facts-minors}, together with the definition of $\bx3D$ and the regularity of grid minors and cluster variables on $R(\dbr)$, imply the following lemma:

\begin{lemma}\label{lem:grid-as-mono-in-cluster}
    Let $\c \in [0, m+n]$ and $i \in \pm I$. Then the grid minor $\grid_{\c, i}$ relates to $\bx3D$ via the formula
    \[\grid_{\c, i}= \prod_{\e \in \J3D} (\x3D_\e)^{\ord_{V_e}(\c,i)}\]
    where $\ord_{V_e}(\c,i) \in \mathbb{Z}_{\ge 0}$ is the order of vanishing of $\grid_{\c,i}$ on the Deodhar hypersurface $\tV_\e$.
\end{lemma}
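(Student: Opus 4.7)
The plan is to express $\grid_{\c,i}$ as a Laurent monomial in the $\bx3D$ using \cref{lem:impt-facts-minors}, then verify that all exponents are non-negative and agree with the claimed orders of vanishing.

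First, I would invoke \cref{lem:impt-facts-minors}(1), which states that the grid minor $\grid_{\c,i}$ is a character on the Deodhar torus $T_\dbr$. By \cref{lem:impt-facts-minors}(2) and (3), the cluster variables $\{\x3D_\e\}_{\e \in \J3D}$ are related to the solid chamber minors $\{\grid_\c\}_{\c \in \J3D}$ by an upper unitriangular transformation, so they also form a basis of the character lattice $\Hom(T_\dbr,\C^\times)$. Consequently, there exist unique integers $n_\e \in \Z$, indexed by $\e \in \J3D$, such that
\[
\grid_{\c,i} = \prod_{\e \in \J3D} (\x3D_\e)^{n_\e}
\]
as characters of $T_\dbr$, equivalently as rational functions on $R(\dbr)$ (or on $\bigR(\dbr)$ via \cref{rmk:where_functions_live}).

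Next, I would argue that $n_\e \geq 0$ for every $\e$. The function $\grid_{\c,i}$ extends to a regular function on $R(\dbr)$, hence lifts to a $\G$-invariant regular function on $\bigRo(\dbr)$ and then to a rational function on $\bigR(\dbr)$ whose only possible poles lie along the Deodhar hypersurfaces $\tV_\e$, since these hypersurfaces together with $\tT_\dbr$ exhaust $\bigR(\dbr)$ (by \cite[Proposition 2.19]{GLSB}, cited above \cref{rmk:where_functions_live}). For any $\e \in \J3D$, the order of vanishing of the right-hand side along $\tV_\e$ equals $n_\e$, because by \cref{prop:GLSB-cluster-var-def} we have $\ord_{\tV_\e}(\x3D_\e) = 1$ while $\ord_{\tV_\e}(\x3D_{\e'}) = 0$ for $\e' \neq \e$. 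Regularity of $\grid_{\c,i}$ on $\bigRo(\dbr)$ then forces $n_\e \geq 0$ when $\e$ is mutable; for frozen $\e$ the same conclusion holds because the grid minor is regular on all of $R(\dbr)$, not merely on the mutable locus, and the only way the Laurent monomial can be regular along $\tV_\e$ is for $n_\e$ to be non-negative.

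Finally, taking orders of vanishing along $\tV_\e$ on both sides yields $\ord_{V_\e}(\grid_{\c,i}) = n_\e$, matching the definition $\ord_{V_\e}(\c,i) := n_\e$ in the statement of the lemma. The main subtlety I anticipate is justifying that the expression is genuinely a monomial (i.e.\ that no $n_\e$ is negative), which requires the fact that the union of the Deodhar hypersurfaces is exactly the complement of $\tT_\dbr$ in $\bigR(\dbr)$; this is the input from \cite[Proposition 2.19]{GLSB} already cited in \cref{ssec:deodhar_variables1}. Once this is in place, the identification of exponents with orders of vanishing is formal.
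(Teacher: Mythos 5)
Your proof is correct in structure and matches the argument the paper intends: invoke \cref{lem:impt-facts-minors} to write $\grid_{\c,i}$ as a Laurent monomial in $\bx3D$, use \cref{prop:GLSB-cluster-var-def} to identify each exponent $n_\e$ with the order of vanishing along $\tV_\e$, and use regularity to conclude $n_\e\ge 0$.

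One small imprecision worth fixing: for frozen $\e$, the hypersurface $\tV_\e$ satisfies $\tV_\e \cap \bigRo(\dbr)=\emptyset$, so it is not seen by $R(\dbr)\cong\bigRo(\dbr)/\G$ at all. Appealing to ``regularity on $R(\dbr)$'' therefore does not constrain $n_\e$ for frozen $\e$. The correct justification, which the paper records just below \cref{def:grid-minors} (citing \cite[Lemma 2.17]{GLSB}), is that the grid minors extend to $\G$-invariant regular functions on the entire partial compactification $\bigR(\dbr)$, not merely on $\bigRo(\dbr)$. Since $\tV_\e\subset\bigR(\dbr)$ for every $\e\in\J3D$, mutable or frozen, this single fact gives $n_\e=\ord_{\tV_\e}(\grid_{\c,i})\ge 0$ uniformly, and your case split becomes unnecessary.
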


\noindent In a certain sense, \cref{lem:grid-as-mono-in-cluster} can be understood as the analogue of \cref{thm:u-as-cluster-var-product} in the Deodhar approach.

%%%%%%%%%%%%%%%%%%%%%%%%%%%%%%%%%%%%%%%%%%%%%%%%%%%%%%%%%%%%%%%%%%
%%%%%%%%%%%%%%%%%%%%%%%%%%%%%%%%%%%%%%%%%%%%%%%%%%%%%%%%%%%%%%%%%%
%%%%%%%%%%%%%%%%%%%%%%%%%%%%%%%%%%%%%%%%%%%%%%%%%%%%%%%%%%%%%%%%%%
\subsection{Deodhar torus element in terms of weave $u$-variables} \label{ssec:comparing-cocharacters} Let us now compare the data introduced in \cref{sec:weave-cluster-variables2}, for weaves, and in \cref{ssec:deodhar_variables2}, for Deodhar tori. The main contribution of this section is \cref{lem:hc-in-terms-of-uc}, which expresses the torus element $\hp_c$ for a Deodhar torus, as defined by \cref{eq:hp_c_dfn}, in terms of the $u$-variables associated to the corresponding weave.

\begin{comment}
\color{blue}
Fix $\e \in \JWe$, $\c \in [0, m+n]$ and recall from \cref{sec:weave-cluster-variables2} that the cocharacter $\gamWe_{\brWe, \c, \e}$ is obtained by slicing $\DW$ at depth $\c$, and computing the coweight of the Lusztig datum $[\bjc, \nudc]$, cf.~\cref{def:lusztig-data-from-dbl-weave}. From \cref{ssec:deodhar_variables2}, we have the cocharacters $\gamchp_{\dbr, \invc, \inve}$ in \cref{def:Deodhar-gamma-cocharacter}, which relate to the torus element $\hp_{\invc}$ via \cref{eq:hp_in_terms_of_x3D-s}. In order to compare the two types of cocharacters, we now relate the torus element $\hp_{\invc}$ to the Lusztig datum $[\bjc, \nudc]$. In doing so, we make use of the edge labeling from \cref{def:edge-labeling}. \MSB{Despite this description, we do not actually use the cocharacters in this section. The right hand side of \eqref{eq:hp_in_terms_of_u-s} can be written in terms of the weave cocharacter (see \cref{cor:Deodhar_cocharacters_via_weaves}) but it's not written that way right now. I think we could move the cocharacter definitions to \cref{ssec:comparing-cocharacters} without much harm}
\color{black}
\end{comment}

\begin{figure}
    \centering
    \adjustbox{scale=0.7}{
    
\begin{tikzcd}
\draw (-1,-1) to (17,-1) to (8,8) to (-1,-1);

\draw[color=blue] (8.5, 7.5) to[out=270, in=45] (6.5, 5);
\draw[color=red] (9.5, 6.5) to[out=270,in=135] (10.5, 4);
\draw[color=blue] (5.5,5.5) to[out=270,in=135] (6.5, 5) to (6.5, 4.5) to [out=270,in=160] (10.5, 2.5);
\draw[color=red] (11.5, 4.5) to[out=270,in=45] (10.5, 4) to (10.5, 2.5);
\draw[color=blue] (12.5, 3.5) to[out=270, in=20] (10.5, 2.5);

\draw[color=blue] (10.5, 2.5) to (10.5,-1);

\draw[color=red] (10.5, 2.5) to[out=0,in=160] (12, 1.5) to (12, -1);

\draw[color=red] (13.5, 2.5) to[out=270, in=45] (12, 1.5);

\draw[color=red] (0.9, 0.9) to[out=340, in=170] (6, 0.5);

\draw[color=red] (10.5, 2.5) to[out=190,in=20] (6, 0.5) to (6,-1);

\draw[dashed] (0,0) to (16,0);

\draw[dashed] (1.4,1.4) to (14.6,1.4);

\draw[dashed] (2.8, 2.8) to (13.2, 2.8);

\draw[dashed] (3.8, 3.8) to (12.2, 3.8);

\draw[dashed] (4.8, 4.8) to (11.2, 4.8);

\draw[dashed] (6, 6) to (10, 6);

\draw[dashed] (7,7) to (9,7);

\draw[dashed] (7.7, 7.7) to (8.3, 7.7);

\node at (1, -0.5) {Y_0};

\node at (15, -0.5){X_0};

\node at (3, 1) {Y_1};
\node at (13.5, 1) {X_1};

\node at (4, 2.3) {Y_2};
\node at (12.5, 2.3) {X_2};

\node at (5, 3.3) {Y_3};
\node at (11.5, 3.3) {X_3};

\node at (5.5, 4.3) {Y_4};
\node at (10.75, 4.35) {X_4};

\node at (6, 5.5)  {Y_5};

\node at (10.1, 5.5)  {X_5};

\node at  (7,6.5)  {Y_6};

\node at (9, 6.5)  {X_6};

\node at (8, 7.2) {Y_7 = X_7};

\end{tikzcd} 
    }
    \caption{The labels on the regions $L_c, R_c$, $\c\in[0,7]$ for our running example, cf.~Examples \ref{ex:running_variables}, \ref{ex:from-double-braid-to-double-string-running-example} and \ref{ex:running_sequences_of_elements_of_W}. The dashed lines are at %level
    depth $\c$, counting from top to bottom, as $Y_\invc$ labels $L_\c$ and $X_\invc$ labels $R_\c$.}
    %bottom to top \MG{From top to bottom? It is $\invc$ which goes from 0 at the bottom to 7 at the top (I believe)}. }
    \label{fig:ind-weave-triangle-labels}
\end{figure}
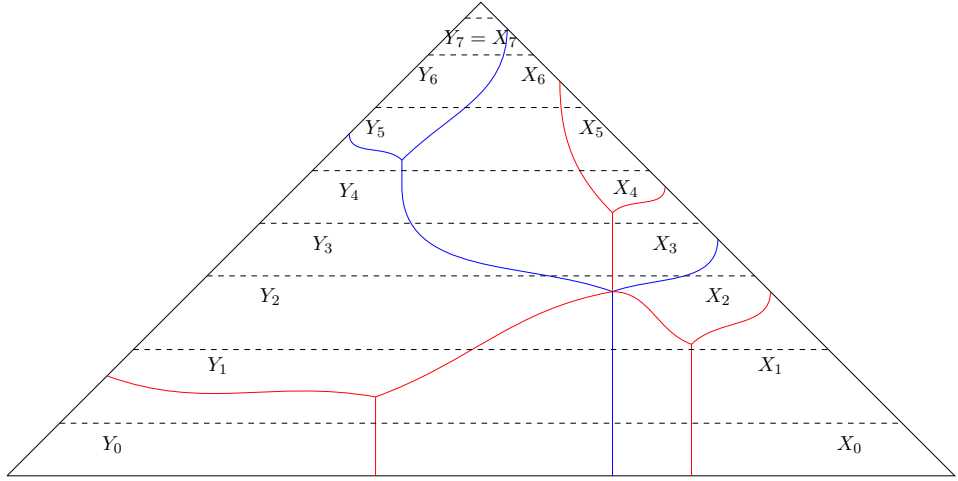

For that, we fix the double inductive weave $\DW$ associated to the Deodhar torus $T_\dbr$. We first determine how to compute $Z_{\invc}$ from the slice of $\DW$ at depth $\c$, as follows. Applying the isomorphism $\iso$ from \cref{lem:iso-glsbs-cgglss}, the top regions of $\triangle \setminus \DW$ are labeled by the weighted flags $(\Xbul, \Ybul)$, in some order. Recall from \cref{rem:iso-on-triangle-weave} that $L_{\c}$ is the region labeled by $Y_{\invc}$ and $R_{\c}$ is the region labeled by $X_{\invc}$, see e.g.~\cref{fig:ind-weave-triangle-labels}.

\begin{figure}
\adjustbox{scale=0.7}{
\begin{tikzcd}
\draw (-1,-1) to (17,-1) to (8,8) to (-1,-1);

\draw[color=blue] (8.5, 7.5) to[out=270, in=45] (6.5, 5);
\draw[color=red] (9.5, 6.5) to[out=270,in=135] (10.5, 4);
\draw[color=blue] (5.5,5.5) to[out=270,in=135] (6.5, 5) to (6.5, 4.5) to [out=270,in=160] (10.5, 2.5);
\draw[color=red] (11.5, 4.5) to[out=270,in=45] (10.5, 4) to (10.5, 2.5);
\draw[color=blue] (12.5, 3.5) to[out=270, in=20] (10.5, 2.5);

\draw[color=blue] (10.5, 2.5) to (10.5,-1);

\draw[color=red] (10.5, 2.5) to[out=0,in=160] (12, 1.5) to (12, -1);

\draw[color=red] (13.5, 2.5) to[out=270, in=45] (12, 1.5);

\draw[color=red] (0.9, 0.9) to[out=340, in=170] (6, 0.5);

\draw[color=red] (10.5, 2.5) to[out=190,in=20] (6, 0.5) to (6,-1);

\node (Y4) at (5, 4.35) {Y_4};
\node (X4) at (11, 4.35) {X_4};

\draw[->, ultra thick, color=cyan] (Y4) to (X4);
\draw[ultra thick, color=purple, ->,rounded corners=10] (Y4) -- (8, 7.5) -- (X4);

\node at (7.2, 6) {\color{purple} P_3^{\uparrow}};

\node at (8, 3.8) {\color{cyan} P_3^{\downarrow}};

\end{tikzcd} 

}
  \caption{\label{fig:ind-weave-triangle-paths} A double inductive weave corresponding to the running \cref{ex:running_variables}, shown together with the paths $\Ptop_3,\Pbot_3$ from \cref{def:highest-lowest-paths}. In this case $\c=3$ and $\invc=4$, so $\Ptop_3,\Pbot_3$ start in the region $L_\c=L_3$, labeled by $Y_\invc=Y_4$, and end in the region $R_\c=R_3$, labeled by $X_\invc=X_4$. Additional flag labels $Y_\invc,X_\invc$ are depicted in \cref{fig:ind-weave-triangle-labels}.}
\end{figure}
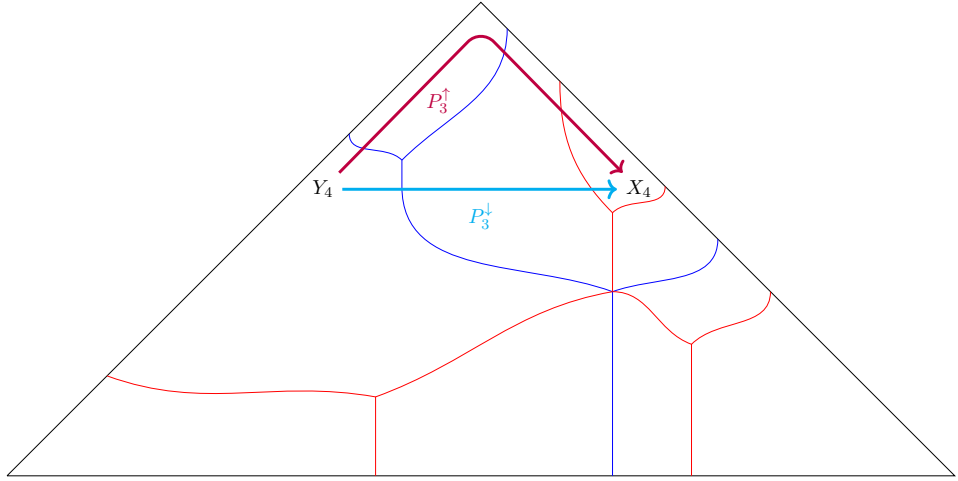

Let us now construct, using the weave $\DW$, an element  $\gPtop_\c\in\G$ which will match the coset (representative) $Z_{\invc}$ used in \cref{ssec:deodhar_variables2}:

\begin{definition}\label{def:highest-lowest-paths}
In the notation above, $\Ptop_\c$ is defined to be the highest possible path in the weave $\DW$ from the flag region $L_{\c}$ to the flag region $R_{\c}$: it is obtained by moving left-to-right and passing through only the top regions of $\triangle \setminus \DW$, as in \cref{fig:ind-weave-triangle-paths} (green). By definition, $\Pbot_\c$ is the path in the weave $\DW$ from $L_{\c}$ to $R_{\c}$ that is a horizontal line at depth $\c$, as in Figure \ref{fig:ind-weave-triangle-paths} (teal). Suppose that the path $\Ptop_\c$ intersects weave edges $\edge_1, \dots, \edge_r$, in this order. By definition, the element $\gPtop_\c$ is
\[\gPtop_\c:=g_{\edge_1} \cdots g_{\edge_r},\]
\noindent where $g_\edge$ denotes the label of $\edge$ in the edge labeling of $\DW$, cf.~\cref{def:edge-labeling}. The element $\gPbot_\c$ is defined analogously.\hfill$\Box$
\end{definition}

\begin{lemma}\label{lemma:Z_c_agrees_with_gPbot_c}
For all $c\in[0,m]$, we have the equality
$$Z_{\invc}=\gPtop_\c.$$ 
In addition, $Z_{\invc}$ agrees with $\gPbot_\c$ up to right multiplication by $U_+$.
\end{lemma}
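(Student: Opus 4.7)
My plan is to establish both identities by tracking the weighted-flag labeling of the double inductive weave $\DW$ from \cref{lem:element-labeling} along the two paths $\Ptop_\c$ and $\Pbot_\c$. First I will use the isomorphism $\iso$ of \cref{lem:iso-glsbs-cgglss}, in the form of \cref{rem:iso-on-triangle-weave}, to identify the weighted flag labeling $L_\c$ with $Y_\invc$ and the one labeling $R_\c$ with $X_\invc$. Under this identification, and via \cref{prop:iso-polynomial-algebras}, the specific representatives of $Y_\invc$ and $X_\invc$ coming from the parametrization of \cref{sec:parametrization} match the top representatives $\tilde F_i = \BZ_{\ip_1}(\zp_1)\cdots \BZ_{\ip_i}(\zp_i)$ from \eqref{eq:parametrized-flags-CGGLSS}. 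If $Y_\invc$ corresponds to $F_a$ and $X_\invc$ to $F_b$, then $Z_\invc$ is represented in $\G$ by $\tilde F_a^{-1}\tilde F_b$, which simplifies to a product of the $\BZ_{\ip_j}(\zp_j)$ whose indices lie between those labeling $L_\c$ and $R_\c$ at the top boundary of the triangle.

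Next, for the equality $Z_\invc = \gPtop_\c$, I will use that the highest path $\Ptop_\c$ lies entirely in the top regions of $\triangle\setminus \DW$ and therefore only crosses top edges. By \cref{def:edge-labeling}, each top edge $\edge_j$ carries $u_{\edge_j}=1$, so its label reduces to the pure braid matrix $\BZ_{\ip_j}(\zp_j)$. Applying the propagation rule of \cref{lem:element-labeling} inductively along $\Ptop_\c$, the ordered product $\gPtop_\c$ of these edge labels realizes the transition from the representative of $Y_\invc$ to that of $X_\invc$. Matching this product with the explicit expression for $\tilde F_a^{-1}\tilde F_b$ from the previous paragraph yields $\gPtop_\c = Z_\invc$ as elements of $\G$, not merely as double cosets.

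Finally, for the statement that $\gPbot_\c$ agrees with $Z_\invc$ up to right multiplication by $\U_+$, I will invoke path-independence of the weighted-flag labeling: since both $\Ptop_\c$ and $\Pbot_\c$ begin at $L_\c$ and end at $R_\c$, the propagation of \cref{lem:element-labeling} forces $\gPtop_\c$ and $\gPbot_\c$ to induce the same map on the chosen representative of $Y_\invc$ when viewed in $\G/\U_+$. Combined with the first part, this yields the required agreement modulo right multiplication by $\U_+$. The extra ambiguity along the horizontal slice arises precisely from the nontrivial Cartan factors $\chi_i(u_\edge)$ on the interior weave edges, which will in turn encode $\hp_\invc$ and be exploited in \cref{lem:hc-in-terms-of-uc}. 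The main obstacle I anticipate is the careful bookkeeping of edge orientations and the ordering of the $\BZ_{\ip_j}(\zp_j)$ factors along the two paths; I expect to handle this by an inductive unpacking of \cref{lem:element-labeling} edge-by-edge, following the concrete pattern already visible for the running example in \cref{fig:ind-weave-triangle-paths}.
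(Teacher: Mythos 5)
Your proof is correct and follows essentially the same route as the paper: part one uses the parametrization of $(X_\bullet,Y_\bullet)$ to write $Z_\invc$ as a product of top-edge braid matrices (all with $u_\edge = 1$), which is exactly $\gPtop_\c$; part two uses well-definedness of the weighted-flag labeling of \cref{lem:element-labeling} to conclude that $\gPbot_\c$ and $\gPtop_\c$ agree up to right multiplication by $\U_+$. The only cosmetic difference is that the paper cites the relevant paragraph of the proof of Lemma 5.9 in \cite{CGGLSS} directly, whereas you invoke the consequence already packaged as \cref{lem:element-labeling}, which is the same content.
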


\begin{proof}
    To show that $Z_{\invc}=Y_{\invc}^{-1}X_{\invc}$ equals $\gPtop_\c$, use the parametrization of $X_{\invc}, Y_{\invc}$ from \cref{sec:parametrization} to express $Z_{\invc}$ as a product of elements $B_i(z_j)$. By \cref{def:edge-labeling}, the edge labels of the top edges of the weave are exactly the elements $B_i(z_j)$ appearing in the parametrization, and thus the equality follows. The fact that $Z_{\invc}$ agrees with $\gPbot_\c$ up to right multiplication by $U_+$ follows from the equality $Z_{\invc}=\gPtop_\c$ and proof of Lemma 5.9 in \cite{CGGLSS}, specifically the second paragraph of it.
\end{proof}

Let $\bj=s_{j_1} \cdots s_{j_\ell}$ be a reduced expression for $w \in W$ and consider the following sequence of coroots $\chi^{\bj}_i:\C^\times\lr \torus$, which are the \emph{inversions} of $w$:
\begin{equation}\label{eq:coroots-for-red-expr}
    \chi^{\bj}_\ell := \chi_{j_\ell}, \quad \chi^{\bj}_{\ell-1} := s_{j_{\ell}} \cdot\chi_{j_{\ell-1}}, \quad \dots \quad, \chi^{\bj}_{1} := s_{j_{\ell}} s_{j_{\ell-1}} \cdots s_{j_2}\cdot\chi_{j_{1}}.
\end{equation}
Now we can use \cref{lemma:Z_c_agrees_with_gPbot_c} to express $\hp_c$ in terms of the $u$-variables in the edge labeling of the weave $\DW$:

\begin{proposition}\label{lem:hc-in-terms-of-uc}
In the notation above, suppose that the weave path $\Pbot_\c$ intersects weave edges $\edge_1, \dots, \edge_l$, the corresponding reduced expression for $w_\c^{\dstr}=w_{\invc}$ is $\bj=s_{j_1} \dots s_{j_\ell}$ and let $\chi^{\bj}_\ell, \dots, \chi^{\bj}_{1}$ be the inversions of $w_{\invc}$, as in \eqref{eq:coroots-for-red-expr}. Then the following equality holds:
   \begin{equation}\label{eq:hp_in_terms_of_u-s}
   \hp_{\invc}= \chi^{\bj}_1(u_{\edge_1}) \cdot \chi^{\bj}_2(u_{\edge_2}) \cdots \chi^{\bj}_\ell(u_{\edge_\ell}).\end{equation}
\end{proposition}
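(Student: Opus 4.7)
The plan is to expand $\gPbot_\c$ using the edge labeling, rewrite it in the form $u_+\, \dw_\invc\, t\, u'_+$ for some $u_+, u'_+\in U_+$ and $t\in\torus$, and then invoke uniqueness of the torus part in the refined Bruhat decomposition \eqref{eq:hp_c_dfn}, together with \cref{lemma:Z_c_agrees_with_gPbot_c}, to conclude $\hp_\invc=t$ equals the product of cocharacters in \eqref{eq:hp_in_terms_of_u-s}.

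Unwrapping \cref{def:edge-labeling}, $\gPbot_\c=\prod_{k=1}^\ell x_{j_k}(f_{\edge_k})\,\dot{s}_{j_k}\,\chi_{j_k}(u_{\edge_k})$. Using the standard torus-root commutation identities
$$\chi_j(u)\,x_i(f)=x_i(u^{a_{ij}} f)\,\chi_j(u), \qquad \chi_j(u)\,\dot{s}_i=\dot{s}_i\,(s_i\cdot\chi_j)(u),$$
I would push each $\chi_{j_k}(u_{\edge_k})$ all the way to the right past the subsequent factors. Each push past a $\dot{s}_{j_m}$ conjugates the cocharacter by $s_{j_m}$, and each push past an $x_{j_m}$ merely rescales its scalar. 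By \eqref{eq:coroots-for-red-expr}, after all the pushes $\chi_{j_k}(u_{\edge_k})$ becomes exactly $\chi^{\bj}_k(u_{\edge_k})$. Since these cocharacters commute inside $\torus$, setting $t:=\prod_{k=1}^\ell \chi^{\bj}_k(u_{\edge_k})$ yields
$$\gPbot_\c \;=\; B_{j_1}(f'_1)\,B_{j_2}(f'_2)\cdots B_{j_\ell}(f'_\ell)\cdot t,$$
for certain rescaled scalars $f'_k\in\C$ whose exact values are irrelevant.

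It remains to show that $B_{j_1}(f'_1)\cdots B_{j_\ell}(f'_\ell) \in U_+\,\dw_\invc\,U_+$. Since the slice of $\DW$ at depth $\c$ spells the reduced word $\bj=s_{j_1}\cdots s_{j_\ell}$ for $w_\invc$, this is standard: an induction on $\ell$ commutes the $\dot{s}_{j_k}$'s to the right past the $x_{j_k}$'s, producing only positive root subgroup factors on the left, with reducedness of $\bj$ precisely preventing any negative root from appearing. As $\torus$ normalizes $U_+$, this gives $\gPbot_\c\in U_+\,\dw_\invc\, t\,U_+$, hence also $Z_\invc\in U_+\,\dw_\invc\, t\,U_+$ by \cref{lemma:Z_c_agrees_with_gPbot_c}. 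Uniqueness of the torus part in the decomposition $U_+\,\dw_\invc\,\hp_\invc\, U_+$ then forces $\hp_\invc=t$, as required.

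The hard part is this last step: while classical for reduced expressions, a careful bookkeeping of the specific lifts $\dot{s}_i$ and their conjugation action on positive root subgroups is required to confirm that no negative-root direction sneaks in. A cleaner alternative would be to invoke \cref{lem:element-labeling}, which identifies $\gPbot_\c$ (up to right multiplication by $U_+$) with the group element transporting the weighted flag $Y_\invc$ to $X_\invc$; since these flags are in strong relative position $w_\invc$, the Bruhat class of $\gPbot_\c$ is automatically $w_\invc$, and the torus part is then read off from the cocharacter product computed above.
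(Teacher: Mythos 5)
Your main argument is correct and follows essentially the same route as the paper's proof: expand $\gPbot_\c$ via the edge labeling, use the commutation relations to push the torus factors $\chi_{j_k}(u_{\edge_k})$ to the right (picking up the inversions $\chi^{\bj}_k$) while collecting the remaining factors into $U_+\,\dw_\invc$, then identify the torus part by uniqueness together with \cref{lemma:Z_c_agrees_with_gPbot_c}. The paper moves the $x$-factors left and the $\chi$-factors right simultaneously, whereas you first push the $\chi$'s right and then analyze the resulting $B$-product, but this is a cosmetic reordering of the same commutations.

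One small caveat on the closing remark: the ``cleaner alternative'' via \cref{lem:element-labeling} does establish that $\gPbot_\c$ lies in the Bruhat cell for $w_\invc$, but it does not by itself pin down the torus part of $B_{j_1}(f'_1)\cdots B_{j_\ell}(f'_\ell)$ as \emph{trivial} (as opposed to some other element of $\torus$), which is what you actually need to identify $t$ with $\hp_\invc$. That triviality is exactly the content of the ``hard step'' you flag, and it cannot be sidestepped this way; it is, however, a standard fact about the braid matrices $B_i(z)=x_i(z)\ds_i$ for reduced words (conjugating $x_{j_m}$ past $\ds_{j_1}\cdots\ds_{j_{m-1}}$ lands in positive root subgroups precisely because $\bj$ is reduced, so the product is in $U_+\dw_\invc$ with no torus correction). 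The paper dispatches this in one line, and your main argument handles it the same way, so the worry is not a real gap.
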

\begin{proof} Consider the product 
    \[\gPbot_c=g_{\edge_1} \cdots g_{\edge_\ell}= \BZ_{j_1}(f_{\edge_1}) \chi_{j_1}(u_{\edge_1}) \cdots \BZ_{j_\ell}(f_{\edge_\ell}) \chi_{j_\ell}(u_{\edge_\ell}) \]
    and recall that $\BZ_j(f) = x_j(f) \ds_j$. Since the word $\bj$ is reduced, we can move all factors $x_j(f)$ to the left, conjugating them by elements of the form $\chi_j(u)$ and $\ds_j$ along the way, and they will give an element of $U_+$. We can simultaneously move all factors $\chi_j(u)$ to the right and they will get conjugated by factors $\ds_j$. Thus, we have
    \[\gPbot_\c= g' \cdot \dot{w_\invc} \cdot \chi^{\bj}_1(u_{\edge_1}) \cdot \chi^{\bj}_2(u_{\edge_2}) \cdots \chi^{\bj}_\ell(u_{\edge_\ell})\]
    where $g' \in U_+$. % \MG{It should be $\dot{w_\invc}$ instead of  $\dot{w_\c}$, I think. Right?}.
    By \cref{lemma:Z_c_agrees_with_gPbot_c}, $\gPbot_\c$ differs from $Z_{\invc}$ only by right multiplication by $U_+$. So, the uniqueness of $\hp_{\invc}$ in the factorization~\eqref{eq:hp_c_dfn} implies the required equality \eqref{eq:hp_in_terms_of_u-s}.
    %\[\hp_{\invc}= \chi^{\bj}_1(u_{\edge_1}) \cdot \chi^{\bj}_2(u_{\edge_2}) \cdots \chi^{\bj}_\ell(u_{\edge_\ell}).\]
\end{proof}

\begin{example}
Let us compute these elements for our running \cref{ex:running}, cf.~Examples \ref{ex:running-iso},  \ref{ex:running_variables}, and \ref{ex:running_sequences_of_elements_of_W}. Applying the computation from \cref{ex:running_variables} to $\c=3$, we obtain
\begin{equation*}%\label{eq:*}
X_{4}=\smat{z_{1} z_{6} - z_{5} z_{7} + 1 & -z_{1} & z_{5} \\
z_{6} & -1 & 0 \\
z_{7} & 0 & -1} U_+,\quad 
Y_{4}=\smat{z_{1} & -1 & 0 \\
1 & 0 & 0 \\
0 & 0 & 1}U_+,\quad
Z_{4} = Z(\Ptop_3)=\smat{z_{6} & -1 & 0 \\
z_{5} z_{7} - 1 & 0 & -z_{5} \\
z_{7} & 0 & -1}. 
\end{equation*}
By~\eqref{eq:hp_c_dfn}, we thus have the following expression for the Cartan element at $\invc=4$:
\begin{equation}\label{eq:ex:hp_4}
  \hp_4 =\smat{z_{7} & 0 & 0 \\
0 & 1 & 0 \\
0 & 0 & \frac{1}{z_{7}}}.
\end{equation}
Let us independently compute the right hand side of \cref{eq:hp_in_terms_of_u-s} to verify the equality. The weave path $\Pbot_3$ in \cref{fig:ind-weave-triangle-paths} intersects a blue edge and a red edge, which gives $w_\invc = s_2s_1$ %\MG{Not sure anymore if this was supposed to read $w_\c$ or $w_\invc$; fortunately, in this example they coincide, see \cref{ex:running_sequences_of_elements_of_W}: $w_4 = w_3$.}
and $(\jc_1,\jc_2) = (2,1)$. By~\cite[Def.~5.8 \& Lemma 5.9]{CGGLSS}, we have
\begin{equation*}%\label{eq:*}
  \gPbot_3 = \BZop_2\left(\frac{z_5z_7 - 1}{z_7}\right) \chi_2(z_7) \cdot \BZ_1(z_6) \chi_1(1) = 
\smat{z_{6} & -1 & 0 \\
z_{5} z_{7} - 1 & 0 & -\frac{1}{z_{7}} \\
z_{7} & 0 & 0}.
\end{equation*}
First, this equality illustrates directly that $\gPbot_3$ differs from $Z_4$ by right multiplication by $U_+$, in agreement with \cref{lemma:Z_c_agrees_with_gPbot_c}. Second, we observe from~\eqref{eq:ex:hp_4} that  $\hp_4 = (\alpha_1+\alpha_2)(z_7)$, whereas the right hand side of \eqref{eq:hp_in_terms_of_u-s} reads
\begin{equation*}%\label{eq:*}
  \hp_\c = (s_1(\alpha_2))(\uc_1)\cdot  \alpha_1(\uc_2) = (\alpha_1+\alpha_2)(\uc_1)\cdot  \alpha_1(\uc_2) = (\alpha_1+\alpha_2)(z_7),
\end{equation*}
since $\uc_1 = z_7$ and $\uc_2 = 1$. This illustrates the equality \eqref{eq:hp_in_terms_of_u-s} in \cref{lem:hc-in-terms-of-uc}.\hfill$\Box$
\end{example}

\begin{comment}
\begin{figure}
  \includegraphics[width=1.0\textwidth]{}
  \caption{\label{fig:plabic_3D_double} The 3D plabic graph and the matrices $X_\c,Y_\c,Z_\c,\hp_\c$ for our running example. \PG{It should be readable upon zooming in. (I'm not suggesting that we include this entire figure into the paper.)}}
\end{figure}
\end{comment}

%%%%%%%%%%%%%%%%%%%%%%%%%%%%%%%%%%%%%%%%%%%%%%%%%%%%%%%%%%%%%%%%%%
%%%%%%%%%%%%%%%%%%%%%%%%%%%%%%%%%%%%%%%%%%%%%%%%%%%%%%%%%%%%%%%%%%
%%%%%%%%%%%%%%%%%%%%%%%%%%%%%%%%%%%%%%%%%%%%%%%%%%%%%%%%%%%%%%%%%%
\subsection{Coincidence of cluster variables}\label{ssec:equality_variables} Let us now upgrade \cref{lem:coincidence-cluster-variables-constants} to an equality where the proportionality constants are identically one. The following is one of our main results:

\begin{thm}[Equality of cluster variables]\label{thm:coincidence-cluster-variables}
Let $\dbr$ be a double braid word, $\C[R(\dbr)]$ the ring of regular functions of its associated double braid variety and $\bx3D$ the Deodhar cluster variables of the Deodhar torus $T_\dbr$. Consider the isomorphism $\iso: R(\dbr) \xrightarrow{\sim} X(\dbr^{(-|+)})$ in \cref{eq:iso-glsbs-cgglss} and the weave cluster variables $\bxWe$ associated to the double inductive weave of $\dbr$. Then, for every $\e \in \JWe$, we have the equality
\[
\iso^{\ast}(\xWe_{\e}) = \x3D_{\inve}. 
\]
\end{thm}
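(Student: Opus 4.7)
The plan is to use the chamber minors as a bridge to upgrade the proportionality of \cref{lem:coincidence-cluster-variables-constants} to an equality. By that proposition, for each $\e \in \JWe$ there exists $\prop_\e \in \C^\times$ with $\iso^*(\xWe_\e) = \prop_\e \x3D_{\inve}$; the task is to show $\prop_\e = 1$ for every $\e$. The strategy, executed for every fixed $\ell \in \J3D$, is to write the solid chamber minor $\grid_\ell$ as a monomial in the $\x3D$'s in two different ways, both with coefficient exactly $1$, then to compare and extract multiplicative relations among the $\prop_\e$'s.

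On the Deodhar side, \cref{lem:impt-facts-minors}(3) supplies
\[
\grid_\ell \;=\; \prod_{c \in \J3D} (\x3D_c)^{T_{\ell c}},
\]
where $T$ is upper unitriangular with $T_{\ell\ell}=1$ and $T_{\ell c}=0$ for $c < \ell$. On the weave side, since $\grid_\ell = \grid_{\ell-1,i_\ell}$ is a fundamental weight evaluated on $\hp_{\ell-1}$ (or on its $\overline{\overline{u_{\ell-1}}}$-conjugate) by \cref{def:grid-minors}, I will combine \cref{lem:hc-in-terms-of-uc} to rewrite $\hp_{\ell-1}$ as an explicit product of $\chi^{\bj}_k(u_{\edge_k})$'s with \cref{thm:u-as-cluster-var-product} to factor each $u_\edge$ as a monomial in the $\xWe$'s. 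Every intermediate step is an exact monomial identity (no scalar fudge factor), so the result is an equality
\[
\grid_\ell \;=\; \prod_{\e \in \JWe}(\iso^*\xWe_\e)^{M_{\ell \e}}
\]
of regular functions on $R(\dbr)$ for some integer matrix $M$.

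Substituting $\iso^*(\xWe_\e) = \prop_\e \x3D_{\inve}$ and reindexing via the bijection $\e = \invc$ turns the weave expression into
\[
\grid_\ell \;=\; \Bigl(\prod_{c \in \J3D}\prop_{\invc}^{M_{\ell\invc}}\Bigr)\cdot \prod_{c \in \J3D}(\x3D_c)^{M_{\ell\invc}}.
\]
Since $\{\x3D_c\}_{c \in \J3D}$ is an algebraically independent family of characters of $T_\dbr$ by \cref{lem:impt-facts-minors}(2), equating this with the Deodhar expression forces both $M_{\ell\invc} = T_{\ell c}$ for all $c$ and the scalar identity
\[
\prod_{c \in \J3D}\prop_{\invc}^{T_{\ell c}} \;=\; 1 \qquad \text{for every } \ell \in \J3D.
\]
The upper unitriangular structure of $T$ rewrites this as $\prop_{\overline{\ell}}\cdot \prod_{c > \ell}\prop_{\invc}^{T_{\ell c}}=1$, and descending induction on $\ell \in \J3D$ (base case: $\ell = \max\J3D$, where the residual product is empty) yields $\prop_{\overline{\ell}}=1$ for all $\ell$, equivalently $\prop_\e = 1$ for every $\e \in \JWe$, as desired.

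The main technical obstacle I anticipate is twofold: the careful bookkeeping between the weave indexing by $\JWe$ and the Deodhar indexing by $\J3D$ under the inversion $c \mapsto \invc$, and, more substantively, the rigorous verification that \cref{lem:hc-in-terms-of-uc} and \cref{thm:u-as-cluster-var-product} produce \emph{exact} monomial identities rather than identities up to scalar. The whole proof hinges on this coefficient-$1$ rigidity, which is what lets us pin down each $\prop_\e$ and not merely their ratios.
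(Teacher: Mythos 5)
Your proposal is correct and rests on the same two technical pillars as the paper's proof: the exact monomial identity from \cref{lem:hc-in-terms-of-uc} expressing $\hp_c$ in terms of coroots applied to $u$-variables, together with \cref{thm:u-as-cluster-var-product} factoring $u_\edge$ into weave cluster variables; and the relation between grid/chamber minors and $\bx3D$ from \cref{lem:impt-facts-minors}. Where your argument differs is in the final extraction of $\prop_\e = 1$. The paper introduces the torus $T_{\dbr}^{\We}$ whose character lattice has $\{\xWe_\e\}$ as a basis, shows that every $\x3D_c$ is a character of $T_{\dbr}^{\We}$ (via the grid minors being characters), and then concludes because a \emph{constant} function that is a character of a torus must be identically $1$; notably this only uses that $\x3D_c$ is a Laurent monomial in the chamber minors, not the unitriangular structure of that relation. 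You instead expand $\grid_\ell$ on both sides, compare exponents using the algebraic independence of $\bx3D$ from \cref{lem:impt-facts-minors}(2), and then invoke the upper unitriangularity of the exponent matrix from \cref{lem:impt-facts-minors}(3) to run a descending induction. Your route is more computational and relies on one piece of extra structure (unitriangularity, or at least invertibility of the transition matrix), but it avoids the abstract ``torus with a new group law'' device and makes the role of the exponent matrix fully explicit; both arguments are valid and of comparable length. One small bookkeeping note: you should make sure the depth index in \cref{lem:hc-in-terms-of-uc} is applied at $\hp_{\ell-1}$, i.e.\ at depth $\overline{\ell-1}$ in the weave, so that the resulting monomial is matched to the chamber minor $\grid_\ell = \grid_{\ell-1,i_\ell}$; this is exactly the $c\mapsto\invc$ reindexing you already flagged as delicate.
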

\begin{proof}
As before, we work in the algebra $\C[R(\dbr)]$ and use $\xWe_\e$ to denote $\iso^{\ast}(\xWe_\e)$. By Lemma \ref{lem:coincidence-cluster-variables-constants}, there exists a constant $\prop_\e\in\C^\times$ such that $\xWe_{\e} = \prop_\e\cdot\x3D_{\inve}$, and we need to show that $\prop_e = 1$. It is enough to show $\prop_e = 1$ when we restrict to the open Deodhar torus $T_{\dbr}\sse R(\dbr)$ and thus, to ease notation, we denote by $\xWe_{\e}, \x3D_{\inve}$ the restriction of these functions to $T_{\dbr}$.

The weave cluster variables $\{\xWe_{\e} \mid \e \in \JWe\}$ give an explicit isomorphism
$$T_{\dbr} \xrightarrow{\sim} (\C^{\times})^{\left|\JWe\right|},$$
thus endowing $T_{\dbr}$ with a multiplicative structure in such a way that $\{\xWe_{\e} : \e \in \JWe\}$ form a basis of the character lattice. Let us denote the torus $T_{\dbr}$ with this multiplicative structure by $T_{\dbr}^{\We}$. We will show that for every $\c \in \J3D$, $\x3D_{\c}$ is a character of $T_{\dbr}^{\We}$. Note that this will imply the result, as we will have that the constant function $\prop_{\e} = \xWe_{\e}(\x3D_{\inve})^{-1}$ is a character of $T_{\dbr}^{\We}$ and thus $\prop_\e = 1$ for every $e \in \JWe$. 

By \cref{lem:impt-facts-minors}, if the grid minors $\grid_{\c, i}$ are characters of $T_{\dbr}^{\We}$ for every $\c \in [1, m+n]$ and $i \in I$, then $\x3D_{\c}$ is a character of $T_{\dbr}^{\We}$ for every $c \in \J3D$. It thus suffices to show that the grid minors $\grid_{\c, i}$ are characters of $T_{\dbr}^{\We}$ for every $\c \in [1, m+n]$ and $i \in I$. By \cref{thm:u-as-cluster-var-product}, the functions $u_{\edge}$ are characters of $T_{\dbr}^{\We}$ for every edge $\edge$ of the double inductive weave $\DW=\DW(\dbr)$ associated to the double braid word $\dbr$. By \cref{lem:hc-in-terms-of-uc}, the torus element $\hp_\c$ is a particular cocharacter applied to some $u_\edge$-variables. By \cref{def:grid-minors}, the grid minors are obtained by applying $\omega_i$ to $\hp_\c$ or a particular conjugate of $\hp_\c$. Since cocharacters are multiplicative, as is conjugation and applying $\omega_i$, the grid minors are characters of $T_{\dbr}^{\We}$, as desired.\end{proof}

\cref{thm:coincidence-cluster-variables} concludes the proof of equality of cluster \emph{variables}. In order to show coincidence of cluster \emph{structures} we still need to show that the exchange matrix associated to the double braid word $\dbr$ coincides with the exchange matrix associated to the corresponding double inductive weave. This will be achieved in \cref{sec:quiver-compare}.

%%%%%%%%%%%%%%%%%%%%%%%%%%%%%%%%%%%%%%%%%%%%%%%%%%%%%%%%%%%%%%%%%%
%%%%%%%%%%%%%%%%%%%%%%%%%%%%%%%%%%%%%%%%%%%%%%%%%%%%%%%%%%%%%%%%%%
%%%%%%%%%%%%%%%%%%%%%%%%%%%%%%%%%%%%%%%%%%%%%%%%%%%%%%%%%%%%%%%%%%

\subsection{Lusztig data and cocharacter comparison}\label{ssec:Lusztig_data_cocharacter_comparison}

We conclude this section by introducing the concept of Lusztig datum, which helps crystallize the role of cocharacters in the context of weaves, and by studying how cocharacters used in \cite[Section 7]{GLSB} and (implicitly) in \cite{CGGLSS} compare to each other. \cref{sec:vertes_cycles_to_Lusztig_data} focuses on Lusztig data, and \cref{ssec:consequence_equality_variables} establishes the equality between the associated weave cocharacters and the Deodhar cocharacters. Note that neither of these two subsections are logically required to prove \maintheoref, but might be of independent interest.

%%%%%%%%%%%%%%%%%%%%%%%%%%%%%%%%%%%%%%%%%%%%%%%%%%%%%%%%%%%%%%%%%%
%%%%%%%%%%%%%%%%%%%%%%%%%%%%%%%%%%%%%%%%%%%%%%%%%%%%%%%%%%%%%%%%%%
\subsubsection{From vertex cycles to Lusztig data}\label{sec:vertes_cycles_to_Lusztig_data} Given a Demazure weave $\weave$, it can be conceptually useful to translate the information of a vertex cycle $\lcyc_\e: E(\weave) \to \Z_{\geq0}$ into a collection of \emph{Lusztig data} $[\bj, f]$ and a list of coweights, as introduced in \cref{def:lusztig-datum} below. Conceptually, coroots $\chi:\C^\times\lr T_{\weave}$ can be evaluated at $u_\edge$, which \cref{thm:u-as-cluster-var-product} expresses as cluster variables with exponents given by vertex cycles $\lcyc_{\c}$. This relation between weaves cycles and coroots can be formalized as follows.

\begin{definition}[Lusztig datum]\label{def:lusztig-datum}
    A \emph{weighted expression} for $w \in W$ is a pair $(\bj, f)$, where $\bj$ is a reduced expression for $w$ and $f: [\ell(w)] \mapsto \Z_{\geq0}$ is a nonnegative integer weighting of the letters. By definition, two weighted expressions $(\bj, f)$ and $(\bj', f')$ are \emph{equivalent} if for any Demazure weave $\weave: \bj \to \bj'$, the unique Lusztig cycle with values given by $f$ at the top of $\weave$ has values given by $f'$ at the bottom of $\weave$. By definition, a \emph{Lusztig datum} for $w$ is an equivalence class of weighted expressions for $w$, denoted by $[\bj, f]$.\hfill$\Box$
\end{definition}

\noindent By definition, the \emph{coweight} of a Lusztig datum $[\bj, f]$ for $w\in W$ is the following linear combination of coroots:
    \begin{equation}\label{eq:coweight-of-Lusztig-datum}
       \charof[\bj, f]:= \sum_{r=1}^{\ell(w)} f(r) \chi^{\bj}_r.
    \end{equation}

The coroots used in \cref{eq:coweight-of-Lusztig-datum} are the inversions of $w$, as defined in \cref{eq:coroots-for-red-expr}. Note that the coweight in \eqref{eq:coweight-of-Lusztig-datum} depends only on the equivalence class $[\bj, f]$, and not a given representative $(\bj, f)$, i.e.~ the coweight of $[\bj, f]$ is well-defined.

Let $\dstr$ be a double string of length $\ell$ and $\DW=\DW(\dstr)$ the corresponding weave. The connection between Lusztig data in \cref{def:lusztig-datum} and weaves $\DW$ starts with the fact that the horizontal slice of the weave $\DW$ at depth $\c$ gives a reduced expression $\bjc$ for the permutation $w_\c^{\dstr}$. Then, given a fixed trivalent vertex $\vertex_\e$ of $\DW$, we can evaluate the vertex cycle $\lcyc_\e$ on the edges in this slice, which gives a weighting $\nudc$ of the reduced expression $\bjc$. This pair gives a well-defined equivalence class $[\bjc, \nudc]$ which is a Lusztig datum for $w_\c^{\dstr}$:

\begin{definition}[Lusztig datum from weave slices]\label{def:lusztig-data-from-dbl-weave} In the notation above, we define the Lusztig datum for $w_\c^{\dstr}$ to be given by the equivalence class $[\bjc, \nudc]$. Its coweight is denoted by $\gamWe_{\dstr,c,e}:= \charof[\bjc, \nudc]$ and, if $\dstr=\dstr(\dbr)$ is the double string associated to the double braid word $\dbr$, we write $\gamWe_{\brWe,c,e}$ for the coweight $\gamWe_{\dstr,c,e}$.\hfill$\Box$
\end{definition}

Note that slicing $\DW$ anywhere from depth $\c$ to depth $\c+0.5$ gives rise to the same Lusztig datum, though the reduced word may change. In particular, all of the equivalent weaves we denote by $\DW$ give rise to the same collection of Lusztig data. For an example of a weave and all its Lusztig data, see Figure \ref{fig:ind-weave-triangle-cycles} and Table \ref{tab:Lusztig-databanks}. 

\begin{figure}[h!]
    \centering
    \adjustbox{scale=0.7}{
    \begin{tikzcd}
\draw (-1,-1) to (17,-1) to (8,8) to (-1,-1);

\draw[color=blue] (8.5, 7.5) to[out=270, in=45] (6.5, 5);
\draw[color=red] (9.5, 6.5) to[out=270,in=135] (10.5, 4);
\draw[color=blue] (5.5,5.5) to[out=270,in=135] (6.5, 5) to (6.5, 4.5) to [out=270,in=160] (10.5, 2.5);
\draw[color=red] (11.5, 4.5) to[out=270,in=45] (10.5, 4) to (10.5, 2.5);
\draw[color=blue] (12.5, 3.5) to[out=270, in=20] (10.5, 2.5);

\draw[color=blue] (10.5, 2.5) to (10.5,-1);

\draw[color=red] (10.5, 2.5) to[out=0,in=160] (12, 1.5) to (12, -1);

\draw[color=red] (13.5, 2.5) to[out=270, in=45] (12, 1.5);

\draw[color=red] (0.9, 0.9) to[out=340, in=170] (6, 0.5);

\draw[color=red] (10.5, 2.5) to[out=190,in=20] (6, 0.5) to (6,-1);

\draw[dashed] (0,0) to (16,0);

\draw[dashed] (1.4,1.4) to (14.6,1.4);

\draw[dashed] (2.8, 2.8) to (13.2, 2.8);

\draw[dashed] (3.8, 3.8) to (12.2, 3.8);

\draw[dashed] (4.8, 4.8) to (11.2, 4.8);

\draw[dashed] (6, 6) to (10, 6);

\draw[dashed] (7,7) to (9,7);

\draw[dashed] (7.7, 7.7) to (8.3, 7.7);

\node at (6.5, 4.9) {\color{Magenta} \bullet};
\node at (6.3, 4) {\color{Magenta} 1};
\node at (11.3, 1.7) {\color{Magenta} 1};

\node at (10.5, 3.9) {\color{LimeGreen} \bullet};
\node at (10.3, 3.3) {\color{LimeGreen} 1};
\node at (11.1, 1.9) {\color{LimeGreen} 1};
\node at (8.6, 1.9) {\color{LimeGreen} 1};

\node at (12, 1.4) {\color{YellowOrange} \bullet};
\node at (11.8, 0.8) {\color{YellowOrange} 1};

\node at (6, 0.4) {\color{Blue} \bullet};
\node at (5.8, -0.4) {\color{Blue} 1};

\end{tikzcd} 
}
    \caption{The double inductive weave from Figure \ref{fig:ind-weave-triangle}, with the nonzero values of the vertex cycles indicated and the integer depths depicted by dashed lines. The top dashed line has depth $c=0$, then scanning downwards until $c=7$ for the bottom dashed line. The Lusztig data associated to the trivalent vertices are displayed in Table \ref{tab:Lusztig-databanks}.}
    \label{fig:ind-weave-triangle-cycles}
\end{figure}
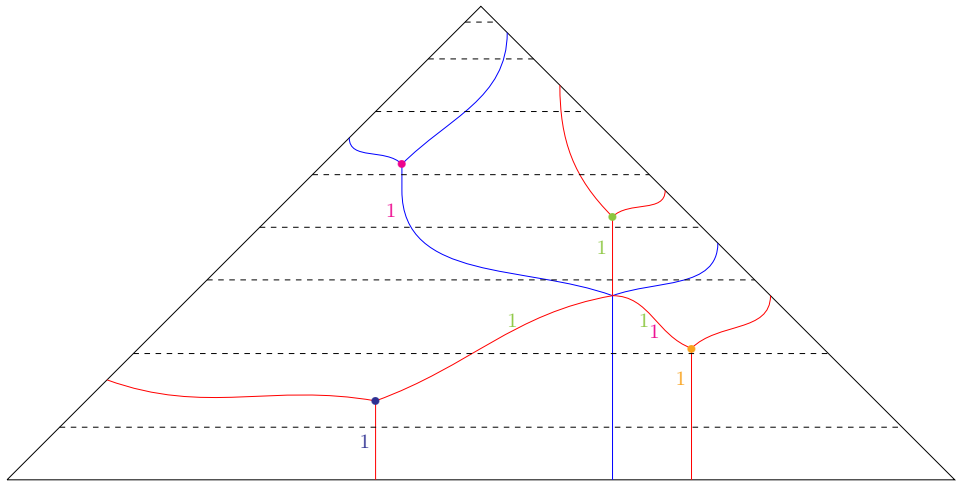

\begin{table}[h!]
    \centering
\begin{tabular}{|c|c|c|c|c|c|c|c|c|c|}
    \hline
    Depth $c$ & $\bjc$ & {\color{Magenta}{$\nu_{2}^{(c)}$}} & {\color{Magenta}{$\charof[\bjc, \nu_{2}^{(c)}]$}} & {\color{LimeGreen}{$\nu_{3}^{(c)}$}} & {\color{LimeGreen}{$\charof[\bjc, \nu_{3}^{(c)}]$}} & {\color{YellowOrange}{$\nu_{5}^{(c)}$}} & {\color{YellowOrange}{$\charof[\bjc, \nu_{5}^{(c)}]$}} & {\color{Blue}{$\nu_{6}^{(c)}$}} & {\color{Blue}{$\charof[\bjc, \nu_{6}^{(c)}]$}} \\
    \hline
     1 & {\bf 2} & {\color{Magenta}{$0$}} & {\color{Magenta}{$0$}} &{\color{LimeGreen}{$0$}} & {\color{LimeGreen}{$0$}} &{\color{YellowOrange}{$0$}} & {\color{YellowOrange}{$0$}} & {\color{Blue}{$0$}} & {\color{Blue}{$0$}} \\
     \hline 
      2 & {\bf 21} & {\color{Magenta}{$00$}} & {\color{Magenta}{$0$}} & {\color{LimeGreen}{$00$}} & {\color{LimeGreen}{$0$}} & {\color{YellowOrange}{$00$}} & {\color{YellowOrange}{$0$}} & {\color{Blue}{$00$}} & {\color{Blue}{$0$}} \\
      \hline
        3 & {\bf 21} & {\color{Magenta}{$10$}} & {\color{Magenta}{$s_1\cdot \chi_2$}} & {\color{LimeGreen}{$00$}} & {\color{LimeGreen}{$0$}} & {\color{YellowOrange}{$00$}} &  {\color{YellowOrange}{$0$}} & {\color{Blue}{$00$}} & {\color{Blue}{$0$}} \\
        \hline 
         4 & {\bf 21} & {\color{Magenta}{$10$}} & {\color{Magenta}{$s_1\cdot \chi_2$}} & {\color{LimeGreen}{$01$}} & {\color{LimeGreen}{$\chi_1$}} & {\color{YellowOrange}{$00$}} & {\color{YellowOrange}{$0$}} & {\color{Blue}{$00$}} & {\color{Blue}{$0$}} \\
         \hline
        5 & {\bf 212} & {\color{Magenta}{$100$}} & {\color{Magenta}{$s_2s_1\cdot \chi_2$}} & {\color{LimeGreen}{$010$}} & {\color{LimeGreen}{$s_2\cdot \chi_1$}} & {\color{YellowOrange}{$000$}} & {\color{YellowOrange}{$0$}} & {\color{Blue}{$000$}} & {\color{Blue}{$0$}} \\
         \hline
      6 & {\bf 121} & {\color{Magenta}{$000$}} & {\color{Magenta}{$0$}} & {\color{LimeGreen}{$100$}} & {\color{LimeGreen}{$s_1s_2\cdot \chi_1$}} & {\color{YellowOrange}{$001$}} & {\color{YellowOrange}{$\chi_1$}} & {\color{Blue}{$000$}} & {\color{Blue}{$0$}}\\
         \hline
     7 & {\bf 121} & {\color{Magenta}{$000$}} & {\color{Magenta}{$0$}} & {\color{LimeGreen}{$000$}} & {\color{LimeGreen}{$0$}} & {\color{YellowOrange}{$001$}} & {\color{YellowOrange}{$\chi_1$}} &  {\color{Blue}{$100$}} & {\color{Blue}{$s_1s_2\cdot\chi_1$}}\\
         \hline
    \end{tabular}
    \caption{Lusztig data for the weave in Figure \ref{fig:ind-weave-triangle-cycles}. The color-code is that of the trivalent vertices in Figure \ref{fig:ind-weave-triangle-cycles}. The four vertex cycles are $\nu_2,\nu_3,\nu_5,\nu_6$, with $\nu_c$ corresponding to the vertex in between depths $c$ and $c+1$. Note that there are no $\nu_1$ or $\nu_4$ as there are no trivalent vertices between depths $c=1$ and $c=2$, and depths $c=4$ and $c=5$.}
    \label{tab:Lusztig-databanks}
\end{table}

%%%%%%%%%%%%%%%%%%%%%%%%%%%%%%%%%%%%%%%%%%%%%%%%%%%%%%%%%%%%%%%%%%
%%%%%%%%%%%%%%%%%%%%%%%%%%%%%%%%%%%%%%%%%%%%%%%%%%%%%%%%%%%%%%%%%%

\subsubsection{Comparing weave and Deodhar cocharacters}\label{ssec:consequence_equality_variables} For each $\c \in [0, m+n]$ we have many grid minors, which are all monomials in the Deodhar cluster variables. In order to record how the cluster variable $\x3D_\e$ spread across all grid minors at $\c$, it can be useful to introduce the following cocharacter:

\begin{definition}\label{def:Deodhar-gamma-cocharacter}
    Let $\dbr$ be a double braid word, $\c \in [0, m+n]$ and $\e \in \J3D$. By definition, the cocharacter $\gamchp_{\dbr, \c, \e}: \C^* \to \torus$  
    is given by
    \[\gamchp_{\dbr, \c, \e}:= \sum_{i \in I} \ord_{V_e}(\c,i) \chi_i\]
    where $\ord_{V_e}(\c,i)$ is the order of vanishing of $\grid_{\c, i}$ on $\tV_\e$, as in \cref{lem:grid-as-mono-in-cluster}.\qed%\MSB{I think we can avoid mentioning $\gamchm_{\dbr, \c, \e}$ if we would like? It also seems like this cocharacter is not actually needed to prove \maintheoref. So we could possibly move the definition later}\hfill$\Box$
\end{definition}
Note that the Cartan element $\hp_c$ from \cref{eq:hp_c_dfn} satisfies the equality
\begin{equation}\label{eq:hp_in_terms_of_x3D-s}
  \hp_\c = \prod_{\e\in\J3D} \gamchp_{\dbr,\c,\e}(\x3D_\e).% \quad \text{and} \quad  \hm_\c = \prod_{\e\in\J3D} \gamchm_{\dbr,\c,\e}(\x3D_\e).
\end{equation}

\begin{remark}
Similar to \cref{def:Deodhar-gamma-cocharacter}, one may define a cocharacter $\gamchm_{\dbr, \c, \e}:= \sum_{i \in I} \ord_{V_e}(\c,-i) \chi_i$ encoding the vanishing of $\grid_{\c,-i}$. This cocharacter records the ``spread" of the cluster variables in the negative chamber minors, and satisfies an analogous formula to \eqref{eq:hp_in_terms_of_x3D-s}, using $\bar{\bar{u_c}} \hp_c \bar{\bar{u_c}}^{-1}$ rather than $\hp_c$. It suffices to analyze $\gamchp_{\dbr, \c, \e}$.\qed
\end{remark}

\cref{def:Deodhar-gamma-cocharacter} gives a geometric description of the cocharacters $\gamchp_{\dbr, \c, \e}$, whereas \cite[Section 7]{GLSB} gives an algorithm for computing $\gamchp_{\dbr, \c, \e}$ in general type using root-system combinatorics and a somewhat intricate induction. As an application of our main cluster comparison result, we will see in \cref{cor:Deodhar_cocharacters_via_weaves} that the weave $\DW$ corresponding to $\dstr(\dbr)$ gives a simpler way to compute these cocharacters.

\begin{remark}
In Lie Type $A$, \cite[Section 3.4]{GLSBS} gives an algorithm for computing $\gamchp_{\dbr, \c, \e}$  using the combinatorics of \emph{monotone multicurves}. \cref{ssec:compare-cocharacters-type-A} establishes the comparison between these specific combinatorics and the weave framework.\hfill$\Box$
\end{remark}

% \MSB{This was in Section 6, which I did not like since this holds in general and Section 6 is just about type A.}
%The equality of cluster variables in \cref{thm:coincidence-cluster-variables} has the following consequence relating the combinatorics involved in computing the weave and Deodhar cluster variables. 

The next corollary shows that the Deodhar cocharacter, recording the orders of vanishing of grid minors along Deodhar hypersurfaces, can be computed entirely combinatorially from weaves, Lusztig cycles, and the corresponding Lusztig datum.

\begin{cor}
\label{cor:Deodhar_cocharacters_via_weaves}
    Let $\dbr$ be a double braid word, $\brWe$ the corresponding braid word and $\DW$ the corresponding double inductive weave. Choose $\e \in \JWe$ and $\c \in [0, m+n]$. Then 
    \[\gamchp_{\dbr, \invc, \inve} = \gamWe_{\brWe, \c, \e}.\]
\end{cor}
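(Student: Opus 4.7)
The strategy is to compute $\hp_{\invc}$ in two different ways—once via the weave using Proposition~\ref{lem:hc-in-terms-of-uc} combined with Theorem~\ref{thm:u-as-cluster-var-product}, and once via the Deodhar geometry using \eqref{eq:hp_in_terms_of_x3D-s}—and then compare the two expressions after transporting across the isomorphism $\iso^{\ast}$ through Theorem~\ref{thm:coincidence-cluster-variables}. Since $\{\x3D_{\e'}\}_{\e' \in \J3D}$ is a basis for the character lattice of $T_{\dbr}$, matching the cocharacter exponents on both sides yields the claimed equality.

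\textbf{Step 1: Write $\hp_{\invc}$ in terms of weave edges.} Slice the double inductive weave $\DW=\DW(\dbr)$ at depth $\c$ along the horizontal path $\Pbot_\c$, meeting edges $\edge_1,\dots,\edge_\ell$ with a reduced expression $\bjc = s_{j_1}\cdots s_{j_\ell}$ for $w_\c^{\dstr} = w_{\invc}$. Proposition~\ref{lem:hc-in-terms-of-uc} gives
\[
\hp_{\invc} \;=\; \chi^{\bjc}_1(u_{\edge_1}) \cdot \chi^{\bjc}_2(u_{\edge_2}) \cdots \chi^{\bjc}_\ell(u_{\edge_\ell}),
\]
where $\chi^{\bjc}_1,\dots,\chi^{\bjc}_\ell$ are the inversions of $w_{\invc}$ as in \eqref{eq:coroots-for-red-expr}.

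\textbf{Step 2: Expand the $u$-variables in terms of weave cluster variables.} Applying Theorem~\ref{thm:u-as-cluster-var-product} to each $u_{\edge_r}$ and using multiplicativity of the coroots yields
\[
\hp_{\invc} \;=\; \prod_{r=1}^{\ell}\chi^{\bjc}_r\!\Bigl(\prod_{\e\in \JWe}(\xWe_\e)^{\lcyc_\e(\edge_r)}\Bigr) \;=\; \prod_{\e\in \JWe}\Bigl(\sum_{r=1}^{\ell}\lcyc_\e(\edge_r)\,\chi^{\bjc}_r\Bigr)(\xWe_\e).
\]
By Definition~\ref{def:lusztig-data-from-dbl-weave}, the tuple $\bigl(\lcyc_\e(\edge_r)\bigr)_{r=1}^\ell$ is exactly the weighting $\nudc$ of the reduced word $\bjc$ arising from the vertex cycle $\lcyc_\e$, so the inner sum is the coweight $\charof[\bjc,\nudc] = \gamWe_{\brWe,\c,\e}$. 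Therefore
\[
\hp_{\invc} \;=\; \prod_{\e\in \JWe}\gamWe_{\brWe,\c,\e}(\xWe_\e).
\]

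\textbf{Step 3: Transport along $\iso^{\ast}$ and compare with the Deodhar expression.} By Theorem~\ref{thm:coincidence-cluster-variables}, $\iso^{\ast}(\xWe_\e) = \x3D_{\inve}$, and the bijection $\e \mapsto \inve$ identifies $\JWe$ with $\J3D$. Pulling the identity above back to $R(\dbr)$ and reindexing by $\e' = \inve$ gives
\[
\hp_{\invc} \;=\; \prod_{\e'\in \J3D}\gamWe_{\brWe,\c,\overline{\e'}}(\x3D_{\e'}).
\]
On the other hand, the defining formula \eqref{eq:hp_in_terms_of_x3D-s} reads
\[
\hp_{\invc} \;=\; \prod_{\e'\in \J3D}\gamchp_{\dbr,\invc,\e'}(\x3D_{\e'}).
\]
By Lemma~\ref{lem:impt-facts-minors}(2), $\{\x3D_{\e'}\}_{\e'\in\J3D}$ is a basis of the character lattice $\Hom(T_{\dbr},\C^\times)$, so the cocharacter exponents on each basis element must agree: $\gamchp_{\dbr,\invc,\e'} = \gamWe_{\brWe,\c,\overline{\e'}}$ for every $\e'\in\J3D$. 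Setting $\e'=\inve$ gives the desired equality $\gamchp_{\dbr,\invc,\inve} = \gamWe_{\brWe,\c,\e}$.

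\textbf{Main obstacle.} The computational content is essentially a matter of chaining the already-established results, so there is no serious obstacle once Theorem~\ref{thm:coincidence-cluster-variables} is in hand. The only point requiring care is the bookkeeping between the two index conventions (depth $\c$ in $\DW$ versus position $\invc = m+n-c$ in $\dbr$, and the trivalent-vertex index $\e$ versus the solid-crossing index $\inve$), and the verification that the weighting of the reduced word $\bjc$ picked up by $\Pbot_\c$ at depth $\c$ agrees with the weighting $\nudc$ defining $\gamWe_{\brWe,\c,\e}$. Both of these follow directly from the definitions set up in Sections~\ref{sec:weave-cluster-variables2} and \ref{ssec:comparing-cocharacters}.
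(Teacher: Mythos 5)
Your proof is correct and follows essentially the same route as the paper: express $\hp_{\invc}$ via \cref{lem:hc-in-terms-of-uc}, expand the $u$-variables with \cref{thm:u-as-cluster-var-product}, and match cocharacter exponents against \eqref{eq:hp_in_terms_of_x3D-s} using \cref{thm:coincidence-cluster-variables}. The only cosmetic difference is the final step, where you invoke the basis of the character lattice while the paper specializes all but one cluster variable to $1$ via algebraic independence; these are equivalent formulations of the same conclusion.
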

\begin{proof}
We use the notation from the proof of \cref{lem:hc-in-terms-of-uc}. By \eqref{eq:hp_in_terms_of_u-s} and \eqref{eq:hp_in_terms_of_x3D-s}, we have
    \[\prod_{\inve\in\J3D} \gamchp_{\dbr,\invc,\inve}(\x3D_\inve)= \hp_{\invc} = \chi^{\bj}_1(u_{\edge_1}) \cdot \chi^{\bj}_2(u_{\edge_2}) \cdots \chi^{\bj}_\ell(u_{\edge_\ell}).\]
    Recall from \cref{thm:u-as-cluster-var-product} that each $u_{\edge_i}$ is a product of cluster variables, and the exponents are given precisely by the values of the vertex cycles on $\edge_i$. The contribution of $\xWe_\e$ to the right-hand side of the above equation is
    \[\chi^{\bj}_1({(\xWe_\e)}^{\lcyc_\e(\edge_1)}) \cdot \chi^{\bj}_2({(\xWe_\e)}^{\lcyc_e(\edge_2)}) \cdots \chi^{\bj}_\ell({(\xWe_\e)}^{\lcyc_e(\edge_\ell)})= \gamWe_{\brWe, \c, \e} (\xWe_\e).\]
    So in fact 
    \[\prod_{\inve\in\J3D} \gamchp_{\dbr,\invc,\inve}(\x3D_\inve)= \hp_{\invc} = \prod_{\e\in\JWe} \gamWe_{\brWe,\c,\e}(\xWe_\e).\]
    By \cref{thm:coincidence-cluster-variables}, $\x3D_{\inve} = \xWe_\e$ for every $\e \in \JWe$. Let us fix $e \in \JWe$. Since cluster variables are algebraically independent, we can set $\x3D_{\invf} = \xWe_\f = 1$ for $\f \neq \e$ and the above equation then shows that $\gamchp_{\dbr, \invc, \inve} = \gamWe_{\brWe, \c, \e}$.
\end{proof}

\begin{remark}
\cref{cor:Deodhar_cocharacters_via_weaves} could also be proved purely combinatorially by showing that the cocharacter $\gamWe_{\brWe, \c, \e}$ satisfies all the conditions in~\cite[Proposition~7.2]{GLSB}.\qed
\end{remark}
\color{black}

\section{Comparison of exchange matrices}\label{sec:quiver-compare}

%%%%%%%%%%%%%%%%%%%%%%%%%%%%%%%%%%%%%%%%%%%%%%%%%%%%%%%%%%%%%%%%%%
%%%%%%%%%%%%%%%%%%%%%%%%%%%%%%%%%%%%%%%%%%%%%%%%%%%%%%%%%%%%%%%%%%
%%%%%%%%%%%%%%%%%%%%%%%%%%%%%%%%%%%%%%%%%%%%%%%%%%%%%%%%%%%%%%%%%%

Let $\dbr$ be a double braid word. \cref{sec:braid_varieties} established an isomorphism of algebraic varieties
$$\iso: R(\dbr) \xrightarrow{\sim} X(\dbr^{(-|+)}).$$
\cref{sec:weaves-tori} proved the equality of tori between the weave and Deodhar cluster algebra structures: $\iso(T_\dbr)=T_{\DW(\dstr(\dbr))}$, cf.~\cref{lem:weave-tori-vs-deodhar-tori}. \cref{sec:cluster_var} showed the corresponding equality of cluster variables: $\iso^{\ast}\bxWe=\bx3D$, cf.~\cref{thm:coincidence-cluster-variables}. In order to complete our proof of \maintheoref, it remains to establish the equality of 2-forms: $\iso^{\ast}\OmWe = \Om3D$, which is the content of the following theorem.

\begin{thm}\label{thm:coincidence-forms}
Let $\dbr$ be a double braid word, $\C[R(\dbr)]$ the ring of regular functions of its associated double braid variety and $\Om3D$ the Deodhar 2-form on the Deodhar torus $T_\dbr$. Consider the isomorphism $\iso: R(\dbr) \xrightarrow{\sim} X(\dbr^{(-|+)})$ in \cref{eq:iso-glsbs-cgglss} and the weave 2-form $\OmWe$ on the weave torus $T_{\DW(\dstr(\dbr))}$ associated to the double inductive weave of $\dbr$. Then
\[
\iso^{\ast}\OmWe = \Om3D. 
\]
\end{thm}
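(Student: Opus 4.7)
The strategy is to use the previously established equality of tori (\cref{lem:weave-tori-vs-deodhar-tori}) and equality of cluster variables (\cref{thm:coincidence-cluster-variables}) to reduce the problem to an equality of exchange matrices, and then to match these using the cocharacter identification from \cref{cor:Deodhar_cocharacters_via_weaves}. Concretely, since $\iso$ identifies $T_\dbr$ with $T_{\DW(\dbr)}$ and pulls back $\bxWe$ to $\bx3D$, on the common torus both pullbacks admit a presentation
\[\omega \;=\; \sum_{\e < \f} \epsilon_{\e,\f}\, d\log \x3D_\e \wedge d\log \x3D_\f\]
for a skew-symmetric matrix $\epsilon$. Thus \cref{thm:coincidence-forms} reduces to showing that the weave exchange matrix $\epsilon^{\We}$ and the Deodhar exchange matrix $\epsilon^{\threeD}$ coincide under the bijection $\J3D \leftrightarrow \JWe$ of \cref{lem:correspondence-solid-crossings-trivalent-vertices}.

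To match these exchange matrices, I would exploit the fact that each admits a description in terms of the cocharacters already compared in \cref{cor:Deodhar_cocharacters_via_weaves}. On the weave side, the $(\e,\f)$-entry of $\epsilon^{\We}$ is governed by local contributions at the trivalent, tetravalent, and $2m_{ij}$-valent vertices of $\DW(\dbr)$, each of which can be read off by pairing the Lusztig-data cocharacters $\gamWe_{\brWe,\c,\e}$ at successive horizontal slices with the appropriate simple roots (cf.\ \cite[Section 5]{CGGLSS}). On the Deodhar side, the entries of $\epsilon^{\threeD}$ admit a parallel description as pairings of $\gamchp_{\dbr,\c,\e}$ with root-theoretic data attached to the double braid word (cf.\ \cite[Section 2.7]{GLSB}). \cref{cor:Deodhar_cocharacters_via_weaves} identifies $\gamchp_{\dbr,\invc,\inve}$ with $\gamWe_{\brWe,\c,\e}$, so the two pairings are computed on identical cocharacter data; matching them index by index along the bijection $\inv$ then yields $\epsilon^{\We}=\epsilon^{\threeD}$.

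The main obstacle will be the combinatorial bookkeeping needed to verify that the two formulas for exchange matrix entries agree, not just their underlying cocharacters. A potentially cleaner route, which I would attempt in parallel, is to observe that both $\OmWe$ and $\Om3D$ extend to rational $2$-forms on the ambient varieties $X(\brWe)$ and $R(\dbr)$, and to compute their pullbacks directly to $\Spec\C[z_1,\dots,z_{n+m}]$ using the explicit parametrizations of \cref{sec:parametrization}. Under the bijection $\phi$ and the coordinate identification of \cref{prop:iso-polynomial-algebras}, one can then compare the resulting expressions letter by letter along $\dbr$, exploiting the fact that both forms are built by summing local contributions attached to individual crossings (solid or vertex). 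Either route, combined with Propositions~\ref{lem:weave-tori-vs-deodhar-tori} and~\ref{thm:coincidence-cluster-variables}, completes the proof of \maintheoref.
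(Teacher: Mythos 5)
Your opening reduction is exactly right: by the equality of tori and of cluster variables, the claim becomes an equality of exchange matrices, and both matrices are governed by cocharacter data. But the proposal stops short precisely where the paper does its real work. You write that the entries of $\epsilon^{\We}$ and $\epsilon^{\threeD}$ can each ``be read off by pairing cocharacters with roots,'' and that \cref{cor:Deodhar_cocharacters_via_weaves} then forces the entries to agree ``index by index.'' This is not a priori true, and you acknowledge the gap (``the main obstacle will be the combinatorial bookkeeping''): the weave exchange matrix involves intersection numbers $\sharp_\eta(\lcyc^\vee_c\cdot\lcyc_d)$ accumulated over all slices and all vertex types, and the Deodhar matrix is defined via wedges of the $1$-forms $L_{c,i}=\tfrac{1}{2}\dlog\alpha_i(\hp_c)$. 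Showing that these two ``pairing'' formulas coincide is the theorem, not a corollary of the cocharacter comparison.

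What the paper actually does is different and more explicit. It first reduces to the case where $\dbr$ has only positive letters (via \cref{rmk:only-positive-letters}, using compatibility of the isomorphism with moves (B1) and (B4)), so that $\DW$ is a right-inductive weave. It then proves the statement by induction on depth $d$, establishing the partial-sum identity \cref{eq:truncated-forms} of truncated local contributions. The key input is not \cref{cor:Deodhar_cocharacters_via_weaves} (which the proof of \cref{thm:coincidence-forms} never invokes) but rather \cref{lem:hc-in-terms-of-uc}, which expresses $\hp_c$ as a product of coroot inversions applied to weave $u$-variables; this lets the paper rewrite $L_{d,i}$ entirely in $u$-variables. Combined with the $u$-variable descriptions of the sliced form $\OmWe_\eta$ (\cref{lemma-u-slice}) and the trivalent-vertex contribution (\cref{lem.x-u.tri}), the inductive step reduces to the single computation of \cref{alma.cluster.form}: $\OmWe_{\DW_{d+1}}-\OmWe_{\DW_d}=2d_i\,L_{d,i}\wedge L_{d+1,i}$. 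Your ``cleaner route'' of computing both forms in $z$-coordinates would in principle work but is not what the paper does and would replace one piece of bookkeeping with a less structured one. To turn your sketch into a proof you would need to supply: (i) the reduction to positive letters, (ii) the inductive bookkeeping in terms of truncated weaves, and (iii) the identity \cref{alma.cluster.form}, which is where the $u$-variable formula for $\hp_c$ actually enters.
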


The remainder of this section is devoted to proving \cref{thm:coincidence-forms}.

%%%%%%%%%%%%%%%%%%%%%%%%%%%%%%%%%%%%%%%%%%%%%%%%%%%%%%%%%%%%%%%%%%
%%%%%%%%%%%%%%%%%%%%%%%%%%%%%%%%%%%%%%%%%%%%%%%%%%%%%%%%%%%%%%%%%%
%%%%%%%%%%%%%%%%%%%%%%%%%%%%%%%%%%%%%%%%%%%%%%%%%%%%%%%%%%%%%%%%%%

\subsection{Setup and notation}\label{ssec:preliminaries_2forms} In this section, we take the viewpoint that a cluster seed $\Sigma$ consists of a collection $\mathbf{x} = \{x_1, \dots, x_n\}$ of cluster variables, a skew-symmetrizable matrix  $(\varepsilon_{ij})$ for $1\leq i, j\leq n$, together with symmetrizers $d_i$ for $1\leq i \leq n$. The matrix $(\varepsilon_{ij})$ is often referred to as the exchange matrix, and in the skew-symmetric case it is equivalently encoded by a quiver. In our case, where the cluster algebra $A=\C[X]$ is the coordinate ring of an affine variety $X$, the cluster variables $x_i\in\C[X]$ are regular functions on $X$ and the torus $T_\Sigma\sse X$ associated to $\Sigma$ is given by the non-vanishing of the cluster variables $\mathbf{x}$ in $\Sigma$, so that we have the defining equality $\C[T_\Sigma]=\C[x_1^{\pm1},\ldots,x_n^{\pm1}]$.

The data of the cluster seed $\Sigma$ defines a 2-form on the corresponding cluster torus $T_\Sigma$ by
\begin{equation}\label{def:2-form_of_a_seed}
\Omega(\Sigma) := \sum_{1 \leq i, j \leq n} d_i \varepsilon_{ij} \dlog x_i \wedge \dlog x_j, 
\end{equation}
where the symbol $\dlog(x)$ is notation for $\dlog(x):=x^{-1}dx$. In other words, the cluster variables $\mathbf{x}$ are the exponential Darboux coordinates for $\Omega(\Sigma)$ on $T_\Sigma$, where the constant coefficients are given by the data of $(\varepsilon_{ij})$ and $d_i$. In general, for a locally acyclic cluster algebra, this form $\Omega(\Sigma)$ extends from the torus $T_\Sigma$ to a (homonymous) regular $2$-form $\Omega(\Sigma)$ on the entirety of the (spectrum of the) associated cluster algebra $A(\Sigma)$, see \cite[Theorem 4.4]{Muller13} and \cite[Lemma 6.10]{FockGoncharov_ensemble}. The extended 2-form $\Omega(\Sigma)$ does not depend on the chosen seed, and thus we denote it simply by $\Omega$. Note that the 2-form $\Omega$ together with the cluster $\mathbf{x}$ determines the matrix $(\varepsilon_{ij})$ and thus, in the presence of $\mathbf{x}$, the data of the exchange matrix $(\varepsilon_{ij})$ and the 2-form are interchangeable. Since both the weave and Deodhar cluster algebras are locally acyclic, by \cite[Theorem 7.13]{CGGLSS} and \cite[Theorem 4.10]{GLSB} respectively, these considerations apply to the coordinate rings of $X=X(\dbr^{(-|+)})$ and $X=R(\dbr)$.

Let $\dbr$ be a double braid word and $\brWe$ its associated braid word. We have shown in \cref{thm:coincidence-cluster-variables} the equality of cluster variables
\[
\xWe_\c = \x3D_\invc,
\]
where we denote by $\xWe_\c$ the pullback of $\xWe_\c \in \C[X(\brWe)]$ to $\C[R(\dbr)]$ under the isomorphism $\iso$, to ease notation. Let $\OmWe\in\Omega^2(X(\brWe))$ be the regular $2$-form on $X(\brWe)$ obtained from the weave cluster structure, as constructed and studied in \cite[Sections 8\&9]{CGGLSS}, and let $\Om3D\in\Omega^2(R(\dbr))$ be the regular $2$-form on $R(\dbr)$ obtained from the Deodhar cluster structure, as constructed in \cite[Section 2.9]{GLSB}. To ease notation, we still denote by $\OmWe$ the pullback $\iso^*\OmWe$ of the form $\OmWe$ under the isomorphism $\iso$, so that both $\OmWe$ and $\Om3D$ are defined on $R(\dbr)$. In this notation, our goal is to establish the equality
\begin{equation}\label{eq:equality-2-forms}
\OmWe = \Om3D,
\end{equation}
as regular 2-forms on $R(\dbr)$.

\begin{remark}\label{rmk:only-positive-letters}
In order to show \eqref{eq:equality-2-forms}, it suffices to assume that $\dbr$ is a double braid word only on positive letters. Indeed, if $\dbr$ and $\dbr'$ are related by either of the moves (B1) or (B4) from \cref{rmk:compatibility-iso-moves}, \cite[Section 4]{GLSB} implies that $\phi_{\dbr, \dbr'}^{\ast}(\Om3D_{\dbr'}) = \Om3D_{\dbr}$, where $\phi_{\dbr, \dbr'}$ is the isomorphism from \cref{rmk:compatibility-iso-moves}.\footnote{This equality is called Property (F) in the proof of \cite[Theorem 4.2]{GLSB}.} Using (B1) and (B4) moves, we can bring any double braid word to a double braid word with only positive letters. The commutativity of the diagram \eqref{eq:compatibility-iso-moves} implies that we can assume $\dbr$ only has positive letters.\qed
\end{remark}

We show \eqref{eq:equality-2-forms} by first analyzing how the $2$-forms are constructed in finer detail: \cref{ssec:Deodhar-2-form} does so for the Deodhar 2-form $\Om3D$, and \cref{ssec:weave-2-form} for the weave 2-form $\OmWe$. Both forms are defined inductively, using a \lq\lq sum of local contributions\rq\rq \; procedure. We then show that partial sums of these local contributions coincide in \cref{ssec:equality-2-forms}, which implies \eqref{eq:equality-2-forms}.

%%%%%%%%%%%%%%%%%%%%%%%%%%%%%%%%%%%%%%%%%%%%%%%%%%%%%%%%%%%%%%%%%%
%%%%%%%%%%%%%%%%%%%%%%%%%%%%%%%%%%%%%%%%%%%%%%%%%%%%%%%%%%%%%%%%%%
%%%%%%%%%%%%%%%%%%%%%%%%%%%%%%%%%%%%%%%%%%%%%%%%%%%%%%%%%%%%%%%%%%

\subsection{The Deodhar 2-form}\label{ssec:Deodhar-2-form}

Let $\dbr$ be a double braid word. The Deodhar 2-form $\Om3D$ on $R(\dbr)$ is constructed by using the grid minors $\Delta_{c,i}$ of $\dbr$, introduced in \cref{def:grid-minors}. Note that \cref{lem:grid-as-mono-in-cluster} expresses such grid minors $\Delta_{c,i}$ as a monomials in the Deodhar cluster variables $\x3D$.

For $i,j \in I$, let $a_{ij}=\langle\alpha_i,\chi_j\rangle$
denote the entries of the Cartan matrix of $\G$. The Cartan matrix has symmetrizers $d_i$, where $d_i a_{ij} = d_j a_{ji}$. For $i, j \in \pm I$, we set $a_{ij}:=0$ if $i,j$ have different signs, and $a_{ij}:=a_{|i||j|}$
otherwise. We also set $d_{i}:=d_{|i|}$. For $c \in [0,\nm]$ and $i \in \pm I$, we define the following 1-form on the Deodhar torus $T_\dbr$:
\begin{equation*}
    L_{c,i}:= \frac{1}{2} \sum_{k \in \pm I} a_{ik} \dlog \Delta_{c,k}\in\Omega^1(T_\dbr). 
\end{equation*}
Note that if $i \in I$, then 
\[L_{c,i}= \frac{1}{2} \dlog \left(\prod_{k \in I} \omega_k(\hp_c)^{a_{ik}} \right)= \frac{1}{2} \dlog \alpha_i(\hp_c)\]
where $\alpha_i$ is the simple root indexed by $i$. Here we are using the fact that the Cartan matrix gives a change of basis between simple roots and fundamental weights.

For each crossing $c \in [\nm]$, we define the following 2-form on $T_\dbr$
\begin{equation*}
    \Om3D_{\dbr,c}:= \sign(i)~2 d_i~L_{c-1, i} \wedge L_{c,i} \quad \text{where }i:=i_c.
\end{equation*}
Finally, by \cite[Section 2.9]{GLSB}, the Deodhar 2-form $\Om3D=\Om3D_\dbr$ associated to the double braid word $\dbr$ can be given as the sum
\begin{equation}\label{eq:om3D-defn}
    \Om3D_\dbr:= \sum_{c \in [m]} \Om3D_{\dbr,c} = \sum_{c \in \J3D} \Om3D_{\dbr,c}.
\end{equation}
The fact that the second equality holds in \cref{eq:om3D-defn}, i.e.~the 2-forms $\Om3D_{\dbr,c}$ vanish if $c\not\in\J3D$, is \cite[Equation (2.28)]{GLSB}.
%\MG{NB: The numeration of this equation and Section 2.9 a couple of lines above agrees with the version on Pasha's page (presumably the submitted version; right?), but not the current arXiv version. Not sure if this should be edited.)}\RC{Let's leave it as is for now.}

\begin{remark}
    The convention of \cite{GLSB} is that the 2-form of a seed $\Sigma$ is $\frac{1}{2} \Omega(\Sigma)$ (compare \cite[Equation (1.3)]{GLSB} with \eqref{def:2-form_of_a_seed}). Thus $\Om3D_{\dbr,c}$ and $\Om3D_{\dbr}$ above differ from the 2-form in \cite{GLSB} by a factor of 2. However, the exchange matrices obtained in \cite{GLSB} and those defined here agree.
    \qed
\end{remark}

%%%%%%%%%%%%%%%%%%%%%%%%%%%%%%%%%%%%%%%%%%%%%%%%%%%%%%%%%%%%%%%%%%
%%%%%%%%%%%%%%%%%%%%%%%%%%%%%%%%%%%%%%%%%%%%%%%%%%%%%%%%%%%%%%%%%%
%%%%%%%%%%%%%%%%%%%%%%%%%%%%%%%%%%%%%%%%%%%%%%%%%%%%%%%%%%%%%%%%%%

\subsection{The weave 2-form}\label{ssec:weave-2-form}

Let $\ww$ be a Demazure weave associated with a braid word $\br$. As in Theorem \ref{thm:u-as-cluster-var-product}, every edge $\edge$ of $\ww$ is associated with a $u$-variable
\begin{equation}
\label{u-x.change}
u_{\edge}=\prod_{c} (x_{c}^{\rm W})^{\lcyc_c(\edge)},
\end{equation}
where the product is over all Lusztig cycles $c$. We now present the weave 2-form $\OmWe$, as in \cite[Sections 8\&9]{CGGLSS}, and describe it in terms of these $u$-variables. This description in terms of $u$-variables is a key step in \cref{thm:coincidence-forms}, to compare to the Deodhar form. Indeed, once the weave 2-form is expressed in terms of $u$-variables, we can use \cref{lem:hc-in-terms-of-uc}, which expresses the torus element $h^+$ in terms of coroots evaluated at the $u$-variables, to obtain an expression for $\OmWe$ which is much closer to the Deodhar 2-form as presented in \cref{ssec:Deodhar-2-form}.

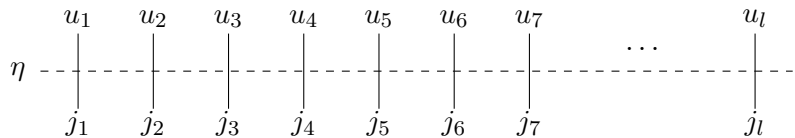
\begin{figure}[h]
\begin{center}
\begin{tikzpicture} 
\draw[dashed] (0,0)--(10,0);
\draw (0.5, 0.5) -- (0.5, -.5);
\draw (1.5, 0.5) -- (1.5, -.5);
\draw (2.5, 0.5) -- (2.5, -.5);
\draw (3.5, 0.5) -- (3.5, -.5);
\draw (4.5, 0.5) -- (4.5, -.5);
\draw (5.5, 0.5) -- (5.5, -.5);
\draw (6.5, 0.5) -- (6.5, -.5);
\node at (8, 0.3) {$\ldots$};
\node at (-.3,0) {$\eta$};
\draw (9.5, 0.5) -- (9.5, -.5);

\node at (0.5, 0.7) {$u_1$};
\node at (1.5, 0.7) {$u_2$};
\node at (2.5, 0.7) {$u_3$};
\node at (3.5, 0.7) {$u_4$};
\node at (4.5, 0.7) {$u_5$};
\node at (5.5, 0.7) {$u_6$};
\node at (6.5, 0.7) {$u_7$};
\node at (9.5, 0.7) {$u_l$};

\node at (0.5, -0.7) {$j_1$};
\node at (1.5, -0.7) {$j_2$};
\node at (2.5, -0.7) {$j_3$};
\node at (3.5, -0.7) {$j_4$};
\node at (4.5, -0.7) {$j_5$};
\node at (5.5, -0.7) {$j_6$};
\node at (6.5, -0.7) {$j_7$};
\node at (9.5, -0.7) {$j_l$};
\end{tikzpicture}
\end{center}
\caption{A horizontal slice $\eta$ of the weave $\ww$}
\label{scaijns3}
\end{figure}

Following \cite[Section 4.1]{CGGLSS}, a generic horizontal slice $\eta$ of the weave $\ww$ is a horizontal cut that intersects the weave $\ww$ without passing through any vertices, as indicated by the dashed line in Figure \ref{scaijns3}. The edges intersecting $\eta$ are colored $j_1, \ldots, j_l$ from left to right, and we denote by $u_1, \ldots, u_l$ their associated $u$-variables. Let ${\bf j}=s_{j_1}\ldots s_{j_l}$ be the corresponding word in simple reflections (note that ${\bf j}$ is not necessarily reduced), and recall the sequence of coroots $\chi_1^{\bf j}, \ldots, \chi_l^{\bf j}$ defined in \eqref{eq:coroots-for-red-expr}, whose norms are given by the symmetrizer $d_{j_i}= ||\chi_{j_i}||^2$. Similarly, define the sequence of roots
\begin{equation}
 \alpha_k^{\bf j}:= s_{j_l}s_{j_{l-1}}\cdots s_{j_{k+1}}\cdot \alpha_{j_k}, \qquad \mbox{where } 1\leq k\leq l.
\end{equation}

For every trivalent vertex $c$ of $\ww$ with associated Lusztig cycle $\lcyc_c$, we consider the Langlands dual Lusztig cycle, cf.~\cite[Section 6.6]{CGGLSS}:
\begin{equation}
\label{co-lusztig-rel}
\lcyc^{\vee}_c(i)= \lcyc_c(i)d_{j_i}d_c^{-1},
\end{equation}
where $d_c := d_j$ if $c$ is colored by $j \in I$.

Following \cite[Section 6.1.(38)]{CGGLSS}, the intersections between Lusztig cycles and their duals at a given slice $\eta$ can be defined as
\[
\sharp_\eta \left(\lcyc^{\vee}_c\cdot \lcyc_d\right):= \frac{1}{2}\sum_{i,k=1}^l{\rm sign}(k-i)\lcyc_c^\vee (i)\lcyc_d(k) \cdot (\alpha_i^{\bf j}, \chi_k^{\bf j}).
\]
These intersections determine the weave 2-form $\Omega^W_\eta$ at the given slice $\eta$, by declaring
\begin{equation}\label{def:slice_weave2form}
\Omega_\eta^{\rm W}:= \sum_{c,d} d_c \cdot \sharp_\eta \left(\lcyc^{\vee}_c\cdot \lcyc_d\right)~ \dlog \xWe_c \wedge \dlog \xWe_d.
\end{equation}
Such weave 2-form $\Omega^W_\eta$ can be expressed in terms of the $u$-variables as follows:
\begin{lemma} \label{lemma-u-slice}
In the notation above, the weave 2-form $\Omega^W_\eta$ equals
\begin{equation}\label{eq:boundary-form}
\Omega_\eta^{\rm W}= \sum_{1\leq i < k \leq l} d_{j_i} (\alpha_i^{\bf j}, \chi_k^{\bf j}) \dlog u_i \wedge \dlog u_k.
\end{equation}
\end{lemma}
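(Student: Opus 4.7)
The plan is to prove \eqref{eq:boundary-form} by substituting the change of variables \eqref{u-x.change} into the defining formula \eqref{def:slice_weave2form} of the weave $2$-form $\OmWe_\eta$ and matching coefficients. Logarithmically differentiating \eqref{u-x.change} immediately yields $\dlog u_i = \sum_c \lcyc_c(i)\,\dlog \xWe_c$ for each $i\in\{1,\dots,l\}$, and this is the identity that links the $u$-variables at the slice $\eta$ to the cluster variables on which $\OmWe$ is defined.

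The computation proceeds in three steps. First, I would expand $\sharp_\eta(\lcyc_c^\vee\cdot\lcyc_d)$ inside \eqref{def:slice_weave2form} according to its definition and interchange orders of summation, so that for each ordered pair $(i,k)$ the relevant inner expression becomes $\sum_{c,d} d_c\,\lcyc_c^\vee(i)\,\lcyc_d(k)\,\dlog \xWe_c\wedge \dlog \xWe_d$. Second, I would apply the Langlands-duality relation \eqref{co-lusztig-rel} in the form $d_c\lcyc_c^\vee(i) = d_{j_i}\lcyc_c(i)$, which factors the inner sum as $d_{j_i}\bigl(\sum_c \lcyc_c(i)\dlog \xWe_c\bigr)\wedge\bigl(\sum_d \lcyc_d(k)\dlog \xWe_d\bigr) = d_{j_i}\,\dlog u_i\wedge \dlog u_k$. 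This reduces $\OmWe_\eta$ to
\[
\tfrac{1}{2}\sum_{i,k=1}^l \sign(k-i)\,d_{j_i}\,(\alpha_i^{\bf j},\chi_k^{\bf j})\,\dlog u_i\wedge \dlog u_k.
\]
Third, the antisymmetry of the wedge and a relabeling of dummy indices on the branch $i>k$ rewrites this as
\[
\tfrac{1}{2}\sum_{i<k}\bigl[d_{j_i}(\alpha_i^{\bf j},\chi_k^{\bf j}) + d_{j_k}(\alpha_k^{\bf j},\chi_i^{\bf j})\bigr]\,\dlog u_i\wedge \dlog u_k.
\]

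The main obstacle I anticipate is the final coefficient identification, which requires the symmetry
\[
d_{j_i}\,(\alpha_i^{\bf j},\chi_k^{\bf j}) \;=\; d_{j_k}\,(\alpha_k^{\bf j},\chi_i^{\bf j}).
\]
I would deduce this from the normalization $d_i = \|\chi_i\|^2$ stated just before the lemma, together with Weyl-invariance of the canonical pairing. Concretely, fixing a Weyl-invariant symmetric bilinear form $B$ on $\mathfrak{h}^*$ compatible with this normalization, the coroot $\chi_\beta$ corresponds to $\tfrac{d_\beta}{2}\beta$ under the induced identification $\mathfrak{h}^*\cong \mathfrak{h}$, whence $(\alpha,\chi_\beta) = \tfrac{d_\beta}{2}B(\alpha,\beta)$; applying this with $\beta=\alpha_k^{\bf j}$ on the left and $\beta=\alpha_i^{\bf j}$ on the right, the symmetry of $B$ yields the claim. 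With this in hand the bracketed coefficient collapses to $2d_{j_i}(\alpha_i^{\bf j},\chi_k^{\bf j})$, producing \eqref{eq:boundary-form}. I do not foresee any other subtleties, as everything else is purely formal manipulation of sums and the defining combinatorial relations.
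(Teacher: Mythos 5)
Your proposal is correct and follows essentially the same computation as the paper's proof, merely running it in the opposite direction (from the definition \eqref{def:slice_weave2form} to the $u$-expression \eqref{eq:boundary-form}, instead of from \eqref{eq:boundary-form} back to $\Omega^W_\eta$). The one place where you add genuine value is in explicitly isolating and justifying the weighted symmetry $d_{j_i}(\alpha_i^{\bf j},\chi_k^{\bf j}) = d_{j_k}(\alpha_k^{\bf j},\chi_i^{\bf j})$, which the paper subsumes under ``direct computation'' when passing between the restricted sum $\sum_{i<k}$ and the signed full sum $\tfrac12\sum_{i,k}\sign(k-i)$; your derivation via the Weyl-invariant form and the normalization $d_i=\|\chi_i\|^2$ is the right way to see it, and is consistent with the fact that $\chi_k^{\bf j}$ is literally the coroot of $\alpha_k^{\bf j}$ (both being obtained from $\chi_{j_k}$, $\alpha_{j_k}$ by the same Weyl element $s_{j_\ell}\cdots s_{j_{k+1}}$).
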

\begin{proof}

By using \eqref{u-x.change}, we can describe the $u$-variables in the right hand side of \eqref{eq:boundary-form} in terms of the $x$-variables. This leads to the following description for the 2-form:
\begin{align*}
&\sum_{1\leq i < k \leq l} d_{j_i} (\alpha_i^{\bf j}, \chi_k^{\bf j}) \dlog u_i \wedge \dlog u_k\\
=&\sum_{1\leq i < k \leq l} d_{j_i} (\alpha_i^{\bf j}, \chi_k^{\bf j})\left( \sum_{c,d }\lcyc_c(i)\lcyc_d(k)\dlog \xWe_c \wedge \dlog \xWe_d\right)\\
=&\sum_{c,d}d_c\left(\sum_{1\leq i<k\leq l}\lcyc^\vee_c(i)\lcyc_d(k)  (\alpha_i^{\bf j}, \chi_k^{\bf j}) \right) \dlog \xWe_c \wedge \dlog \xWe_d\\
    =&\frac{1}{2} \sum_{c,d}d_c\left(\sum_{i,k=1}^l{\rm sign}(k-i)\lcyc_c^\vee (i)\lcyc_d(k)  (\alpha_i^{\bf j}, \chi_k^{\bf j}) \right) \dlog \xWe_c \wedge \dlog \xWe_d\\
=& \Omega_\eta^{\rm W},
\end{align*}
as required. In the above the second equality uses \eqref{co-lusztig-rel}.
\end{proof}

\begin{example}[Type $G_2$]Let ${\bf j}=s_2s_1s_2s_1s_2s_1$, where $\alpha_1$ is the longer simple root, with symmetrizers $d_1=1$ and $d_2=3$. The sequence of  coroots $\chi_i^{\bf j}$ is
\[
\chi_2, ~~\chi_1+\chi_2, ~~ 3\chi_1+2\chi_2,~~2\chi_1+\chi_2,~~3\chi_1+\chi_2,~~\chi_1.
\]
The sequence of roots $\alpha_i^{\bf j}$ is
\[
\alpha_2,~~\alpha_1+3\alpha_2,~~\alpha_1+2\alpha_2,~~2\alpha_1+3\alpha_2,~~\alpha_1+\alpha_2,~~\alpha_1,
\]
and note that the pairing between roots and coroots gives
\[(\alpha_1,\chi_1)=2,~(\alpha_1,\chi_2)=-3,~(\alpha_2, \chi_1)=-1, ~(\alpha_2,\chi_2)=2.
\]
Thus the $6\times 6$ symmetric matrix $M= (m_{ik})$, $m_{ik}:=d_{j_i} (\alpha_i^{\bf j},\chi_k^{\bf j})$, reads
\begin{equation}
\label{2025.2.24.11}
\begin{pmatrix}
    6 &3 &3 &0 &-3 &-3\\
    3 &2 &3 &1 &0 &-1\\
    3 &3 &6 &3 &3 &0\\
    0 &1 &3 &2 &3 &1\\
    -3&0 &3 &3 &6 &3 \\
    -3 &-1 &0 &1 &3 &2 \\
\end{pmatrix}.
\end{equation}

Therefore
\[
\Omega_\eta^{\rm W}= \sum_{1\leq i<k\leq 6} m_{ik} \dlog u_i \wedge \dlog u_k,
\]
which in part illustrates \cref{lemma-u-slice} in this example.\qed
\end{example}

\cref{def:slice_weave2form} defines the weave 2-form at a given slice $\eta$ of a weave $\ww$. The global weave 2-form $\OmWe$ is obtained by scanning the weave downwards, accounting for the local contributions of these sliced 2-forms $\OmWe_\eta$ as the slice $\eta$ moves past the weave vertices. Let us now consider the contributions of the internal vertices of $\ww$ to the global weave form $\OmWe$: starting with the 3-valent case, then 4-valent and 6-valent, and finally the 8-valent and 12-valent cases.

\begin{figure}[h]
\begin{center}
\begin{tikzpicture} 
\node at (11.9, 1.5) {$v$};
\draw[red] (10.5, 3) to[out=270, in=135] (11.5, 1.5);
\node at (10.5, 3.2) {$u_1$};
\draw[red] (12.5, 3) to[out=270, in=45] (11.5, 1.5);
\node at (12.5, 3.2) {$u_3$};
\draw[red] (11.5, 1.5) to (11.5, 0);
\node at (11.5, -0.2) {$u_2$};
\end{tikzpicture}
\end{center}
\caption{The $u$-variables of the edges adjacent to a trivalent vertex $v$.}
\label{scaijns2}
\end{figure}
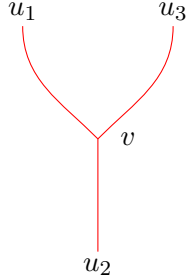

%%%%%%%%%%%%%%%%%%%%%%%%%%%%%%%%%%%%%%%%%%%%%%%%%%%%%%%%%%%%%%%%%%
%%%%%%%%%%%%%%%%%%%%%%%%%%%%%%%%%%%%%%%%%%%%%%%%%%%%%%%%%%%%%%%%%%
%%%%%%%%%%%%%%%%%%%%%%%%%%%%%%%%%%%%%%%%%%%%%%%%%%%%%%%%%%%%%%%%%%

\subsubsection{Local contribution at 3-valents}\label{sssec:3valent} First, let $v$ be a trivalent vertex, as in Figure \ref{scaijns2}. Consider the following local contribution at such trivalent vertex $v$:
\[
\OmWe_v:=\sum_{c,d}d_c \left|\begin{matrix}
1 & 1 & 1\\
\nu_c^\vee(1) & \nu_c^\vee(2) & \nu_c^\vee(3)\\
\nu_d(1) & \nu_d(2) & \nu_d(3)\\
\end{matrix}\right| \dlog \xWe_c\wedge \dlog \xWe_d
\]

In line with \cref{lemma-u-slice}, we can express this local contribution in terms of $u$-variables:

\begin{lemma} \label{lem.x-u.tri} In the notation above, if the three edges incident to $v$ are colored by $i$, then
\[
\OmWe_v= 2d_i \left({\dlog u_1}\wedge {\dlog u_2}+{\dlog u_2}\wedge {\dlog u_3}+ {\dlog u_3}\wedge {\dlog u_1}\right).
\]
\end{lemma}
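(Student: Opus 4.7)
The plan is to mirror the structure of the proof of Lemma~\ref{lemma-u-slice}: expand the proposed right-hand side via the change-of-basis $\dlog u_k = \sum_c \lcyc_c(k)\,\dlog \xWe_c$ coming from~\eqref{u-x.change}, and then identify the resulting coefficient of $\dlog \xWe_c \wedge \dlog \xWe_d$ with the signed determinant appearing in the definition of $\OmWe_v$.

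More concretely, I would first substitute~\eqref{u-x.change} into $\dlog u_1\wedge \dlog u_2 + \dlog u_2\wedge \dlog u_3 + \dlog u_3\wedge \dlog u_1$, obtaining a double sum over Lusztig cycles $c,d$ with coefficient $\lcyc_c(1)\lcyc_d(2)+\lcyc_c(2)\lcyc_d(3)+\lcyc_c(3)\lcyc_d(1)$. Using the antisymmetry $\dlog \xWe_c\wedge \dlog \xWe_d = -\dlog \xWe_d\wedge \dlog \xWe_c$ to symmetrize, this coefficient becomes, up to a factor $\tfrac{1}{2}$, the alternating sum of $2\times 2$ minors of the matrix with rows $(\lcyc_c(1),\lcyc_c(2),\lcyc_c(3))$ and $(\lcyc_d(1),\lcyc_d(2),\lcyc_d(3))$. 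That alternating sum is precisely the Laplace expansion along the first row of
\[
\left|\begin{matrix}
1 & 1 & 1\\
\lcyc_c(1) & \lcyc_c(2) & \lcyc_c(3)\\
\lcyc_d(1) & \lcyc_d(2) & \lcyc_d(3)\\
\end{matrix}\right|.
\]
Collecting the $\tfrac{1}{2}$ with the external factor $2d_i$, the right-hand side equals $d_i \sum_{c,d} \det(\cdots)\,\dlog \xWe_c\wedge \dlog \xWe_d$.

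To finish, I would use the Langlands duality relation~\eqref{co-lusztig-rel}, namely $\lcyc_c^\vee(k) = \lcyc_c(k)\,d_{j_k}d_c^{-1}$, together with the key input that the three edges meeting the trivalent vertex $v$ are all colored $i$. This means that in the determinant defining $\OmWe_v$ the Langlands-dual row can be rewritten with a common factor $d_i/d_c$ pulled out of the second row, so that
\[
d_c\left|\begin{matrix}
1 & 1 & 1\\
\lcyc_c^\vee(1) & \lcyc_c^\vee(2) & \lcyc_c^\vee(3)\\
\lcyc_d(1) & \lcyc_d(2) & \lcyc_d(3)\\
\end{matrix}\right|
= d_i\left|\begin{matrix}
1 & 1 & 1\\
\lcyc_c(1) & \lcyc_c(2) & \lcyc_c(3)\\
\lcyc_d(1) & \lcyc_d(2) & \lcyc_d(3)\\
\end{matrix}\right|.
\]
Summing over $c,d$ then matches the expression already obtained for the right-hand side, proving the lemma.

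I do not expect a genuine obstacle here; the argument is a routine bookkeeping computation entirely analogous to Lemma~\ref{lemma-u-slice}. The only subtlety to be careful about is the use of the hypothesis that all three edges at $v$ share the color $i$: this is what allows the single factor $d_i$ to replace the ``slice'' symmetrizers $d_{j_k}$ in~\eqref{co-lusztig-rel} and to cancel uniformly against $d_c$. In particular, this is why the local trivalent contribution is color-homogeneous and takes the compact form stated, which will be the feature used in the next step to match the weave 2-form $\OmWe$ with the Deodhar 2-form $\Om3D$.
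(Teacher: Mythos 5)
Your proof is correct and follows essentially the same route as the paper: expand the $u$-variables via~\eqref{u-x.change}, antisymmetrize, and recognize the result as the $3\times 3$ determinant with a row of ones (Laplace expansion), then use~\eqref{co-lusztig-rel} together with the color-homogeneity of the three edges at $v$ to exchange $d_c\,\lcyc_c^\vee$ for $d_i\,\lcyc_c$. The paper packages the same computation by first proving the $2\times 2$ identity $2d_i\,\dlog u_e\wedge\dlog u_f=\sum_{c,d}d_c\left|\begin{smallmatrix}\lcyc_c^\vee(e)&\lcyc_c^\vee(f)\\\lcyc_d(e)&\lcyc_d(f)\end{smallmatrix}\right|\dlog \xWe_c\wedge\dlog \xWe_d$ and then summing cyclically, but this is just the Laplace expansion you wrote out explicitly.
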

\begin{proof} Let $e$ and $f$ be edges of $\ww$ with color $i$. Then
\begin{align*}
2d_i\dlog u_e \wedge \dlog u_f &= 2d_i\sum_{c,d} \nu_c(e)\nu_d(f) \dlog \xWe_c\wedge \dlog \xWe_d\\
&=d_i\sum_{c,d}\left(\nu_c(e)\nu_d(f)-\nu_c(f)\nu_d(e)\right)\dlog \xWe_c\wedge \dlog \xWe_d\\
&= \sum_{c,d}d_c \left|\begin{matrix}
\nu_c^\vee(e) & \nu_c^\vee(f) \\
\nu_d(e) & \nu_d(f) \\
\end{matrix}\right|\dlog \xWe_c\wedge \dlog \xWe_d.
\end{align*}
The rest of the equality follows from direct computation.
\end{proof}

%%%%%%%%%%%%%%%%%%%%%%%%%%%%%%%%%%%%%%%%%%%%%%%%%%%%%%%%%%%%%%%%%%
%%%%%%%%%%%%%%%%%%%%%%%%%%%%%%%%%%%%%%%%%%%%%%%%%%%%%%%%%%%%%%%%%%

\subsubsection{Local contribution at 4-valents}\label{sssec:4valent} For $v$ a $4$-valent vertex, we simply set 
\[
\Omega_v^{\rm W}=0.
\]
That is, 4-valent vertices do {\it not} contribute to the weave 2-form.

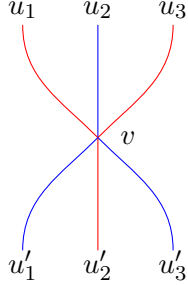
\begin{figure}[h]
\begin{center}
\begin{tikzpicture} 
\draw[red] (0.5, 3) to[out=270, in=135] (1.5, 1.5);
\node at (0.5, 3.2) {${u_1}$};
\node at (1.9,1.5) {$v$};
\draw[blue] (1.5, 3) to (1.5, 1.5);
\node at (1.5, 3.2) {$u_2$};
\draw[red] (2.5, 3) to [out=270, in=45] (1.5, 1.5);
\node at (2.5, 3.2) {$u_3$};
\draw[blue] (1.5, 1.5) to [out=225, in=90] (0.5, 0);
\draw[red] (1.5, 1.5) to (1.5, 0);
\draw[blue] (1.5, 1.5) to[out=315, in=90] (2.5, 0);
\node at (0.5, -0.2) {${u_1'}$};
\node at (1.5, -0.2) {${u_2'}$};
\node at (2.5, -0.2) {${u_3'}$};
\end{tikzpicture}
\end{center}
\caption{The $u$-variables of edges adjacent to a 6-valent vertex $v$.}
\label{scaijns2.1}
\end{figure}

%%%%%%%%%%%%%%%%%%%%%%%%%%%%%%%%%%%%%%%%%%%%%%%%%%%%%%%%%%%%%%%%%%
%%%%%%%%%%%%%%%%%%%%%%%%%%%%%%%%%%%%%%%%%%%%%%%%%%%%%%%%%%%%%%%%%%

\subsubsection{Local contribution at 6-valents}\label{sssec:6valent} Let $v$ be a $6$-valent vertex, as in Figure \ref{scaijns2.1}.  The local contribution at such hexavalent vertex $v$ is
\[
\OmWe_v := \sum_{c,d}\frac{d_c}{2}\left(\left|\begin{matrix}
1 & 1 & 1\\
\nu_c^\vee(1) & \nu_c^\vee(2) & \nu_c^\vee(3)\\
\nu_d(1) & \nu_d(2) & \nu_d(3)\\
\end{matrix}\right|-\left|\begin{matrix}
1 & 1 & 1\\
\nu_c^\vee(1') & \nu_c^\vee(2') & \nu_c^\vee(3')\\
\nu_d(1') & \nu_d(2') & \nu_d(3')\\
\end{matrix}\right|\right) \dlog \xWe_c \wedge \dlog \xWe_d.
\]
Here the edges incident to $v$ are of two distinct colors $i$ and $j$, where $i, j$ are joined by a simple edge in the Dynkin diagram. Hence, their corresponding symmetrizers coincide, $d_i = d_j$. In line with Lemmas \ref{lemma-u-slice} and \ref{lem.x-u.tri}, the expression in terms of $u$-variables is:

%\JS{notation clash: $d$ is a symmetrizer and a trivalent vertex. Also, maybe the sentence about $d_i = d_j = d$ is better after the formula for $\OmWe_v$}
\begin{lemma} \label{lem.x-u.6-val}In the notation above, we have
\begin{align*}
\OmWe_v = &d_i \left({\dlog u_1}\wedge {\dlog u_2}+{\dlog u_2}\wedge {\dlog u_3}+ {\dlog u_3}\wedge {\dlog u_1}\right) \\
&-d_i\left({\dlog u_1'}\wedge {\dlog u_2'}+{\dlog u_2'}\wedge {\dlog u_3'}+ {\dlog u_3'}\wedge {\dlog u_1'}\right).
\end{align*}
\end{lemma}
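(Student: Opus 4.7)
The plan is to mirror the argument used for the trivalent case in Lemma~\ref{lem.x-u.tri}, with one additional ingredient: since a $6$-valent vertex occurs precisely when the two incident colors $i,j$ are joined by a simple edge in the Dynkin diagram, the symmetrizers coincide, $d_i=d_j$. This is exactly what is needed to allow the manipulation $d_i\,\nu_c = d_c\,\nu_c^\vee$ (from~\eqref{co-lusztig-rel}) to be applied uniformly to edges of either color.

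First, I would expand each $\dlog u_a$ in terms of cluster variables via~\eqref{u-x.change}, so that $\dlog u_a = \sum_c \nu_c(a)\,\dlog \xWe_c$. Antisymmetrizing the wedge of two such expressions and then multiplying by $d_i=d_j$ yields, for any pair of edges $a,b$ incident to $v$,
\[
2 d_i\, \dlog u_a \wedge \dlog u_b \;=\; \sum_{c,d} d_c \begin{vmatrix}\nu_c^\vee(a) & \nu_c^\vee(b)\\ \nu_d(a) & \nu_d(b)\end{vmatrix} \dlog \xWe_c \wedge \dlog \xWe_d,
\]
exactly as in the proof of Lemma~\ref{lem.x-u.tri}. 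The point is that whether $a,b$ carry color $i$ or $j$, the factor $d_i$ can be absorbed as $d_c\nu_c^\vee$, thanks to $d_i=d_j$.

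Second, I would sum this identity over the three ordered pairs $(a,b)=(1,2),(2,3),(3,1)$ on the top and then over the three pairs $(1',2'),(2',3'),(3',1')$ on the bottom. Cofactor expansion along the first row gives the key combinatorial identity
\[
\begin{vmatrix} 1 & 1 & 1\\ \nu_c^\vee(1) & \nu_c^\vee(2) & \nu_c^\vee(3)\\ \nu_d(1) & \nu_d(2) & \nu_d(3)\end{vmatrix} \;=\; \begin{vmatrix}\nu_c^\vee(2) & \nu_c^\vee(3)\\ \nu_d(2) & \nu_d(3)\end{vmatrix} + \begin{vmatrix}\nu_c^\vee(3) & \nu_c^\vee(1)\\ \nu_d(3) & \nu_d(1)\end{vmatrix} + \begin{vmatrix}\nu_c^\vee(1) & \nu_c^\vee(2)\\ \nu_d(1) & \nu_d(2)\end{vmatrix},
\]
so the three $2\times 2$ contributions assemble into the $3\times 3$ determinant featured in the definition of $\OmWe_v$.

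Finally, subtracting the resulting primed identity from the unprimed one and dividing by two produces exactly
\[
d_i(\dlog u_1\wedge \dlog u_2 + \dlog u_2 \wedge \dlog u_3 + \dlog u_3 \wedge \dlog u_1) - d_i(\text{primed analogue}) \;=\; \OmWe_v,
\]
as claimed. The only subtle point is the use of $d_i=d_j$ when converting $d_i\nu_c$ to $d_c\nu_c^\vee$ for edges of the \emph{other} color; this is automatic because $6$-valent vertices correspond to simply-laced pairs of colors, so no genuine obstacle arises beyond the linear algebra already present in the trivalent case.
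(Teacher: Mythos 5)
Your proposal is correct and is essentially the same argument the paper has in mind: the paper's proof of Lemma~\ref{lem.x-u.6-val} simply reads ``It follows by the same calculation as in the proof of Lemma \ref{lem.x-u.tri},'' and you have carried out that calculation explicitly, correctly flagging the one extra ingredient ($d_i = d_j$ for the two colors at a 6-valent vertex, which is already noted in the text just before the lemma) and the cofactor expansion along the row of ones that assembles the three $2\times 2$ minors into the $3\times 3$ determinant.
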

\begin{proof} It follows by the same calculation as in the proof of Lemma \ref{lem.x-u.tri}. 
\end{proof}
\begin{remark} By \cite[Lemma 4.28]{CGGLSS}, if $v$ is not a trivalent vertex, then $\OmWe_v$ is given by the difference between the contributions of its top and bottom slices. Therefore Lemma \ref{lem.x-u.6-val} can also be understood as a consequence of Lemma~\ref{lemma-u-slice}.\qed
\end{remark}

%%%%%%%%%%%%%%%%%%%%%%%%%%%%%%%%%%%%%%%%%%%%%%%%%%%%%%%%%%%%%%%%%%
%%%%%%%%%%%%%%%%%%%%%%%%%%%%%%%%%%%%%%%%%%%%%%%%%%%%%%%%%%%%%%%%%%

\subsubsection{Local contribution at 8-valent and 12-valents}\label{sssec:8and12valent} For vertices of valency 8 and 12, the 2-form $\OmWe_v$ is constructed via unfolding and reduction to the simply-laced case. By \cite[Lemma 4.28]{CGGLSS}, $\OmWe_v$ for such a vertex $v$ can be expressed as the difference between the contributions of its top and bottom slices. By Lemma~\ref{lemma-u-slice}, the 2-forms for these top and bottom slices can themselves can be expressed in terms of the $u$-variables. This leads to an expression for the local contributions $\OmWe_v$ in terms of the $u$-variables. For completeness, we provide the explicit formulas below.

\begin{figure}[h]
\begin{center}
\begin{tikzpicture} 
\begin{scope}[shift={(3,0)}]
\draw[red] (0, 3) to[out=270, in=165] (1.5, 1.5);
\draw[blue] (1, 3) to[out=270, in=135] (1.5, 1.5);
\draw[red] (2, 3) to[out=270, in=45] (1.5, 1.5);
\draw[blue] (3, 3) to[out=270, in=15] (1.5, 1.5);
\node at (0, 3.2) {${u_1}$};
\node at (2,1.5) {$v$};
\node at (1, 3.2) {$u_2$};
\node at (2, 3.2) {$u_3$};
\node at (3, 3.2) {$u_4$};
\draw[blue] (1.5, 1.5) to [out=195, in=90] (0, 0);
\draw[red] (1.5, 1.5) to [out=225, in=90] (1, 0);
\draw[blue] (1.5, 1.5) to[out=315, in=90] (2, 0);
\draw[red] (1.5, 1.5) to[out=345, in=90] (3, 0);
\node at (0, -0.2) {${u_1'}$};
\node at (1, -0.2) {${u_2'}$};
\node at (2, -0.2) {${u_3'}$};
\node at (3, -0.2) {${u_4'}$};
\end{scope}
\end{tikzpicture}
\end{center}
\caption{For every 8-valent vertex $v$ of the weave, we consider the corresponding $u$-variables associated with its edges. }
\label{scaijns2.2}
\end{figure}
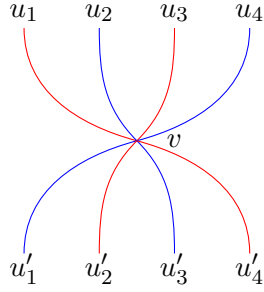

The following lemma can also be proven in the same line as Lemmas \ref{lemma-u-slice}, \ref{lem.x-u.tri}, \ref{lem.x-u.6-val}:
\begin{lemma} In the notation above:\\

(1) Let $v$ be an 8-valent vertex, as in Figure \ref{scaijns2.2}. Then the resulting contribution $\OmWe_v$ in terms of the $u$-variables is
\begin{align*}
\OmWe_v = 2\left(\dlog u_1\wedge \dlog u_2 + \dlog u_2 \wedge \dlog u_3 + \dlog u_3\wedge \dlog u_4 + \dlog u_4 \wedge \dlog u_1\right)  \\
- 2\left (\dlog u_1'\wedge \dlog u_2' + \dlog u_2' \wedge \dlog u_3' + \dlog u_3'\wedge \dlog u_4' + \dlog u_4' \wedge \dlog u_1'\right).
\end{align*}

(2) Let $v$ be a 12-valent vertex with the incoming edges labeled $2,1,2,1,2,1$ and the outgoing edges labeled $1,2,1,2,1,2$. Then
\[
\OmWe_v = \sum_{1\leq i<k\leq 6} m_{ik}\left(\dlog u_i\wedge \dlog u_k - \dlog u_{7-k}' \wedge \dlog u_{7-i}'\right),
\]
where the entries $m_{ik}$ are given as in \eqref{2025.2.24.11}.\qed
\end{lemma}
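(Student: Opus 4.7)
The plan is to exploit the two tools already on the table: (i) \cite[Lemma~4.28]{CGGLSS}, which asserts that for a non-trivalent internal vertex $v$ the local contribution $\OmWe_v$ equals the difference $\OmWe_{\eta_{\mathrm{top}}}-\OmWe_{\eta_{\mathrm{bot}}}$ between the slice 2-forms just above and just below $v$; and (ii) \cref{lemma-u-slice}, which turns any slice 2-form $\OmWe_\eta$ into the explicit expression
\[
\OmWe_\eta=\sum_{1\le i<k\le l} d_{j_i}(\alpha_i^{\bj},\chi_k^{\bj})\,\dlog u_i\wedge \dlog u_k.
\]
Thus in both cases the proof reduces to computing the coefficient matrix $m_{ik}:=d_{j_i}(\alpha_i^{\bj},\chi_k^{\bj})$ associated with the top slice word $\bj$ at $v$ and checking that the bottom slice contributes the claimed reversed summand.

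For part (1), the $8$-valent vertex lies in a doubly-laced rank $2$ subsystem (type $B_2$ or $C_2$, with $m_{ij}=4$). The top slice word is $\bj=s_i s_j s_i s_j$ and the bottom slice word is its ``mirror'' $\bj'=s_j s_i s_j s_i$ (in the opposite order). I would compute the sequences $(\chi_k^{\bj})_{k=1}^4$ and $(\alpha_i^{\bj})_{i=1}^4$ from \eqref{eq:coroots-for-red-expr} using the rank $2$ Cartan pairings $(\alpha_i,\chi_i)=2$, $(\alpha_i,\chi_j)=-2$, $(\alpha_j,\chi_i)=-1$, $(\alpha_j,\chi_j)=2$ (with appropriate symmetrizers $d_i=1$, $d_j=2$ or vice versa, as dictated by the unfolding convention of \cite{CGGLSS}), and then directly evaluate $m_{ik}$ for $1\le i<k\le 4$. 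The expected outcome is $m_{i,i+1}=2$ and $m_{1,4}=2$ with the remaining entries vanishing, which reproduces the cyclic sum $\dlog u_1\wedge\dlog u_2+\dlog u_2\wedge\dlog u_3+\dlog u_3\wedge\dlog u_4+\dlog u_4\wedge\dlog u_1$. The bottom slice is treated identically, using that $\bj'$ is the reverse of $\bj$, which gives the same cyclic formula in the primed variables with the indexing reversed, hence the minus sign in the statement.

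For part (2), the top slice word at the $12$-valent vertex is exactly $\bj=s_2s_1s_2s_1s_2s_1$, which is the $G_2$ reduced expression explicitly handled in the example preceding the lemma. The matrix $M=(m_{ik})$ displayed in \eqref{2025.2.24.11} is precisely $d_{j_i}(\alpha_i^{\bj},\chi_k^{\bj})$, so \cref{lemma-u-slice} immediately gives
\[
\OmWe_{\eta_{\mathrm{top}}}=\sum_{1\le i<k\le 6} m_{ik}\,\dlog u_i\wedge\dlog u_k.
\]
For the bottom slice, the word is $\bj'=s_1s_2s_1s_2s_1s_2$, which is the reverse of $\bj$. A direct check (or the symmetry $\alpha_i^{\bj'}=\alpha_{7-i}^{\bj}$, $\chi_k^{\bj'}=\chi_{7-k}^{\bj}$, together with the skew nature of the wedge) shows that the analogous slice formula in the primed variables is $\sum_{1\le i<k\le 6} m_{ik}\,\dlog u'_{7-k}\wedge\dlog u'_{7-i}$. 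Subtracting the bottom slice from the top slice via \cite[Lemma~4.28]{CGGLSS} yields exactly the claimed formula.

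The main obstacle is the bookkeeping in part (2): one must verify that the reversal symmetry of the matrix $M$, together with the transformation of the sequences $(\chi_k^{\bj}),(\alpha_i^{\bj})$ under reversing the reduced word, reproduces precisely $m_{ik}$ with the reindexed primed variables $u'_{7-k},u'_{7-i}$ (and not some other combination). Once this symmetry is pinned down — which amounts to observing that both sides of the pairing $(\alpha_i,\chi_k)$ are invariant under $w_0$-conjugation in the $G_2$ rank $2$ subsystem — the identification of coefficients is immediate from the explicit matrix \eqref{2025.2.24.11}.
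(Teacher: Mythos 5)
Your proposal takes the same route the paper takes implicitly — the paper's proof is simply the one-line remark that the 8- and 12-valent cases follow ``in the same line as'' the earlier slice lemmas, i.e.\ by combining \cite[Lemma~4.28]{CGGLSS} (local contribution equals top slice minus bottom slice) with \cref{lemma-u-slice} (slice form in terms of $u$-variables), which is exactly what you do. Your sketch is a useful unpacking of what the paper leaves as a computation.

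One sign should be corrected in part (1): you assert that the top-slice coefficient matrix has $m_{1,4}=+2$, but the computation of $d_{j_i}(\alpha_i^{\bj},\chi_k^{\bj})$ for $\bj=s_is_js_is_j$ in the $B_2$/$C_2$ subsystem (either choice of which root is long) gives $m_{12}=m_{23}=m_{34}=2$, $m_{13}=m_{24}=0$, and $m_{14}=-2$. The sign $-2$ is exactly what is needed: since $m_{14}\,\dlog u_1\wedge\dlog u_4 = -2\,\dlog u_1\wedge\dlog u_4 = +2\,\dlog u_4\wedge\dlog u_1$, one recovers the cyclic sum with a $+$ in front of the $\dlog u_4\wedge\dlog u_1$ term, as stated in the lemma. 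As written, your claim $m_{1,4}=2$ is inconsistent with your own conclusion that the cyclic sum results.

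Two smaller points. First, in part (2), your use of the reversal symmetry $\alpha_i^{\bj'}=\alpha_{\ell+1-i}^{\bj}$, $\chi_k^{\bj'}=\chi_{\ell+1-k}^{\bj}$ is correct for these rank-$2$ subsystems precisely because $-w_0=\mathrm{id}$ on the rank-$2$ root lattice (both for $B_2/C_2$ and $G_2$); it is worth saying that explicitly rather than invoking a vague ``$w_0$-conjugation'' invariance. Second, the final reindexing $\sum_{i<k} m_{ik}\,\dlog u'_{7-k}\wedge\dlog u'_{7-i}$ also uses the symmetry $m_{ik}=m_{ki}$, i.e.\ $d_{j_i}(\alpha_i^{\bj},\chi_k^{\bj})=d_{j_k}(\alpha_k^{\bj},\chi_i^{\bj})$, which the paper states just above \cref{alma.cluster.form}; including that observation would make the bookkeeping in part (2) watertight.
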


%%%%%%%%%%%%%%%%%%%%%%%%%%%%%%%%%%%%%%%%%%%%%%%%%%%%%%%%%%%%%%%%%%
%%%%%%%%%%%%%%%%%%%%%%%%%%%%%%%%%%%%%%%%%%%%%%%%%%%%%%%%%%%%%%%%%%

\subsubsection{The global weave 2-form $\OmWe$} Following \cite[Definition 4.26]{CGGLSS}, the weave 2-form $\OmWe$ is defined as the sum
\begin{equation}\label{eq:OmWe_definition}
\OmWe:= \OmWe_{\bf bottom} + \OmWe_{\mathrm{i}}.
\end{equation}

The first summand of $\OmWe$ is the contribution of the bottom slice $\eta$ of the weave $\ww$, where we denoted $\OmWe_{\bf bottom}: =\OmWe_\eta$, as introduced in \cref{def:slice_weave2form}.\footnote{In this case, $\eta$ encodes a reduced word for $\delta(\beta)$.} The second summand of $\OmWe$ comes from the contributions of all the internal vertices $v$ of $\ww$, where we denoted
\begin{equation}
\OmWe_{\mathrm{i}}:=\sum_{v} \OmWe_v,
\end{equation}
with the local contributions $\OmWe_v$ at internal vertices $v$ as in Subsections \ref{sssec:3valent},\ref{sssec:4valent},\ref{sssec:6valent} and \ref{sssec:8and12valent} above.

%\MSB{To add: Justify why this definition is the same as the 2-form from folding in \cite{CGGLSS}.} \JS{We need an analogue of Lemma 9.3 in \cite{CGGLSS} for $8$ and $12$-valent vertices. I believe Linhui's computation is essentially this analogue, but I need to think more about it}

%%%%%%%%%%%%%%%%%%%%%%%%%%%%%%%%%%%%%%%%%%%%%%%%%%%%%%%%%%%%%%%%%%
%%%%%%%%%%%%%%%%%%%%%%%%%%%%%%%%%%%%%%%%%%%%%%%%%%%%%%%%%%%%%%%%%%
%%%%%%%%%%%%%%%%%%%%%%%%%%%%%%%%%%%%%%%%%%%%%%%%%%%%%%%%%%%%%%%%%%

\subsection{Proof of \cref{thm:coincidence-forms} (Equality of 2-forms)}\label{ssec:equality-2-forms} 
As discussed in \cref{rmk:only-positive-letters}, it suffices to consider the case where $\dbr = i_{m}\cdots i_1$ consists only of positive letters. Then $\brWe = i_1\cdots i_m$ and \cref{cor:inductive-weaves} implies that the corresponding weave $\ww = \DW(\dstr{(\dbr))} $ is right inductive, that is, all weave lines start on the right side of $\triangle$.

Both 2-forms $\OmWe$ and $\Om3D$ are defined as a sum of local contributions:
\begin{enumerate}
    \item In $\Om3D$, there is a contribution $\Om3D_c$ for each index $c \in [m]$.\\
    \item In $\OmWe$, there is a contribution $\OmWe_v$ for each vertex of the weave $\ww$ and a separate contribution for the bottom of the weave $\ww$. The bottom contribution can be considered as a contribution of the bottom side of $\triangle$ in the description of \cref{ssec:double_inductive_weaves}.
\end{enumerate}
For each depth $d\in[0,m]$, we will now show the identity
\begin{equation}\label{eq:truncated-forms}
\sum_{c \le d}\Om3D_{\invc} = \left(\sum_{v \mbox{\tiny{ s.t.~ }}d(v)\leq d} \OmWe_v\right) + \OmWe_{\eta_d}.
\end{equation}
The sum on the right of \cref{eq:truncated-forms} is taken over vertices $v$ that have depth $d(v)$ at most $d$, and $\OmWe_{\eta_d}$ is the 2-form defined in \cref{eq:boundary-form} for the weave $\DW_d$ obtained by truncating $\ww$ at depth $d$. \cref{eq:truncated-forms} implies \cref{thm:coincidence-forms} for maximal depth, as the left hand side coincides with $\Om3D$ and the right hand side coincides with $\OmWe$, by the defining \cref{eq:OmWe_definition}. Thus the remainder of the proof of \cref{thm:coincidence-forms} is now focused on showing \cref{eq:truncated-forms}. We write $\OmWe_{\DW_{d}}$ for the right hand side of \eqref{eq:truncated-forms} to ease notation. 

We prove \cref{eq:truncated-forms} by induction on the depth $d$, $d\in[0,m]$. The base case is $d=0$: since both sides of \eqref{eq:truncated-forms} are $0$, we have the required equality in the base case. We now assume that \eqref{eq:truncated-forms} is valid for depth $d$, which is our inductive hypothesis, and proceed to show that this implies \eqref{eq:truncated-forms} for depth $d+1$.

Since we are assuming that $\dbr$ has only positive letters, we have that $\br^{\dstr}_{d+1} = \br_d^{\dstr}i$ for some letter $i \in I$ where, as in \cref{def:dbl-string-and-braid-word-seq}, $\br^{\dstr}_{d}$ is the braid word obtained by truncating the double string $\dstr$ after $d$ steps. Let us use again the notation $w_{d}^{\dstr} := \delta(\br^{\dstr}_{d})$ for the Demazure product, and split the analysis into the two possible cases: $w^{\dstr}_{d+1} = w^{\dstr}_d$, where the Demazure product remains equal from depth $d$ to $d+1$, and $w^{\dstr}_{d+1} = w^{\dstr}_ds_i$, where it changes. The first case, which accounts for the trivalent vertices, is what requires most attention.

{\bf \framebox{Case 1: $w^{\dstr}_{d+1} = w^{\dstr}_d$}}. By \cite[Corollary 4.29]{CGGLSS}, braid moves at the bottom of a weave do not change the corresponding 2-form. Thus, after applying a sequence of braid moves, we can and do assume that the rightmost bottom leg of the weave $\DW_{d}$ is colored by $i$. Following \cref{acnsdjc}, let ${\rm A}_0, \ldots, {\rm A}_{d+1}\in \G/\U_+$ be a sequence of decorated flags such that ${\rm A}_0=\U_+$ and ${\rm A}_{k-1}\Rrel{s_{i_k}}{\rm A}_k$ for $k\in[1,d+1]$, and let $\Dot{w}\in \G$ be a lift of $w^{\dstr}_{d}$. By \cref{lem:properties-rel-pos}, the underlying flags of the pair $({\rm A}_0, {\rm A}_d)$ are in relative position $w^{\dstr}_{d}$, so there is a unique $h\in H$ such that 
\begin{equation}\label{eq:relposition_depthd}
({\rm A}_0, {\rm A}_d) = (g \U_+, g \Dot{w}h \U_+),\quad \mbox{for some }g\in \G.
\end{equation}

Let $\eta_d$ and $\eta_{d+1}$ be the bottom slices of $\DW_d$ and $\DW_{d+1}$ respectively. 
Suppose that the edges intersecting $\eta_d$ are colored $j_1, \ldots, j_{l-1}, j_l=i$, reading left to right, and let $u_1, \ldots, u_l$ be the $u$-variables associated with the slice $\eta_{d}$. By \cref{lem:hc-in-terms-of-uc}, the torus element $h\in H$ can be expressed as the product
\[h=\prod_{k=1}^l \chi_{k}^{\bf j}(u_k) \in H.\]
Let $x$ be the cluster variable associated to the south-most trivalent vertex $v$ of the weave $\DW_{d+1}$, drawn in red in \cref{acnsdjc}. Since we are in the case $w_{d}^{\dstr} = w_{d+1}^{\dstr}$, we have that
\begin{equation}\label{eq:relposition_depthd1}
({\rm A}_0, {\rm A}_{d+1})= (g\U_+, g\Dot{w} h' \U_+),\quad \mbox{for some }g\in \G,
\end{equation}
as in \cref{eq:relposition_depthd} but for depth $d+1$. By using \cref{lem:hc-in-terms-of-uc} again, this new torus element $h'\in H$ at depth $d+1$ is
\[
h'= \left(\prod_{k=1}^{l-1} \chi_k^{\bf j}(u_k)\right) \cdot \chi_l^{\bf j}(x) \in H.
\]

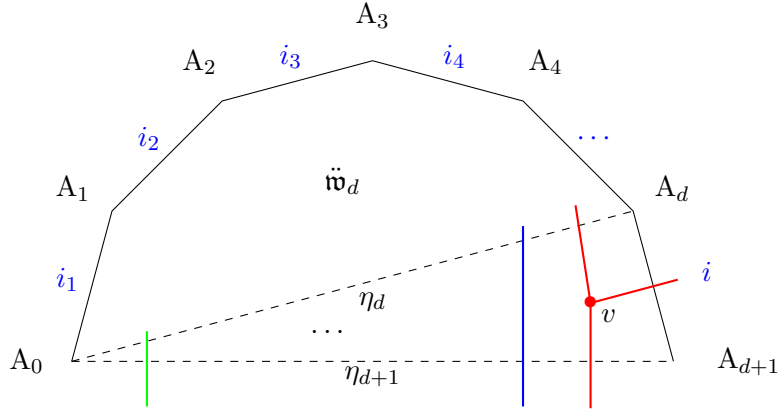
\begin{figure}[ht]
\begin{tikzpicture}[scale=2]
\draw (180:2)--(150:2)--(120:2)--(90:2)--(60:2)--(30:2)--(0:2);
\draw[dashed] (180:2) -- (0:2);
\draw[dashed] (180:2) -- (30:2);
\node at (180:2.3) {${\rm A}_0$};
\node at (150:2.3) {${\rm A}_1$};
\node at (120:2.3) {${\rm A}_2$};
\node at (90:2.3) {${\rm A}_3$};
\node at (60:2.3) {${\rm A}_4$};
\node at (30:2.3) {${\rm A}_{d}$};
\node at (0:2.5) {${\rm A}_{d+1}$};
\node[blue] at (165:2.1) {$i_1$};
\node[blue] at (135:2.1) {$i_2$};
\node[blue] at (105:2.1) {$i_3$};
\node[blue] at (75:2.1) {$i_4$};
\node[blue] at (45:2.1) {$\cdots$};
\node[blue] at (15:2.3) {$i$};
\node[red] at (15:1.5) {$\bullet$};
\node at (11: 1.6) {$v$};
\draw[thick, red] (15:2.1)--(15:1.5);
\draw[thick, red] (15:1.5) -- + (0,-0.7);
\draw[thick, red] (15:1.5)--+ (-0.1,0.65);
\draw[thick, blue] (1,0.9) -- (1, -.3);
\node[thick] at (-.3,0.2) {$\ldots$};
\draw[thick, green] (-1.5,0.2) -- (-1.5,-.3); 
\node at (-.2,1.2) {$\DW_{d}$};
\node at (0,-.1) {$\eta_{d+1}$};
\node at (0, .4) {$\eta_d$};
\end{tikzpicture}
\caption{The weave $\DW_{d+1}$, with depth $d+1$, constructed from the weave $\DW_{d}$, with depth $d$. It depicts the (red) trivalent vertex at the bottom right, labeled with color $i$, which is the key addition to obtain $\DW_{d+1}$ from $\DW_{d}$.}
\label{acnsdjc}
\end{figure}

As in \cref{ssec:Deodhar-2-form} above, we note that we have the equalities
\[
L_{d,i}=\frac{1}{2}\dlog \alpha_i(h), \qquad L_{d+1,i}=\frac{1}{2}\dlog \alpha_i(h'). 
\]
The inductive step needed to prove \cref{eq:truncated-forms} in this case $w^{\dstr}_{d+1} = w^{\dstr}_d$ is a now consequence of the following lemma:

\begin{lemma} 
\label{alma.cluster.form}$\OmWe_{\DW_{d+1}}- \OmWe_{\DW_{d}}= 2d_i L_{d,i}\wedge L_{d+1,i}.$
\end{lemma}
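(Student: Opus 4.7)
The plan is to expand both sides of the claimed equality in terms of the $u$-variables at the slices $\eta_d$ and $\eta_{d+1}$, and then match coefficients term by term.

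First, I will decompose the LHS using the already-established reduction that the rightmost bottom leg of $\DW_d$ has color $i$. Under this reduction, passing from $\DW_d$ to $\DW_{d+1}$ only adds the trivalent vertex $v$ of \cref{acnsdjc} together with its NE top edge, giving
\[
\OmWe_{\DW_{d+1}} - \OmWe_{\DW_d} \;=\; \OmWe_v + \big(\OmWe_{\eta_{d+1}}-\OmWe_{\eta_d}\big).
\]
Since the NE edge of $v$ is a top edge of $\DW_{d+1}$, its $u$-variable equals $1$ by \cref{def:edge-labeling}, so \cref{lem.x-u.tri} specializes to $\OmWe_v = 2d_i\,\dlog u_l\wedge\dlog x$. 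The slices $\eta_d$ and $\eta_{d+1}$ share the color sequence $(j_1,\dots,j_{l-1},i)$ and differ only in the rightmost $u$-variable (which switches from $u_l$ to $x$), so \cref{lemma-u-slice} produces
\[
\OmWe_{\eta_{d+1}} - \OmWe_{\eta_d} \;=\; \sum_{k=1}^{l-1} d_{j_k}\langle \alpha_k^{{\bf j}},\chi_l^{{\bf j}}\rangle\,\dlog u_k \wedge (\dlog x-\dlog u_l).
\]

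Next I will expand the RHS. From the factorizations $h = \prod_{k=1}^l \chi_k^{{\bf j}}(u_k)$ and $h' = \prod_{k=1}^{l-1}\chi_k^{{\bf j}}(u_k)\cdot \chi_l^{{\bf j}}(x)$ recorded just before the lemma, setting $p_k:=\langle\alpha_i,\chi_k^{{\bf j}}\rangle$ one obtains $\dlog\alpha_i(h)=\sum_{k=1}^l p_k\dlog u_k$ and $\dlog\alpha_i(h')=\sum_{k=1}^{l-1}p_k\dlog u_k+p_l\dlog x$. Since $j_l=i$, we have $\chi_l^{{\bf j}}=\chi_i$ and $p_l=\langle\alpha_i,\chi_i\rangle=2$. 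A short bilinear computation, using that $\sum_{k<l}p_k\dlog u_k$ wedges trivially with itself, then yields
\[
2 d_i\, L_{d,i}\wedge L_{d+1,i} \;=\; 2d_i\,\dlog u_l\wedge\dlog x + d_i\sum_{k=1}^{l-1} p_k\,\dlog u_k\wedge(\dlog x-\dlog u_l).
\]

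The main step, routine but essential, is matching the coefficients of $\dlog u_k\wedge(\dlog x-\dlog u_l)$ for $k<l$, which reduces to the identity
\[
d_{j_k}\,\langle\alpha_k^{{\bf j}},\chi_i\rangle \;=\; d_i\,\langle\alpha_i,\chi_k^{{\bf j}}\rangle.
\]
This is the standard Weyl-invariant symmetrization $d_\alpha\langle\alpha,\beta^\vee\rangle = d_\beta\langle\beta,\alpha^\vee\rangle$ for real roots (a restatement of $d_ia_{ij}=d_ja_{ji}$ via the $W$-invariant symmetric bilinear form on the root lattice), applied with $\alpha=\alpha_k^{{\bf j}}$ and $\beta=\alpha_i$, using $(\alpha_k^{{\bf j}})^\vee = \chi_k^{{\bf j}}$ together with $d_{\alpha_k^{{\bf j}}}=d_{j_k}$ by the $W$-invariance of root lengths. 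The coefficient of $\dlog u_l\wedge\dlog x$ matches immediately as $2d_i$ on both sides, so this termwise comparison completes the proof.
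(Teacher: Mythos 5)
Your proof is correct and follows essentially the same route as the paper's: you decompose the left-hand side as $\OmWe_v + (\OmWe_{\eta_{d+1}} - \OmWe_{\eta_d})$, evaluate $\OmWe_v = 2d_i\,\dlog u_l \wedge \dlog x$ via \cref{lem.x-u.tri} (using that the NE top edge carries $u=1$), compute the slice difference via \cref{lemma-u-slice}, and invoke the same symmetrizer identity $d_{j_k}(\alpha_k^{\bf j},\chi_l^{\bf j})=d_i(\alpha_i,\chi_k^{\bf j})$. The only difference is organizational: the paper combines the two contributions into $\sum_{k=1}^l d_i(\alpha_i,\chi_k^{\bf j})\,\dlog u_k\wedge(\dlog x-\dlog u_l)$ and then recognizes this as $2d_i\,L_{d,i}\wedge(L_{d+1,i}-L_{d,i})$, whereas you expand $2d_i\,L_{d,i}\wedge L_{d+1,i}$ directly and match term by term — equivalent bookkeeping, same substance.
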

\begin{proof} 

By definition, we have 
\[
\OmWe_{\DW_{d+1}}- \OmWe_{\DW_{d}}
= \OmWe_{\eta_{d+1}} - \OmWe_{\eta_d} + \OmWe_v.
\]
In the notation above, the $u$-variables for the edges adjacent to $v$ are $u_l$, $x$, and 1. By Lemma \ref{lem.x-u.tri}, the local contribution of the trivalent vertex $v$ in the $u$-variables is
\begin{equation}\label{eq:partial-sum-2}
\OmWe_{v}=  2d_{i}\dlog u_l\wedge \dlog x.
\end{equation}
Note that $d_{j_k}(\alpha_k^{\bf j},\chi_l^{\bf j})=d_{j_l}(\alpha_l^{\bf j},\chi_k^{\bf j})=d_{i}(\alpha_l^{\bf j},\chi_k^{\bf j})$ and use Lemma \ref{lemma-u-slice} to obtain the equality
\begin{equation}\label{eq:partial-sum-1}
\OmWe_{\eta_{d+1}} - \OmWe_{\eta_d}
=\sum_{k=1}^{l-1}d_{i}(\alpha_l^{\bf j},\chi_k^{\bf j})\,  \dlog u_k \wedge \left(\dlog x - \dlog u_l\right).
\end{equation}
Since $\alpha_l^{\bf j}=\alpha_i$ and $(\alpha_l^{\bf j}, \chi_l^{\bf j})=2$, adding \eqref{eq:partial-sum-2} and \eqref{eq:partial-sum-1} yields
\begin{equation}\label{eq:lhs_2forms_weaves}
\OmWe_{\DW_{d+1}}- \OmWe_{\DW_{d}}
= \sum_{k=1}^{l}d_{i}(\alpha_i,\chi_k^{\bf j})\, \dlog u_k \wedge \left(\dlog x -  \dlog u_l\right)
\end{equation}
for the left hand side of the equality in the statement, that we are trying to establish. Its right hand side can be re-written using the following equalities:
\[
L_{d,i}=\frac{1}{2}\dlog \alpha_i(h)=\frac{1}{2} \dlog \alpha_i \left(\prod_{k=1}^l \chi_{k}^{\bf j}(u_k)\right)= \frac{1}{2} \dlog \prod_{k=1}^l u_k^{( \alpha_i ,\chi_k^{\bf j})}  =\frac{1}{2}\sum_{k=1}^l(\alpha_i, \chi_k^{\bf j}) \dlog u_k,
\]
and there the difference of the 1-forms $L_{d+1,i},L_{d,i}$ in depths $(d+1)$ and $d$ reads
\begin{equation}\label{eq:rhs_2forms_weaves}
 L_{d+1,i}-L_{d,i}=\frac{1}{2} \dlog \alpha_i(h'h^{-1})=\frac{1}{2} \dlog \alpha_i(\chi_i(xu_l^{-1}))=\dlog x - \dlog u_l.
\end{equation}
Therefore, Equations \eqref{eq:lhs_2forms_weaves} and \eqref{eq:rhs_2forms_weaves} together imply the required equality:
\[
\OmWe_{\DW_{d+1}} - \OmWe_{\DW_{d}} = 2d_{i}L_{d,i}\wedge (L_{d+1,i}-L_{d,i}) =  2d_{i}L_{d,i}\wedge L_{d+1,i}. \qedhere
\]
\end{proof}

By the definition of the Deodhar $2$-form, \cref{alma.cluster.form} implies that \cref{eq:truncated-forms} holds in this first case. To conclude \cref{thm:coincidence-forms}, it remains to establish the second case, which is done as follows.\\

{\bf \framebox{Case 2: $w^{\dstr}_{d+1} = w^{\dstr}_{d}s_i$}}.
In this case, the weaves $\DW_{d}$ and $\DW_{d+1}$ define the same exchange matrix, so we directly obtain $\OmWe_{\DW_{d+1}} = \OmWe_{\DW_{d}}$. That is, the right-hand side of \eqref{eq:truncated-forms} remains unchanged when passing from depth $d$ to depth $d+1$. By \cite[Corollary 2.26]{GLSB}, the same is true for the left-hand side. This concludes the argument in this case, and the proof of \cref{thm:coincidence-forms}.\qed

%%%%%%%%%%%%%%%%%%%%%%%%%%%%%%%%%%%%%%%%%%%%%%%%%%%%%%%%%%%%%%%%%%
%%%%%%%%%%%%%%%%%%%%%%%%%%%%%%%%%%%%%%%%%%%%%%%%%%%%%%%%%%%%%%%%%%
%%%%%%%%%%%%%%%%%%%%%%%%%%%%%%%%%%%%%%%%%%%%%%%%%%%%%%%%%%%%%%%%%%
\color{black}
\subsection{Proof of \maintheoref}\label{ssec:proof_main_theorem} Let $\iso:R(\dbr)\lr X(\dbr^{(-|+)})$ be the isomorphism in \cref{lem:iso-glsbs-cgglss}. Then Part (1) is \cref{lem:weave-tori-vs-deodhar-tori}, Part (2) is \cref{thm:coincidence-cluster-variables} and Part (3) is \cref{thm:coincidence-forms}.\qed

\section{Type A: weaves and 3D plabic graphs}\label{sec:weave_from_3DPG}

Cluster algebras related to a Lie group $\G$ with Lie algebra $\mathfrak{g}=\mathfrak{s}\mathfrak{l}_n$, a.k.a.~Type A, are related to particularly beautiful combinatorics. Specifically, the connection between plabic graphs and cluster algebras has been fruitful, cf.~\cite[Chapter 7]{FWZ}, and see also \cite{KLS} or \cite{CGGS2} and references therein for equivalent combinatorial data in the case of positroid varieties. Even if the combinatorics of weaves, as developed in \cite{CasalsZaslow,CasalsWeng22,CGGLSS}, work for any $\G$ and provide a unifying combinatorial setting for all Lie types, it is valuable to relate weaves in a specific Lie type to previously studied combinatorics. For instance, \cite[Section 3]{CLSBW} shows how to transition from a reduced plabic graph to a weave in Type A, by the procedure of iterated $T$-shifts. In the Deodhar side, \cite{GLSBS} developed the combinatorics of 3D plabic graphs to study the Deodhar cluster structures in the specific case of double braid varieties in Type A. In this section, we further develop two aspects of our comparison between weave and Deodhar combinatorics in this Type A case:

\begin{enumerate}
    \item The combinatorial relation between 3D plabic graphs and weaves, in \cref{ssec:weaves_and_3Dplabicgraphs}.
    \item The relation between monotone multicurves, which are associated to 3D plabic graphs, and the tropical rules for Lusztig cycle propagation in weaves, \cref{ssec:compare-cocharacters-type-A}.
\end{enumerate}

The geometric relation between 3D plabic graphs and weaves, building on and generalizing \cite[Theorem 3.1]{CasalsLi22} and \cite[Section 5]{CLSBW}, is developed in \cite{CasalsKimWeng}.

\begin{remark}
\label{rmk:sign_convention_3D}
A technical point: the 3D plabic graph cluster seeds of \cite{GLSBS} and the Deodhar cluster seeds of \cite{GLSB} differ by an overall minus sign in their exchange 2-form, i.e.~their quivers are opposite from each other in their respective conventions.\qed
\end{remark}

%%%%%%%%%%%%%%%%%%%%%%%%%%%%%%%%%%%%%%%%%%%%%%%%%%%%%%%%%%%%%%%%%%
%%%%%%%%%%%%%%%%%%%%%%%%%%%%%%%%%%%%%%%%%%%%%%%%%%%%%%%%%%%%%%%%%%
%%%%%%%%%%%%%%%%%%%%%%%%%%%%%%%%%%%%%%%%%%%%%%%%%%%%%%%%%%%%%%%%%%

\subsection{Weaves for 3D plabic graphs}\label{ssec:weaves_and_3Dplabicgraphs} 
Let us focus on Type A, setting $\G$ with Lie algebra $\mathfrak{g}=\mathfrak{s}\mathfrak{l}_n$, e.g.~$\G=\SL_n$. In \cite{CGGLSS}, cluster algebras are constructed using {\it weaves}. In \cite{GLSBS}, cluster algebras are constructed using {\it 3D plabic graphs}. The object of this subsection is to give a combinatorial procedure that inputs a 3D plabic graph and outputs a weave, up to equivalence, in a manner compatible with the comparison between the two works \cite{CGGLSS,GLSBS} as presented above. This procedure is consistent with the main comparison in the manuscript, e.g.~equivalence class of weaves obtained from a 3D plabic graph is such that the resulting cluster algebras coincide, and so does the initial seed, cf.~\maintheoref.

%%%%%%%%%%%%%%%%%%%%%%%%%%%%%%%%%%%%%%%%%%%%%%%%%%%%%%%%%%%%%%%%%%
%%%%%%%%%%%%%%%%%%%%%%%%%%%%%%%%%%%%%%%%%%%%%%%%%%%%%%%%%%%%%%%%%%

\subsubsection{Inductive weaves and red projections of 3D plabic graphs}\label{ssec:3DPlabicToWeave_RedCase} Let us present a combinatorial construction which inputs a 3D plabic graph associated to a braid word in the alphabet $I$ and outputs an inductive weave, up to equivalence. Consider a 3D plabic graph $\bG_{u,\dbr}$, as defined in \cite[Section 3]{GLSBS}. Following \cite[Section 6.4]{GLSBS}, we can and do assume that $u=\wo$. For now, we suppose that $\bG_{u,\dbr}$ only uses the alphabet $I$, i.e.~in this section $\dbr=\dbr^+$ denotes a braid word in the alphabet $I$ of the form $s_{i_\ell} \ldots s_{i_1}$. Note that then $\dbr^{(-|+)} = s_{i_1} \ldots s_{i_\ell}$.

As a consequence, the red projection of such 3D plabic graph $\bG_{u,\dbr}$, cf.~\cite[Section 3.2]{GLSBS}, is such that all short bridges have a black dot on top and it has no long bridges. In particular, we can draw $\bG_{u,\dbr}$ as a plabic fence, which contains the solid crossings in the language of \cite[Section 3]{GLSBS}, with additional hollow crossings interspersed as dictated by $u$. By definition, a weave class is an equivalence class of weaves, defined as a set of weaves up to weave equivalence, cf.~\cite[Section 4.2]{CGGLSS} or \cite[Theorem 4.2]{CasalsZaslow}.

We associate a weave $\ww(\bG_{u,\dbr})$ to $\bG_{u,\dbr}$ by scanning the 3D plabic graph right-to-left, drawing certain local models to construct $\ww(\bG_{u,\dbr})$ as we scan each crossing (solid or hollow) of $\bG_{u,\dbr}$. This weave $\ww(\bG_{u,\dbr})$ is well-defined up to weave equivalence, i.e.~$\ww(\bG_{u,\dbr})$ is a well-defined weave class. We draw the weave horizontally as in \cite{CasalsWeng22}, in line with the 3D plabic graphs of \cite{GLSBS}, which are also drawn horizontally. (Weaves are drawn vertically in \cite{CGGLSS} and in this manuscript, cf.~Subsection \ref{sssec:weave_for_Ialphabet} below for further discussion.)

The local model we insert for $\ww(\bG_{u,\dbr})$ depends on whether the crossing we scan in $\bG_{u,\dbr}$ is solid or hollow:

\begin{enumerate}
    \item If the crossing in in $\bG_{u,\dbr}$ is solid, the weave constructed thus far acquires a new trivalent vertex. The procedure is closely related to \cite[Section 3.3.2]{CasalsWeng22}. Intuitively, a new strand appears from the top vertically and attaches to the top (and left) of the existing weave with a trivalent vertex.\\
    
    \noindent This typically requires introducing hexavalent vertices to the left of the existing weave, before this new trivalent is created. These hexavalent vertices are necessary so as to modify the existing weave to a weave whose top (left) weave line matches the $I$-letter of the new strand appearing vertically from the top. In that situation, a trivalent vertex of the corresponding $I$-letter can then be attached to that top weave line. \\

    \item If the crossing in $\bG_{u,\dbr}$ is hollow, the weave does not acquire any new trivalent vertices. Intuitively, a new strand appears from the top vertically and takes a sharp left turn to become horizontal. At the leftmost point, the new strand union the existing weave (below the new strand) constitute the new piece of horizontal weave to the left.
\end{enumerate}

Let us now provide the necessary details, presented in Subsections \ref{sssec:weave_for_solidcrossing}, \ref{sssec:weave_for_hollowcrossing} and \ref{sssec:weave_for_Ialphabet}. The weave $\ww(\bG_{u,\dbr})$, up to weave equivalence, is then defined in Definition \ref{def:weave_3D_plabic} below. %Since we scan 3D plabic graphs right-to-left, as in \cite{GLSBS}, we denote the positive braid word as $\beta=(i_\ell,\ldots,i_1)$ for notational ease, just for this subsection \MG{And here I do not know anymore whether this means an original double braid word or the associated to it (as in the present paper) braid word}. 
We use the convention from \cite{GLSBS} that the positive alphabet corresponds to bridges with white on top, which is opposite to the convention in \cite{CasalsWeng22}.

%%%%%%%%%%%%%%%%%%%%%%%%%%%%%%%%%%%%%%%%%%%%%%%%
%%%%%%%%%%%%%%%%%%%%%%%%%%%%%%%%%%%%%%%%%%%%%%%%

\subsubsection{Weaves when scanning solid crossings}\label{sssec:weave_for_solidcrossing} Let $w\in S_n$ be any permutation and $w=s_{j_r}\cdots s_{j_1}=(j_r,\ldots,j_1)$ a fixed reduced word of length $r=\ell(w)$. Consider the following local weaves $\mathfrak{n}(w),\cc^{\uparrow}(w)$, as in \cite[Section 3.3]{CasalsWeng22}.
 
\begin{definition}\label{def:nc_weaves}
The weave $\mathfrak{n}(w)$ is given by $n$ horizontal parallel weave lines such that the $k$-th strand, counting from the bottom, is labeled by the transposition $s_{j_k}$, $k\in[1,r]$. By definition, the weave $\cc^{\uparrow}(w)$ is the weave $\mathfrak{n}(w)$ with a trivalent vertex added at the top strand -- labeled by $s_{j_r}$ -- such that the third leg of this trivalent vertex is a vertical ray starting at the top strand and continuing upwards.\hfill$\Box$
\end{definition}

\noindent Figure \ref{fig:Weave_SolidCrossing1} depicts two instances of the weaves $\cc^{\uparrow}(w)$ in Definition \ref{def:nc_weaves}.

\begin{center}
	\begin{figure}[h!]
		\centering
		\includegraphics[scale=1.2]{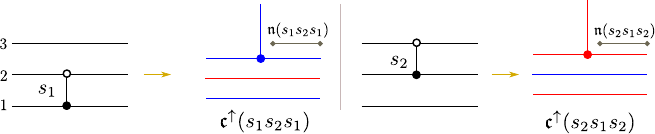}
		\caption{The weave $\cc^{\uparrow}(s_1s_2s_1)$ on the left and $\cc^{\uparrow}(s_2s_1s_2)$ on the right. Here weave lines labeled with $s_1$ (resp.~with $s_2$) are depicted in blue (resp.~red). We have also highlighted a portion of the weave that can be thought to correspond to $\mathfrak{n}(w)$, before adding any trivalent in \cref{def:nc_weaves}.\\}
		\label{fig:Weave_SolidCrossing1}
	\end{figure}
\end{center}

Suppose that we are scanning $\bG_{u,\dbr}$, equivalently $\dbr$, right-to-left and it is time to add a solid crossing. Let $\beta_{j-1}$ be the braid subword of $\dbr$ to the right of the $j$th crossing $i_j$ of $\dbr$, $j$th counting from the right; so $\beta_0$ is empty and $\beta_\ell=\dbr$ . Denote by $\ww(\beta_{j-1})$ the weave constructed thus far: at its left end, $\ww(\beta_{j-1})$ consists of a series of horizontal weave edges which spell a reduced word $\delta_{j-1}$ for the Demazure product $\delta(\beta_{j-1})$ of $\beta_{j-1}$.

\begin{remark}
In our convention, this reduced word $\delta_{j-1}$ for $\delta(\beta_{j-1})$ is spelled right-to-left when the weave lines at the left end of $\ww(\beta_{j-1})$ are read bottom-to-top; this matches how the letters of $w$ correspond to weaves lines of $\mathfrak{n}(w)$ in \cref{def:nc_weaves}.\hfill$\Box$
\end{remark}

The fact that this new crossing is solid (instead of hollow) is equivalent to the condition that the Demazure product of the braid $\beta_{j-1}$ does {\it not} change when we add $i_j$ to its left. That is, $\delta(s_{i_j}\beta_{j-1})=\delta(\beta_{j-1})$ and so there is a reduced word for $\delta(\beta_{j-1})$ which begins (from the left) with $s_{i_j}$. 

\begin{definition}\label{def:weave_sequence}
Let $\beta_{j-1}$ and $i_j\in I$ be such that $\delta(s_{i_j}\beta_{j-1})=\delta(\beta_{j-1})$. Let $\delta_{j-1}$ be a given reduced word for $\delta(\beta_{j-1})$ and $\sigma_{i_j}(\delta_{j-1})$ a (possibly different) reduced word for $\delta(\beta_{j-1})$ starting with $s_{i_j}$ at its left. By definition, the weave class $\n_{i_j}^{\uparrow}(\beta_{j-1})$ is the unique weave class represented by an homonymous weave $\n_{i_j}^{\uparrow}(\beta_{j-1})$ such that:

\begin{enumerate}
    \item The weave $\n_{i_j}^{\uparrow}(\beta_{j-1})$ contains only 4-valent and 6-valent vertices.

    \item The right end of $\n_{i_j}^{\uparrow}(\beta_{j-1})$ is $\delta_{j-1}$, i.e.~it spells $\delta_{j-1}$ when read bottom-to-top.%top-to-bottom.

    \item The left end of $\n_{i_j}^{\uparrow}(\beta_{j-1})$ is $\sigma_{i_j}(\delta_{j-1})$.\hfill$\Box$
\end{enumerate}
\end{definition}

The fact that the three properties in Definition \ref{def:weave_sequence} determine a unique weave class follows from \cite[Theorem 4.12]{CGGS}. The weave vertices of $\n_{i_j}^{\uparrow}(\beta_{j-1})$ correspond to a sequence of braid moves (if 6-valent) and commutations (if 4-valent) that are applied to the reduced word $\delta_{j-1}$ to obtain $\sigma_{i_j}(\delta_{j-1})$. See Figure \ref{fig:Weave_SolidCrossing_StrandUp} 
for four examples of \cref{def:weave_sequence} . The theory of higher syzygies for permutations and weave equivalence moves are such that different choices of such sequences of braid moves yield equivalent weaves, cf.~\cite[Theorem 4.12.(a)]{CGGS} or \cite[Section 4.2]{CGGLSS}. Note also that there are no trivalent vertices in $\n_{i_j}^{\uparrow}(\beta_{j-1})$.

\begin{center}
	\begin{figure}[h!]
		\centering
		\includegraphics[scale=1.2]{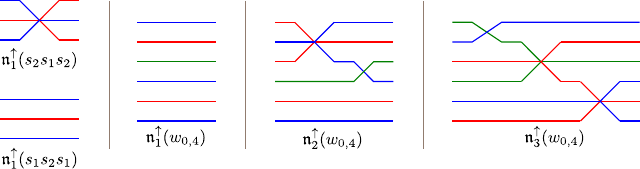}
		\caption{The weaves $\n_{i_j}^{\uparrow}(\beta_{j-1})$ in \cref{def:weave_sequence}. Here we abbreviated $w_{0,4}=s_1s_2s_3s_1s_2s_1$ for this reduced word for $w_0\in S_3$. In this example, we have chosen the Demazure products $\delta_{j-1}$ to be $w_0$, as it is the most interesting case: we emphasize that $\delta_{j-1}$ can be any permutation.}
		\label{fig:Weave_SolidCrossing_StrandUp}
	\end{figure}
\end{center}

\begin{remark}\label{rmk:weave_bring_strand_up}
(1) If $\delta_{j-1}$ starts with $s_{i_j}$ on its left, then we can choose $\sigma_{i_j}(\delta_{j-1})=\delta_{j-1}$ and $\n_{i_j}^{\uparrow}(\beta_{j-1})$ is represented by the identity weave: all horizontal lines spelling $\delta_{j-1}$ bottom-to-top and with no vertices at all.\\

(2) More generally, a representative for $\n_{i_j}^{\uparrow}(\beta_{j-1})$ can be chosen with a minimal number of weave vertices, by choosing the sequence of braid moves from $\delta_{j-1}$ to a word starting with $s_{i_j}$ in a greedy manner. In practice, this is often the choice of representative.\\

(3) The condition $\delta(s_{i_j}\beta_{j-1})=\delta(\beta_{j-1})$ in \cref{def:weave_sequence} is necessary to construct such weave $\n_{i_j}^{\uparrow}(\beta_{j-1})$. Indeed, if the Demazure product increased, i.e.~$\delta(s_{i_j}\beta_{j-1})=s_{i_j}\delta(\beta_{j-1})$, there would be no reduced word for $\delta(\beta_{j-1})$ starting with $s_{i_j}$, so $\n_{i_j}^{\uparrow}(\beta_{j-1})$ could not exist.\hfill$\Box$
\end{remark}

\begin{definition}[\color{black}Weave class for a solid crossing\color{black}]\label{def:weave_solid_crossing}

The weave class $\mathfrak{s}(\bG_{u_{j-1},\beta_{j-1}},j;\ww(\beta_{j-1}))$ is given by the horizontal concatenation of $\cc^{\uparrow}(\gamma_{j-1})$, $\n_{i_j}^{\uparrow}(\beta_{j-1})$ and $\ww(\beta_{j-1})$, where $\cc^{\uparrow}(\gamma_{j-1})$ is concatenated to the left of $\n_{i_j}^{\uparrow}(\beta_{j-1})$ and $\n_{i_j}^{\uparrow}(\beta_{j-1})$ is concatenated to the left of $\ww(\beta_{j-1})$. Here we have assumed that:
\begin{enumerate}
   
    \item Representatives of the weave classes $\n_{i_j}^{\uparrow}(\beta_{j-1})$ and $\ww(\beta_{j-1})$ are chosen such that the right end of the (homonymous) weave representative $\n_{i_j}^{\uparrow}(\beta_{j-1})$ coincides with the left end of the weave representative $\ww(\beta_{j-1})$.\\

    \item $\gamma_{j-1}$ denotes the reduced word spelled by the left end of the weave representative for $\n_{i_j}^{\uparrow}(\beta_{j-1})$, which is a reduced word for $\delta(\beta_{j-1})$ starting with $s_{i_j}$.\hfill$\Box$
\end{enumerate}
\end{definition}

\noindent Intuitively described, the weave $\mathfrak{s}(\bG_{u_{j-1},\beta_{j-1}},j;\ww(\beta_{j-1}))$ in \cref{def:weave_solid_crossing} is obtained by starting with $\ww(\beta_{j-1})$, and then trying to attach a weave $s_{i_j}$-edge (coming from above) to the top of the leftmost part of $\ww(\beta_{j-1})$, which is what the piece $\cc^{\uparrow}(\gamma_{j-1})$ does. The issue is that the top weave line on the left boundary of $\ww(\beta_{j-1})$ might not be a weave $s_{i_j}$-line, and so we need to bring up a weave line on the left boundary of $\ww(\beta_{j-1})$ to its top so that the top becomes a weave $s_{i_j}$-edge: this is what $\n_{i_j}^{\uparrow}(\beta_{j-1})$ achieves and thus it is inserted between $\cc^{\uparrow}(\gamma_{j-1})$, to its left, and $\ww(\beta_{j-1})$, to its right.

%%%%%%%%%%%%%%%%%%%%%%%%%%%%%%%%%%%%%%%%%%%%%%%%
%%%%%%%%%%%%%%%%%%%%%%%%%%%%%%%%%%%%%%%%%%%%%%%%

\subsubsection{Weaves when scanning hollow crossings}\label{sssec:weave_for_hollowcrossing} For hollow crossings, we need a different weave than that in Definition \ref{def:weave_solid_crossing}. \cref{rmk:weave_bring_strand_up}.(3) implies that some of the weave pieces constructed do not exist if the Demazure product increases, and indeed the required weaves for hollow crossing must be built differently. We use the same notation as in Subsection \ref{sssec:weave_for_solidcrossing}, with $s_{i_j}$ being the $j$th crossing of $\beta$ reading left-to-right, so that $\beta_j=s_{i_j}\beta_{j-1}$.

\begin{center}
	\begin{figure}[h!]
		\centering
		\includegraphics[scale=1.2]{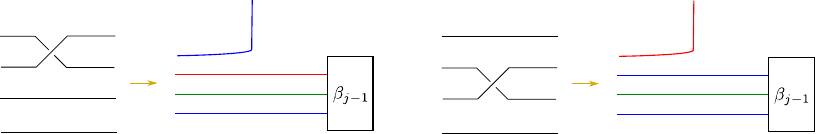}
		\caption{Examples of the weaves for hollow crossings of the 3D plabic graph, as in Definition \ref{def:weave_hollow_crossing}. (Left) The reduced word for the Demazure product of $\beta_{j-1}$ is $\delta(\beta_{j-1})=s_2s_3s_1$ and the new crossing has index $i_j=1$. (Right) The reduced word is $\delta(\beta_{j-1})=s_1s_3s_1$ and $i_j=2$. Here we use blue for $s_1$-edges of the weave, red for $s_2$-edges and green for $s_3$-edges.\\}
		\label{fig:RulesWeaves_CrossingExamples4}
	\end{figure}
\end{center}

\begin{definition}[\color{black}Weave class for a hollow crossing\color{black}]\label{def:weave_hollow_crossing}
By definition, $\mathfrak{h}(\bG_{u_{j-1},\beta_{j-1}},j;\ww(\beta_{j-1}))$ is the weave class obtained by choosing a weave representative of $\ww(\beta_{j-1})=\ww(\bG_{u_{j-1},\beta_{j-1}})$ an adding a weave edge of $s_{i_j}$-type, disjoint and on top of $\ww(\beta_{j-1})$, drawn as follows. The right end of the new weave $s_{i_j}$-edge is a vertical ray to the left of all the crossings of $\ww(\beta_{j-1})$, the left end of this new $s_{i_j}$-edge is a horizontal strand on top of the left end of $\ww(\beta_{j-1})$.

By definition, the weave class $\mathfrak{h}(\bG_{u_{j-1},\beta_{j-1}},j;\ww(\beta_{j-1}))$ is the weave equivalence class for any such (homonymous) weave $\mathfrak{h}(\bG_{u_{j-1},\beta_{j-1}},j;\ww(\beta_{j-1}))$.
\hfill$\Box$
\end{definition}

Figure \ref{fig:RulesWeaves_CrossingExamples4} depicts two instances of the weaves in Definition \ref{def:nc_weaves}. The weaves associated to hollow crossings have {\it no} trivalent vertices added to $\ww(\beta_{j-1})$, i.e.~the number of trivalent vertices of $\mathfrak{h}(\bG_{u_{j-1},\beta_{j-1}},j;\ww(\beta_{j-1}))$ and $\ww(\beta_{j-1})$ coincides. In a sense, it would {\it not} be possible to increase the number of trivalent vertices of $\ww(\beta_{j-1})$ by using the vertical weave $s_{i_j}$-line: the Demazure product increases when adding $s_{i_j}$ to the left of $\beta_{j-1}$ and thus there is no weave representative of $\ww(\beta_{j-1})$ such that the reduced word for the permutation at its left boundary has a weave $s_{i_j}$-line on top.

%%%%%%%%%%%%%%%%%%%%%%%%%%%%%%%%%%%%%%%%%%%%%%%%
%%%%%%%%%%%%%%%%%%%%%%%%%%%%%%%%%%%%%%%%%%%%%%%%

\subsubsection{The weave $\ww(\bG_{u,\dbr})$ }\label{sssec:weave_for_Ialphabet}  The required weave class $\ww(\bG_{u,\dbr})$ associated to $\bG_{u,\dbr}$ is defined by iteratively using Definitions \ref{def:weave_solid_crossing} and \ref{def:weave_hollow_crossing}, as $\beta$ is scanned right-to-left.

\begin{definition}[\color{black}Weave class of a 3D plabic graph\color{black}]\label{def:weave_3D_plabic} Let $\bG_{u,\dbr}$ be a 3D plabic graph, with $\dbr=(i_\ell,\ldots,i_1)$ a positive braid word in the positive alphabet of length $\ell=\ell(\dbr)$. By definition, the weave $\ww(\bG_{u,\dbr})$ is the final output of the following iterative algorithm:

\begin{enumerate}
    \item Initialize with $\ww(\beta_0):=\emptyset$, i.e.~the starting weave is the empty weave.\\

    \item For $j=1$ until $j=\ell$:

    \begin{itemize}
        \item[-] If $i_{j}$ is a solid crossing, then set $\ww(\beta_{j}):=\mathfrak{s}(\bG_{u_{j-1},\beta_{j-1}},j;\ww(\beta_{j-1}))$.

        \item[-] If $i_{j}$ is a hollow crossing, then set $\ww(\beta_{j}):=\mathfrak{h}(\bG_{u_{j-1},\beta_{j-1}},j;\ww(\beta_{j-1}))$.\\
    \end{itemize}

    \item Set $\ww(\bG_{u,\dbr}):=\ww(\beta_{\ell})$.
\end{enumerate}
The weave class $\ww(\bG_{u,\dbr})$ is said to be the weave class of the 3D plabic graph $\bG_{u,\dbr}$.
\hfill$\Box$
\end{definition}
\noindent See Figure \ref{fig:Example_3DPlabicToWeave1.pdf} for an example of Definition \ref{def:weave_3D_plabic}, drawing the weave $\ww(\bG_{u,\dbr})$ from $\bG_{u,\dbr}$.

\begin{center}
	\begin{figure}[h!]
		\centering
		\includegraphics[scale=1]{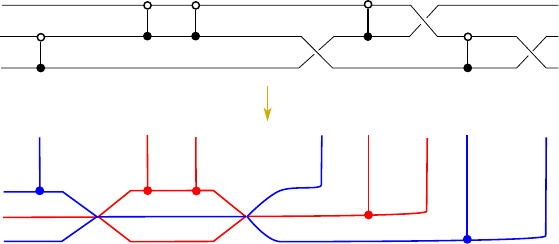}
		\caption{Example of a 3D plabic graph $\bG_{u,\dbr}$ (top) and its associated weave $\ww(\bG_{u,\dbr})$ (bottom), according to \cref{def:weave_3D_plabic}. The braid word is $\beta=(1,2,2,\mbox{\textcolor{purple}{1}},2,\mbox{\textcolor{purple}{2}},1,\mbox{\textcolor{purple}{1}})$, where we marked the hollow crossings in purple. After the third hollow crossing, counting from the left, the Demazure product is maximal (equal to $s_1s_2s_1$) and thus no more hollow crossings can appear.}
		\label{fig:Example_3DPlabicToWeave1.pdf}
	\end{figure}
\end{center}

\noindent For visual ease, we have drawn $\ww(\bG_{u,\dbr})$ horizontally, as it is built right-to-left as we scan $\bG_{u,\dbr}$ right-to-left. This is the same drawing convention we used in \cite{CasalsWeng22}, where weaves were built for grid plabic graphs. The 3D plabic graphs in \cite{GLSBS} and in the present article are drawn horizontally. In contrast, Demazure weave are often drawn vertically, see e.g.~\cite{CGGS,CGGLSS,CLSBW}, with a horizontal slice at the top spelling a braid word $\beta$ and the horizontal slice at the bottom spelling its Demazure product. The geometric reason for that is that there is directionality in the Lagrangian cobordisms that were being initially studied when weaves were first introduced in \cite{CasalsZaslow}. To bridge between these, we can rotate $\ww(\bG_{u,\dbr})$ to produce a vertical Demazure weave. In order to match conventions, so that $\ww(\bG_{u,\dbr})$ produces a right-inductive weave \cite{CGGLSS}, we always proceed as follows:
\begin{enumerate}
    \item Reflect $\ww(\bG_{u,\dbr})$ along a line of slope one disjoint from the weave and to its right,
    \item Extend the weave lines to the left of this rotated weave so that they all become vertical parallel lines pointing north.
\end{enumerate}
\noindent An example is depicted in Figure \ref{fig:Example_WeaveRotated_Revised.pdf}.

\begin{center}
	\begin{figure}[h!]
		\centering
		\includegraphics[scale=1.2]{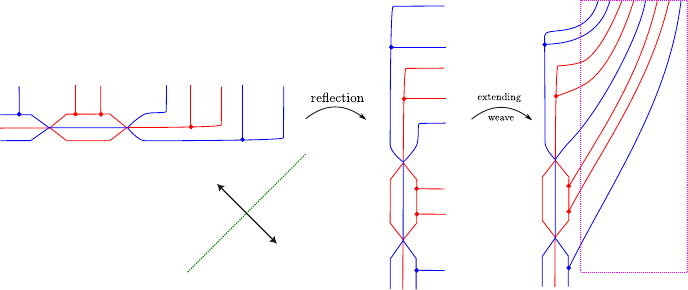}
		\caption{Reflecting the weave $\ww(\bG_{u,\dbr})$ to a right-inductive Demazure weave. The first step is the actual reflection along the dashed green line, and the second step is extending the weave lines to the right of the rotated weave so they reach the top. The extended weave lines are marked within the pink dashed box, for clarity.}
		\label{fig:Example_WeaveRotated_Revised.pdf}
	\end{figure}
\end{center}

\begin{remark} To be technically precise, depending on how one draws the vertical top weave lines in $\ww(\bG_{u,\dbr})$, the rotated weave is only equivalent to the right-inductive weave for the same word $\beta = \dbr^{(-|+)}$ after a planar isotopy: the rotated weave might not be literally Demazure when scanned via horizontal slices top-to-bottom, but it will be so after a planar isotopy. Such a minor correction is always implicitly assumed to be implemented.\hfill$\Box$
\end{remark}

By construction, after having rotated them as above, the weaves representing $\ww(\bG_{u,\dbr})$ in Definition \ref{def:weave_3D_plabic} are weave equivalent to right inductive weaves, the latter being defined in \cite[Section 4.3]{CGGLSS}. Indeed, the weaves $\ww(\bG_{u,\dbr})$ in \cref{def:weave_3D_plabic} are such that each trivalent vertex of their rotation has its incident top-right weave line go directly to the top of the weave: this characterizes right inductive weaves. We record this fact in the following statement:

\begin{lemma}\label{lem:3Dgraph_I_to_left_inductive}
Let $\dbr=\dbr^+$ be a braid word in the alphabet $I$ and $\bG_{u,\dbr}$ a 3D plabic graph. The right-inductive weave of $\dbr^{(-|+)}$ is a representative of the weave class $\ww(\bG_{u,\dbr})$. In particular, any weave representative of $\ww(\bG_{u,\dbr})$ is a Demazure weave.\hfill$\Box$
\end{lemma}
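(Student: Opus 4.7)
The plan is to prove the first statement by induction on the length $\ell$ of $\dbr$, exploiting the fact that Definition \ref{def:weave_3D_plabic} is itself inductive and matches, crossing by crossing, the inductive construction of a right-inductive weave in Definition \ref{def:dbl-inductive-weave} after applying the rotation described in \cref{fig:Example_WeaveRotated_Revised.pdf}. The base case $\ell=0$ is immediate since both weaves are empty. For the inductive step, I assume $\ww(\beta_{j-1})$ is weave-equivalent (after rotation) to the right-inductive weave of $\beta_{j-1}^{(-|+)}$ and consider scanning the $j$-th letter $i_j$ of $\dbr$ from the right, which corresponds to adding depth $j$ to the right-inductive weave for $\beta_j^{(-|+)}$.

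In the solid case, $\delta(\beta_j) = \delta(\beta_{j-1})$, so in the double-string language of Section~\ref{sssec:double_inductive_weaves} we are in situation (3) of Definition \ref{def:dbl-inductive-weave}: the right-inductive weave acquires a new trivalent vertex of color $i_j$ between depths $j-1$ and $j$, whose northeast weave line runs straight to the top, possibly preceded by a strip of vertices of valence at least four reorganizing the reduced expression for $w_{j-1}^{\dstr}$. After rotation, the piece $\cc^{\uparrow}(\gamma_{j-1})$ from Definition \ref{def:weave_solid_crossing} provides exactly that trivalent vertex (its vertical upward leg becomes the new rightmost top leg), and $\n_{i_j}^{\uparrow}(\beta_{j-1})$ supplies precisely the hexa- and tetravalent reorganization strip bringing the top weave line to color $i_j$. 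By \cite[Theorem 4.12]{CGGS}, any two such reorganizations are related by weave equivalence moves, so both constructions yield the same weave class.

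In the hollow case, $\delta(\beta_j) = s_{i_j}\delta(\beta_{j-1})$, so we are in situation (2) of Definition \ref{def:dbl-inductive-weave}: the right-inductive weave just appends a new $s_{i_j}$-colored strand on the right, with no interior vertex between depths $j-1$ and $j$. After rotation, the disjoint $s_{i_j}$-edge added by $\mathfrak{h}(\bG_{u_{j-1},\beta_{j-1}},j;\ww(\beta_{j-1}))$ in Definition \ref{def:weave_hollow_crossing} becomes exactly this new rightmost vertical strand, again introducing no vertex. Concatenating the two cases completes the induction. For the second statement, weave equivalence moves preserve the top and bottom boundary words, so every weave representing $\ww(\bG_{u,\dbr})$ has top equal to $\dbr^{(-|+)}$ and bottom equal to a reduced expression for $\delta(\dbr^{(-|+)}) = \delta(\dbr)$; it is therefore Demazure in the sense of Definition \ref{def:weave}.

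The step I expect to require the most care is the bookkeeping around the rotation-and-extension operation: one must verify that rotating horizontally-drawn weaves $\ww(\beta_{j-1})$ into vertically-drawn Demazure weaves is compatible with the inductive concatenation, so that adding a solid or hollow crossing on the left of the horizontal picture corresponds to extending the vertical picture at the bottom-right. This is essentially a planar isotopy argument, absorbed into weave equivalence via \cref{rmk:equivalence_weave}, but it is the only place where the geometric conventions of \cite{GLSBS} and \cite{CGGLSS} must be explicitly reconciled.
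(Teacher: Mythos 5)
Your proof is correct and takes essentially the same approach the paper intends: the paper's only justification is the one-paragraph remark preceding the boxed statement (that every trivalent vertex of the rotated weave has its top-right edge going directly to the top, which characterizes right-inductive weaves), and your crossing-by-crossing induction is simply an expanded version of that observation, correctly matching the solid/hollow cases of Definition~\ref{def:weave_3D_plabic} with cases (3)/(2) of Definition~\ref{def:dbl-inductive-weave}. The rotation-and-extension bookkeeping you flag at the end is precisely what the paper glosses over, and handling it as a planar isotopy absorbed into weave equivalence via \cref{rmk:equivalence_weave} is the right move.
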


It is a consequence of the proof of \maintheoref that, under the comparison we established, the cluster seed associated to $\ww(\bG_{u,\dbr})$ in \cite{CGGLSS} coincides with the cluster seed associated to $\bG_{u,\dbr}$ in \cite{GLSBS} up to considering the opposite quiver, cf.~e.g.~ \cref{cor:inductive-weaves}.(1) and \cref{rmk:sign_convention_3D}.

\begin{remark}
For a double braid word $\dbr^-$ in the alphabet $-I$, there is an analogous procedure to build the corresponding weave class $\ww(\bG_{u,\dbr^-})$ from its associated 3D plabic graph $\bG_{u,\dbr^-}$. In this case, the strands for the weave would come from the bottom of the weave, instead of the top as in \cref{fig:Example_3DPlabicToWeave1.pdf}. The resulting weave, after reflection, is left-inductive. This is in line with \cref{cor:inductive-weaves}.(2), and the corresponding version of \cref{lem:3Dgraph_I_to_left_inductive} also holds.\qed
\end{remark}

%%%%%%%%%%%%%%%%%%%%%%%%%%%%%%%%%%%%%%%%%%%%%%%%%%%%%%%%%%%%%%%%%%
%%%%%%%%%%%%%%%%%%%%%%%%%%%%%%%%%%%%%%%%%%%%%%%%%%%%%%%%%%%%%%%%%%
%%%%%%%%%%%%%%%%%%%%%%%%%%%%%%%%%%%%%%%%%%%%%%%%%%%%%%%%%%%%%%%%%%

\subsection{Comparison between weave and Deodhar cocharacters in Type A}\label{ssec:compare-cocharacters-type-A} \cref{cor:Deodhar_cocharacters_via_weaves} proved the equality
\begin{equation}\label{eq:equality_cocharacters}
\gamchp_{\dbr, \invc, \inve}=\gamWe_{\brWe, \c, \e}
\end{equation}
between the cocharacter $\gamchp_{\dbr, \invc, \inve}$, which encoded the order of vanishing of the minors $\grid_{\c, i}$ on $\tV_\e$, and the weave cocharacter $\gamWe_{\brWe, \c, \e}$, i.e.~the coweight of the Lusztig datum associated to a double inductive weave. In this section, we give a direct combinatorial argument for  \cref{eq:equality_cocharacters} in Type $A$ which does not make use of the torus element $\hp_{\invc}$ or the edge functions $u_\edge$, which were both key in the proof of \cref{cor:Deodhar_cocharacters_via_weaves}. Rather, we use \emph{monotone multicurves} on the Deodhar side, which directly encode the order of vanishing of grid minors along Deodhar hypersurfaces, and we use tropical Lusztig rules on the weave side. This type of propagation algorithm for cocharacters in terms of monotone multicurves did not quite feature in \cite{GLSBS} and, from the viewpoint of 3D plabic graphs, can be of combinatorial interest on its own. As in \cref{ssec:weaves_and_3Dplabicgraphs}, the focus now is on Lie Type A, so $\mathfrak{g}=\mathfrak{s}\mathfrak{l}_n$, and the Weyl group $W(\G)\cong S_n$ can be identified with the symmetric group $S_n$.

%%%%%%%%%%%%%%%%%%%%%%%%%%%%%%%%%%%%%%%%%%%%%%%%%%%%%%%%%%%%%%%%%%
%%%%%%%%%%%%%%%%%%%%%%%%%%%%%%%%%%%%%%%%%%%%%%%%%%%%%%%%%%%%%%%%%%

\subsubsection{Monotone multicurves and Lusztig data}\label{sssec:multicurves_Lusztig}
Following~\cite[Section~3.2]{GLSBS}, we define the \emph{permutation diagram} $\PD(u)\subset\Z^2$ of a permutation $u\in S_n$ as the set of dots with coordinates $(u(j), j)$ for $j\in[n]$. Thus, multiplying $u$ on the right (resp.~left) by $s_j$ corresponds to swapping rows (resp.~columns) $j$ and $j+1$, counting from the southwest corner.

\begin{definition}[{\cite[Section~3.4]{GLSBS}}]
% Let $u\in\Sn$. 
A \emph{monotone curve} is a curve $\curve:[0,1]\to\R^2$ inside the permutation diagram $\PD(u)$ whose endpoints are dots in $\PD(u)$ and such that both coordinates of $\curve$ are strictly increasing functions on $[0,1]$. A \emph{monotone multicurve} is a collection $\bgamma = (\curve_1,\curve_2,\dots,\curve_k)$ of monotone curves inside $\PD(u)$ such that $\curve_i(1)$ is located strictly southwest of $\curve_{i+1}(0)$ for all $i\in[k-1]$. \hfill$\Box$
\end{definition}
% \noindent See \TODO{figure} for an example.

Recall from \cref{def:lusztig-datum} that a \emph{Lusztig datum} $[\bj, f]$ is an equivalence class of weighted expressions. Our first goal is to associate a Lusztig datum to an arbitrary monotone multicurve $\bgamma$ inside the permutation diagram $\PD(u)$. For that, we consider the \emph{\dual propagation moves} as shown in \figref{fig:propag}(B). The \emph{propagation moves} of~\cite{GLSBS} are shown in \figref{fig:propag}(A).\\

\begin{figure}
\begin{subfigure}{\textwidth}
    \includegraphics[width=\textwidth]{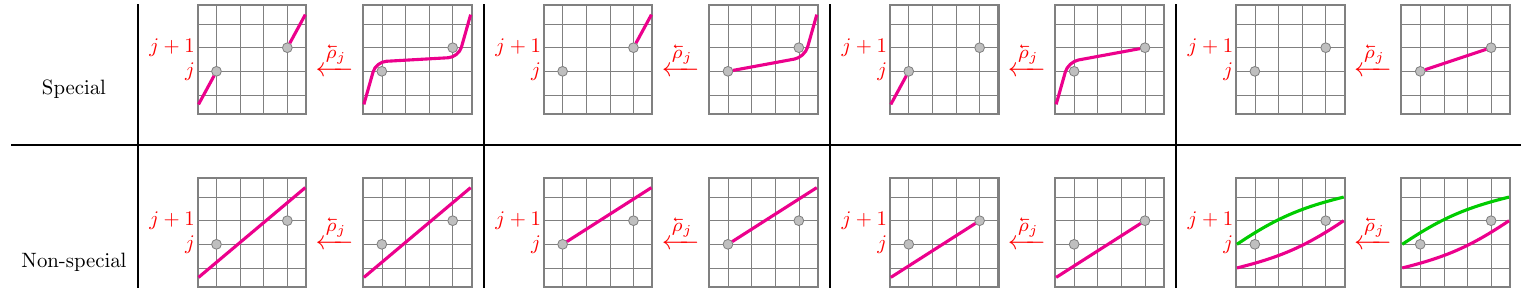}
    \caption{Propagation moves; cf.~\cite[Figure 9]{GLSBS}.}
    \label{fig:curve-propag-rules} 
    \vspace{0.5cm}
\end{subfigure}

\begin{subfigure}{\textwidth}
    \includegraphics[width=\textwidth]{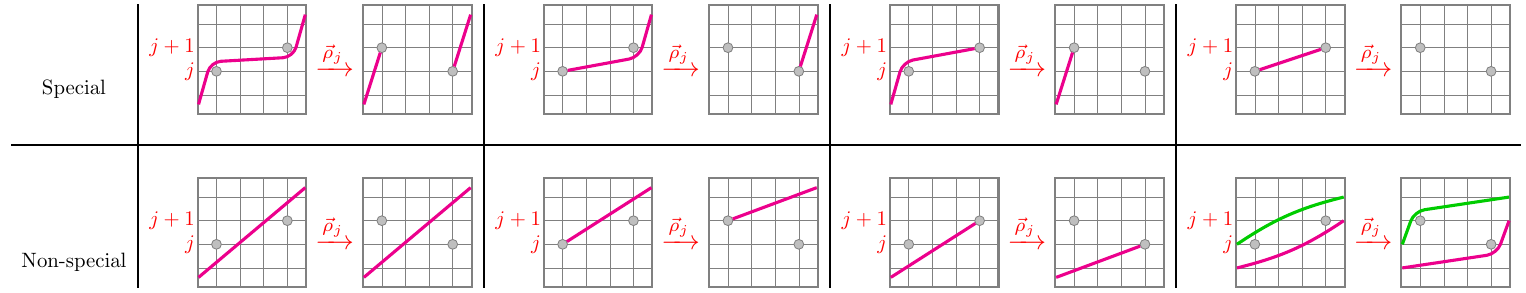}
    \caption{\Dual propagation moves, used in \cref{ssec:compare-cocharacters-type-A}.}
    \label{fig:rev}
  \end{subfigure}
\caption{\label{fig:propag} Propagation moves (A) and \dual propagation moves (B). In each case, the special moves are shown in the top row, with the non-special moves in the bottom row.} 
\end{figure}

Let $j\in I$ be such that $us_j>u$, that is, such that the dot $(u(j), j)$ is southwest of the dot $(u(j+1), j+1)$. Define another multicurve $\propagR_j(\bgamma)$ drawn inside the permutation diagram $\PD(us_j)$ to be obtained from $\bgamma$ by applying one of the moves shown in \figref{fig:propag}(B). (Exactly one of these moves applies in each case  of the shape of the intersection of curves $\gamma_j$  with the horizontal stripe $\Z \times ]j, j+1[$, 
depending on the configuration of $\bgamma$ relative to the two dots.  If $\bgamma$ does not intersect this stripe, we let $\propagR_i(\bgamma)$ be obtained via an endpoint-preserving isotopy that moves the  dot $(u(i), i)$ up, the dot $(u(i+1), i+1)$ down, and never passes a dot through a curve.) Strictly speaking, the resulting multicurve need not be monotone, but it is a collection of monotone curves, and we continue applying the \dual propagation moves to each of them separately. %\MG{Maybe something trivial, but slightly annoying to formulate if one does not want to edit the figures, should be said for the case where the curve comes to one of the dots, e.g. $(u(j), j)$, from outside of the rectangle. After the move, it now comes from the same direction to $(u(j), j+1)$. This (or similar for the other dot) happens e.g. with (one of the connected components of) the pink multicurves in second move in the first row and third move in the third row in \cref{fig:fug_check}; this is considered as a non-special move in \cref{dfn:fug}, i.e. contributes 0 to the weighting.}
%\MG{This is either dual to the ``end-preserving isotopy'' mentioned in the beginning of Section 6.2.3, or something very similar to that.}

Similarly, for $j\in -I$ such that $s_{-j}u>u$, the \dual propagation moves $\propagR_j$ are obtained from those shown in \figref{fig:propag}(B) by reflecting each diagram across the line $y=x$.
% we let $\propagR_j(\bgamma)$ be obtained from $\bgamma$ 
% by one of the moves shown in \TODO{figure}. Note that each diagram in \TODO{figure} is obtained from that in \figref{fig:propag}(B) by reflecting across the line $y=x$. 
Recall from \cref{def:si+-} that the two conditions ($us_j>u$ if $j\in I$, $s_{-j}u>u$ if $j\in -I$) may be combined as $s_j^-us_j^+>u$.

% as follows. If $\bgamma$ contains a curve that passes weakly above the dot $(u(i), i)$ and weakly below the dot $(u(i+1), i+1)$, we let $\propagR_i(\bgamma)$ be obtained by applying one of the moves shown in \figref{fig:propag}(B)(left). Such moves are called \emph{special}. If $\bgamma$ contains no such curve, we let $\propagR_i(\bgamma)$ be obtained via an endpoint-preserving isotopy that moves the  dot $(u(i), i)$ up, the dot $(u(i+1), i+1)$, and never passes a dot through a curve.
% The moves in \figref{fig:rev}(a) are called \emph{special}, and the moves in \figref{fig:rev}(b) are \emph{non-special}. These cover all the possible cases for the shape of the intersection of curves $\gamma_j$  with the horizontal stripe $\Z \times [i, i+1]$. 

Let $w = \wo u$ and let $\bj = j_1j_2\dots j_\l\in (\pm I)^\l$ be a double braid word, where $\l = \ell(w)$. We say that $\bj$ is a \emph{double reduced word} for $w$ if $\delta(\bj) = w$. By~\eqref{eq:def:pos-neg-ind}, this condition may be explicitly written as
\begin{equation}\label{eq:double_red}
w = s_{j_\l^\ast}^- s_{j_{\l-1}^\ast}^- \cdots s_{j_1^\ast}^- \cdot s_{j_1}^+ s_{j_2}^+\cdots s_{j_\l}^+,
  \quad\text{i.e.,}\quad
  s_{j_1}^- s_{j_2}^- \cdots s_{j_\l}^-\cdot  u \cdot  s_{j_\l}^+ s_{j_{\l-1}}^+\cdots s_{j_1}^+ = \wo.
\end{equation}
% Viewing $\bj$ as a double braid word, we see that the latter condition is equivalent to $\delta(\bj) = w$; cf.~\eqref{eq:def:pos-neg-ind}.
% or equivalently, if $w = s_{j_\l^\ast}^- s_{j_{\l-1}^\ast}^- \cdots s_{j_1^\ast}^- \cdot s_{j_1}^+ s_{j_2}^+\cdots s_{j_\l}^+$.

The analogue of \cref{def:lusztig-datum} reads as follows:

\begin{definition}\label{dfn:double_red}
A \emph{double weighted expression} for $w$ is a pair $(\bj,f)$ where $\bj$ is a double reduced word for $w$ and $f:[\ell(w)]\to\Z_{\geq0}$ is an arbitrary weighting of the letters. We record the weights as exponents in angle brackets and write $(\bj,f) = j_1^{\<f_1\>}j_2^{\<f_2\>}\cdots j_\l^{\<f_\l\>}$. By definition, a \emph{double Lusztig datum} $[\bj,f]$ is an equivalence class of double weighted expressions modulo the following \emph{double braid moves}:
\begin{enumerate}[label=(B\arabic*)]
% \item\label{typeA_move_B1}  $(\cdots i j \cdots,\cdots a b \cdots) \leftrightarrow (\cdots j i \cdots,\cdots b a \cdots)$ if $\sign(i)\neq\sign(j)$;
\item\label{typeA_move_B1}  $i^{\<a\>} j^{\<b\>} \leftrightarrow j^{\<b\>} i^{\<a\>}$ if $\sign(i)\neq\sign(j)$;
\item\label{typeA_move_B2}  $i^{\<a\>} j^{\<b\>} \leftrightarrow j^{\<b\>} i^{\<a\>}$ if $\sign(i)=\sign(j)$ and $|i-j|>1$;
\item\label{typeA_move_B3}  $i^{\<a\>} j^{\<b\>}i^{\<c\>} \leftrightarrow j^{\<a'\>} i^{\<b'\>} j^{\<c'\>}$ if $\sign(i)=\sign(j)$ and $|i-j|=1$, where %$a'=b+c-\min(a,c)$, $b'=\min(a,c)$, $c'=a+b-\min(a,c)$;
\begin{equation}\label{eq:Lusztig_trop}
  a'=b+c-\min(a,c),\quad b'=\min(a,c), \quad\text{and}\quad c'=a+b-\min(a,c).
\end{equation}
% \item\label{typeA_move_B4}  $i^{\<a\>}\bj_0 \leftrightarrow (-i^\ast)^{\<a\>}\bj_0 $ (changing the sign of the first letter).
\item\label{typeA_move_B4}  $j_1^{\<f_1\>} \leftrightarrow (-j_1^\ast)^{\<f_1\>}$ (changing the sign of the first letter).
\end{enumerate}
\qed
\end{definition}

\noindent See \cref{appendix:comparison_moves} for more details on the double braid moves. Note for now that all double reduced words for $w$ are related by double braid moves (B1) -- (B4). As we will see in \cref{dfn:fug} below, we will apply the moves $\propagR_j$ for each letter $j$ of $\bj$ in the reverse order, which is why \ref{typeA_move_B4} involves the first letter of $\bj$ as opposed to the last.

\begin{remark}\label{rmk:moves_double_to_single}
Each double Lusztig datum $[\bj,f]$ gives rise to a Lusztig datum similarly to \cref{def:pos-neg-ind}. Namely, let $a_1<a_2<\cdots<a_p$ and $b_1<b_2<\cdots<b_q$ be the indices of negative and positive letters in $\bj$, respectively. Define the weighted expression
$$(\bj,f)^{(-|+)}:=(-j_{a_p}^\ast)^{\<f_{a_p}\>}\cdots (-j_{a_1}^\ast)^{\<f_{a_1}\>} \cdot  j_{b_1}^{\<f_{b_1}\>} \cdots j_{b_\l}^{\<f_{b_\l}\>}$$
and let $[\bj,f]^{(-|+)}$ be the associated Lusztig datum. In this manner, the moves~\ref{typeA_move_B1} and~\ref{typeA_move_B4} do not change $(\bj,f)^{(-|+)}$ at all, while moves~\ref{typeA_move_B2} and~\ref{typeA_move_B3} translate into regular commutation and braid moves on $(\bj,f)^{(-|+)}$.\qed
\end{remark}

% \ref{typeA_move_B1}  

% We extend the notion of a Lusztig datum 

In the same manner that a weave defined a Lusztig datum, as in \cref{eq:coweight-of-Lusztig-datum} and \cref{def:lusztig-data-from-dbl-weave}, a monotone multicurve defines a double weighted expression as in \cref{dfn:double_red}, as follows.

\begin{definition}\label{dfn:fug}
Let $\bgamma$ be a monotone multicurve inside $\PD(u)$ and consider a double reduced word $\bj=j_1j_2\cdots j_\l$ for $w = \wo u$. The weighting $\fug:[\l]\to\Z_{\geq0}$ is defined as follows. 
We let $u^{(\l)}:=u$, $u^{(\l-1)}:= s_{j_\l}^-u s_{j_\l}^+$, \dots, $u^{(0)} := s_{j_1}^- s_{j_2}^-\cdots s_{j_\l}^- \cdot u \cdot  s^+_{j_\l}s^+_{j_{\l-1}} \cdots s_{j_1}^+ = \wo$; cf.~\eqref{eq:double_red}. Thus, $ s_{j_i}^-u^{(i)}s_{j_i}^+ > u^{(i-1)}$, so that a \dual propagation move can be applied at each step. Set $\bgamma^{(\l)}:=\bgamma$. For each $i=\l,\l-1,\dots,1$, we let $\bgamma^{(i-1)}:=\propagR_{j_{i}}(\bgamma^{(i)})$. If this \dual propagation move was special then we set $\fug(i):=1$, otherwise set $\fug(i):=0$. By definition, the double weighted expression for $w$ associated to $(\bj,\gamma)$ is $(\bj,\fug)$.\qed
\end{definition}

\begin{figure}[h!]
  \includegraphics[width=1.0\textwidth]{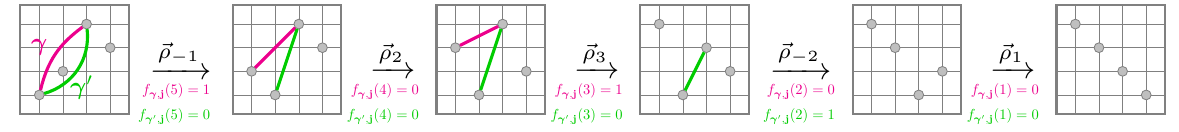}
  \caption{\label{fig:fug_ex} Computing the weighting $\fug$ for two monotone multicurves. This illustrates \cref{dfn:fug} in \cref{ex:fug}.}
\end{figure}

\begin{example}\label{ex:fug}
Let $n=4$, $u = s_3$, and $\bj = 1\,(-2)\,3\,2\,(-1)$. For the purple monotone multicurve $\bgamma$ in \cref{fig:fug_ex}, we have $(\fug(5),\dots,\fug(1)) = (1,0,1,0,0)$, while for the green monotone multicurve $\bgamma'$ in \cref{fig:fug_ex}, we have $(\fuggp(5),\dots,\fuggp(1)) = (0,0,0,1,0)$. Note that this uses both the moves in \figref{fig:propag}(B), for $\propagR_{j_{i}}$ with $j_i \in I$, and their reflections across the line $y = x$, for  $\propagR_{j_{i}}$ with $j_i \in -I$.
\qed
% Let $n=4$, $u = s_3$, and $\bj = 1(-2)32(-1)$. 
%\MG{According to Definition 6.15 and definition of dual propagation moves in Figure 18.(B), one should get $\fug(5) = 0, \fuggp(5) = 1$ (instead of $\fug(5) = 1, \fuggp(5) = 0$) here and in Figure 19. Either this should be corrected here, or the figures in the second column of Figure 18.(B) should be switched, i.e. the current figure in the top row defines a non-special move and the figure in the bottom row defines a special move. Please double check.\\
%\MG{Ah, I see!}
\end{example}

\begin{proposition}\label{prop:rev_invariant}
The double Lusztig datum $[\bj,\fug]$ does not depend on the choice of the double reduced word $\bj$ for $w=\wo u$. 

In other words, if two double reduced words $\bj,\bj'$ for $w$ are related by moves~\ref{typeA_move_B1}--\ref{typeA_move_B4}, then the weightings $\fug$ and $\fugjp$ are related by the corresponding tropical Lusztig moves from \cref{dfn:double_red}.
\end{proposition}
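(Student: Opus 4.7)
The plan is to verify invariance of the double Lusztig datum $[\bj, \fug]$ under each of the four double braid moves (B1)--(B4) from \cref{dfn:double_red} separately. Since any two double reduced words for the same $w = \wo u$ are related by a finite sequence of such moves, these local invariances will imply the proposition. The key structural observation is locality: if $\bj$ and $\bj'$ differ only in a contiguous block at positions $k, k+1, \dots, k+r$, then the \dual propagations $\propagR_{j_\l}, \dots, \propagR_{j_{k+r+1}}$ to the right of this block are identical for the two words, producing a common monotone multicurve $\bgamma^{(k+r)}$ inside a common permutation diagram. Once the block is shown to produce the same intermediate multicurve $\bgamma^{(k-1)}$, all subsequent propagations also agree. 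Thus the verification reduces to analyzing an isolated group of at most three consecutive \dual propagation moves (for (B1)--(B3)) or a single move at position $1$ (for (B4)).

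For moves (B1) and (B2), the two \dual propagation moves involved act on disjoint stripes of the permutation diagram: a horizontal stripe and a vertical stripe in the case of (B1) (opposite signs of $i,j$), or two parallel horizontal (respectively vertical) stripes separated by at least one full row (respectively column) in the case of (B2) (same sign, $|i-j|>1$). As operations on multicurves, these moves commute, and the ``special''/``non-special'' classification of each move depends only on the local configuration of $\bgamma^{(k+1)}$ inside its own stripe. Consequently both the resulting multicurve and the weights $f_k, f_{k+1}$ are preserved under the swap.

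Move (B3) is the main obstacle. After applying (B1) if necessary, we may assume the three letters $i,j,i$ with $|i-j|=1$ are all positive; the all-negative case follows by reflecting across the line $y=x$. The task is to verify the equality $\propagR_i \propagR_j \propagR_i(\bgamma^{(k+2)}) = \propagR_j \propagR_i \propagR_j(\bgamma^{(k+2)})$ and that the three weights $(a,b,c)$ are transformed into $(a',b',c')$ via~\eqref{eq:Lusztig_trop}. This is carried out by a direct case analysis on the pattern by which the curves of $\bgamma^{(k+2)}$ enter, exit, and avoid the three rows indexed by $i, i+1, i+2$ (assuming $j = i+1$; the case $j = i-1$ is analogous). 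In each case the counts $\min(a,c)$, $a+b-\min(a,c)$, and $b+c-\min(a,c)$ emerge as the numbers of curves occupying distinct geometric ``pockets'' delimited by the three dots. These tropical formulas coincide with the tropical Lusztig rules at a $6$-valent vertex in \cref{def:lusztig-cycle}, providing a conceptual explanation via \cref{sec:weave_from_3DPG}: a monotone multicurve is the Type~$A$ avatar of a Lusztig cycle propagating through a weave, and the \dual propagation moves implement the tropicalization of the Chamber Ansatz.

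For (B4), after processing all letters beyond the first, we obtain a multicurve $\bgamma^{(1)}$ inside $\PD(u^{(1)})$. Writing $j_1 = i \in I$ yields $u^{(1)} = \wo s_i$; the alternative choice $j_1 = -i^\ast \in -I$ gives $s_{i^\ast} u^{(1)} = \wo$, i.e., $u^{(1)} = s_{i^\ast}\wo = \wo s_i$, the same permutation. Moreover, the two dots of $\PD(\wo s_i)$ in rows $i, i+1$ (affected by $\propagR_i$) occupy columns $i^\ast+1, i^\ast$, so they are precisely the two dots affected by $\propagR_{-i^\ast}$, which swaps columns $i^\ast, i^\ast+1$. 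The resulting multicurves in $\PD(\wo)$ coincide, and the ``special'' status of the move is determined by the common local configuration around these two dots, so $\fug(1)$ agrees in both cases. Assembling these four local verifications completes the argument.
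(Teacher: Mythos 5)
Your argument follows the same overall strategy as the paper's proof: reduce by locality to the four double braid moves, then treat each locally, with a case analysis for (B3) and a direct comparison for (B4). However, your handling of (B1) has a real gap. You claim the horizontal stripe (rows $i, i+1$) and the vertical stripe (columns of the negative letter) are \emph{disjoint}, but a horizontal and a vertical stripe always cross in a unit square, and the pairs of dots swapped by the two moves need not be distinct — a dot can lie simultaneously in a swapped row and a swapped column. So commutativity of the two dual propagation moves is not an immediate consequence of locality, and the assertion ``these moves commute'' is carrying weight without justification. The paper's (B1) argument takes a slightly different route: rather than proving commutativity of the intermediate multicurves, it directly characterizes each of the weights $a$, $b$ as a geometric condition on the original multicurve $\bgamma$ (the presence of a curve between a fixed pair of dots) that is manifestly independent of the order in which the two moves are applied; one must still verify that the final multicurve is order-independent so that earlier weights agree, but the special/non-special classification falls out cleanly. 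Your conceptual aside linking the dual propagation moves to the tropical Lusztig rules at a $6$-valent weave vertex is apt and does not appear in the paper's proof, though it is not logically used in your verification.
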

\begin{proof}
Consider the case of move~\ref{typeA_move_B1}: $i^{\<a\>} j^{\<b\>} \leftrightarrow j^{\<b\>} i^{\<a\>}$ with $\sign(i)\neq\sign(j)$. Suppose that, say, $i\in I$ and $j\in -I$. Let $A_i,B_i$ be the two dots involved in the move $\propagR_i$ and $C_j,D_j$ be the two dots involved in the move $\propagR_j$. Thus, $A_i$ and $B_i$ are located in adjacent rows while $C_j,D_j$ are located in adjacent columns. Since $\bj$ is reduced, we cannot have $\{A_i,B_i\}=\{C_j,D_j\}$. 
It is straightforward to check that regardless of the order in which we apply the two propagation moves, we have $b=1$ if $\bgamma$ contains a curve weakly to the right of $C_j$ and weakly to the left of $D_j$, and $b=0$ otherwise. Similarly, $a=1$ if  $\bgamma$ contains a curve weakly above $A_j$ and weakly below $B_j$, and $a=0$ otherwise.

The case of move~\ref{typeA_move_B2} is clear, since for $j\in I$, the \dual propagation move $\propagR_j$ only affects the multicurve in the neighborhood of a horizontal strip between rows $j$ and $j+1$. 

% The result is clear in the case of a commutation move, since the \dual propagation move $\propagR_j$ only affects the multicurve in the neighborhood of a horizontal strip between rows $j$ and $j+1$. 

\begin{figure}
% \begin{tabular}{c|c}
%   \includegraphics[width=0.48\textwidth]{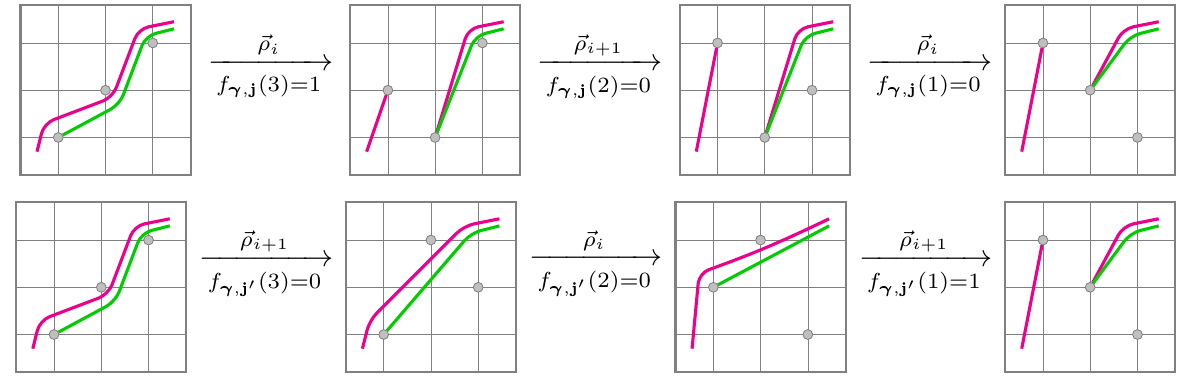}
% &
%   \includegraphics[width=0.48\textwidth]{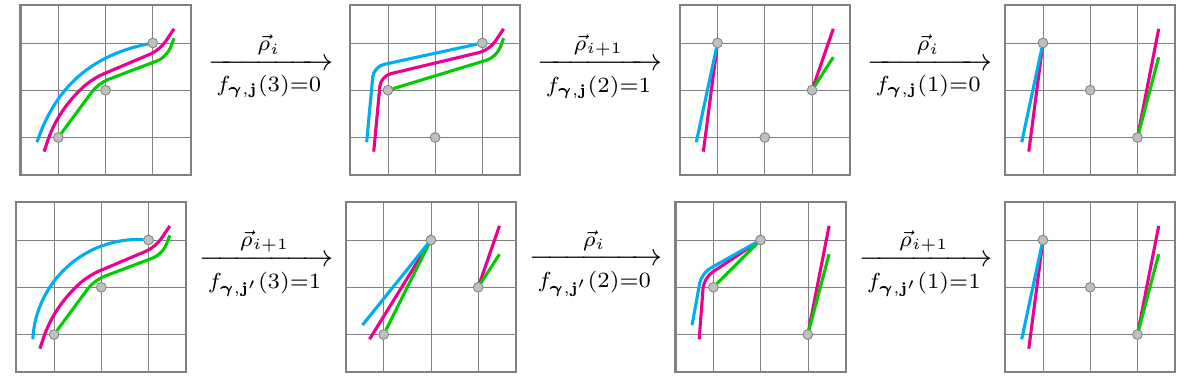}
% \end{tabular}
\begin{tabular}{c}
  \includegraphics[width=0.8\textwidth]{figures/fug_check}
\\\hline \\[-7pt]
  \includegraphics[width=0.8\textwidth]{figures/fug_check2}
\end{tabular}
  \caption{\label{fig:fug_check} Checking \cref{prop:rev_invariant} for move~\ref{typeA_move_B3}.}
\end{figure}

Suppose now that $\bj$ and $\bj'$ differ by a braid move~\ref{typeA_move_B3}: $i^{\<a\>} j^{\<b\>}i^{\<c\>} \leftrightarrow j^{\<a'\>} i^{\<b'\>} j^{\<c'\>}$ with, say, $i,j\in I$ and $j=i+1$. In this case, the result boils down to a case analysis. There are $26$ possibilities for how $\bgamma$ may be located relatively to the dots in rows $i,i+1,i+2$; see~\cite[Figure~20]{GLSBS}. A quick check confirms that in each case, the resulting weightings on $(i,i+1,i)$ and $(i+1,i,i+1)$ (which always take values in $\{0,1\}$) are related by the tropical Lusztig moves. For $5$ of these $26$ possibilities, the verification is performed in \cref{fig:fug_check}: in the top (resp.~bottom) row, the weightings $(\fug(3),\fug(2),\fug(1))\leftrightarrow(\fugjp(3),\fugjp(2),\fugjp(1))$ given by $(1,0,0)\leftrightarrow(0,0,1)$ (resp.~ $(0,1,0)\leftrightarrow(1,0,1)$) are indeed related by the tropical Lusztig relations~\eqref{eq:Lusztig_trop}. The remaining $21$ cases are checked similarly.

For move~\ref{typeA_move_B4}, observe that the move $\propagR_{j_1}$ is applied last, and the monotone multicurve $\bgamma^{(1)}$ from \cref{dfn:fug} is drawn inside the permutation diagram $\PD(u^{(1)})$ with $u^{(1)}=s_{j_1}^- \wo s_{j_1}^+$. Assume without loss of generality that $j_1\in I$. Then $u^{(1)}=\wo s_{j_1} = s_{j_1^*} \wo$. Therefore, regardless of whether we apply the move $\propagR_{j_1}$ or $\propagR_{-j_1^\ast}$, the resulting weight $f(1)$ will be equal to $1$ if $\bgamma^{(1)}$ contains a curve between the dots in rows (equivalently, columns) $j_1$ and $j_1+1$ and to $0$ otherwise. 
% Case analysis. \PG{See curves-vs-crystals.pdf.}
% \MG{This note currently does not contain a detailed proof. Maybe we could leave this as it is, as the statement is currently not crucial for the main results of the paper.}
\end{proof}

\begin{definition}\label{dfn:fugmap}
Let $\bgamma$ be a monotone multicurve inside $\PD(u)$. We define $\fugmap(\bgamma):=[\bj,\fug]$, where $\bj$ is an arbitrary double reduced word for $w = \wo u$. By \cref{prop:rev_invariant}, the map $\fugmap$, from such monotone multicurves to double Lusztig datum, is well-defined.\qed
\end{definition}

Following~\cite{GLSBS}, to a collection $\bgamma$ of monotone curves we associate a \emph{red projection cocharacter} $\chip(\bgamma)$ defined as follows. Write $\bgamma=(\curve_1,\curve_2,\dots,\curve_k)$, and for each index $r\in[k]$, let $i_r,j_r$ be such that $\curve_r(0)=(u_\c(i_r),i_r)$ and $\curve_r(1)=(u_\c(j_r),j_r)$. We set
\begin{equation}\label{eq:chi_bgamma_dfn}
\chip(\bgamma):= \sum_{r=1}^k (\chi_{i_r} + \chi_{i_{r}+1}+\cdots + \chi_{j_r-1}).
\end{equation}
In other words, $\chip(\bgamma)$ is the sum of simple coroots $\chi_s$ with coefficients given by the number of times that the horizontal line at height $s+1/2$ intersects a monotone curve in $\bgamma$.
% the contribution of $\curve_r$ to $\chip(\bgamma)$ is the sum of simple coroots $\chi_s$ over all $s\in I$ such that the horizontal line at height $s+1/2$ intersects $\curve_r$.

\begin{lemma}\label{lemma:chip_bgamma}
The cocharacter $\chip(\bgamma)$ coincides with the cocharacter $\charof(\fugmap(\bgamma))$ defined in~\eqref{eq:coweight-of-Lusztig-datum}.
\end{lemma}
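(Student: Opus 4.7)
The plan is to combine the independence of $\fugmap(\bgamma)$ on the choice of $\bj$ (coming from \cref{prop:rev_invariant}) with a telescoping computation over the \dual propagation moves. First, by \cref{dfn:fugmap} and \cref{prop:rev_invariant}, the double Lusztig datum $[\bj,\fug]=\fugmap(\bgamma)$ depends only on $\bgamma$, so the coweight $\charof(\fugmap(\bgamma))$ may be computed from any convenient double reduced word for $w=\wo u$. I will pick $\bj=(j_1,\dots,j_\ell)$ to consist entirely of positive letters, so that $\bj$ is an ordinary reduced expression for $w$; then \cref{eq:coweight-of-Lusztig-datum} together with \cref{rmk:moves_double_to_single} gives the fully explicit expansion
\[
\charof(\fugmap(\bgamma)) \;=\; \sum_{i=1}^{\ell} \fug(i)\, v_i,
\]
where each $v_i$ is the positive coroot indexed by the $i$-th inversion of $w$, computable from $\bj$ via \cref{eq:coroots-for-red-expr}. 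Using $u^{(i-1)}=u^{(i)}s_{j_i}$ and $u^{(i-1)}=\wo\cdot s_{j_1}\cdots s_{j_{i-1}}$, in $\mathfrak{sl}_n$ each $v_i$ takes the shape $\chi_a+\chi_{a+1}+\cdots+\chi_{b-1}$ with $a,b$ determined by the $x$-coordinates of the dots in rows $j_i,j_i+1$ of $\PD(u^{(i-1)})$; this is the same form as the summands of $\chip$ in~\eqref{eq:chi_bgamma_dfn}, which is what makes matching possible.

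\textbf{Main step.} The heart of the proof is the local identity
\[
\chip\bigl(\bgamma^{(i)}\bigr) - \chip\bigl(\bgamma^{(i-1)}\bigr) \;=\; \fug(i)\cdot v_i \qquad \text{for each } i\in[1,\ell],
\]
which I would establish by direct inspection of the local pictures in \figref{fig:propag}(B); exactly one move applies at each step by construction. In every non-special case the multicurve is simply transported alongside the swap of the two dots in rows $j_i,j_i+1$; one reads off that the number of curves crossing each horizontal line $y=s+\tfrac{1}{2}$ is unchanged, so $\chip$ is preserved and $\fug(i)=0$, making both sides of the local identity vanish. In each of the two special cases, a single curve is shortened or deleted; a direct inspection shows that the resulting decrement of $\chip$ is precisely the positive coroot $v_i$ read off from the swapped dots at step $i$, and that $\fug(i)=1$. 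This is the same finite case analysis underlying the proof of \cref{prop:rev_invariant}, depicted for instance in \cref{fig:fug_check}, now bookkeeping the cocharacter alongside the weighting.

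\textbf{Conclusion.} Summing telescopically from $i=\ell$ down to $i=1$ yields
\[
\chip(\bgamma) - \chip\bigl(\bgamma^{(0)}\bigr) \;=\; \sum_{i=1}^{\ell} \fug(i)\, v_i \;=\; \charof(\fugmap(\bgamma)).
\]
The terminal multicurve $\bgamma^{(0)}$ lies in $\PD(\wo)$, whose dots $(n+1-j,j)$ form a strict antidiagonal and therefore admit no monotone pair; hence $\bgamma^{(0)}$ is empty and $\chip(\bgamma^{(0)})=0$, completing the argument. The hardest part is the local identity in the middle step: verifying in every special \dual move that the positive coroot decrement of $\chip$ matches the inversion coroot $v_i$ predicted by the purely positive word $\bj$. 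Everything else is telescoping, elementary root-system bookkeeping, and direct unpacking of definitions.
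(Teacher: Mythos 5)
Your overall approach — stepping through the \dual propagation moves one at a time and bookkeeping the cocharacter — mirrors the paper's induction, and your final formula $\chip(\bgamma)=\sum_i \fug(i)\,v_i$ is correct, but the ``local identity'' on which your telescoping rests is false, and so is the reason you give for it. Concretely, $\chip(\bgamma^{(i)})-\chip(\bgamma^{(i-1)})=\fug(i)\,v_i$ already fails at $i=1$: since $u^{(1)}$ has $\ell(\wo)-1$ inversions, $\bgamma^{(1)}$ is either empty or a single segment between the dots in rows $j_1,j_1+1$, so $\chip(\bgamma^{(1)})=\fug(1)\,\chi_{j_1}$, whereas $\fug(1)\,v_1=\fug(1)\,s_{j_\ell}\cdots s_{j_2}\cdot\chi_{j_1}$; these disagree whenever $\fug(1)=1$ and $s_{j_\ell}\cdots s_{j_2}$ does not fix $\chi_{j_1}$. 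The crossing-count justification for the non-special case is also wrong: a \dual propagation move swaps the $y$-coordinates of the two dots in rows $j_i,j_i+1$, so a curve whose endpoint lies on one of them gains or loses a crossing of the line $y=j_i+\tfrac{1}{2}$, and $\chip(\bgamma^{(i)})$ differs from $\chip(\bgamma^{(i-1)})$ by a multiple of $\chi_{j_i}$ even when $\fug(i)=0$.

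The correct one-step relation, as in the paper's proof, carries an $s_{j_i}$-twist,
\[\chip(\bgamma^{(i)})=\fug(i)\,\chi_{j_i}+s_{j_i}\cdot\chip(\bgamma^{(i-1)}),\]
and it is this recursion that must be unwound rather than naively summed as differences. Since $\chip(\bgamma^{(0)})=0$, iterating from $i=\ell$ down to $1$ produces $\chip(\bgamma)=\sum_r \fug(r)\,s_{j_\ell}\cdots s_{j_{r+1}}\cdot\chi_{j_r}=\sum_r \fug(r)\,v_r$, which is the conclusion you wanted. If you insist on a literal telescoping sum, define the Weyl-twisted cocharacter $\tilde\chi^{(i)}:=s_{j_\ell}\cdots s_{j_{i+1}}\cdot\chip(\bgamma^{(i)})$; then $\tilde\chi^{(i)}-\tilde\chi^{(i-1)}=\fug(i)\,v_i$ does hold and telescopes to $\chip(\bgamma)$, but this is the paper's recursion rewritten and cannot be established by the unchanged-crossing-count argument you gave.
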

\begin{proof}
We prove the result by induction on $\ell(\wo)-\ell(u)$. For the base case $u=\wo$, the multicurve $\bgamma$ must be empty, so $\chip(\bgamma)=\charof(\fugmap(\bgamma))=0$. 

For the induction step, let $\bj = j_1j_2\cdots j_\l$ be a reduced word for $w=\wo u$, with all letters positive. Set $i:=j_\l$ and $\bgamma':=\propagR_i(\bgamma)$. Assume that the result is known for $\bgamma'$. On the one hand, \eqref{eq:coweight-of-Lusztig-datum} implies
$$\charof(\fugmap(\bgamma)) = \fug(\l) \chi_i + s_i\cdot \charof(\fugmap(\bgamma')).$$
On the other hand, since $\propagR_i$ acts by $s_i$ on the vertical coordinates of the dots in $\PD(u)$, we obtain that $\chip(\bgamma)$ equals $s_i\cdot \chip(\bgamma')$ if the move $\propagR_i$ was not special, and equals $\chi_i + s_i\cdot \chip(\bgamma')$ if $\propagR_i$ was special. This completes the induction step.
\end{proof}

%%%%%%%%%%%%%%%%%%%%%%%%%%%%%%%%%%%%%%%%%%%%%%%%%%%%%%%%%%%%%%%%%%
%%%%%%%%%%%%%%%%%%%%%%%%%%%%%%%%%%%%%%%%%%%%%%%%%%%%%%%%%%%%%%%%%%

\subsubsection{Lusztig data under propagation moves}\label{sssec:propagation_Lusztig_data}
We continue to assume that $\bgamma$ is a monotone multicurve inside $\PD(u)$ and that $w=\wo u$, and recall the propagation moves from~\cite[Section 3.4]{GLSBS}, as illustrated in \figref{fig:propag}(A). We now prove \cref{lemma:fugmap_commute}, describing how the double Lusztig datum of a monotone multicurve, as introduced in \cref{dfn:fugmap}, changes when a propagation move is applied to the multicurve.

Suppose that $us_i>u$. Let the monotone multicurve $\propag_i(\bgamma)$ be obtained from $\bgamma$ by applying one of the moves in \figref{fig:propag}(A). Again, exactly one of the moves applies for each $\bgamma$. Note that unlike in the case of \dual propagation moves discussed above, the permutation diagram $\PD(u)$ is unaffected by propagation moves. Let us now show that the action of the propagation moves on monotone multicurves translates under the map $\fugmap$ from \cref{dfn:fugmap} into the action of the operators $\etop_i$ on double Lusztig data, $i\in\pm I$, introduced in \cref{eq:crystal_operator} below. These operators $\etop_i$ have been studied in relation to string parametrizations of canonical bases; see e.g.~\cite[Equation~(2.2)]{BZ93}.

% Suppose that $i\in I$ is such that $us_i > u$, so $w s_i < w$. Then, one can choose a reduced word $\bj=j_1 j_2\cdots j_\l$ for $w$ that ends with $j_\l=i$. Given a Lusztig datum $[\bj,f]$ with $\bj$ ending with $i$, we set $\etop_i[\bj,f]:=[\bj,f']$, where $f'(j):=
% \begin{cases}
%   f(j), &\text{if $j<l$}\\
%   0, &\text{otherwise}
% \end{cases}$ for all $j\in[l]$. 
% Similarly, for $i\in -I$, suppose that $s_{-i}u>u$ and thus $s_{-i^\ast} w < w$. We choose $\bj$ to start with $j_1=-i^\ast$ and set $\etop_i[\bj,f]:=[\bj,f']$ with $f'(j):=
% \begin{cases}
%   f(j), &\text{if $j>1$}\\
%   0, &\text{otherwise.}
% \end{cases}$ 

Suppose that $i\in \pm I$ is such that $s_i^-us_i^+ > u$, so $s_{i^*}^-w s_i^+ < w$. Then, one can choose a double reduced word $\bj=j_1 j_2\cdots j_\l$ for $w$ that ends with $j_\l=i$.

\begin{definition}\label{eq:crystal_operator}
Given a double Lusztig datum $[\bj,f]$ with $\bj$ ending with $i$, we define
$$\etop_i[\bj,f]:=[\bj,f']$$
where for each $j\in[\l]$ the weighting $f'$ is set to be
$$f'(j):=
\begin{cases}
  f(j), &\text{if $j<\l$,}\\
  0, &\text{if $j=\l$.}
\end{cases}$$
\qed
\end{definition}

By~\cite[Theorem~2.2]{BZ93}, the output Lusztig datum $\etop_i[\bj,f]$ is well defined, i.e.~it does not depend on the choice of $\bj$ as long as $\bj$ ends with $j_\l=i$.\footnote{In~\cite{BZ93}, this result was only proved for the case when $i\in I$ and all letters in $\bj$ are positive. The extension to double reduced words follows from \cref{rmk:moves_double_to_single}.}

\begin{remark}\label{rmk:etop_neg}
Consider the case of a single reduced word $\bj$ on positive letters. Then for $i\in I$, the operator $\etop_i$ acts on $[\bj,f]$ by setting $f(\l)=0$ as above (assuming $j_\l=i$). For $i\in -I$, the operator $\etop_i$ may be described as follows. One can apply braid and commutation moves to $(\bj,f)$ until $\bj$ starts with $j_1=-i^\ast$, in which case $\etop_i$ acts by setting $f(1)=0$. Indeed, this description can be obtained from the one above by applying move~\ref{typeA_move_B4} to make the first letter $j_1=i$ of $\bj$ negative, then applying moves~\ref{typeA_move_B1} to make $i$ the last letter of $\bj$.\qed
% The operators $\etop_i$ for $i\in I$, resp.~$i\in -I$ were denoted $E^{\top}_i$, resp.~$(E^{\top}_{-i})^{\ast}$ in~\cite{BZ93}.
\end{remark}

% The weighted reduced word $\etop_i[(\bj,\nu)]$ is obtained by applying braid moves to $(\bj,\nu)$ until the first letter $j_1$ is $i$, and then replacing $\nu_1$ with $0$. Similarly, $\etopd_i[(\bj,\nu)]$ is obtained by applying braid moves to $(\bj,\nu)$ until the last letter $j_\l$ is $i$, and then replacing $\nu_\l$ with $0$.

The change of Lusztig datum for monotone multicurves is precisely captured by the $\etop_i$ operators in \cref{eq:crystal_operator}:

\begin{lemma}\label{lemma:fugmap_commute}
Let $\bgamma$ be a monotone multicurve inside $\PD(u)$ and $i\in\pm I$ any index such that $s_{i}^- u s_i^+ > u$. Then the following equality holds:
\begin{equation}\label{eq:fugmap_commute}
   \fugmap(\propag_i(\bgamma)) = \etop_i ( \fugmap (\bgamma)).
\end{equation}
\end{lemma}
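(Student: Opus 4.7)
The plan is to compute both sides of~\eqref{eq:fugmap_commute} using a single cleverly chosen double reduced word for $w = \wo u$ and then compare the outputs step by step. To begin, I would choose a double reduced word $\bj = j_1 j_2 \cdots j_\l$ for $w$ ending in $j_\l = i$; such a word exists because the hypothesis $s_i^- u s_i^+ > u$ translates to $s_{i^\ast}^- w s_i^+ < w$, as noted right before \cref{eq:crystal_operator}. By \cref{prop:rev_invariant}, the double Lusztig data $\fugmap(\bgamma) = [\bj, \fug]$ and $\fugmap(\propag_i(\bgamma)) = [\bj, \fuggp]$ are both independent of the choice of reduced word, so I may use this particular $\bj$ throughout.

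Following \cref{dfn:fug}, the weightings $\fug$ and $\fuggp$ are computed by applying the \dual propagation moves $\propagR_{j_\l}, \propagR_{j_{\l-1}}, \dots, \propagR_{j_1}$ in turn to $\bgamma$ and to $\propag_i(\bgamma)$, respectively, recording at each step whether the move is special. Since $j_\l = i$, the first move in both computations is $\propagR_i$. The key combinatorial claim, which drives the entire proof, is the identity
\begin{equation}\label{eq:key-claim-plan}
\propagR_i(\propag_i(\bgamma)) = \propagR_i(\bgamma) \quad \text{as monotone multicurves in } \PD(s_i^- u s_i^+),
\end{equation}
together with the assertion that the application of $\propagR_i$ to $\propag_i(\bgamma)$ is never special, so that $\fuggp(\l) = 0$. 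Once \eqref{eq:key-claim-plan} is established, the remaining \dual propagation moves $\propagR_{j_{\l-1}}, \dots, \propagR_{j_1}$ are applied to identical multicurves in both computations and hence produce identical weights, giving $\fuggp(k) = \fug(k)$ for all $k < \l$. Combined with $\fuggp(\l) = 0$, this yields $[\bj, \fuggp] = \etop_i[\bj, \fug]$, which is precisely the conclusion of the lemma.

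The main obstacle, and the only nontrivial input, is verifying the combinatorial claim \eqref{eq:key-claim-plan} and the non-specialness of the resulting $\propagR_i$. This is a finite case analysis obtained by overlaying \figref{fig:propag}(A) with \figref{fig:propag}(B). The guiding intuition is that $\propag_i$ modifies $\bgamma$ in a neighborhood of the two dots at rows $i, i+1$ (for $i \in I$; for $i \in -I$ one reflects the whole picture across the line $y = x$) in such a way as to eliminate exactly the local configuration of curves that would otherwise make the subsequent $\propagR_i$ special: after applying $\propag_i$, the residual action of $\propagR_i$ is just an elementary dot swap and produces the same multicurve as a direct non-special $\propagR_i$ applied to $\bgamma$. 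Each local configuration of $\bgamma$ near the two relevant dots falls into exactly one of the cases enumerated in \figref{fig:propag}(A), and the verification reduces to composing the relevant case of (A) with the corresponding case of (B), simultaneously yielding the equality~\eqref{eq:key-claim-plan} and the non-specialness of $\propagR_i$ on $\propag_i(\bgamma)$.
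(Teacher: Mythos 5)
Your proof is correct and takes essentially the same approach as the paper: fix a double reduced word for $w$ ending in $i$, observe that $\propagR_i\circ\propag_i = \propagR_i$ with the resulting $\propagR_i$ always non-special, and deduce the claim by comparing the remaining \dual propagation steps. The only cosmetic difference is that you state this key combinatorial observation uniformly, whereas the paper splits into the two cases $\propag_i(\bgamma)\neq\bgamma$ (where the observation is checked against \cref{fig:propag}) and $\propag_i(\bgamma)=\bgamma$ (where it is trivial and $\fug(\l)=0$ already).
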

\begin{proof}
Suppose that $i\in I$. Then the assumption $s_{i}^- u s_i^+ > u$ translates into $u s_i > u$. Choose a reduced word $\bj=j_1j_2\cdots j_\l$ for $w$, with all letter positive and ending with $i$. Comparing the propagation moves $\propag_i$ and their duals $\propagR_i$, in \cref{fig:propag}(A) and (B), we obtain that $\propag_i(\bgamma)\neq \bgamma$ if and only if the move $\propagR_i(\bgamma)$ is special, which is equivalent to $\fug(\l)=1$. Suppose that we are in this case. Let $\bgamma':=\propag_i(\bgamma)$ and let $f':[\l]\to\Z_{\geq0}$ be such that $\etop_i[\bj,\fug]=[\bj,f']$. Thus, $f'$ differs from $\fug$ only in the value $f'(\l)=0$. Let $f'':=\fuggp$. By definition, $f''(\l)=0$ since the move $\propagR_i(\bgamma')$ is non-special. Furthermore, we have $\propagR_i(\bgamma') = \propagR_i(\bgamma)$, and thus $f''(j)=\fug(j)$ for all $j\in[\l-1]$. This implies that $f'=f''$, which is the content of~\eqref{eq:fugmap_commute}.

Suppose now that $\propag_i(\bgamma)= \bgamma$, or equivalently, $\fug(\l)=0$. Then we have $\fug=\fuggp$, so~\eqref{eq:fugmap_commute} holds in this case as well. The case $i\in -I$ is handled similarly.
\end{proof}

%%%%%%%%%%%%%%%%%%%%%%%%%%%%%%%%%%%%%%%%%%%%%%%%%%%%%%%%%%%%%%%%%%
%%%%%%%%%%%%%%%%%%%%%%%%%%%%%%%%%%%%%%%%%%%%%%%%%%%%%%%%%%%%%%%%%%

\subsubsection{Comparing the cocharacters}
We return to the setup of the rest of the paper. Let $\dbr = i_1i_2\cdots i_\nm \in (\pm I)^\nm$ be a double braid word and $\DW(\dbr)$ the associated double inductive weave. 

Let $\e \in \JWe$ and $\c\in[n+m]$ be such that $\c>\e$. On the one hand, from \cref{def:lusztig-data-from-dbl-weave}, the path $\Pbot_\c$ gives rise to a reduced word $\bjc = \jc_1\jc_2\cdots\jc_\l$ for $w_\invc$ and to a Lusztig datum $[\bjc, \nudc]$, where the weighting $\nudc$ is induced by the Lusztig cycle associated with the trivalent vertex located between depths $\e$ and $\e+1$.

On the other hand, the main construction of~\cite{GLSBS} gives rise to a monotone multicurve $\bgamma^{(\inve,\invc)}$ inside the permutation diagram $\PD(u_\invc)$, where $u_\invc:=\wo w_\invc$. It is defined inductively for $\invc=\inve-1,\inve-2,\dots,0$. Let $\bgamma^{(\inve,\inve-1)}$ consist of a single line segment connecting two dots located in rows $i_{\inve},i_{\inve}+1$, resp.~columns $|i_{\inve}|,|i_{\inve}|+1$, depending on the sign of $i_{\inve}$. Then, each $\bgamma^{(\inve,\invc-1)}$ is obtained from $\bgamma^{(\inve,\invc)}$ by either applying a propagation move $\propag_{i_{\invc}}$ if the crossing $\c\in\JWe$ is solid, or by applying an endpoint-preserving isotopy to $\bgamma^{(\inve,\invc)}$ that swaps two dots in adjacent rows $i_{\invc},i_{\invc}+1$, resp.~columns $|i_{\invc}|,|i_{\invc}|+1$, and never allows a monotone curve to pass through a dot. In other words, such an isotopy is obtained by reversing a non-special \dual propagation move shown in the bottom row of \figref{fig:propag}(B). See~\cite[Section~3.4]{GLSBS} for further details.
% Set $u_{\c}:=\wo w_{\c}$ for all $\c\in[n+m]$. We define $\bgamma^{(\inve,\inve-1)}$ to consist of a single curve connecting the dots $(u_

Our next result relates the weave Lusztig datum of~\cite{CGGLSS} to the monotone multicurves of~\cite{GLSBS}.
\begin{proposition}\label{prop:fugmap_vs_weave}
For each $\e \in \JWe$ and $\c\in[n+m]$ with $\c>\e$, we have 
% $[\bjc, \nudc] = \fugmap(\bgamma^{(\inve,\invc)})$.
\begin{equation}\label{eq:fugmap_vs_weave}
  [\bjc, \nudc] = \fugmap(\bgamma^{(\inve,\invc)}).
\end{equation}
\end{proposition}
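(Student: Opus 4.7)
The plan is to proceed by downward induction on $\invc$, starting from the base case $\invc = \inve - 1$ (equivalently $\c = \e+1$). The multicurve $\bgamma^{(\inve,\invc)}$ is defined recursively in \cite{GLSBS}, and the weave weighting $\nudc$ is determined recursively by the tropical Lusztig rules of \cref{def:lusztig-cycle}, so the argument splits into a base case verification and a matching of the two recursions. For the base case, $\bgamma^{(\inve,\inve-1)}$ is a single monotone segment joining two dots of $\PD(u_{\inve-1})$ in adjacent rows (if $i_\inve \in I$) or adjacent columns (if $i_\inve \in -I$). Choosing a double reduced word for $w_{\inve-1}$ whose last letter is $i_\inve$ and applying \cref{dfn:fug}, one sees that the associated weighting assigns $1$ to this last letter and $0$ elsewhere. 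On the weave side, $\nudc$ at depth $\e+1$ has a single $1$ at the position of the southern edge of $\vertex_\e$ and $0$s elsewhere, and matching these under the equivalence relation on (double) Lusztig data yields the base case.

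For the inductive step, assume $[\bjc,\nudc] = \fugmap(\bgamma^{(\inve,\invc)})$ and consider the transition from $\invc$ to $\invc-1$. By \cref{lem:br-w-and-weave-w-coincide}, the crossing $\invc$ of $\dbr$ is solid if and only if $w^{\dstr}_{\c+1} = w^{\dstr}_\c$, i.e., a trivalent vertex $\vertex_\c$ sits between depths $\c$ and $\c+1$ of $\DW(\dbr)$. In the solid case, $\bgamma^{(\inve,\invc-1)} = \propag_{i_\invc}(\bgamma^{(\inve,\invc)})$, and \cref{lemma:fugmap_commute} gives $\fugmap(\bgamma^{(\inve,\invc-1)}) = \etop_{i_\invc}[\bjc,\nudc]$. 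On the weave, the $4$- and $6$-valent vertices between depths $\c$ and $\c+0.5$ implement commutation and braid moves, preserving the equivalence class of the Lusztig datum, while at the trivalent $\vertex_\c$ the new weave line carries $\lcyc_\e$-weight $0$ (since $\lcyc_\e$ vanishes above depth $\e+1 < \c+1$), so the tropical trivalent rule forces the southern edge to weight $\min(0,a) = 0$. Working with a double reduced word in which the relevant letter is brought to the last position (using moves \ref{typeA_move_B1}, \ref{typeA_move_B2}, \ref{typeA_move_B3} and \cref{rmk:etop_neg} when $i_\invc \in -I$), this is precisely the action of $\etop_{i_\invc}$. In the hollow case there is no trivalent vertex, the multicurve $\bgamma^{(\inve,\invc-1)}$ differs from $\bgamma^{(\inve,\invc)}$ only by an endpoint-preserving isotopy, and the new weave strand contributes a $0$ to the weighting, so both sides extend the previous double Lusztig datum by a single $0$ at the appropriate position.

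The main obstacle is the bookkeeping identification between the ordinary Lusztig datum $[\bjc,\nudc]$ (in the positive alphabet $I$) and the double Lusztig datum $\fugmap(\bgamma^{(\inve,\invc)})$ (in the alphabet $\pm I$): one must pass through the canonical reduction $(\bj,f) \mapsto (\bj,f)^{(-|+)}$ of \cref{rmk:moves_double_to_single} and verify that the operator $\etop_i$ acts compatibly for both signs of $i$, in particular using \cref{rmk:etop_neg} to handle $i \in -I$. Once this identification is fixed, the solid case reduces to matching the tropical trivalent rule at weight $0$ with the action of $\etop_i$, and the hollow case becomes a direct verification.
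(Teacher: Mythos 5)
Your proposal is correct and follows essentially the same strategy as the paper's proof: the same upward induction on $\c$ (equivalently downward on $\invc$), the same base case $\c=\e+1$, the same split of the inductive step into solid versus hollow, the same invocation of \cref{lemma:fugmap_commute} for the solid case, and the same bookkeeping via \cref{rmk:moves_double_to_single} and \cref{rmk:etop_neg} to reconcile the positive-alphabet weave Lusztig datum with the $\pm I$-alphabet double Lusztig datum of the multicurve. Two minor remarks: in the base case the paper is explicit that the weave weighting places the $1$ in the last slot when $i_\inve\in I$ but the first slot when $i_\inve\in -I$ (with the reconciliation then done via moves \ref{typeA_move_B1}, \ref{typeA_move_B4}), whereas you compress this into ``matching under the equivalence''; and your parenthetical ``since $\lcyc_\e$ vanishes above depth $\e+1<\c+1$'' is not quite the reason the new weave line carries weight $0$ (that line lies entirely below depth $\e+1$), the actual reason being that it is a top edge of $\DW$ distinct from the southern edge of $\vertex_\e$, hence assigned $0$ by \cref{def:cycles-for-trivalents}. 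Neither affects the validity of the argument.
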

\begin{proof}
Denote $\fugmap(\bgamma^{(\inve,\invc)})$ by $[\bi^{(\c)},\fdc]$. By construction, $\bgamma^{(\inve,\invc)}$ is a monotone multicurve inside $\PD(u_\invc)$ with $u_\invc=\wo w_\invc$, and thus $\bi^{(\c)}$ is a reduced word for $w_\invc$. We need to show that the (double) weighted expressions $(\bi^{(\c)},\fdc)$ and $(\bjc, \nudc)$ are related by a sequence of double braid moves~\ref{typeA_move_B1}--\ref{typeA_move_B4}.
% Therefore, after potentially applying some braid/commutation moves, we may assume that $\bjc = \bi^{(\c)}$. It remains to show $\fdc = \nudc$, and we do so by induction on $c-e$.

Consider the base case $\c-\e=1$. If $i_{\inve}\in I$ (resp.~$i_{\inve}\in-I$) then $\nudc(\l)=1$ (resp.~$\nudc(1)=1$) and the remaining entries of $\nudc$ are zero. Proceeding similarly to \cref{rmk:etop_neg}, we apply double braid moves~\ref{typeA_move_B4} and~\ref{typeA_move_B1}  to $(\bjc,\nudc)$ to transform it into a double weighted expression $(\bj',\nu')$ such that $\bj'$ ends with $j'_\l=i_{\inve}$, and $\nu'(\l)=1$ and $\nu'(j)=0$ for $j<\l$. Since $\bgamma^{(\inve,\inve-1)}$ contains a single line segment between two dots in rows (resp.~columns) $|i_{\inve}|$ and $|i_{\inve}|+1$, we see that this line segment disappears after we apply the move $\propagR_{i_{\inve}}$. Thus, $f_{\bgamma^{(\inve,\inve-1)},\bj'} = \nu'$, which shows~\eqref{eq:fugmap_vs_weave} in the base case.
% see that the double Lusztig datum $[\bi^{(\c)},\fdc]$ satisfies $\fdc(l)=1$ and $\fdc(i)=0$ for $i<l$.
% then the slice of the weave $\DW(\dbr)$ at depth $\e$ crosses a single edge 

For the induction step, suppose that the result is known for some $\c>\e$. Assume first that $\c\in\JWe$ is a solid crossing. Recall that $\bgamma^{(\inve,\invc-1)} = \propag_{i_{\invc}}(\bgamma^{(\inve,\invc)})$. On the one hand, applying \cref{lemma:fugmap_commute} and the induction hypothesis, we obtain the equalities
\begin{equation*}%\label{eq:*}
  \fugmap(\bgamma^{(\inve,\invc-1)}) = \etop_i ( \fugmap (\bgamma^{(\inve,\invc)})) 
= \etop_i [\bjc, \nudc].
\end{equation*}
On the other hand, $[\bj^{(\c+1)},\nu_\e^{(\c+1)}]$ is obtained from $[\bjc, \nudc]$ as follows. We have $\bj^{(\c+1)}=\bjc$ since the crossing $\c$ was solid. If $i_{\invc}\in I$ (resp.~$i_{\invc}\in-I$), we may assume that the last letter of $\bjc$ is $i_{\invc}$ (resp.~the first letter of $\bjc$ is $-i_{\invc}^\ast$). The weighting $\nu_\e^{(\c+1)}$ is obtained from $\nudc$ by setting the last (resp.~the first) entry to $0$ in view of the third diagram in~\eqref{fig:tropical-lusztig-rules}. By \cref{rmk:etop_neg}, this operation agrees with the action of $\etop_i$. This completes the induction step in the case $\c\in\JWe$.

Assume now that $\c\notin\JWe$ is a hollow crossing. On the one hand, in this case $[\bj^{(\c+1)},\nu_\e^{(\c+1)}]$ is obtained from $[\bjc, \nudc]$ by appending an extra letter $i_{\invc}$ with weight zero. On the other hand, $\bgamma^{(\inve,\invc-1)}$ is obtained from $\bgamma^{(\inve,\invc)}$ by an endpoint-preserving isotopy as above, so $\bgamma^{(\inve,\invc)} = \propagR_{i_{\invc}}(\bgamma^{(\inve,\invc-1)})$ are related by a non-special move $\propagR_{i_{\invc}}$. Thus, the last letter of each of $[\bj^{(\c+1)},\nu_\e^{(\c+1)}]$ and $\fugmap (\bgamma^{(\inve,\invc)})$ is $i_{\invc}$ with weight zero, which completes the induction step.
\end{proof}

We are ready to give a combinatorial proof of \cref{cor:Deodhar_cocharacters_via_weaves} in Lie Type A:
\begin{cor}
For each $\e \in \JWe$ and $\c\in[n+m]$ with $\c>\e$, we have 
\begin{equation*}%\label{eq:*}
\gamchp_{\dbr, \invc, \inve} = \gamWe_{\brWe, \c, \e}.
\end{equation*}
\end{cor}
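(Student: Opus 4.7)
The plan is to chain together the three combinatorial/geometric identifications that have already been established earlier in the section, so that the desired equality becomes a composition of known maps applied to the same underlying object, namely the monotone multicurve $\bgamma^{(\inve,\invc)}$.

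First, by \cref{def:lusztig-data-from-dbl-weave}, the weave cocharacter is by definition the coweight of the Lusztig datum read off from the slice $\Pbot_\c$ of the double inductive weave $\DW(\dbr)$, that is,
\begin{equation*}
\gamWe_{\brWe,\c,\e} = \charof[\bjc,\nudc].
\end{equation*}
Next, \cref{prop:fugmap_vs_weave} identifies this Lusztig datum with the one produced by the \dual propagation algorithm applied to the monotone multicurve built in~\cite{GLSBS}:
\begin{equation*}
[\bjc,\nudc] = \fugmap\bigl(\bgamma^{(\inve,\invc)}\bigr).
\end{equation*}
Finally, \cref{lemma:chip_bgamma} translates the coweight of this double Lusztig datum into the red projection cocharacter of the multicurve:
\begin{equation*}
\charof\bigl(\fugmap(\bgamma^{(\inve,\invc)})\bigr) = \chip\bigl(\bgamma^{(\inve,\invc)}\bigr).
\end{equation*}
Composing these three equalities yields $\gamWe_{\brWe,\c,\e} = \chip(\bgamma^{(\inve,\invc)})$.

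It then remains to identify $\chip(\bgamma^{(\inve,\invc)})$ with the Deodhar cocharacter $\gamchp_{\dbr,\invc,\inve}$ of \cref{def:Deodhar-gamma-cocharacter}. This is precisely the content of the main combinatorial construction of~\cite{GLSBS} in Lie Type $A$: the orders of vanishing of the grid minors $\grid_{\invc,i}$ on the Deodhar hypersurface $\tV_\inve$ are computed by the number of intersections of the horizontal line at height $s+1/2$ with the multicurve $\bgamma^{(\inve,\invc)}$, which is exactly the definition of $\chip$ given in~\eqref{eq:chi_bgamma_dfn}. Assembling these identifications gives
\begin{equation*}
\gamWe_{\brWe,\c,\e} = \chip\bigl(\bgamma^{(\inve,\invc)}\bigr) = \gamchp_{\dbr,\invc,\inve},
\end{equation*}
as required.

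The main obstacle I anticipate is not any single step but the verification of \cref{prop:rev_invariant} and \cref{prop:fugmap_vs_weave}, both of which were proved by case analysis on the $26$ possible local configurations of a multicurve relative to a triple of dots. These case checks are the technical heart of the argument; once they are in place, the corollary itself is a short composition of maps, with no additional geometric input required. Note also that this proof is purely combinatorial, using only tropical Lusztig rules on the weave side and propagation/\dual propagation moves on the multicurve side, and in particular avoids any use of the Cartan element $\hp_\invc$ or the edge $u$-variables that featured in the general-type proof of \cref{cor:Deodhar_cocharacters_via_weaves}.
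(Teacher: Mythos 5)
Your proof is correct and takes exactly the same approach as the paper's, composing the same three identifications --- \cref{def:lusztig-data-from-dbl-weave}, \cref{prop:fugmap_vs_weave}, and \cref{lemma:chip_bgamma} --- just read in the opposite direction (the paper starts from the Deodhar side, you start from the weave side). The one place to tighten is your appeal to ``the main combinatorial construction of [GLSBS]'' for the identity $\gamchp_{\dbr,\invc,\inve} = \chip(\bgamma^{(\inve,\invc)})$: the precise reference the paper invokes for this is \cite[Lemma~7.13]{GLSBS}.
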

\begin{proof}
Comparing \cref{def:Deodhar-gamma-cocharacter} to~\cite[Lemma~7.13]{GLSBS}, we obtain the first equality $\gamchp_{\dbr, \invc, \inve} = \chip(\bgamma^{(\inve,\invc)})$, where $\chip(\bgamma^{(\inve,\invc)})$ is the red projection cocharacter defined in \cref{eq:chi_bgamma_dfn}. By \cref{lemma:chip_bgamma}, we have $\chip(\bgamma^{(\inve,\invc)}) = \charof(\fugmap(\bgamma^{(\inve,\invc)}))$. Then \cref{prop:fugmap_vs_weave} implies $\charof(\fugmap(\bgamma^{(\inve,\invc)})) = \charof[\bjc, \nudc]$, and the right hand side $\charof[\bjc, \nudc]$ is precisely $\gamWe_{\brWe, \c, \e}$ by \cref{def:lusztig-data-from-dbl-weave}.
\end{proof}

\section{Appendix A: Notation and conventions}\label{sec:appendix}
\begin{comment}
\begin{itemize}
\item Left-to-right inductive weave (called ``right inductive" in \cite{CGGLSS}): \textbackslash rw $=\rw$
\item Right-to-left inductive weave (called ``left inductive" in \cite{CGGLSS}): \textbackslash lw $=\lw$
\item Double inductive weave: \textbackslash DW $=\DW$
\item Braid word: \textbackslash br $= \br$
\item Double braid word: \textbackslash dbr $=\dbr$
\item Braid word corresp to a double braid word: \textbackslash brWe $= \brWe$
\item Indices of negative letters in a double braid word: \textbackslash negind $=\negind$.
\item Indices of positive letters in a double braid word: \textbackslash posind $=\posind$.
\item Double string \textbackslash dstr = $\dstr$.
\item Flags = Borels \textbackslash borel = $\borel$.
\item Lusztig cycle \textbackslash lcyc = $\lcyc$. Full notation is something like $\lcyc_c(\edge)$ where $c$ is a trivalent vertex and $\edge$ an edge of the weave.
\item Graph of braid words (to define Demazure weaves) \textbackslash Graph = $\Graph$. 
\item Isomorphism between braid variety and double braid variety: \textbackslash iso = $\iso$.
\item Cartan torus \textbackslash torus = $\torus$. 
\end{itemize}
\end{comment}
In this section, we set up notation and collect preliminaries on flags and their relative positions that will be useful throughout the manuscript.
\subsection{Pinnings}\label{ssec:pinnings}
We fix the following concepts and their notation for the entire article:
\begin{enumerate}

\item A simple algebraic group $\G$.
\item A pair of opposite Borel subgroups $\borel := \borel_+$ and $\borel_{-}$ of $\G$.
\item $\torus = \borel_{+}\cap\borel_{-}$, a maximal torus of $\G$.
\item $\U_+$, the unipotent radical of $\borel_+$, so that we have a decomposition $\borel_+ = \U_+H = H\U_+$. 
\item The Weyl group $W = N_{\G}(T)/T$, with simple reflections $s_i$, $i \in I$, where $I$ is the set of vertices of the Dynkin diagram of $\G$. For each element $w \in W$, we fix a lift $\dot{w} \in \G$. 
\item The longest element $\wo \in W$. 
\item Simple roots $\alpha_i$, $i \in I$.
\item Fundamental weights $\omega_i$, $i \in I$. 
\end{enumerate}

We also fix a \emph{pinning} of $(H, \borel_+, \borel_-, x_i, y_i; i \in I)$ of $\G$, cf.~\cite[Section~1.1]{Lus2}. That is, for each $i\in I$, we fix a group homomorphism 
\begin{equation*}%
	\phi_i:\SL_2\to G,\quad \begin{pmatrix}
		1 & t\\
		0 & 1
	\end{pmatrix}\mapsto x_i(t),\quad  \begin{pmatrix}
		1 & 0\\
		t & 1
	\end{pmatrix}\mapsto y_i(t),\quad
\begin{pmatrix}
	t & 0\\
	0 & t^{-1}
\end{pmatrix}\mapsto \chi_i(t),
\end{equation*} 
where $x_i(t),y_i(t)$ are the exponentiated Chevalley generators and $\chi_i: \C^* \to \H$ is the simple coroot corresponding to $i \in I$. We set
\begin{equation*}%
	\ds_i:=\phi_i \begin{pmatrix}
		0 & -1\\ 
		1 & 0
	\end{pmatrix}.
\end{equation*}
Note that this is consistent with the notation previously established, i.e., $\ds_i \in \G$ is a lift of the simple root $s_i \in W$. 

\subsection{Flags and weighted flags}\label{ssec:flags_and_weighted_flags} 

By definition, the \emph{flag variety} is the quotient $\G/\borel_+$. This is a projective algebraic variety and, since $\borel_+$ is self-normalizing, its elements can be identified with the Borel subgroups of $\G$. For each $w \in W$, we have an element $w.\borel_{+} \in \G/\borel_+$ that does not depend on the specific lift of $w$ to $\G$. Note that $\borel_- = \wo.\borel_{+}$, where $\wo \in W$ is the longest element. By definition, the variety of \emph{weighted flags} is the quotient $\G/\U_+$. It is a quasi-affine variety that admits a natural projection $\pi: \G/\U_+ \lr \G/\borel_+$.

\subsubsection{Relative position of flags} The $\G$-orbits in $\G/\B_+ \times \G/\B_+$ are parametrized by the elements of the Weyl group $W$. More precisely, for each pair of elements $(\borel_1, \borel_2) \in \G/\borel_+ \times \G/\borel_+$ there exists an element $g \in \G$ and an element $w \in W$ such that 
\[
(g.\borel_1, g.\borel_2) = (\borel_+, w.\borel_+).
\]
The element $w \in W$ is uniquely specified by the pair $(\borel_1, \borel_2)$ and we say that $\borel_1, \borel_2$ are in position $w$. We write
\[
\borel_1 \Rrel{w} \borel_2
\]
to express the fact that $\borel_1$ and $\borel_2$ are in position $w$. The following lemma summarizes basic properties of relative position between two flags, cf.~e.g.~\cite[Section 3.2]{CGGLSS}.

\begin{lemma}\label{lem:properties-rel-pos} The following properties hold:
\begin{enumerate}
\item $\borel_1 \Rrel{w} \borel_2$ if and only if $\borel_2 \Rrel{w^{-1}} \borel_1$. In particular, $\borel_1 \Rrel{s_i} \borel_2$ if and only if $\borel_2 \Rrel{s_i} \borel_1$.
\item Suppose $\borel_1 \Rrel{w} \borel_2$ and $\borel_2 \Rrel{w'} \borel_3$. If $\ell(ww') = \ell(w) + \ell(w')$, then $\borel_1 \Rrel{ww'} \borel_3$. 
\item $\borel \Rrel{w} \borel'$ if and only if there exist a reduced decomposition $w = s_{i_1} \cdots s_{i_{r}}$ and flags $\borel_1, \dots, \borel_{r-1}$ such that
\[
\borel \Rrel{s_{i_1}} \borel_1 \Rrel{s_{i_2}} \cdots \Rrel{s_{i_{r-1}}} \borel_{r-1} \Rrel{s_{i_r}} \borel'. 
\]
Moreover, the flags $\borel_1, \dots, \borel_{r-1}$ are unique.
\item Let $i \in I$. If $\borel_1 \Rrel{s_i} \borel_2$ and $\borel_2 \Rrel{s_i} \borel_3$, then either $\borel_1 = \borel_3$ or $\borel_1 \Rrel{s_i}\borel_3$.\qed
\end{enumerate}
\end{lemma}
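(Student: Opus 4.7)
The four statements are standard consequences of the Bruhat decomposition $\G=\bigsqcup_{w\in W}\borel_+\dot w\borel_+$, and the plan is to reduce each to an assertion about this decomposition, working with representative pairs $(\borel_+,\dot w\borel_+)$ whenever possible and using $\G$-invariance of the relative position.

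For part (1), the plan is purely definitional: if $\borel_1\Rrel{w}\borel_2$, choose $g\in\G$ with $g.(\borel_1,\borel_2)=(\borel_+,\dot w\borel_+)$ and act by $\dot w^{-1}g$ on the swapped pair $(\borel_2,\borel_1)$ to land at $(\borel_+,\dot w^{-1}\borel_+)$, which gives $\borel_2\Rrel{w^{-1}}\borel_1$. The $s_i$ consequence is immediate since $s_i=s_i^{-1}$.

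For part (2), I would invoke the standard product formula
\[
(\borel_+\dot w\borel_+)\cdot(\borel_+\dot w'\borel_+)\subseteq \borel_+\dot{ww'}\borel_+\qquad\text{whenever } \ell(ww')=\ell(w)+\ell(w'),
\]
cf.~the usual proof by induction on $\ell(w')$ using the rank-one identity $\borel_+\dot s_i\borel_+\cdot \borel_+\dot s_i\borel_+=\borel_+\borel_+\sqcup \borel_+\dot s_i\borel_+$. Translated into flag language via $\G$-translation, this says exactly that a length-additive chain of relative positions composes. The plan is to apply this, after using $\G$-invariance to normalize $\borel_1=\borel_+$ and $\borel_2=\dot w\borel_+$ and then writing $\borel_3=\dot w\dot w' u\borel_+$ for some $u\in\U_+$.

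For part (3), existence is immediate by iterating part (2) over a reduced word $w=s_{i_1}\cdots s_{i_r}$. For uniqueness of the intermediate flags $\borel_1,\dots,\borel_{r-1}$, the plan is to work in the Bott--Samelson variety $Z(\bi):=\borel_+\dot s_{i_1}\borel_+\times_{\borel_+}\cdots\times_{\borel_+}\borel_+\dot s_{i_r}\borel_+/\borel_+$ and use that the multiplication map $Z(\bi)\to \overline{\borel_+\dot w\borel_+}/\borel_+$ is an isomorphism over the open cell $\borel_+\dot w\borel_+/\borel_+$ when $\bi$ is reduced (equivalently, by induction on $r$: given $\borel_{r-1}$, uniqueness follows from part (2) applied to the pair $(\borel_1,\borel_{r-1})$, which must be in position $s_{i_1}\cdots s_{i_{r-1}}$).

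For part (4), one reduces to the rank-one group $\phi_i(\SL_2)\subset\G$ by normalizing $\borel_2=\borel_+$ and using part (1) to write both $\borel_1$ and $\borel_3$ as elements of the open cell $\borel_+\dot s_i\borel_+/\borel_+$. After modding out by the stabilizer of $\borel_2=\borel_+$ (i.e.~$\borel_+$ itself), this open cell is a single orbit of $\U_+$ acting through $\phi_i$, so both $\borel_1,\borel_3$ are of the form $\phi_i\smat{1&*\\0&1}\dot s_i\borel_+$ and the product lands either back in the open cell or at $\borel_+$.

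The only mildly delicate step is uniqueness in part (3); the rest is a direct unraveling of definitions combined with the length-additive product formula in the Bruhat decomposition.
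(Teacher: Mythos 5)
The paper does not prove this lemma: it is stated with a closing \qed\ and a citation to \cite[Section 3.2]{CGGLSS}, so there is no in-text argument to compare against. Your sketch is the standard Bruhat-decomposition argument, and it is essentially correct. Two small points worth cleaning up. In part (2), after normalizing $\borel_1=\borel_+$, $\borel_2=\dot w\borel_+$, the element $g$ achieving $g.\borel_2=\borel_+$ lies in $\borel_+\dot w^{-1}$, so $\borel_3=\dot w\,b^{-1}\dot w'\borel_+$ for some $b\in\borel_+$; this need not be of the form $\dot w\dot w'u\borel_+$ with $u\in\U_+$ (the $\borel_+$-factor sits between $\dot w$ and $\dot w'$, not to the right), but the conclusion $\borel_3\in\borel_+\dot{ww'}\borel_+/\borel_+$ still follows immediately from the length-additive product $\borel_+\dot w\borel_+\cdot\borel_+\dot w'\borel_+\subseteq\borel_+\dot{ww'}\borel_+$ that you invoke, so the logic is unaffected. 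In part (3), the phrase \emph{existence is immediate by iterating part (2)} reads backwards: part (2) goes from a chain to the composed position; to produce the chain from $\borel\Rrel{w}\borel'$, you should normalize $(\borel,\borel')=(\borel_+,\dot w\borel_+)$ and set $\borel_k:=s_{i_1}\cdots s_{i_k}.\borel_+$, which trivially gives a valid chain. Your two alternatives for uniqueness (Bott--Samelson isomorphism over the open cell, or the inductive intersection-of-two-cells argument) are both standard and either closes the argument. Parts (1) and (4) are correct as written.
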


\subsubsection{Relative position of weighted flags} For weighted flags, the notion of relative position is a bit subtler. By definition, two weighted flags $\U_1$ and $\U_2$ are said to be in weak relative position $w$ if there exist $g \in \G$ and $h \in H$ such that
\[
(g.\U_1, g.\U_2) = (\U_+, hw.\U_+). 
\]
As in the flag case, the element $w$ is uniquely determined by $\U_1$ and $\U_2$. We write $\U_1 \Rwrel{w} \U_2$ to express the fact that $\U_1$ and $\U_2$ are in {\it weak} relative position $w$. Note that
\[
\U_1 \Rwrel{w} \U_2 \qquad \text{is equivalent to} \qquad \pi(\U_1) \Rrel{w} \pi(\U_2). 
\]

By definition, two weighted flags $\U_1$, $\U_2$ are said to be in {\it strong} relative position $w$ if there exists $g \in G$ such that
\[
(g.\U_1, g.\U_2) = (\U_+, w.\U_+),
\]
and we write $\U_1 \Rrel{w} \U_2$ if this is the case. Note that $\U_1 \Rrel{w} \U_2$ implies $\U_1 \Rwrel{w} \U_2$ but the converse does not hold. In particular, $\U_1 \Rrel{\id} \U_2$ if and only if $\U_1 = \U_2$. 

%\section{Further comparisons}\label{sec:further-comparisons}

\section{Appendix B: Comparison between moves}\label{appendix:comparison_moves}

%In this section, we collect additional comparisons between the Deodhar cluster structure and the weave cluster structure. These are straightforward consequences of \maintheoref, and we include them for the convenience of the reader.

%\MSB{In this section, we put any other comparisons, such as:}
% \begin{itemize}
% \item Commutative diagram indicating that canonical isomorphism between double braid varieties after double braid move commutes with iso btw double braid varieties and braid variety. \MSB{I believe that we do not need this}
%     \item Which mutation sequence corresponds to rotation of the braid word
%     \item Comparison between moves (B1)-(B5) in \cite{GLSB} and some weave moves in \cite{CGGLSS}.
%     \end{itemize}

%\subsection{Comparison between moves} \label{ssec:moves-comparison}
The construction of the Deodhar cluster algebra structures on the coordinate rings $\C[R_\dbr]$ of double braid varieties $R_\dbr$, as developed in \cite{GLSBS,GLSB}, involves \emph{double braid moves} $\dbr\to\dbr'$ between double braid words $\dbr,\dbr'$, and the corresponding isomorphisms $\phi_{\dbr, \dbr'} : R(\dbr) \lr R(\dbr')$ between double braid varieties. Specifically, \cite[Section 4]{GLSB} analyzes the relationship between the Deodhar cluster seed $\Sigma_{\dbr}^{\rm D}$ for $\dbr$ and the pullback $\phi_{\dbr, \dbr'}^*(\Sigma_{\dbr'}^{\rm D})$ of the Deodhar cluster seed $\Sigma_{\dbr'}^{\rm D}$ for $\dbr'$ under the isomorphism $\phi_{\dbr, \dbr'}$.

Double braid moves are also studied for weave cluster algebra structures, as developed in \cite{CGGLSS}, under the different guise of weave moves and double string moves, cf.~\cite[Section 6.4]{CGGLSS}. This short appendix presents a comparison of these moves: specifically, \cref{table:move_comparison} records the results in \cite{GLSB} and \cite{CGGLSS} related to such braid moves, as well as the relationship between $\Sigma_{\dbr}^{\rm D}$ and $\phi_{\dbr, \dbr'}(\Sigma_{\dbr'}^{\rm D})$. The fact that these indeed match under the comparison is a consequence of \maintheoref and the results developed in the main body of the article.

\footnotesize
\begin{table}[h!]
\begin{center}
\caption{The comparison between the double braid moves from \cite{GLSB} and the double inductive weave moves from \cite{CGGLSS} under the cluster algebra isomorphism established in \maintheoref. By definition, a move is said to be \emph{all solid} if all letters involved are solid. In this table, we also use $c$ for the index of the rightmost letter in the move. The moves are written in black, the sections in \cite{GLSB} and \cite{CGGLSS} where the move is studied are written in \textcolor{purple}{purple}, and the effect on the seed (which is equal on both sides) is written in \textcolor{blue}{blue}.}\label{table:move_comparison}
\begin{tabular}{|c|c|}
\hline
Double braid move \cite{GLSB} & \quad Double string \& weave move  \cite{CGGLSS} \quad \\
\hline & \\
(B1): $ij \leftrightarrow ji$ if ${\rm sign}(i) \neq {\rm sign}(j)$  & $\dots, iR, jL, \dots \leftrightarrow \dots, jL, iR, \dots$ \\
\textcolor{purple}{Sects.~4.1, 4.2} & \textcolor{purple}{Theorem 6.8} \\ 
\textcolor{blue}{Mutation at $c$ if \emph{special} (all solid \& $w_c s_{|i|} = s_{|j|^*} w_c$),} & \\ \textcolor{blue}{else relabeling.}
& \\  \hline & \\ 
 (B2): $ij \leftrightarrow ji$ if ${\rm sign}(i) = {\rm sign}(j)$, $s_{|i|} s_{|j|}=s_{|j|} s_{|i|}$  & $iX, jX \leftrightarrow jX, iX$ if $s_{i} s_{j}=s_{j} s_{i}$ \\ 
\textcolor{purple}{Sect.~4.3} &  \\
\textcolor{blue}{Relabeling.} & \\
& \\ 
 \hline
  & \\ 
 (B3), short: $iji \leftrightarrow jij$ if ${\rm sign}(i) = {\rm sign}(j)$ and & $iX, jX, iX \leftrightarrow jX, iX, jX$ if \\ $s_{|i|}s_{|j|} s_{|i|}=s_{|j|}s_{|i|} s_{|j|}$ & $s_{i}s_{j} s_{i}=s_{j}s_{i} s_{j}$ \\
  \textcolor{purple}{Sects. 4.4, 4.5} & \textcolor{purple}{Proposition 4.46} \\ 
  \textcolor{blue}{Mutation at $c$ if all solid, else relabeling.} & \\
  & \\ 
  \hline & \\ 
   (B3), long: $\underbrace{iji\dots}_{\text{$m_{ij}$ letters}}  \leftrightarrow \underbrace{jij\dots}_{\text{$m_{ij}$ letters}}$ if ${\rm sign}(i) = {\rm sign}(j)$, & $\underbrace{iX,jX,iX,\dots}_{\text{$m_{ij}$ letters}}  \leftrightarrow \underbrace{jX,iX,jX,\dots}_{\text{$m_{ij}$ letters}}$ if \\ $(s_{|i|}s_{|j|})^{m_{ij}}=1$ with $m_{ij}>3$ & $(s_{i}s_{j})^{m_{ij}}=1$ with $m_{ij}\geq3$ \\
   & \\
   \textcolor{purple}{Sect.~6.2}& \\
   \textcolor{blue}{Sequence of mutations $\mu_{\rm fold}$ in \cite[Table 1]{GLSB}.}& \\
   & \\ \hline &\\
  (B4): $\dbr i \leftrightarrow \dbr (-i)^{\ast}$ & $(iL, \dots) \leftrightarrow (iR, \dots)$ \\
  \textcolor{purple}{Sect.~4.6} & \textcolor{purple}{Theorem 6.8}\\ 
  \textcolor{blue}{Seeds are equal.} & \\
  & \\ 
  \hline & \\
  (B5): $i\dbr \leftrightarrow (-i)\dbr$ & $i\beta \leftrightarrow \beta i^{\ast}$ \\ \textcolor{purple}{Sect.~4.7} & \textcolor{purple}{Sect.~5.5}\\ 
   \textcolor{blue}{Quasi-cluster transformation $x_1 \mapsto x_1^{-1} M$} & \\
   \textcolor{blue}{for $M$ Laurent monomial in frozens} &\\
  & \\ 
  \hline 
\end{tabular}
\end{center}
\end{table}
\normalsize

Note that on the weave side, applying move (B1) or (B4) preserves the braid word but changes the double inductive weave, see also \cref{rmk:compatibility-iso-moves}. Applying move (B2) or (B3) changes the braid word according to a commutation or braid move, while (B5) changes the braid word by cyclic rotation.

\begin{remark} Technically, \cite[Proposition 4.46]{CGGLSS} only treats the case of inductive weaves, as opposed to general double inductive weaves. Nevertheless, the same proof shows that the same result holds for double inductive weaves.\qed
\end{remark}

As a final comment, \cite[Section 8.3]{CGGLSS} describes geometrically the Donaldson-Thomas (DT) transformation on $X(\br)$ as the composition of two isomorphisms, cyclic rotation $\rho$ and the map $\ast$. Using \cref{table:move_comparison}, we can describe the pullback of the DT transformation $\ast \circ \rho^l$ explicitly as a sequence of mutations and rescalings in the context of \cite{GLSB}, as follows. To ease notation, if $\dbr'$ is related to $\dbr$ by a sequence of double braid moves, corresponding to an isomorphism $\phi$ of double braid varieties, we will write just $\dbr'$ for $\phi^*(\Sigma_{\dbr'}^{\rm D})$, the pullback of $\Sigma_{\dbr'}^{\rm D}$ to $\C[R(\dbr)]$. Then, given a double braid word $\dbr=i_1 \dots i_l$ with all positive letters, the pullback $(\ast \circ \rho^l)^*$ of DT applied to $\Sigma_{\dbr}^{\rm D}$ agrees with the following sequence of rescalings and mutations
    \[i_1 \dots i_l \xrightarrow{{\rm (B5)}} (- i_1) i_2 \dots i_l \xrightarrow{{\rm (B1)}} \dots \xrightarrow{{\rm (B1)}}i_2 \dots i_l (- i_1) \xrightarrow{{\rm (B4)}} i_2 \dots i_l i_1^* \xrightarrow{{\rm (B5)}} (-i_2) \dots i_l i_1^* \xrightarrow{{\rm (B1)}} \dots\]
    \[\dots \xrightarrow{{\rm (B4)}} i_3 \dots i_l i_1^* i_2^* \xrightarrow{{\rm (B5)}} \cdots \cdots \xrightarrow{{\rm (B1)}} i_1^* \dots (-i_l) \xrightarrow{{\rm (B4)}} i_1^* \dots i_l^*.\]

\section{Appendix C: Richardson varieties and braid varieties}\label{appendix:richardson_varieties}

\def\rpRich{\mathcal{R}}
\def\rmRich{\mathcal{R}^-}
\def\lpRich{{^+\mathcal{R}}}
\def\lmRich{{^-\mathcal{R}}}
\def\rpChi{\chi^+_{v,w}}
\def\lpChi{{^+\chi_{v,w}}}
\def\rmChi{\Vec{\chi}}
\def\lmChi{\cev{\chi}}
\def\lpretwist{\cev{\tau}^{\pre}}
\def\rpretwist{\vec{\tau}^{\pre}}
\def\ltwist{\cev{\tau}}
\def\rtwist{\vec{\tau}}
\def\bw{\mathbf{w}}
\def\sigL{\Sigma^{\Lec}}
\def\sigI{\Sigma^{\Ing}}
\def\sigM{\Sigma^{\Men}}

This short appendix discusses the relation between Richardson varieties and braid varieties, with comments on the relation between different cluster algebra structures in Type A. The main contribution is \cref{thm:richardson-compare}, where a precise comparison is established between the weave cluster seeds, the Deodhar cluster seeds, the cluster seeds constructed by B.~Leclerc, and the cluster seeds constructed by G.~Ingermanson.

For $v, w \in W$, the (open) Richardson variety $\rpRich_{v,w}$ is defined as the intersection
\[\rpRich_{v,w} := (\borel_- v \borel_+/\borel_+) \cap (\borel_+ w \borel_+/\borel_+) \subset G/\borel_+. \]
We also have the isomorphic varieties in different flag varieties:
\[\lmRich_{v,w} := (\borel_-\backslash \borel_- v \borel_+) \cap (\borel_- \backslash \borel_- w \borel_-) \subset \borel_-\backslash G, \] \[ \rmRich_{v,w} := (\borel_+ v \borel_-/\borel_-) \cap (\borel_- w \borel_-/\borel_-) \subset G/\borel_-.\]

\subsection{Summary on cluster algebras structures for Richardson varieties} B.~Leclerc defined a conjectural cluster structure on $\lmRich_{v,w}$ in types ADE in \cite{Leclerc}. G.~Ingermanson defined an upper cluster structure on $\rpRich_{v,w}$ in type A in \cite{Ingermanson}; this was compared to the Deodhar cluster structure in \cite[Section 10.2]{GLSBS} and to B.~Leclerc's type A conjectural cluster structure in \cite{SSB}. These comparisons respectively established that G.~Ingermanson's and B.~Leclerc's constructions give cluster structures on Richardson varieties. E.~M\'enard defined a conjectural cluster structure on $\lmRich_{v,w}$ in types ADE in \cite{menard2021algebres}, which was shown to be an upper cluster structure in \cite{CaoKeller}. E.~M\'enard's construction was compared to the weave cluster structure in \cite[Section 10]{CGGLSS}, where it was also shown to give a cluster structure on Richardson varieties.

In the following sections, we record the precise relationships between these cluster structures on Richardson varieties, and the weave and Deodhar cluster structures on braid varieties. These results are largely proved in \cite{CGGLSS,GLSBS,SSB}; we compile them here for convenience.

\subsection{Identifying Richardson varieties with braid varieties}
Comparing cluster structures on $\rpRich_{v,w}, \lmRich_{v,w}$ and (certain) braid varieties requires a choice of isomorphism between the varieties in question. In this subsection, we introduce a number of maps which we will use to define these isomorphisms.

First, we have that the projection maps $\borel_-\backslash\G\longleftarrow\G\lr\G/\borel_-$, $\borel_{-}g \mapsfrom g \mapsto g\borel_{-}$, restrict to isomorphisms
\begin{equation}
    \lmRich_{v,w} \xleftarrow{\delta_1} \dot{v} U_+ \cap U_+ \dot{v} \cap \borel_- w \borel_- \xrightarrow{\delta_2} \rmRich_{v,w}. %\quad \text{and} \quad \lpRich_{v,w} \xleftarrow{\sim} \dot{v} N_- \cap N_- \dot{v} \cap B w B \xrightarrow{\sim} \rpRich_{v,w}
\end{equation}

%\JS{Is $N$ the unipotent radical? We were using $U_{+}$ in \cref{sec:double-braid-varieties} above. I don't have a preference for one over the other} 
The \emph{right chiral map} $\rmChi_{v,w}: \lmRich_{v,w} \xrightarrow{\sim} \rmRich_{v,w}$ %and $\rpChi_{v,w}: \lpRich_{v,w} \to \rpRich_{v,w}$ 
is defined as $\rmChi_{v,w} := \delta_2 \circ \delta_1^{-1}$. By definition, the inverse isomorphism is the \emph{left chiral map} $\lmChi_{v,w}: \rmRich_{v,w} \xrightarrow{\sim} \lmRich_{v,w}$.

Second, we have three involutions on $\G$, two anti-automorphisms $g \mapsto g^T$ and $g \mapsto g^\iota$ and one automorphism $g \mapsto g^\theta= (g^\iota)^T=(g^T)^\iota$. These maps are defined by:
\begin{align}
    &a^T=a\quad (a \in H), & x_i(t)^T=y_i(t),\qquad & y_i(t)^T=x_i(t)\\
    &a^\iota=a^{-1}\quad (a \in H), & x_i(t)^\iota=x_i(t),\qquad & y_i(t)^\iota=y_i(t)\\
    &a^\theta=a^{-1}\quad (a \in H), & x_i(t)^\theta=y_i(t),\qquad & y_i(t)^\theta=x_i(t).
\end{align}

Note that $\ds_i^\iota=\ds_i$ and $\ds_i^T=\ds_i^\theta=\ds_i^{-1}$ and $\borel_-^\theta=\borel_+$. It follows that $g \mapsto g^{\theta}$ descends to an involutive isomorphism
\begin{equation}
    \Theta: G/\borel_+ \xrightarrow{\sim} G/\borel_-, \quad \qquad g\borel_+ \mapsto g^{\theta}\borel_-
\end{equation}
which restricts to an involutive isomorphism $\Theta: \rpRich_{v,w} \xrightarrow{\sim} \rmRich_{v,w}$.

Finally, \cite[Definition 2.5]{galashin2022twist} introduced an isomorphism $\rpretwist_{v,w}: \rmRich_{v,w} \to \lmRich_{v,w}$. The \emph{right twist} $\rtwist_{v,w}:= \Theta^{-1} \circ \rmChi_{v,w}\circ \rpretwist_{v,w} \circ \Theta$ is then an automorphism of $\rpRich_{v,w}$.

Explicitly, we have
\[
\rpretwist_{v,w} \circ \Theta: g\borel_+ \mapsto \borel_- (v[g^{\iota }\dot{w}]_L^+),
\]
where $x_L^+ \in U_+$ is the first component in the image of $x \in \borel_+ \borel_-$ under the canonical isomorphism $\borel_+ \borel_- \overset\sim\to U_+ \times H \times U_-$. For instance, in type A this isomorphism is the UDL decomposition.
%For convenience, we also write the inverse automorphism of $\rpretwist_{v,w}$: 
%\[
%\lpretwist_{v,w}: g\borel_+ \mapsto ([\dot{w}g^{\iota}]_R^+ v)^{\theta} \borel_+,
%\]
%where $x_R^+ \in U_+$ is the last component in the image of $x \in \borel_- \borel_+$ under the canonical isomorphism $\borel_- \borel_+ \overset\sim\to U_- \times H \times U_+$ (in type A, the isomorphism is the LDU decomposition).
%\MG{Please double check!}

\begin{remark} \label{rem:identify-with-right-B+-cosets}
As braid varieties are defined using right $\borel_+$-cosets, our preference is to phrase
all cluster structures on $\rpRich_{v,w}$. In what follows, we use the isomorphisms
\[\Theta: \rpRich_{v,w} \xrightarrow{\sim} \rmRich_{v,w}\quad \text{and} \quad \lmChi_{v,w} \circ \Theta^{-1}: \rpRich_{v,w} \xrightarrow{\sim} \lmRich_{v,w}\]
to identify $\rpRich_{v,w}, \rmRich_{v,w}, \lmRich_{v,w}$ when necessary, and use their pullbacks to identify the appropriate coordinate rings.\qed
%\MSB{This is my aesthetic preference so that \cref{thm:richardson-compare} involves an automorphism $\rtwist_{v,w}$ rather than many isomorphisms to relate Leclerc's seed and Ingermanson's. Let me know if you disagree.}
\end{remark}

\subsection{Comparison between cluster structures in Type A Richardson varieties} 
Fix $v \leq w \in W$ and let $\bw$ be a reduced expression for $w$. The constructions cited above of B.~Leclerc, E.~M\'enard, and G.~Ingermanson\footnote{Technically, Ingermanson only gives a seed for the \emph{unipeak} expression for $w$.} have in common that each reduced expression $\bw$ gives rise to a seed. We denote these seeds by $\sigL_{v,\bw}, \sigM_{v, \bw}$, and $\sigI_{v,\bw}$, respectively. By the convention in \cref{rem:identify-with-right-B+-cosets}, all seeds here are seeds for the Richardson variety $\rpRich_{v,w}$: e.g.~the functions of $\sigL_{v, \bw}$ differ from those defined by B.~Leclerc by the pullback in \cref{rem:identify-with-right-B+-cosets}. The cluster variables of $\sigL_{v,\bw}$ and $\sigI_{v,\bw}$ are indexed by the letters in the complement of the \emph{rightmost} subexpression for $v$ in $\bw$. 

Let $\beta=s_{i_1} \dots s_{i_\ell}$ be a reduced word for $\wo v$, and let $\gamma=s_{j_1} \dots s_{j_r}$ be the reverse of $\bw$. We define an isomorphism 
$p: \rpRich_{v,w} \lr X(\beta\gamma)$ via
\[p: g \borel_+ \mapsto (\borel_+ \xrightarrow{s_{i_1}} \cdots \xrightarrow{s_{i_{\ell}}} \dot{\wo} g^{-\iota} \borel_+ \xrightarrow{s_{j_1}} \cdots \xrightarrow{s_{j_{r}}} \wo \borel_+). \]

\begin{remark}
There are many ways to identify braid varieties and Richardson varieties. This isomorphism $p$ agrees with \cite[Eq. (10.2)]{GLSBS}, up to composition with $\iso$, and was chosen because it sends the Lusztig-positive part of $\rpRich_{v,w}$ to the cluster-positive part of $X(\beta \gamma)$. It differs from the isomorphisms of \cite[Section 3.6]{CGGLSS} and \cite[Section 2.7]{GLSBS}.\qed
\end{remark}

The relation between all the different cluster seeds in Type A, i.e.~weave seeds, Deodhar seeds, and the two types of seeds constructed by B.~Leclerc and G.~Ingermanson, is established in the following result:

\begin{thm}\label{thm:richardson-compare}
    Let $v, w \in S_n$ and let $\bw$ be a reduced expression for $w$. Let $\beta, \gamma$ be as above and define the double string $\dstr= (s_{i_1} R, \dots, s_{i_{\ell}}R,s_{j_1} R, \dots, s_{j_{r}}R)$. Let $\dbr$ be the double braid word $\dbr:=s_{j_r} \cdots s_{j_1} s_{i_\ell} \cdots s_{i_1}$. Then we have the following relation between cluster seeds for the Type A Richardson variety $\rpRich_{v,w}$:
   \[\Sigma^{\threeD}_{\dbr} \circ \iso^{-1} \circ p= \Sigma^{\We}_{\dstr} \circ p = \sigI_{v, \bw} = \sigL_{v, \bw} \circ \rtwist_{v,w}.\]
  % \MSB{Using a natural (but annoying to write down) identification of $\JWe$ with complement of rightmost subexpression for $v$ in $\bw$ here.}
  % \MG{Is the identification that annoying? $\JWe$ is the complement to the leftmost subexpression for $w_0$ in $\beta\gamma$, which can be naturally identified with the complement to the rightmost expression for $w_0$ in $\gamma^{\mbox{rev}}\beta^{\mbox{rev}} = (\beta\gamma)^{\mbox{rev}}$ (the word read from right to left). This rightmost subexpression is the rightmost subexpression for $v$ in $\bw$ followed by $\beta^{\mbox{rev}}$, so it complement is precisely the complement to the rightmost subexpression for $v$ in $\bw$. This is it, isn't it? Or is this more complicated than that?}
   %\MSB{I am not sure how to add M\'enard's seed to this.}
\end{thm}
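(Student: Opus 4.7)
The proof proposal decomposes the chain of equalities into three links and then invokes results already established in this paper and in the literature.

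\textbf{First equality: $\Sigma^{\threeD}_{\dbr} \circ \iso^{-1} \circ p = \Sigma^{\We}_{\dstr} \circ p$.} The plan is to recognize this as a direct consequence of \maintheoref, provided we verify the combinatorial identifications $\dbr^{(-|+)} = \beta\gamma$ and $\dstr(\dbr) = \dstr$. The first identification follows by inspection of \cref{def:pos-neg-ind}: since $\dbr = s_{j_r}\cdots s_{j_1}s_{i_\ell}\cdots s_{i_1}$ consists only of positive letters, we have $\negind=\emptyset$ and $\posind = [r+\ell]$, so $\dbr^{(-|+)} = i_{b_{r+\ell}}\cdots i_{b_1} = s_{i_1}\cdots s_{i_\ell}s_{j_1}\cdots s_{j_r} = \beta\gamma$. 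The second identification follows by reading $\dbr$ right-to-left via \cref{def:double_string_from_double_braid_word}, which reproduces exactly $\dstr = (s_{i_1}R,\dots,s_{i_\ell}R,s_{j_1}R,\dots,s_{j_r}R)$. With these matches in hand, \maintheoref gives $\iso^*\Sigma^{\We}_{\dstr(\dbr)} = \Sigma^{\threeD}_{\dbr}$ on $R(\dbr)$; pulling back along $p$ yields the first equality.

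\textbf{Second equality: $\Sigma^{\We}_{\dstr}\circ p = \sigI_{v,\bw}$.} By the first equality, it is equivalent to prove $\Sigma^{\threeD}_{\dbr} \circ \iso^{-1} \circ p = \sigI_{v,\bw}$. The plan is to invoke \cite[Section~10.2]{GLSBS}, where the Deodhar cluster seed on $R(\dbr)$ is shown to coincide with Ingermanson's seed on $\rpRich_{v,w}$ after the natural identification of these two varieties. The substantive task is to verify that the composite isomorphism $\iso^{-1}\circ p : \rpRich_{v,w}\to R(\dbr)$ used here agrees, up to the already-established compatibility of $p$ with the chiral/twist identifications of \cref{rem:identify-with-right-B+-cosets}, with the isomorphism used in \cite[Section~10.2]{GLSBS}. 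Once the two isomorphisms are matched, the coincidence of seeds is a direct quotation.

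\textbf{Third equality: $\sigI_{v,\bw} = \sigL_{v,\bw}\circ \rtwist_{v,w}$.} This is established in \cite{SSB}, where Ingermanson's and Leclerc's type $A$ seeds on Richardson varieties are compared and shown to agree after precomposition with the right twist $\rtwist_{v,w}$. We cite this result directly; no additional argument is needed beyond ensuring that our conventions for $\rtwist_{v,w}$ (fixed in the current appendix via $\Theta$, $\rmChi_{v,w}$, and $\rpretwist_{v,w}$) match those in \cite{SSB}.

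\textbf{Main obstacle.} The actual mathematical content, namely each of the three seed identifications, is already available: the first comes from \maintheoref in this paper, the second from \cite{GLSBS}, and the third from \cite{SSB}. The principal difficulty is the bookkeeping required to align the chosen isomorphisms $p$, $\iso$, $\Theta$, $\lmChi_{v,w}$, $\rpretwist_{v,w}$, and $\rtwist_{v,w}$ with the (implicit) identifications used in \cite{GLSBS} and \cite{SSB}. Concretely, $p$ was chosen precisely so that Lusztig-positive loci in $\rpRich_{v,w}$ correspond to cluster-positive loci in $X(\beta\gamma)$; a key lemma in the proof will be that this positivity-compatible choice of $p$ differs from the isomorphism of \cite[Section~2.7]{GLSBS} only by one of the automorphisms introduced above, and that this difference is exactly absorbed by $\rtwist_{v,w}$ in the third link. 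Once this bookkeeping is settled, the theorem is a concatenation of the three cited results.
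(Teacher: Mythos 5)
Your first equality, with the explicit verification that $\dbr^{(-|+)}=\beta\gamma$ and $\dstr(\dbr)=\dstr$, is correct and matches the paper's appeal to \maintheoref; the citations for the second and third links are also the right ones.

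However, there is a genuine gap that the paper's proof must and does address but your proposal does not: Ingermanson's seed $\sigI_{v,\bw}$ is only defined when $\bw$ is a \emph{unipeak} reduced expression, while the theorem is stated for an arbitrary reduced expression $\bw$. Simply citing \cite[Section~10.2]{GLSBS} and \cite{SSB} therefore only proves the chain of equalities for unipeak $\bw$. The paper handles this by \emph{defining} $\sigI_{v,\bw}:=\Sigma^{\threeD}_{\dbr}$ when $\bw$ is not unipeak (making the first and third terms equal by fiat), reducing to the unipeak case via the fact that every permutation has a unipeak expression, and then extending the equality $\sigI_{v,\bw}=\sigL_{v,\bw}\circ\rtwist_{v,w}$ to all $\bw$ by checking that commutation and braid moves on $\bw$ (and the corresponding moves on $\dbr$) induce the same relabelings and mutations on $\sigL_{v,\bw}$ and on $\Sigma^{\threeD}_{\dbr}$, invoking \cite[Proposition~7.1]{SSB} for the Leclerc side. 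Without this mutation-compatibility argument, the claimed equality for non-unipeak $\bw$ is unproven. Finally, your ``main obstacle'' paragraph misattributes the role of $\rtwist_{v,w}$: the twist is not a correction term absorbing a mismatch between the isomorphism conventions of $p$, $\iso$ and the identifications of \cite{GLSBS} — that bookkeeping is already settled by the remark preceding the theorem, which records that $p$ agrees with \cite[Eq.~(10.2)]{GLSBS} up to composition with $\iso$. Rather, $\rtwist_{v,w}$ is an intrinsic part of the statement of \cite[Theorem~B]{SSB} comparing Ingermanson's and Leclerc's seeds, and no further ``key lemma'' reconciling the isomorphisms is needed.
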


A word is warranted regarding indices in the above equalities. The index set for $\Sigma^{\threeD}_{\dbr}$ is $\J3D$, the complement of the rightmost subexpression for $\wo$ in $\dbr$. All letters of the suffix $s_{i_{\ell} \dots s_{i_1}}$, a reduced word for $v^{-1} \wo$, are in the rightmost subexpression for $\wo$. The remaining letters of $\dbr$ in this subexpression form the rightmost subexpression for $v$ in $s_{j_r} \dots s_{j_1} = \bw$. Thus there is a natural bijection between $\J3D$ and the index sets for $\sigI_{v, \bw}$ and $\sigL_{v, \bw}$. The corresponding equality in \cref{thm:richardson-compare} above utilizes this bijection.

%\MG{What presumably follows from \cite{CGGLSS} using the conventions and identifications as in the theorem: M\'enard's seed is obtained by freezing and deleting the vertices/variables corresponding to letters in $\nu$ from $\Sigma_{\dstr'}^{\We} \circ p \circ (\rtwist_{e,w})^{-1}$ on $X(\nu\beta\gamma)$, where $\nu = s_{k_1}\ldots s_{k_t}$ is a reduced word for $w_0 v^{-1} w_0$ and $\dstr' = (s_{i_1} R, \dots, s_{i_{\ell}}R,s_{j_1} R, \dots, s_{j_{r}}R, s_{k_t}L,\ldots s_{k_1}L)$. Geometrically, and only in terms of braid varieties, this concerns a realization of $X(\beta\gamma)$ as a subvariety of $X(\nu\beta\gamma)$. Alternatively, as Linhui suggests, one might have to think about this as a boundary divisor instead...}

\begin{proof}[Proof of \cref{thm:richardson-compare}] The equality $\Sigma^{\threeD}_{\dbr} \circ \iso^{-1}= \Sigma^{\We}_{\dstr}$ follows from \maintheoref, which yields the first equality. As mentioned above, technically the seed $\sigI_{v, \bw}$ is defined only when $\bw$ is a unipeak expression, i.e.~ in the wiring diagram, strands never go up after they begin to go down. Every permutation has a unipeak expression. For $\bw$ not unipeak, we directly define $\sigI_{v, \bw}:= \Sigma^{\threeD}_{\dbr}$. For $\bw$ unipeak, the equality $\sigI_{v, \bw} = \sigL_{v, \bw} \circ \rtwist_{v,w}$ of the third and fourth seeds is \cite[Theorem B]{SSB} and the equality $\Sigma^{\threeD}_{\dbr} \circ \iso^{-1} \circ p = \sigI_{v, \bw}$ of the first and third seeds is \cite[Prop.~10.5 \& Rmk.~10.8]{GLSBS}. These equalities for arbitrary $\bw$ follow from the fact that commutation and braid moves on $\bw$ and $\dbr$ give rise to relabelings and mutations at the same indices in both $\sigL_{v, \bw}$ and $\Sigma^{\threeD}_{\dbr}$. See the proof of \cite[Proposition 7.1]{SSB} for the effect of commutation and braid moves on $\sigL_{v, \bw}$.
\end{proof}

\begin{remark} \cref{thm:richardson-compare} does not feature E.~M\'enard's seeds $\sigM_{v, \bw}$, but it follows from \cite[Section 10]{CGGLSS} that the quiver coincides with the quivers of the seeds in \cref{thm:richardson-compare}.\qed
\end{remark}
\bibliographystyle{alpha}
\bibliography{main}

\end{document}